\newcommand{\marked}{\mathcal{M}\hspace{-1pt}\word{arked}}
\newcommand{\Marked}{\mathcal{M}\hspace{-1pt}\word{arked}}
\newcommand{\pecial}{\word{special}}
\newcommand{\oc}{\word{oc}}
\newcommand{\Fatoc}{\Fat^{\word{oc}}}
\newcommand{\closed}{\word{closed}}
\newcommand{\Closed}{\mathcal{C}\hspace{-2pt}\mathit{losed}}
\newcommand{\Open}{\mathcal{O}\hspace{-2pt}\mathit{pen}}
\newcommand{\Out}{\mathcal{O}\hspace{-2pt}\mathit{ut}}
\newcommand{\In}{\mathcal{I}\hspace{-2pt}\mathit{n}}
\newcommand{\Outgoing}{\mathcal{O}\hspace{-2pt}\mathit{ut}}
\newcommand{\Incoming}{\mathcal{I}\hspace{-2pt}\mathit{n}}
\newcommand{\Special}{\mathcal{S}\hspace{-2pt}\mathit{pecial}}
\newcommand{\win}{\mathit{in}}
\newcommand{\out}{\mathit{out}}
\newcommand{\free}{\mathit{free}}
\newcommand{\pants}{\mathit{pants}}
\newcommand{\aFat}{\Fat^a}
\newcommand{\Bigoplus}[1]{\underset{#1}{\bigoplus}}
\newcommand{\Map}{\mathit{Map}}
\newcommand{\Split}{\mathit{Split}}
\newcommand{\Mod}{\mathit{Mod}}
\newcommand{\Tub}{\cT\hspace{-2.5pt} \mathit{ub}}
\newcommand{\Thom}{\mathit{Thom}}
\newcommand{\VB}{\mathcal{V\hspace{-1pt}B}}
\newcommand{\tilfatop}{\til{\left(\aFat\right)^{op}}}
\newcommand{\tilfat}{\til{\aFat}}
\newcommand{\Mtop}{\cM^{\mathit{top}}_{\aFat}(M)}
\newcommand{\cylinder}{\mathit{cyl}}
\newcommand{\tilsigma}{{\tilde{\sigma}}}
\begin{document}

\title{Higher string topology operations.}
\author{V\'eronique Godin\\ Harvard University\\ godin@math.harvard.edu}
\maketitle
\abstract{In \cite{ChasSullivan} Chas and Sullivan defined an intersection-type product on the homology of the free loop space $LM$ of an oriented manifold $M$. In this paper we show how to extend this construction to a homological conformal field theory of degree $d$. In particular, we get operations on $H_*LM$ which are parameterized by the twisted homology of the moduli space of Riemann surfaces.}


 \section{Introduction}

Let $M$ be an oriented manifold of dimension $d$. Let the free loop space 
\[LM=\Map(S^1, M)\]
of $M$ be the space of piecewise smooth maps from the circle into $M$. In \cite{ChasSullivan} Chas and Sullivan defined a Batalin-Vilkovisky algebra structure on the homology $H_*LM$ of the free loop space of any closed oriented manifold. In particular, they have constructed an intersection-type product
\begin{equation}\label{eqn:CS product}
\bullet : H_pLM\tens H_qLM\map H_{p+q-d}LM.
\end{equation}
Their paper started string topology; the study of the algebraic structures of the space $LM$ and various stringy spaces associated to $M$.

\begin{figure} \begin{center}
\mbox{\epsfig{file=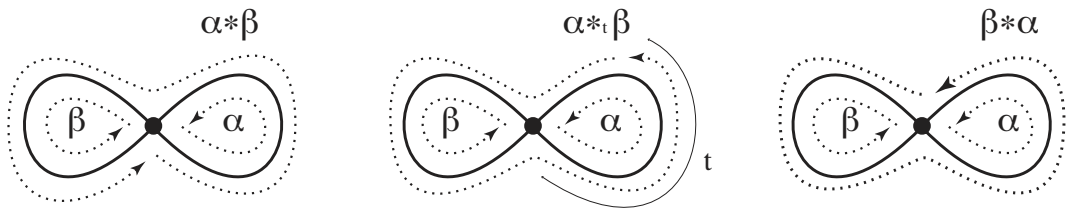, width=400pt}}
\caption{A family of compositions giving a homotopy between $a \bullet b$ and $b\bullet a$
 }\label{fig:firstfamily}
\end{center}
\end{figure}
The loop product of \eqref{eqn:CS product} is constructed by first intersecting chains in $LM\times LM$ with the submanifold $\Map(\infty, M)$ of composable loops and then by composing these composable loops. Their paper suggests a richer algebraic structure on $H_*LM$. For example, to prove that the loop product is graded commutative, they used the various compositions illlustrated by graphs in figure \ref{fig:firstfamily}. Hence there should be a space of graphs whose homology parameterized string topology operations,

This idea has already been pursued by Voronov in \cite{Voronov}, by Cohen and Jones in \cite{CohenJones} and by Cohen and the author in \cite{CohenGodin}. This paper extends these constructions to an action on $H_*LM$ by the homology of a space of graphs whose homotopy type is related to the moduli space of bordered Riemann surfaces. 

\subsection{Main result and examples}
\label{sec:main result}

\begin{theorem}\label{thm:HCFT}\label{thm:main}
The pair $(H_*LM,H_*LM)$ has the structure of a degree $d$ open-closed homological conformal field theory (HCFT) with positive boundary.
\end{theorem}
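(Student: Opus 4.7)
The plan is to build the operations graph by graph on a combinatorial model of open-closed moduli space, and then glue the graph-level pull-push constructions into a prop action on homology.

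I would begin by introducing a category $\Fatoc$ of open-closed fat graphs whose geometric realization is homotopy equivalent to the moduli space of bordered surfaces with labeled free and constrained boundary arcs. To each such graph $G$ representing a cobordism one associates a middle mapping space $\Map(G,M)$ together with evaluation maps
\[
p_G:\Map(G,M)\longrightarrow X_{\win}(G), \qquad q_G:\Map(G,M)\longrightarrow X_{\out}(G)
\]
to products of copies of $LM$ (one per incoming/outgoing boundary circle) and of appropriate path spaces in $M$ (one per open boundary arc). This provides the ``skeleton'' on which the operations will be defined.

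The operation associated to a single $G$ is the pull-push $(q_G)_*\circ(p_G)^!$. The map $p_G$ factors through a finite-codimensional embedding of Hilbert (or Fr\'echet) mapping spaces whose normal bundle is pulled back from the tangent bundle of a finite power of $M$; a Thom-Pontryagin collapse along a tubular neighbourhood yields the Gysin map $(p_G)^!$, shifting degree by $-d\,\chi(G)$, where $\chi(G)$ is a combinatorial Euler characteristic of $G$ relative to its outgoing boundary. The positive boundary hypothesis is crucial here: it guarantees that $q_G$ has at least one free outgoing factor in every connected component, so that the collapse is well defined and compatible with the gluings that come next.

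I would then promote this pointwise construction to a family construction. Over a simplex $\sigma\to|\Fatoc|$ the middle spaces and their normal bundles assemble into fibrewise smooth bundles, and the Thom spectra of these normal bundles glue along the face relations of $\Fatoc$ to produce a chain-level operation
\[
\mu:C_*\bigl(|\Fatoc|;\Orient\bigr)\tens C_*X_{\win}\longrightarrow C_*X_{\out}
\]
twisted by the orientation local system $\Orient$ of the family of normal bundles. Passing to homology and identifying $|\Fatoc|$ with the moduli space of bordered surfaces gives the advertised action by the twisted homology of moduli space.

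Finally, the HCFT axioms (associativity and unitality of horizontal and vertical composition) reduce to showing that, for a composite open-closed fat graph, the associated pull-push agrees with the composite of the pull-pushes along its pieces; this is a Fubini-type statement for nested Thom collapses along the boundary cycles being glued, together with a functoriality statement for $\Fatoc$ under concatenation of cobordisms. The main obstacle will be keeping all the combinatorial choices---tubular neighbourhoods, normal framings, degenerations of graphs---coherent in families so that the chain-level maps satisfy the HCFT axioms up to compatible homotopies and descend to a genuine HCFT structure on homology; this is where the refined decoration of $\Fatoc$ suggested by the ``marked'', ``special'' and open/closed structures encoded in the macros must do the real work.
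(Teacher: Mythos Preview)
Your high-level strategy matches the paper's: a fat-graph model for $B\Mod(S)$, a graphwise Pontryagin--Thom collapse giving a wrong-way map on the restriction to the incoming boundary, a family version over the nerve, identification of the twisting, and a gluing argument. Two points, however, are genuine gaps rather than details to be filled in.

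First, working over all of $\Fatoc$ will not give you a finite-codimension embedding. For an arbitrary open-closed fat graph $\G$ the incoming boundary cycles need not be disjoint embedded subgraphs, so there is no clean splitting of $\G$ into ``incoming part plus finitely many extra cells'' and hence no embedding $M^{\G}\hookrightarrow M^{\p_\win\G}\times(\text{finite-dimensional piece})$ with the normal bundle you describe. The paper fixes this by passing to the full subcategory $\aFat\subset\Fatoc$ of \emph{admissible} fat graphs, those whose incoming closed cycles are disjoint embedded circles, and proving that $|\aFat_S|\hookrightarrow|\Fatoc_S|$ is a homotopy equivalence. This is exactly where the positive-boundary hypothesis enters: it is needed for that equivalence (if every boundary component is incoming there is no admissible model), not, as you write, to guarantee an outgoing factor for $q_G$.

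Second, you correctly flag the coherence of tubular neighbourhoods across the nerve as the main obstacle, but you give no mechanism for solving it, and this is most of the work. The paper's device is worth knowing: for each finite-dimensional embedding one has a contractible space of tubular neighbourhoods, but the infinite-dimensional embedding $M^{\G}\hookrightarrow M^{\p_\win\G}\times PM^{eE}\times M^{eV}$ has no exponential map available. The paper lifts finite-dimensional tubular neighbourhoods to infinite-dimensional ones using \emph{propagating flows} on the normal bundle, then packages all the choices (tubular neighbourhoods, propagating flows, connections, and splittings comparing the Euclidean ambient spaces for different graphs) into contractible spaces $\Tub(\sigma)$ attached to each simplex $\sigma$ of the nerve, and thickens $\aFat$ twice to incorporate them. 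Without some such mechanism your family collapse is not defined, let alone coherent.

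A smaller point: the orientation local system is not left abstract in the paper but identified with $\det(\chi_S)^{\otimes d}$ where $\chi_S=H_*(S,\p_\win S)$; this identification, and its compatibility with gluing via the long exact sequence of the triple $(S_1\#S_2,S_1,\p_\win S_1)$, is what makes the structure an HCFT of degree $d$ rather than one twisted by an unspecified line bundle.
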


This theorem is the main result of this paper which, amongst other things, settles a conjecture from \cite{CohenVoronov}. We will first of all spell out what structure this gives on the pair $(H_*LM, H_*M)$. Specific examples will then be given.

By an \emph{open-closed cobordism} $S$, we mean an oriented cobordism between two 1-dimensional manifolds. More precisely, $S$ is a compact oriented surface with boundary. Its boundary is divided into three parts : the incoming part $\p_\win S$, the outgoing part $\p_\out S$ and the free part $\p_\free S$. The incoming and the outgoing boundaries are the 1-dimensional manifolds. The free boundary is a cobordism between the boundary of $\p_\win S$ and the boundary of $\p_\out S$. We also choose parameterizing diffeomorphisms 
\[\varphi_\win : \p_\win S\map N \qquad \varphi_\out :\p_\out S\map P\]
to ordered disjoint unions $N$ and $P$ of $I$ or $S^1$. Note that this gives the connected components of both $\p_\win S$ and $\p_\out S$ an ordering and a parameterization. 
The \emph{mapping class group} 
\[\Mod(S)=\Mod^{\oc}(S)=\pi_0 \Diff\left(S; \p_\win S \du \p_\out S\right)\]
 of such an open-closed cobordism $S$ is the group of isotopy classes of orientation-preserving diffeomorphisms of $S$ which fix both the incoming boundary and the outgoing boundary pointwise. 
Whenever the outgoing boundary of $S_1$ is identified with the same 1-manifold as the incoming boundary of $S_2$, we get a well-defined gluing $S_1\#S_2$ and  a group homomorphism
\[\Mod^{\oc}(S_1)\times \Mod^{\oc}(S_2) \map \Mod^{\oc}(S_1\#S_2).\]

\begin{figure} \begin{center}
\mbox{\epsfig{file=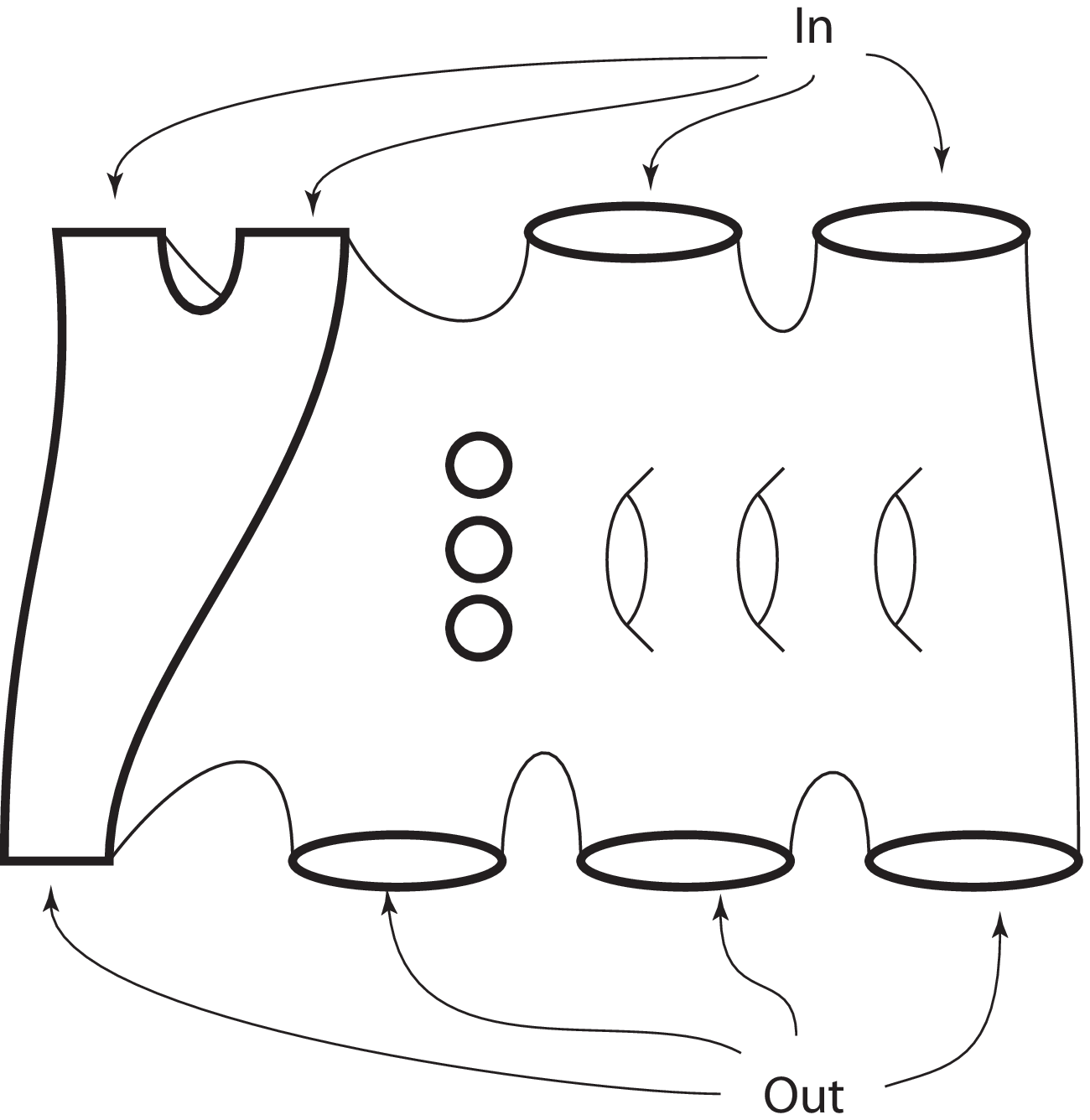, height=150pt}}\qquad\qquad
\mbox{\epsfig{file=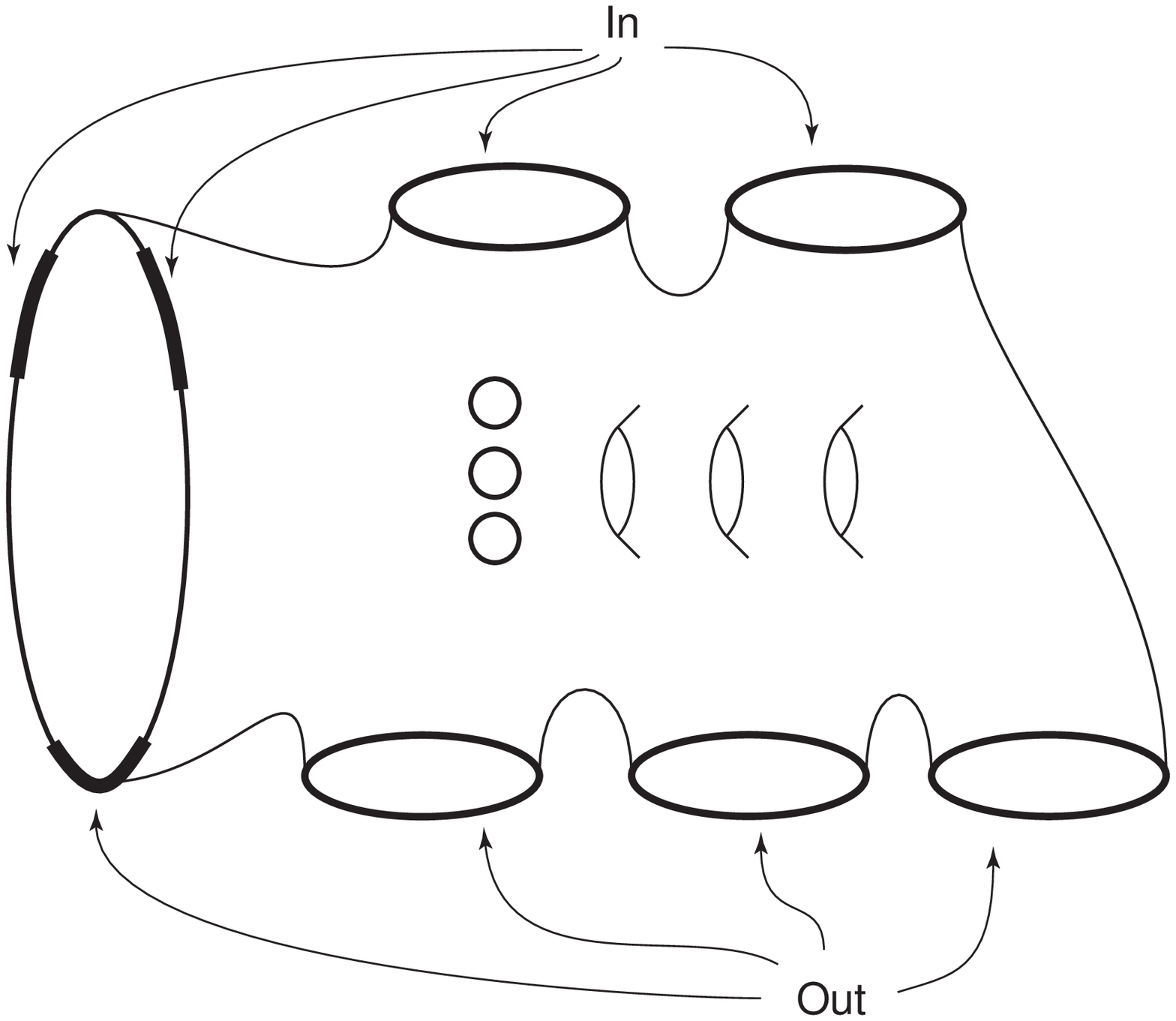, height=150pt}}
\caption{Two different pictures representing the same open-closed cobordism $S$.} \label{fig:oc cobordism}
\end{center}
\end{figure}
\begin{example}
The surface of figure \ref{fig:oc cobordism} has genus 3 and nine boundary components. The two drawings show the same open-closed cobordism between
\[\p_\win S \cong  I \du I \du S^1\du S^1 \qquad \p_\out S \cong I \du S^1\du S^1\du S^1.\]
\end{example}

Throughout this paper, we will assume that the boundary of every component of an open-closed cobordism is not completely contained in the incoming boundary. This eliminates, for example, the disk whose unique boundary component is completely incoming.
The term \emph{positive boundary} comes from this restriction.

Theorem \ref{thm:HCFT} says that for any open-closed cobordism $S$ there is a map
\[\mu_S: H_*(B\Mod^{\oc}(S);\det(\chi_{S})^{\tens d} \tens H_*LM^{\tens p}\tens H_*M^{\tens q} \map H_*LM^{\tens m} \tens H_*M^{\tens n}\]
which preserves degree.  Here $p$ and $q$ (respectively $m$ and $n$) are the number of circles and intervals in the incoming (respectively outgoing) boundary of $S$. We therefore get operations parameterized by the homology of the mapping class groups with twisted coefficients. 

The twisting $\det(chi_S)^{d}$ will be defined in section \ref{sec:orient}. We will consider first the virtual vector space
\[\chi_S = H_*(S;\p_\win S)= \left(H_1(S;\p_\win S), H_0(S,\p_\win S)\right)\]
with the appropriate action of $\Mod(S)$. We get a graded vector bundle $\det(\chi_S)$ above $B\Mod(S)$ which we think as lying in degree minus the Euler characteristic of $S$ relative to its boundary. 
When we glue two open-closed cobordism $S_1$ and $S_2$, the long exact sequence associated to the triple $(S_1\# S_2, S_1,\p_{\win} S_1)$ gives an identification
\[\det(\chi_{S_1})^{\tens d}\tens  \det(\chi_{S})^{\tens d} \cong \det(\chi_{S_1\# S_2})^{\tens d}.\]

We ask that the operations of $\mu_{S_1}$ and $\mu_{S_2}$ compose to give a commutative diagram.
\[\xymatrix{
H_*(B\Mod^{\oc}(S_2); \chi_{S_2}^d) \tens H_*(B\Mod^{\oc}(S_1);\chi_{S_1}^d) \tens H_*LM^{\tens p}\tens H_*M^{\tens q} \ar[d]\ar@/_175pt/[dd]_{\mu_2\cdot\mu_1}\\
 H_*(B\Mod^{\oc}(S_1\#S_2),\chi_{S_1\# S_2}^{d}) \tens H_*LM^{\tens p}\tens H_* M^{\tens q}\ar[d]^{\mu_{1\# 2}}\\
 H_*LM^{\tens k} \tens H_*M^{\tens j}
}\]

\begin{remark}
If the cobordism $S$ has no more than one boundary component that is completely free then $\chi_{S,d}$ is a trivial twisting. However there is no way of picking trivialization for each of these components in a way compatible with the gluing.
\end{remark}

\begin{figure} \begin{center}
\mbox{
\subfigure[Cylinder]{\epsfig{file=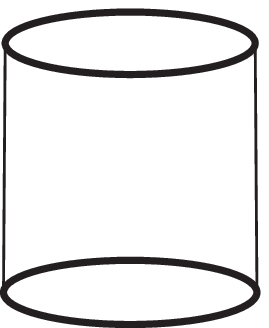, height=50pt}}
\qquad
\subfigure[Pair of pants]{\epsfig{file=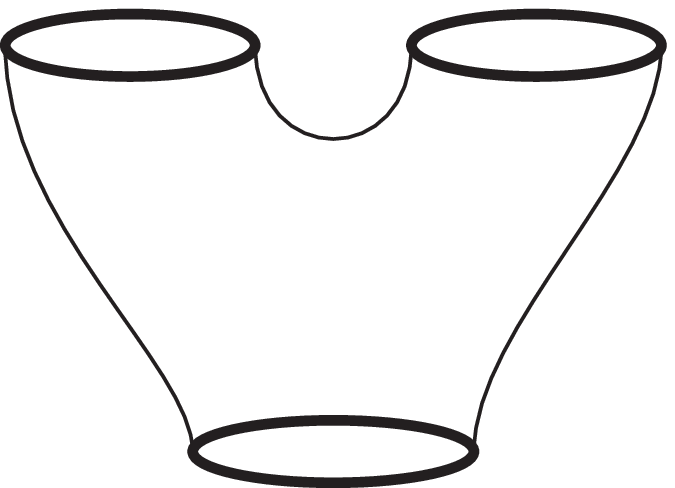, height=50pt}}
\qquad
\subfigure[Mouth piece]{\epsfig{file=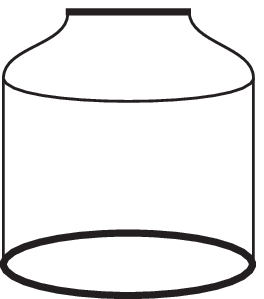, height=50pt}}
\qquad
\subfigure[Mouth piece backwards]{\epsfig{file=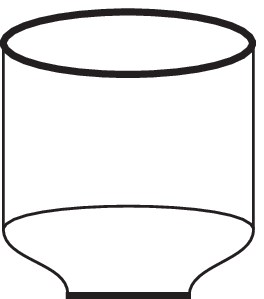, height=50pt}}
\qquad
\subfigure[Pair of flaps]{\epsfig{file=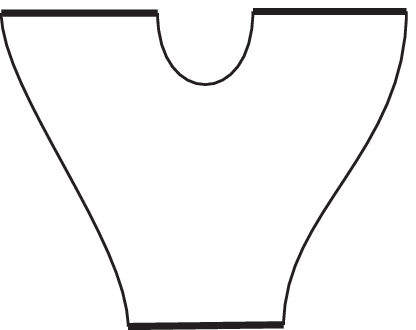, height=50pt}}
}
\caption{Some interesting cobordisms}\label{fig:int cobordisms}
\end{center}
\end{figure}

\begin{example}
The mapping class group of the cylinder $S=S_\cylinder$ of figure \ref{fig:int cobordisms}  is an infinite cyclic group generated by a Dehn twist around any curve homotopic to a boundary component. The twisting 
\[H_*(S;\p_{\win} S) = (0,0)\]
which is naturally oriented. We therefore get
\[H_0(\Mod^\oc(S_{\cylinder});\det(\chi_{S})^{\tens d}) \cong \bZ \qquad H_1(\Mod^\oc(S_{\cylinder});\det(\chi_S)^{\tens d}) \cong \bZ.\] 
Theorem \ref{thm:main} gives two interesting operations 
\[Id : H_*LM\map H_*LM \qquad \Delta : H_*LM\map H_{*+1} LM\]
corresponding respectively to the generator of $H_0$ and $H_1$. Here $\Delta$ is the BV operator of Chas and Sullivan which is given by the composition
\[\xymatrix{
H_*LM \ar[rr]^-{[S^1]\tens Id}&& H_1S^1\tens H_*LM \ar[r] & H_{*+1} (S^1\times LM) \ar[r]& H_{*+1}LM
}\]
where the last map is induced from the usual $S^1$ action.
\end{example}

\begin{example}
Lets now consider the pair of pants.
Its mapping class group is generated by three Dehn twists : one for each boundary. These Dehn twists commute and hence
\[\Mod(S_{\pants}) \cong \bZ\times\bZ\times\bZ.\]
The relative homology
\[H_*(S_\pants, \p_\win S_\pants) = (0, \bZ)\]
of the pair of pants is generated by a curve linking the first incoming boundary component to the second one. This choice of a generator determines an identification
\begin{alignat*}{8}
H_0(\Mod(S_{\pants});\det(\chi_{S})^{\tens d}) &\cong \bZ \qquad &H_1(\Mod(S_\pants);\det(\chi_S)^{\tens d}) &\cong \bZ^{\oplus 3} \\
H_2(\Mod(S_\pants);\det(\chi_S)^{\tens d})
 &\cong \bZ^{\oplus 3}&
H_3(\Mod(S_\pants);\det(\chi_S)^{\tens d}) &\cong  \bZ.
\end{alignat*}
The generator of $H_0(\Mod(S_{\pants});\det(\chi_S)^{\tens d})$ determines an operation
\[\star_{CS} : H_*LM\tens H_*LM \map H_*LM\]
which is, up to a sign, the Chas and Sullivan product. The operations corresponding to higher homology groups are simply composition of this product with various $\Delta$. 
\end{example}

\begin{remark}
As we shall see in section \ref{sub:ChasSullivan},
 the operation $\star_{CS}$  is anti-associative for an odd-dimensional manifold $M$. By regrading the group $H_*LM$, Chas and Sullivan defined a strictly associative product. However, this regrading would render our coproduct anti-co-associative. 
\end{remark}

\begin{example}
The mapping class group of both mouth pieces in figure \ref{fig:int cobordisms} is again $\bZ$ generated by a Dehn twist along a curve which is homotopic to a boundary component. Once again the bundle $\chi_S$ is oriented and hence these cobordisms therefore give two operations each. 

First the relative homology of the forward mouthpiece
\[H_*(S_{\mathit{mouthpiece}};\p_\win S_{\mathit{mouthpiece}}) \cong (0,\bZ).\]
is oriented by the orientation on the outgoing boundary component.
We obtain maps
\[\mu_{S_{\mathit{mouthpiece}},0} : H_*M \map H_{*-d} LM
\qquad \mu_{S_{\mathit{mouthpiece}},1} : H_*M \map H_{*-d+1} LM.\]
The first one sends
\[\mu_{S_{\mathit{mouthpiece}},0}([M]) = \chi(M)\ [c]\]
where $[c]$ is the class in $H_0LM$ corresponding to a single constant loop and where $\chi(M)$ is the Euler characteristic of $M$. The generator in degree one is obtained from the generator in degree zero by gluing the generator of $H_1(\Mod(S_{cylinder}))$ and hence the map
\[\mu_{S_{\mathit{mouthpiece}},1} : H_*M \map H_{*-d+1}LM\]
is the composition of $\mu_{S_\mathit{mouthpiece},0}$ with $\Delta$. Since $\Delta$ is zero on constant loops and hence 
\[\mu_{S_{\mathit{mouthpiece}},1} \equiv 0.\]

We pick the positive orientation on the relative homology of the backwards mouthpiece 
\[H_*(S_{\mathit{eceiphtuom}};\p_\win S_{\mathit{eceiphtuom}})\cong(0,0).\]
The first of the two maps
\[\mu_{S_{\mathit{eceiphtuom}},0} : H_*LM\map H_*M \qquad  \mu_{S_{\mathit{eceiphtuom}},1}: H_*LM \map H_{*+1} M.\]
is the induced by the evaluation map 
\[ev^0 : LM \map M\]
at the base point. As before the second map is the composition of $\Delta$ with the first $\mu_{S_{\mathit{eceiphtuom}},0}$ and therefore is 
\[H_*LM \cong H_*\left(M^{S^1}\right) \map H_{*+1} M.\]
\end{example}

\begin{example}
By Smale's theorem, the mapping class group of  the pair of flaps $S_\mathit{flaps}$ is trivial. Hence the twisting $\chi_{S_{\mathit{flaps}}}$ is trivial and the twisted homology of  $\Mod(S_\mathit{flaps})$ is trivial except that
\[H_0( \Mod_\oc(S_\mathit{flaps});\det(\chi_S)^{\tens d})\cong\bZ.\] 
The operation corresponding to a generator is the intersection product
\[H_pM\tens H_q M \map H_{p+q-d} M\]
of $M$. Again this intersection-product is not strictly associative.
\end{example}

\subsection{Conjectures}
\subsubsection{Relative string topology}
Our construction should be a special case of a more general structure. From the work of Sullivan in \cite{SullivanOpenClosed} and Ram\'irez in \cite{Ramirez}, one expects a relative version of string topology. More precisely, fix any collection $\cB$ of submanifolds of $M$. For any $A, B$ in $\cB$, let
\[P_{AB}M = \left\{ \gamma : I \map M \qquad \gamma(0)\in A \quad \gamma(1)\in B\right\}\]
be the space of piecewise smooth paths in $M$ which starts in $A$ and ends in $B$. Using the ideas of string topology, Sullivan constructed an operation
\[H_p(P_{AB}M) \tens H_*(P_{BC}M) \map H_*(P_{AC} M)\]
which decreases degree by the dimension of $B$. 
We conjecture that this relative string product is part of a more general structure including the operations of theorem \ref{thm:main} as follows. 

An $\cB$-manifold is a parameterized 1-dimensional manifold $N$ whose boundary is labeled
\[ l_N :\p N\map \cB\]
by elements of $\cB$. An $\cB$-open-closed cobordism $S$ is an open-closed cobordism between two $\cB$-manifolds. In particular, $S$ comes with a labeling
\[l_S:\pi_0\p_\free S \map \cB\]
of the connected components of its free boundary by elements of $\cB$. The mapping class groups 
\[\Mod^\cB(S) =\pi_0\Diff(S;\p_\win S\du \p_\out S; l_S)\]
of such an $S$ is the group of isotopy classes of diffeomorphism of $S$ which preserves the incoming and the outgoing boundary pointwise and also respects the labeling. In particular, these diffeomorphisms are allowed to permute entire circles of the free boundary only if they are labeled by the same element of $\cB$.

An $\cB$-cobordism has \emph{positive-boundary} if each of its component has some outgoing boundary.
For each $\cB$-cobordism $S$, there should be a twisting $\xi_S$ above each $B\Mod^{\cB}(S)$. These should be compatible operations
\[\xymatrix{
 H_*(B\Mod^{\cB}(S);\xi_S)\tens H_*(LM)^{\tens p} \tens \bigotimes H_*(P_{A_{i}B_{i}}M)\ar[r]^-{\mu_S}&
 H_*(LM)^{\tens q} \tens \bigotimes H_*(P_{C_{j}D_{j}}M)
}\]
where the incoming boundary of $S$ has $p$ circles and then intervals $I_i$ labeled by $A_i$ and $B_{i}$ and similarly for the outgoing. 
 
\begin{conjecture}\label{conj:relative}
The tuple 
\[\{H_*(P_{AB}M) \qquad A,\, B\in \cB\}\]
has the structure of a $\cB$-homological conformal field theory.
\end{conjecture}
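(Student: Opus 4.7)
The plan is to mimic the construction used for Theorem \ref{thm:main}, replacing the category $\Fat$ of fat graphs by a category $\Fat^\cB$ of $\cB$-labeled fat graphs. An object of $\Fat^\cB$ is an ordered disjoint union $N$ of circles and intervals together with a labeling $l_N:\pi_0 \partial N\map \cB$ of the endpoints of the intervals. A morphism $N\map M$ is a fat graph $\Gamma$ with an inclusion of $N\du M$ into its boundary cycles and a labeling of the connected components of its free boundary by elements of $\cB$ which is compatible with $l_N$ and $l_M$ at each endpoint. As in the absolute case, the realization of $\Fat^\cB(N,M)$ should be a model for $\du_{[S]} B\Mod^\cB(S)$ where the union ranges over $\cB$-open-closed cobordisms between $N$ and $M$; this should follow from the arguments that gave the corresponding statement for $\Fat$ together with a check that the added combinatorial data of labels does not affect the homotopy-theoretic analysis.

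Next I would define the operations. To each morphism $\Gamma\in\Fat^\cB(N,M)$ one associates the space $\Map(\Gamma,M;l)$ of continuous maps $\Gamma\map M$ sending each free boundary component of $\Gamma$ labeled by $A\in\cB$ into $A$. There are evaluation maps
\[
\prod_i P_{A_i B_i}M \times (LM)^{\times p} \xleftarrow{\rho_{\win}} \Map(\Gamma,M;l) \xrightarrow{\rho_{\out}} \prod_j P_{C_j D_j}M \times (LM)^{\times q}
\]
given by restriction to the incoming and outgoing boundaries. The key step, as in \cite{CohenGodin} and the present paper, is to construct an umkehr map for $\rho_\win$; this requires $\rho_\win$ to be modeled by an embedding of Hilbert manifolds with finite-codimensional normal bundle. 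This in turn rests on the fact that for submanifolds $A, B\subset M$, the path space $P_{AB}M$ is a Hilbert manifold and the restriction maps are fibrations whose fibers are intersections built from the submanifolds $A\in\cB$. The twisting $\xi_S$ is then forced to encode the total codimension of these intersections; concretely, in place of $\det(\chi_S)^{\tens d}$ one takes an appropriate determinant line built from the chain complex computing $H_*(S,\p_\win S)$ with coefficients in the dimensions of the labeling submanifolds.

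The composition law is verified exactly as in Section~\ref{sec:orient}: gluing two $\cB$-cobordisms $S_1, S_2$ along a common $\cB$-manifold produces a long exact sequence of the triple $(S_1\#S_2, S_1, \p_\win S_1)$ respecting the labeling, and functoriality of the umkehr construction under transverse pullbacks ensures that $\mu_{S_2}\circ\mu_{S_1}=\mu_{S_1\#S_2}$ up to the expected twist isomorphism. Positive boundary is used, as in the closed case, to guarantee that the incoming restriction is codimension-zero on at most the right number of factors so that the umkehr is well-defined.

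The hard part will be the construction of the umkehr map in the relative setting. In the absolute case one leverages a careful choice of thickened fat graph so that $\rho_\win$ becomes a smooth embedding of infinite-dimensional manifolds with a tubular neighborhood of finite codimension $d\cdot\chi(S,\p_\win S)$. In the relative case one must simultaneously control the transversality of restrictions to boundary points, which land in the submanifolds $A\in\cB$ rather than in all of $M$; this requires choosing the thickened fat graph and its parameterization so that the composition of restriction with the inclusion $A\hookrightarrow M$ remains transverse along every free boundary component. Once this technical point is set up correctly, the orientation bundle and compatibility with gluing should follow from the linear algebra of the relative homology $\chi_S$ decorated by the dimensions $\dim A$ for $A\in\cB$, and the verification that the resulting operations recover Sullivan's relative product on the pair of pants cobordism with all-free boundary labeled by $\cB$ is then a direct computation analogous to the computation of $\star_{CS}$ in the main text.
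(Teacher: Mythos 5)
This statement is Conjecture \ref{conj:relative}, not a theorem: the paper gives no proof of it. The paper itself only remarks that ``these operations in this conjecture can be constructed similarly using an appropriately decorated graph model,'' and that Theorem \ref{thm:main} settles only the special case $\cB = \{M\}$. So there is nothing in the paper to compare your argument against.

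Your proposal is in the same spirit as the paper's suggestion --- decorate fat graphs with $\cB$-labels on free boundary components, build a classifying-space model, and run a generalized Thom--Pontryagin collapse --- so the outline is plausible. But it is a plan, not a proof, and you say so yourself: you flag the relative umkehr construction and the simultaneous transversality to the various $A\in\cB$ as ``the hard part'' and then assume it can ``be set up correctly.'' That is exactly the step the paper declines to do, and nothing in your sketch resolves it. Concretely: the paper's construction of the umkehr map in Section \ref{sec:TP} does not proceed through Hilbert-manifold embeddings at the level of mapping spaces; it is a delicate finite-dimensional-at-each-stage Thom collapse on the vertices/edges of the graph, lifted via propagating flows. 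To carry this out in the relative setting one must replace the bundle $\nu^{eV\du eE}$ (normal bundle of $M\hookrightarrow W$) at free-boundary vertices by normal data of $A\hookrightarrow M\hookrightarrow W$, and then show the contractibility of the resulting space $\cT(\G)$ of choices and re-prove Proposition \ref{prop:key} with the extra labels; none of this is automatic and you have not indicated how the constraints along free boundary interact with the ``going up'' data and the two restrictions in that proof. Similarly, your proposed twisting ``$\det$ of $H_*(S,\p_\win S)$ with coefficients in the dimensions of the labeling submanifolds'' is a reasonable guess, but you do not verify that it matches $\det\kappa$ in the analogue of Lemma \ref{lem:orientable}, nor that it glues under the triple long exact sequence as in Section \ref{sec:orient}. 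As written, this is a statement of intent, not an argument.
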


Theorem  \ref{thm:main} proves this conjecture in the case where $\cB = \{M\}$. It actually constructs extra operations because of the existence of a trace map
\[H_*P_{MM}M \cong H_*M \map \bZ.\]
We believe that these operations in this conjecture can be constructed similarly using an appropriately decorated graph model.

Such a construction, especially a chain-level construction, would have application to contact homology through the work of NG, Sullivan and Sullivan. The existence of such a construction would also have consequences for the string operations of theorem \ref{thm:main}.

\subsubsection{Homotopy invariance}
The construction of the string operations $\mu_{S}$ presented in this paper uses the tangent bundle of the manifold $M$ as well as its differentiable structure. At first glance, we have therefore constructed diffeomorphism invariants of $M$. However in \cite{CohenKleinSullivan}, Cohen, Klein and Sullivan show that the BV structure defined by Chas and Sullivan does not depend on the smooth structure of $M$. They even show that any homotopy equivalence
\[f:M_1\map M_2\]
which sends the orientation of $M_1$ to an orientation of $M_2$ induces an isomorphism
\[Lf_* : H_*LM_1\map H_*LM_2\]
of BV algebra. Such an homotopy equivalence is called an \emph{oriented homotopy equivalence}. Hence the Chas and Sullivan structure is an oriented-homotopy invariant.

\begin{conjecture}\label{conj:he}
The operations of theorem \ref{thm:main} and the conjectured operations of \ref{conj:relative}  are oriented-homotopy invariant.
\end{conjecture}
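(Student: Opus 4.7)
The plan is to follow the blueprint of Cohen--Klein--Sullivan \cite{CohenKleinSullivan} and to reinterpret every Pontrjagin--Thom collapse that appears in the construction of $\mu_S$ as a map of Thom spectra that depends only on the oriented Spivak normal fibration of $M$, not on its smooth structure. Concretely, for each open-closed cobordism $S$ and each fat graph $\Gamma$ representing $S$, the operation is assembled out of umkehr maps associated to inclusions of mapping spaces $\Map(\Gamma,M)\hookrightarrow \Map(V,M)$, where $V$ is the vertex set of $\Gamma$. These umkehr maps arise from embeddings of the path space into a finite-dimensional approximation and a tubular neighborhood argument. The first step is to replace this with the homotopy-theoretic umkehr: the map $\Map(\Gamma,M)^{-TM\cdot \chi} \to \Map(V,M)^{0}$ obtained by applying $\Map(-,M)$ to the cofibration $V \hookrightarrow \Gamma$, tensoring with Atiyah duality and using the identification $\chi(\Gamma, \p_\win \Gamma) = \chi(S,\p_\win S)$.

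Next I would verify that, in the category of parameterized spectra over $B\Mod^{\oc}(S)$, these homotopy-theoretic umkehrs agree with the geometric ones used in the body of the paper. This is the direct analogue, performed fiberwise over the graph space, of the comparison carried out in \cite{CohenKleinSullivan} for the loop product, and it requires only that $M$ be a closed Poincar\'e duality space with a Spivak normal fibration. Then, given an oriented homotopy equivalence $f:M_1\to M_2$, Atiyah duality produces a map $M_1^{-TM_1}\to M_2^{-TM_2}$ of Thom spectra compatible with the chosen orientations, and $f$ induces compatible maps $\Map(\Gamma,M_1)\to \Map(\Gamma,M_2)$ and $\Map(V,M_1)\to \Map(V,M_2)$. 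A diagram chase shows that the homotopy-theoretic umkehrs commute with these maps, and since the gluing of surfaces only uses concatenation in the mapping-space direction, the whole functor $\mu$ is natural in $f$. The analogous argument works for Conjecture \ref{conj:relative} once each submanifold $A \in \cB$ is required to come with a Spivak normal structure inside $M$ and $f$ is required to respect these structures.

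The main obstacle is the fiberwise/parameterized comparison step. The operations of Theorem \ref{thm:main} are parameterized by points of the graph space $\Fat$, and the ``tangent bundle'' that enters the construction is really a bundle of tangent bundles along the evaluation maps, trivialized in different ways on different cells of $\Fat$. Promoting the comparison between smooth and homotopy-theoretic umkehrs from a single fat graph to a coherent statement across all of $\Fat$, so that it is compatible with the composition maps $\Fat(N,P)\times \Fat(P,Q)\to \Fat(N,Q)$ and with the twistings by $\det(\chi_S)^{\tens d}$, is where the technical difficulty lies. One likely needs a parameterized version of Atiyah duality (as developed by Klein, or within the framework of parameterized spectra of May--Sigurdsson) and a check that the identifications $\det(\chi_{S_1})^{\tens d}\tens \det(\chi_{S_2})^{\tens d}\cong \det(\chi_{S_1\#S_2})^{\tens d}$ lift to the spectrum level; once this is in place, the oriented-homotopy invariance of $\mu_S$ for all $S$ follows formally from that of Atiyah duality.
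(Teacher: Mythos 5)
This is a \emph{conjecture} in the paper, not a theorem: the paper offers no proof of Conjecture~\ref{conj:he}, only heuristic motivation. The route the author hints at is quite different from yours. The paper suggests that homotopy invariance would follow from Costello's work on open-closed TQFTs and universality, provided one could (a) lift the string operations to the chain or spectrum level, and (b) make sense of an $A_\infty$-Frobenius structure on $C^*M$ so that the dual operations arise algebraically as $HH_*(C^*M)\cong H^*LM$; the paper explicitly notes that Costello's nondegeneracy hypothesis on the pairing does not hold for the infinite-dimensional $C^*M$, so even that sketch is incomplete. Your proposal instead follows the Cohen--Klein--Sullivan blueprint directly: replace each geometric Pontrjagin--Thom collapse by a homotopy-theoretic umkehr map built from the Spivak normal fibration and Atiyah duality, and then chase naturality under an oriented homotopy equivalence $f:M_1\to M_2$. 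These are genuinely different strategies, and yours has the advantage of not requiring the problematic finite-dimensionality that blocks the Costello route.

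That said, you have not closed the gap you yourself identify, and it is a real one, not a formality. The construction in the paper does not produce a single umkehr map for each fat graph; it produces a coherent \emph{family} of Thom collapses parameterized by the simplices of $\tilfatop$, with all the compatibility encoded in the spaces $\Tub(\sigma)$ of Proposition~\ref{prop:tub}. Your reduction to ``$\Map(\Gamma,M)\hookrightarrow\Map(V,M)$ plus Atiyah duality'' only describes the operation associated to a single object $\Gamma$, i.e.\ it recovers the map $\mu_\G$ of~\eqref{eq:operation one}, not the map of spectra out of $\Sigma^\infty\bigl|\tilfatop\int M^{\p_\win-}\bigr|$. To prove the conjecture one would need a homotopy-theoretic analogue of the entire $\Tub(\sigma)$ machinery --- a contractible space of ``Spivak-style'' umkehr data for each simplex, functorial in $\cN\Fat_1$, compatible with the gluing functor $\#$ of section~\ref{sub:gluing graphs}, and with the spectrum-level identification of $\det(\chi_{S_1})^{\tens d}\tens\det(\chi_{S_2})^{\tens d}$ with $\det(\chi_{S_1\#S_2})^{\tens d}$ --- together with a fiberwise comparison to the smooth construction. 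Until that coherence is established, the claim that ``the whole functor $\mu$ is natural in $f$'' is an assertion rather than an argument; the single-graph diagram chase does not by itself propagate to the twisted homology of $B\Mod^{\oc}(S)$.
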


This conjecture follow morally from work of Costello. In \cite{CostelloCalabiYau}, Costello constructed a positive-boundary open-closed TQFT structure on the pair $(A, HC_*A)$
where $A$ is a Frobenius algebra and $HC_*A$ are its Hochshild chains. He also showed that this TQFT structure was universal amongst all such structure on $(A,B)$.
 
If one manages to apply his theory applies to $A=C^*M$, there would be a purely algebraic construction of the dual of the string operations as 
 \[HH_*(C^*M) \cong H^*LM.\]
Hence the structure on $H_*LM$ would depend only on a choice of an $A_\infty$ Frobenius algebra structure on $C^*M$. However, Costello's definition of an $A_{\infty}$ algebra is too restrictive to apply to $C^*M$. It would require to have a non-degerate pairing
 \[<,> : C^*M\tens C^*M \map k \]
on the \emph{infinite}-dimensional vector space $C^*M$.

Note that to apply the universality to this conjecture, we would need   to lift the string operations conjectured in \ref{conj:relative} to the chain level or to the spectrum level and define them in a slightly more general setting.

\subsection{Tools and methods}

This paper will start by the construction of a fat graph model for the classifying space of the mapping class groups of our cobordism. A fat graph is a graph with cyclic ordering of the edges coming into each vertex. These fat graphs are spines of Riemann surfaces and the cyclic ordering comes from the orientation of the surfaces.

\begin{remark}
In \cite{CohenGodin}, Cohen and the author also used a space of metric Sullivan chord diagrams to construct string operations. These chord diagrams were introduced by Sullivan and first appeared in \cite{CohenGodin}. In that paper, Cohen and the author had conjectured that the space of metric Sullivan chord diagrams is a classifying space for the mapping class groups and then hinted about how to get operations over families of chord diagrams. 
However the conjecture is wrong : the space of chord diagrams is too small to be such a model. More precisely it's cellular dimension is smaller than the cohomological dimension of the mapping class group as was shown in \cite{godinthesis}. This explains the bigger model and the more complicated operations defined in this paper.
\end{remark}

In this paper, we use the category $\aFat$ of admissible fat graphs.  These are fat graphs whose incoming boundary cycles are disjoint circles. This category is introduced in details in the next section. In that section, we  also prove that it is, in fact, a model for the classifying space of the mapping class groups.

We also show how to construct a symmetric monoidal partial 2-category ${\mathit{Cob}}^{\Fat}$ enriched over categories. The objects of ${\mathit{Cob}}^{\Fat}$ are diffeomorphism classes of parameterized 1-manifolds. The category of morphisms is the category  $\aFat$ of admissible fat graphs.
The composition in ${\mathit{Cob}}^{\Fat}$  is defined by gluing fat graphs along their boundary components. 

We then attack the construction of the string operations. For each admissible fat graph $\G$, we show how to generalized the idea of \cite{CohenGodin} to get a map of spaces
\begin{equation}\label{eq:operation one}
M^{\p_\win \G} \times W_{\G} \map \Thom\left(\kappa_\G\right)
\end{equation}
where $\kappa_\G$ is a  bundle above the space $M^{\G}$ of piecewise smooth maps of $\G$ into $M$ and $W_{\G}$ is an Euclidean space. This gives an operation
\[\mu_\G : H_*M^{\p_\win\G} \map H_*(\Thom(\kappa_\G)) \cong H_*M^{\G} \map H_*M^{\p_\out \G}\]
which corresponds to $H_0(B\Mod(S_\G))$.

To get the higher operations, we construct one big map of spectra
\[\Sigma^{\infty} \left|\tilfatop \int M^{\p_\win -}\right| \map \Thom(\kappa)\]
Here $\tilfatop$ is a topological category which is a thickened version of $\aFat$. It contains the choices involved in the construction of the Thom-Pontrjagin collapses of \eqref{eq:operation one}. The target virtual bundle  $\kappa$ lies above the space
\[\left|\tilfatop \int M^{-}\right| \simeq E\Diff(S)\underset{\Diff(S)}\times \Map(S;M)\]
which twists the $M^{\G}$ together. This construction in section \ref{sec:TP} will consume the most of our energy. 

In section \ref{sec:orient} we complete the construction of the higher operations. We first study the determinant bundle of  $\kappa$ and relate it to the $\det(\chi)^{\tens d}$. We then use the Thom isomorphism
\[H_*\left(\Thom(\kappa); \det(\kappa)^{\tens d}\right) \map H_* \left|\tilfatop \int M^{-}\right|\]
which we follow with the restriction to the outgoing boundary component. In this section \ref{sec:orient}, we construct the twisting $\det\chi$ and define homological conformal field theory.

Finally in section \ref{sec:gluing operations}, we will show that these operations glue correctly.

\subsection{Acknowledgments}

I am thankful to Antonio Ram\'irez for starting this project with me. I would also like to thank Ralph Cohen, Michael Hopkins, Mohammed Abouzaid, Tyler Lawson and Andrew Stacey for important discussions about this project. I am also thankful to Nathalie Wahl for her comments about an earlier version.

\section{Admissible fat graphs}

The goal of this section is to construct a category $\aFat$ whose objects are \emph{admissible} fat graphs and to prove that its geometric realization is a classifying space for the mapping class groups. More precisely,
\[|\aFat| \simeq \du_{[S]} B\Mod(S)\]
where the disjoint union ranges over all diffeomorphism type of cobordism $S$ whose boundary is divided into three parts
\[ \p_{\win} S \qquad \p_{\out} S \qquad \p_{\free} S.\]
We also have an ordering of the connected components of $\p_\win S$ and $\p_\out S$.

In the remainder of this paper, the model $\aFat$ will be used to define the new string operations.

\subsection{Graphs and fat graphs}

In this section, we briefly introduce fat graphs and construct a category $\Fat$ following the work of  Igusa \cite[Chap. 8]{IgusaReidemeister}.

A \emph{graph} $G=(V,H,s,i)$ is a set of vertices $V$ a set of \emph{half-edges} $H$ a source map
$s:H\smap V$ and a fixed-point free involution $i:H\map H$ which pairs an half-edge $h$ with its other half $\ol{h}=i(h)$. Hence the set $E$ of edges in the graph $G$ is the set of orbits of $i$.

There are two different ways of constructing the geometric realization of $G$ corresponding to thinking of half-edges as an edge with an orientation or as an actual half of an edge. 
\[|G|^{\div} = \frac{ (I\times H) \du V}{\left\{(0,h)\sim s(h),\, (1,h)\sim (1,i(h))\right\}} \qquad |G|=\frac{ (I\times H)\du V}{\left\{(0,h)\sim s(h) \ (t,h)\sim (1-t,i(h))\right\}}.\]
Both these definitions will be useful to us and we will use both.

We can extend the map $s$ and $i$ by the identity on $V$ to get
\[\tilde{i}:V\du H \map V\du H\qquad  \tilde{s} : V\du H \map V.\]
A \emph{morphism} $f:G_0\map G_1$ of graph is a map of sets
\[f : V_0\du H_0 \map V_1\du H_1\]
which commutes with the extended $\tilde{i}$ and $\tilde{s}$. We will add the requirement that $f$ is surjective and that it induces a homotopy equivalence between the geometric realization. At the combinatorial level, we are requiring that any half edge $h_1$ of $G_1$ is the image of exactly one half-edge of $G_0$. We also forces that for any vertex $v_1$ of $G_1$, the subgraph $f^{-1}(v_1)$ of $G_0$ is a tree, ie contains no loops.

A \emph{fat graph} $\G=(G,\sigma)$ is a graph $G$ with a permutation $\sigma$ on the set  of half-edges. We ask that the orbits of $\sigma$ are exactly the sets $s^{-1}(v)$. Hence $\sigma$ gives cyclic orderings of the half-edges starting at each vertex.

From a fat graph $\G$, we can construct a surface $\Sigma_\G$ as follows. For each vertex $v$ of $G$, we take a disk $D_v$ whose oriented boundary we identifies with
\[ \p D_v = \frac{s^{-1}(v)\times I}{\{(h,1)\sim (\sigma(h),0)\}}. \]
For each edge $e=\{h,\ol{h}\}$ of $G$ we take a strip
\[S_e= \frac{\{h,\ol{h}\}\times I}{\{(h,t)\sim (\ol{h},1-t)\}}\times \frac{\{h,\ol{h}\}\times I}{\{ (h,t)\sim (\ol{h},1-t)\}}\]
To construct the surface $\Sigma_\G$, we glue one side of this band to the disk of $s(h)$ 
 and the other side to the disk of $s(\ol{h})$ by identifying
\begin{eqnarray*}
(t,(0,h)) &\sim& (t,h)\in \p D_{s(h)}\\
(t,(0,\ol{h}))&\sim& (t,\ol{h})\in \p D_{s(\ol{h})}
\end{eqnarray*}
Using this last identification we define
\[\Sigma_\G =  \frac{\Du D_v \du \Du S_e}{\sim}.\]
The geometric realization of $\G$ sits inside $\Sigma_\G$ as a spine. The boundary of $\Sigma_\G$ is made up of intervals
\[\p_h = \{[(h,0),(h,t)] t\in I\} \subset S_e\]
which leads from $D_{s(h)}$ to $D_{s(\ol{h})}$. Because of this, we define the \emph{boundary cycles} of $\G$ to be the permutation
\[\omega = \sigma \cdot i.\]
Note that since $\sigma = \omega \cdot i$, the fat structure on $\G$ is completely determined by $(G,\om)$. 

Following Igusa, we let a \emph{morphism}
\[\varphi: \G_0=(G_0,\sigma_0)\map \G_1=(G_1,\sigma_1)\]
of fat graphs be a morphism of graphs $\varphi:G_0\smap G_1$ which preserves the boundary cycles.
More precisely, we ask that for any $h\in H_1$
\[\om_1(h_1) = \varphi\left(\om_1^k (h_0)\right)\]
where $h_0$ is the unique preimage of $h_1$ in $H_0$ and $k$ is the smallest positive integer so that $\om_1^k (h_0)$ is not collapsed. In other words, $\om_1$ is gotten by removing the collapsed elements from $\om_0$ and  by applying $\varphi$ to the remaining half-edges.

\begin{figure} \begin{center}
\mbox{\epsfig{file=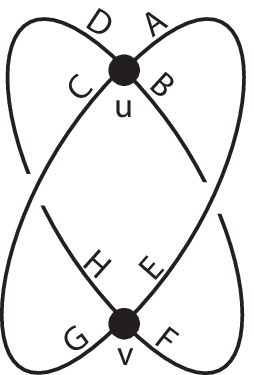, width=100pt}}
\mbox{\epsfig{file=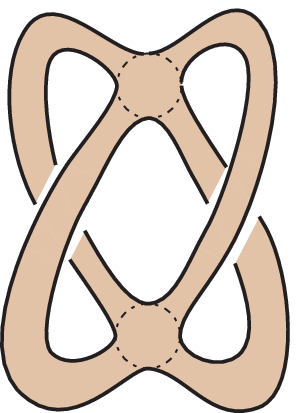, width=90pt}}
\caption{A fat graph $\G$ and its surface $\Sigma_\G$}\label{fig:fatgraph}
\end{center}
\end{figure}
\begin{example}
As in figure \ref{fig:fatgraph}, we consider the graph with two vertices $u$ and $v$ and whose half-edges are 
\[H=\{A, B,C,D,E,F,G,H,\}\]
The source map is 
\[s(A)= s(B)=s(C)=s(D)=u\qquad s(E)=s(F)=s(G)=s(H)=v\]
and the involution associates
\[i(A)= E\quad  i(B)=F \quad i(C)=G\quad i(D)=H.\]
We put the fat structure
\[\sigma=(ABCD)(EFGH)\]
 on $G$ which is given by the clockwise orientation on the plane. The boundary cycles of $\G$ are then
 \[\om= (AFCH)(BGDE)\]
Hence the surface $\Sigma_\G$, which is also pictured in ffigure \ref{fig:fatgraph}, has two boundary components.
Since $\Sigma_\G$ is homotopy equivalent to $\G$, its Euler characteristic is $-2$ and hence $\Sigma_\G$ is homeomorphic to a torus with two boundary components.
\end{example}

Igusa defined $\Fat$ to be the category whose objects are fat graphs $(\G,\om)$. We ask that every vertex of $\G$ has valence at least three, ie there are at least three half-edges with source $v$. The morphisms of $\Fat$ are the morphism of fat graphs. 

For any closed surface $S$ with marked points $p_1,\ldots p_k$, let the mapping class group of $S$ 
\[\Mod(S)= \pi_0\Diff(S; \{p_1,\ldots p_k\})\]
be the group of isotopy classes of diffeomorphisms of $S$ which fix the $p_i$'s.
\begin{theorem}[Harer, Strebel, Penner, Igusa]\label{thm:igusa}
The geometric realization of the category $\Fat$ is 
\[|\Fat| \simeq \Du_{[S]} B\Mod(S)\]
a disjoint union of classifying space of the mapping class groups of marked surfaces with positive Euler characteristic. Here the disjoint union is over all diffeomorphism type of surfaces $S$ of genus $g$ with $k\geq 1$ marked points with negative Euler characteristic.
\end{theorem}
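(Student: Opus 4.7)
The plan is to reduce Igusa's theorem to the classical theorem of Harer--Strebel--Penner identifying the decorated moduli space of Riemann surfaces with a space of metric fat graphs, and then to recognize the categorical nerve $|\Fat|$ as exactly such a space of metric fat graphs.

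First I would introduce, for each fat graph $\G$ with $n$ edges, the open simplex $\sigma_\G \subset \mathbb{R}_{>0}^{E(\G)}$ of edge-length assignments (say, of total length one). A morphism $\varphi:\G_0 \to \G_1$ in $\Fat$ collapses a subforest of $\G_0$, and therefore induces a face inclusion $\sigma_{\G_1} \hookrightarrow \overline{\sigma_{\G_0}}$ obtained by setting the lengths of the collapsed edges to zero. Gluing the $\sigma_\G$ along these face maps produces a ``space of metric fat graphs'' $M\Fat$. I would then check that this gluing construction is naturally homeomorphic to the geometric realization $|\Fat|$ of the nerve of $\Fat$: a $k$-simplex of $|\Fat|$ is a chain $\G_0 \to \cdots \to \G_k$ of collapses, which corresponds exactly to a stratum of $\overline{\sigma_{\G_0}}$ where one specifies which edges are to be collapsed first, second, and so on. This identification is formal once one verifies that the combinatorics of iterated collapses matches the combinatorics of simplicial subdivision.

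Next I would decompose $M\Fat$ into connected components indexed by the diffeomorphism type of the thickening, $M\Fat \cong \bigsqcup_{[S]} M\Fat^S$, where $M\Fat^S$ consists of metric fat graphs $\G$ with $\Sigma_\G \cong S$. The core of the proof is then Strebel's theorem together with Penner's interpretation: every Riemann surface structure on $S$ together with a choice of positive real ``perimeter'' at each marked point (one per boundary cycle) determines a unique Jenkins--Strebel quadratic differential whose horizontal foliation has only closed leaves circling the marked points, and the critical graph of this differential is a metric fat graph whose thickening is $S$. This produces a $\Mod(S)$-equivariant homeomorphism between the decorated Teichm\"uller space $\mathcal{T}(S) \times \mathbb{R}_{>0}^{k}$ and the universal cover of $M\Fat^S$, which descends to a homeomorphism
\[
M\Fat^S \;\cong\; \bigl(\mathcal{T}(S) \times \mathbb{R}_{>0}^{k}\bigr)/\Mod(S).
\]
One then projects away the $\mathbb{R}_{>0}^{k}$ factor of perimeters (which is contractible and on which $\Mod(S)$ acts trivially), obtaining $M\Fat^S \simeq \mathcal{T}(S)/\Mod(S)$.

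Finally, since $\mathcal{T}(S)$ is contractible and, for surfaces with $\chi(S) < 0$ and at least one marked point, the mapping class group $\Mod(S)$ acts properly discontinuously and freely on $\mathcal{T}(S)$, the quotient $\mathcal{T}(S)/\Mod(S)$ is a model for $B\Mod(S)$. Assembling over all diffeomorphism types yields the desired equivalence $|\Fat| \simeq \bigsqcup_{[S]} B\Mod(S)$. The main technical obstacle is the Strebel step: producing the equivariant homeomorphism between decorated Teichm\"uller space and the space of metric fat graphs requires genuine complex analytic input (existence and uniqueness of Jenkins--Strebel differentials with prescribed heights), and verifying that the piecewise-linear structure coming from $\sigma_\G$'s matches Penner's decorated cell decomposition is where almost all the work lies; the rest of the argument is essentially bookkeeping about nerves and quotients of contractible spaces.
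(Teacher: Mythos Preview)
The paper does not prove this theorem; it is quoted from Igusa \cite{IgusaReidemeister} (building on Harer, Strebel, Penner), so there is no in-paper argument to compare against. Your outline is the classical Strebel--Penner route, and most of it is sound, but there is one genuine gap.

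Your final step asserts that $\Mod(S)$ acts \emph{freely} on $\mathcal{T}(S)$. This is false in general: take $g=1$, $k=1$. Every elliptic curve has the involution $z\mapsto -z$ fixing the marked point, and the square and hexagonal lattices have larger automorphism groups; correspondingly $SL_2(\mathbb{Z})=\Mod(S_{1,1})$ acts on $\mathcal{T}(S_{1,1})\cong\mathbb{H}$ with torsion stabilizers. So $\mathcal{T}(S)/\Mod(S)$ is the \emph{coarse moduli space}, an orbifold, and is not a model for $B\Mod(S)$ integrally (only rationally). The same problem surfaces one step earlier: your identification of $|\Fat|$ with the space $M\Fat$ of metric fat graphs glosses over automorphisms of fat graphs. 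The nerve of $\Fat$ has nondegenerate $1$--simplices for every nontrivial automorphism $\G\to\G$, and these are exactly what make $|\Fat|$ the classifying space rather than the orbifold; gluing open simplices $\sigma_\G$ along collapse faces and then identifying isometric graphs kills that information and lands you in coarse moduli.

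The fix is to work equivariantly from the start, which is what the Remark immediately following the theorem is pointing at. Strebel's theorem gives a $\Mod(S)$--equivariant homeomorphism between decorated Teichm\"uller space and the space of \emph{marked} metric fat graphs $(\G,\lambda,[f:|\G|\hookrightarrow S])$; this is the realization $|E\Fat_{g,s}|$ and it is contractible. The forgetful functor $E\Fat_{g,s}\to\Fat_{g,s}$ realizes to a genuine covering map with deck group $\Mod(S)$ (automorphisms of $\G$ downstairs lift to morphisms between distinct marked objects upstairs, so there are no fixed points). Contractible total space plus free $\Mod(S)$--action then gives $|\Fat_{g,s}|\simeq B\Mod(S)$. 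Everything else in your sketch --- the bookkeeping about nerves and the Strebel analytic input --- is correct as stated.
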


\begin{remark}
We will let $\Fat_{g,s}$ be the subcategory which realizes to the connected component homotopy equivalent to $B\Mod(S)$ where $S$ is a connected surface of genus $g$ with $s$ marked points. 
The  universal cover of $|\Fat_{g,s}|$ is the geometric realization of the $E\Fat_{g,s}$ whose objects 
are pairs
\[\left(\G, [f:|\G|\map S]\right).\]
Here $[f]$ is a isotopy classes of embeddings that avoid the marked points. We also asked that $f$ be a homotopy equivalence and that each marked point be circled by its corresponding boundary cycle. See \cite{IgusaReidemeister, GodinGraphComplex} for more details.
\end{remark}

\subsection{Open-closed fat graphs.}

An \emph{open-closed cobordism} $S$ is a surface with boundary. The boundary of $S$ is divided into an incoming $\p_\win S$, an outgoing $\p_\out S$ and a free $\p_\free S$ parts. Hence, $S$ is a cobordism between the 1-manifolds with boundary $\p_\win S$ and $\p_\out S$. The remainder $\p_\free S$ is a cobordism between the boundary of $\p_\win S$ and $\p_\out S$.
The \emph{mapping class group} of such an $S$ is the group
\[\Mod(S) = \pi_0\Diff(S; \p_\win S\du \p_\out S)\]
of isotopy classes of diffeomorphisms of $S$ which fixes the incoming and the outgoing boundary components pointwise.

In this section we will decorate the fat graphs of $\Fat$ to construct a category $\Fat^\oc$ of open-closed fat graphs. The connected components of the geometric realization of $\Fat^{\oc}$ are homotopy equivalent to the classifying spaces of the different open-closed mapping class groups $\Mod(S)$. 

Let $G$ be a graph. A vertex $v$ of $G$ is called a \emph{leaf} if it has valence one, ie if there is a single edge attach to it. We let $V_l\subset V$ be the set of leaf of $G$.
An \emph{open-closed} fat graph is a quadruple
\[\left(\G,\Incoming,\Outgoing, \Closed\right)\]
where $\G=(G,\om)$ is a fat graph and 
\[\Incoming, \Outgoing, \Closed \subset V_l\]
are all subsets of the set of leaves in $G$. We require that 
$\Incoming$ and $\Outgoing$ be disjoint and ordered. We will call the elements of $\Incoming$ \emph{incoming} leaves  and  the elements of $\Outgoing$, \emph{outgoing} leaves.  We will call a leaf \emph{special} if it is either incoming or outgoing. We also require that the closed leaves all be special
\[\Closed\subset \Incoming \cup \Outgoing=:\Special\]
and that each leaf $v$ in $\Closed$ be the only special leaf in its boundary cycle.
The special leaves that are not closed are said to be \emph{open}
\[\Open=\Special\setminus \Closed.\]

This extra data on $\G$ gives $S= \Sigma_\G$ the structure of an open-closed cobordism as follows. We first divide the boundary into the three parts $\p_\win S$, $\p_\out S$ and $\p_\free S$. 
The incoming boundary is made up of the \emph{entire} boundary associated to a closed incoming leaf and a small part 
\[\Big\{[(0,h),(t,h)] \in \p_{h} \quad t\leq 1/2\quad s(h)=v \Big\}
\cup \Big\{[(0,h),(t,h)] \in \p_{h} \quad t\geq 1/2 \quad s(i(h))=v\Big\}\]
of the boundary around an open incoming leaf.

\begin{figure} \begin{center}
\mbox{\epsfig{file=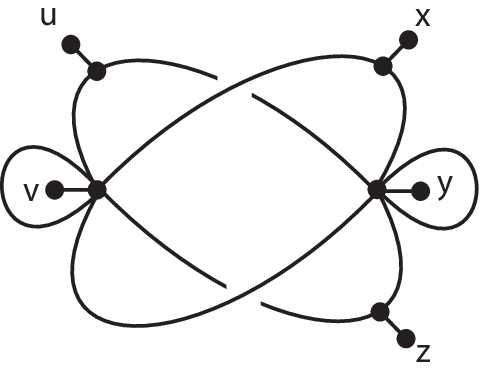, width=150pt}}\qquad
\mbox{\epsfig{file=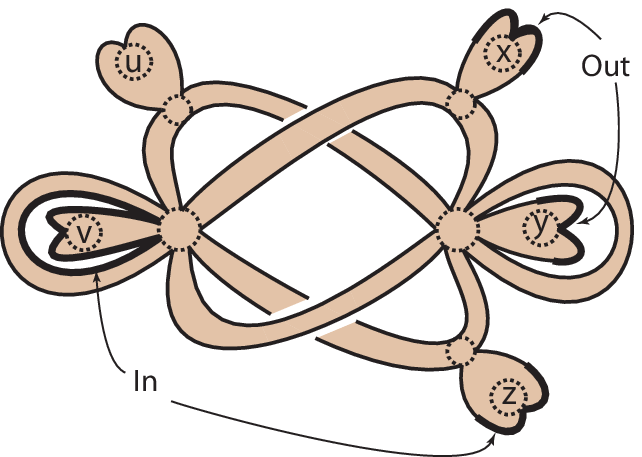, width=200pt}}
\caption{An open-closed fat graph and its associated open-closed cobordism.}\label{fig:openclosed}
\end{center}
\end{figure}
\begin{example}
The fat graph of figure \ref{fig:openclosed} has five leaves
\[V_l=\{u,v,x,y,z\}\]
and four boundary cycles $\om_u$, $\om_v$, $\om_{x,z}$, $\om_y$. 
We let
\[ \Incoming = \{v, z\} \qquad \Outgoing=\{x,y\}\qquad \Closed = \{v\} \qquad \Open=\{x,y,z\}.\]
The open-closed cobordism $\Sigma_\G$ is also illustrated in figure \ref{fig:openclosed}. The incoming and outgoing boundary are thickened while the free boundary is thin. In particular $\Sigma_\G$ is a cobordism between
\[I\du S^1\map I\du I.\]
\end{example}

A morphism of open-closed fat graph 
\[\varphi : \left(\G_0,\Incoming_0,\Outgoing_0,\Closed_0\right) \map \left(\G_1,\Incoming_1,\Outgoing_1,\Closed_1\right) \]
is a morphism of fat graphs $\varphi:\G_0\map \G_1$ which induces order-preserving isomorphism
\[\varphi_\win : \Incoming_0\maplu\cong \Incoming_1\qquad \varphi_\out:\Outgoing_0\maplu\cong \Outgoing_1
\qquad \varphi_\closed:\Closed_0\maplu\cong \Closed_1.\]
Let $\Fatoc$ be the category of open-closed fat graphs with these morphisms.

\begin{theorem} \label{thm:openclosed}
The geometric realization of the category $\Fatoc$ of open-closed
\[|\Fatoc| \simeq \Du_{[S]} B\Mod^{\oc}(S)\]
a disjoint union of classifying spaces of open-closed mapping class group. More precisely, the disjoint union is over all diffeomorphism of open-closed cobordism $[S]$ with ordered incoming and outgoing boundary components.
\end{theorem}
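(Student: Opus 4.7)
The plan is to adapt Igusa's proof of Theorem~\ref{thm:igusa} to the open-closed setting. Fix a diffeomorphism type $[S]$ of open-closed cobordism with ordered incoming and outgoing components, and let $\Fatoc_{[S]} \subset \Fatoc$ denote the full subcategory of open-closed fat graphs $(\G,\Incoming,\Outgoing,\Closed)$ whose associated surface $\Sigma_\G$ is diffeomorphic to $S$ by an orientation preserving homeomorphism respecting the open-closed decomposition of the boundary and the prescribed ordering. Since this combinatorial type is locally constant along the morphisms of $\Fatoc$, we have a decomposition $|\Fatoc| = \bigsqcup_{[S]} |\Fatoc_{[S]}|$, and it suffices to establish $|\Fatoc_{[S]}| \simeq B\Mod^{\oc}(S)$ for each $[S]$.

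Following Igusa's pattern, I would introduce an enriched category $E\Fatoc_{[S]}$ whose objects are pairs $(\G^{\oc}, [f])$ with $\G^{\oc}\in \Fatoc_{[S]}$ and $[f]$ an isotopy class of orientation preserving diffeomorphism $\Sigma_\G \to S$ that matches the incoming, outgoing, closed and free pieces of the boundary and preserves the given ordering; morphisms are morphisms of open-closed fat graphs compatible with $[f]$ up to isotopy. The group $\Mod^{\oc}(S)$ acts freely on $E\Fatoc_{[S]}$ by post-composing $f$ with a diffeomorphism of $S$, and the quotient category is precisely $\Fatoc_{[S]}$. Thus, once $|E\Fatoc_{[S]}|$ is shown to be contractible, a Borel construction argument yields
\[ |\Fatoc_{[S]}| \;\simeq\; |E\Fatoc_{[S]}| \big/ \Mod^{\oc}(S) \;\simeq\; B\Mod^{\oc}(S). \]

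To prove contractibility I would appeal to a Strebel/Penner-style spine decomposition. Equip $S$ with a complete hyperbolic structure having geodesic free boundary and a cusp at the centre of each special boundary component; a choice of positive weights on these cusps (admissible under the positive-boundary hypothesis) determines a horocyclic measured foliation whose dual ribbon graph carries a canonical open-closed fat graph structure in $\Fatoc_{[S]}$. As the metric and the weights vary, this decorates the space of weighted hyperbolic structures on $S$ as a $\Mod^{\oc}(S)$-equivariant cell complex whose face poset is the opposite of $E\Fatoc_{[S]}$, and whose underlying space is a product of decorated Teichm\"uller space with an open simplex of weights. Both factors are contractible, giving $|E\Fatoc_{[S]}| \simeq \star$ and completing the argument.

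The chief obstacle is this last step. One must verify that the Penner--Strebel construction is compatible with the open/closed labelling of leaves (closed leaves correspond to boundary cycles containing a single cusp, open leaves to cusps sharing a component of the free boundary), and that the cell incidence relations match the edge-collapse morphisms of $\Fatoc$. The ordering data is then automatic once it is fixed on the model surface. Assuming the undecorated analogue of Theorem~\ref{thm:igusa} is in hand, promoting it to the open-closed setting amounts to tracking these additional discrete labels through both the $\Mod^{\oc}(S)$-action and the spine construction; see \cite{IgusaReidemeister, GodinGraphComplex} for the template.
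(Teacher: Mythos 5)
Your proposal is a genuinely different route from the paper's. The paper does \emph{not} re-run the Strebel/Penner argument; it takes Igusa's theorem for punctured surfaces as given and builds two fibrations on top of it: a covering with fibre $\Sigma(\free)$ comparing $\Fat_S$ with $\Fat^{\free}_S$, and then a functor $\Psi:\Fat^{\oc}\to\Fat$ which, via Quillen's Theorem~B, realizes the fibration
\[
(S^1)^{\pi_0 \p_{\pecial} S}\;\map\; B\Mod^{\oc}(S)\;\map\; B\Mod(\tilde S),
\]
the fibre $(S^1)^{\pi_0\p_{\pecial}S}$ being exactly the categorical fibre $\Psi^{-1}(\tilG)$ of cyclic attaching positions for the special leaves. (There is also a preliminary reduction showing that allowing univalent and bivalent vertices does not change the homotopy type of the nerve.) Your plan instead tries to exhibit a contractible $\Mod^{\oc}(S)$-CW model $|E\Fatoc_{[S]}|$ directly from hyperbolic geometry.

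There is a genuine gap in the contractibility step, and it is precisely where the paper's fibration does the work you are skipping. The Penner--Strebel spine attached to a complete finite-area hyperbolic structure with cusps at the special boundaries is a ribbon graph \emph{with no distinguished leaves on its boundary cycles}: it is a cell in (decorated) Teichm\"uller space of the punctured surface $\tilde S$, equivariant for $\Mod(\tilde S)$, not for $\Mod^{\oc}(S)$. Your sentence ``the dual ribbon graph carries a canonical open-closed fat graph structure'' is where the argument breaks: the spine knows the boundary cycle of each cusp but has no canonical point on it at which to attach the incoming/outgoing/closed leaf. The two groups differ by the central $\bZ^{\pi_0\p_{\pecial}S}$ generated by Dehn twists about the special boundary components, and geometrically this difference is recorded by the choice of attaching position(s) along each boundary cycle, which is, up to homotopy, a circle per special boundary --- not ``an open simplex of weights.'' Adding weights on the cusps (your extra simplex factor) only re-encodes the decoration already present in decorated Teichm\"uller space; it does not see the Dehn twists. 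To make your route go through you would have to enlarge the contractible space by a contractible torsor over $(S^1)^{\pi_0\p_{\pecial}S}$ encoding leaf positions (equivalently, lift the hyperbolic picture from $\tilde S$ to the parametrized-boundary surface $S$), and verify that the resulting cell poset is the nerve of $E\Fatoc_{[S]}$; that extra content is exactly what the paper extracts by computing $\Psi^{-1}(\tilG)\simeq(S^1)^{\pi_0\p_{\pecial}S}$ and invoking Quillen's Theorem~B.
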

\begin{proof}
First we claim that allowing for univalent and bivalent vertices in Igusa's category does not change the homotopy type of the geometric realization. Consider the categories $\Fat^{1,2}$ and $\Fat^{2}$ whose objects are respectively any fat graph and fat graphs with no univalent vertices. 

Lets first show that removing univalent vertices does not change the homotopy type. There are two functors
\[\Fat^{1,2}\maplu{\xi_1} \Fat^{2} \maplu{\xi_2} \Fat^{1,2}.\]
The functor $\xi_2$ is the natural inclusion.
The functor $\xi_1$ removes all univalent vertices and the unique edge leading to it. And repeats this process until there are no univalent vertices. The composition $\xi_1\cdot \xi_2$ is the identity. There is a natural transformation $\theta$ between the composition the identity on $\Fat^{1,2}$ and $\xi_2\cdot \xi_1$ by collapsing all the edges that $\xi_2\cdot \xi_1$ has removed. 

Lets now shoe that we can remove bivalent vertices as well.There are also two functors
\[\Fat^{2}\maplu{\xi_3}\Fat\maplu{\xi_4} \Fat^2.\]
The functor $\xi_4$ is the natural inclusion. The functor $\xi_3$ takes a fat graph $\G=(G,\sigma=(\sigma_v))$ to the fat graph $\G^{\word{united}}$ obtained from removing all bivalent vertices $v$ and joining the two edges attaching at $v$ into a single edge. Again the composition $\xi_3\cdot \xi_4$ is the identity. It therefore suffices to show that $\zeta= \xi_3$ is a homotopy equivalence.

To show this, we will prove that the geometric realization of the over category $\zeta/\G$ is contractible for each $\G$ in $\Fat$. The objects of $\zeta/\G$ are pairs $(\tilG,\varphi)$ where $\tilG$ is a fat graph with no leaves (but with possibly bivalent vertices) and where
\[\varphi : \zeta(\tilG)\map \G\]
is a morphism of fat graph between two fat graphs with no univalent or bivalent vertices. The morphism 
\[\tilvarphi : (\tilG_0,\varphi_0)\map (\tilG_1,\varphi_1)\]
of $\zeta/\G$ are morphism 
\[\tilvarphi : \tilG_0\map \tilG_1\]
of fat graphs that make the following diagram commute.
\[\xymatrix{
\zeta(\tilG_0)\ar[d]_{\zeta(\tilvarphi)}\ar[r]^{\varphi_0}& \G\\
\zeta(\tilG_1)\ar[ru]_{\varphi_1}.
}\]
Let $\cC_\G$ be the full subcategory of $\zeta/\G$
whose objects are the pairs $(\tilG, \varphi)$ where $\varphi$ is an isomorphism. We claim that $\cC_\G$ is a deformation retract of $\zeta/\G$. Note that the objects of the category $\cC_\G$ give a subdivision of $\G$ by adding bivalent vertices. Let $F$ be the functor 
\[F : \zeta/\G \map \cC_\G\]
which takes an object $(\tilG,\varphi : \zeta(\tilG)\map \G)$ to the fat graph above $\G$ obtained by collapsing every subdivision of every edge collapsed by $\varphi$. This gives both the functor and the natural transformation back to the identity. 

It now suffices to show that for any $\G$, the two functors
\[\xymatrix{
\cC_\G \ar@<-3pt>[r] _{Id}\ar@<3pt>[r]^{cst} & \cC_\G
}\]
where $cst$ is the constant functor which sends anything to $(\G,Id)$ induces homotopy equivalent maps on the realization. To do so, we first pick arbitrary orientations of every edge of $\G$. Following an argument of Igusa \cite[Chapter 8]{IgusaReidemeister}, we then construct an intermediary functor
\[F: \cC_\G\map \cC_\G\]
which sends $(\tilG,\varphi)$ to the fat graph obtained by adding a new edge and a new bivalent vertex at the beginning of every edge of $\G$. (Here we are using the arbitrary choice of an orientation.) We have natural transformation
\[\xymatrix{
Id &\ar[l]_{\nu_1} F \ar[r]^{\nu_2} & cst.
}\]
Here $\nu_1$ collapses the newly added edges while $\nu_2$ collapses every other edge. This shows that adding bivalent and univalent vertices does not change the homotopy type of the geometric realization.

The rest of the proof is a slight variant from \cite{GodinGraphComplex}. Fix an open-closed cobordism $S$. Denote by $\tilde{S}$ the marked surface obtained from $S$ by collapsing each boundary component to a marked point. The maked points are divided into $\marked_{\pecial}$ and $\marked_{\free}$ depending on whether their corresponding boundary contains an incoming or outgoing boundary or whether it does not. (Note that the \emph{entire} boundary must be free for a marked point to be considered free.)

Recall that $\Mod(\tilde{S})$ is the mapping class group of diffeomorphisms which preserve each marked point. Consider the mapping class groups $\Mod^{\free}(\tilde{S})$
associated to the diffeomorphisms which preserves $\free$ as a set and fixes every special marked point. We have a short exact sequence of groups
\[ 0\map \Mod(\tilde{S}) \map \Mod^{\free}(\tilde{S})\map \Sigma(\free) \map 0\]
where $\Sigma(A)$ is the group of permutations on the set $A$. On the classifying spaces, we get a homotopy fibration
\[\xymatrix{
\Sigma(\free) \ar[r]& B\Mod(\tilde S)\ar[r]  &B\Mod^\free(\tilde{S}) 
}\]
Fix an ordering on the marked points of $\tilde{S}$ so that the special ones $p_1,\ldots p_k$ come first. 
Consider the category $\Fat^{\free}_S$ whose objects are fat graphs with a splitting of the boundary cycles into special ones and free ones. We give the special boundary cycles an ordering which is preserved by the morphisms of $\Fat^{\free}_S$. We only take fat graphs so that the surface $\Sigma_\G$ is homeomorphic to S as surfaces and that the number of special boundary cycles in $\G$ is the number of special mark points in $\tilde{S}$. Note that the morphism of $\Fat^{\free}_S$ are allowed to interchange the boundary cycles of $\free$. Hence there is a functor
\[\xi\ :\  \Fat_S \map \Fat^{\free}_S\]
which sends the object $(\G, \om_1,\om_2\ldots \om_n)$ to the same fat graph $\G$ with special cycles 
$\om_1\, \ldots \om_k$ ordered as before. The fiber above each $\G$ is simply a choice of ordering on the remaining boundary cycles and hence is (not naturally)  isomorphic to $\Sigma(\free)$. Any morphism of $\Fat^{\free}_S$ lifts to a bijection between the two corresponding fibers. Hence we get a covering space
\[\xymatrix{
\Sigma(\free)\ar[r]&|\Fat_S|\ar[r]&|\Fat^{\free}_S|.
}\]
Since the two fibrations lift to the universal cover in exactly the same way we get that
\[|\Fat^{\free}_S| \simeq B\Mod^{\free}(\tilde{S}).\]

Now every diffeomorphism of $S$ induces one on $\tilde{S}$ which gives a  group homomorphism
\[\Mod^{\oc}(S)\map \Mod^{\free}(\tilde{S}).\]
Since all these mapping class groups are generated by Dehn twists, this homomorphism is surjective. Its kernel is
generated by Dehn twists around the boundary components. This gives a central extension
\[0\map \bZ^{\pi_0(\p_\pecial S)}\map \Mod^{\oc}(S)\map \Mod(\tilde{S})\map 0.\]
On the classifying spaces, we get a fibration
\[\xymatrix{
 (S^1)^{\pi_0(\p_\pecial S)}\cong B\bZ^{\pi_0(\p_\pecial S)} \ar[r] &B\Mod^\oc(S)\ar[r]&B\Mod(\tilde{S})
}\]
We shall define a functor
\[\Psi : \Fat^{\oc} \map \Fat\]
which will realize this fibration on the categories.

First fix an open-closed cobordism $S$. Pick an ordering on the components $\pi_0\p_\win S$ of the incoming boundary, the components $\pi_0\p_\win S$ and an ordering on the special boundary components $\pi_0\p_\pecial S$ of $S$. 
Let $\Fat^{\oc}_S$ be the category of fat graphs $\G^{\oc}= (\G,\In,\Out,\Closed)$ whose associated open-closed cobordism $\S_{\G^\oc}$ is homeomorphic to $S$ as open-closed cobordism.

Take any open-closed fat graphs $(\G,\In,\Out,\Closed)$ in $\Fat^{\oc}_S$. Since $\In$ is ordered, there is a natural bijection
\[\In\cong \pi_0\p_\win S\]
and similarly, we get
\[\Out\cong \pi_0 \p_\out S.\]
In particular this fixes a bijection between the special boundary components of $S$ and the ones of $\Sigma_\G$. Using the ordering on $\pi_0\p_\pecial S$, we get an ordering of the special boundary cycles of $\G$.

Define $\Psi(\G^{\oc})$ to be the fat graph $\tilG$ obtained from $\G$ by removing all the special leaves in $\In \cup \Out$ and the unique edge leading to them. If that special edge was attached to a trivalent vertex, we remove this (now bivalent) vertex. The special boundary cycles of $\tilG$ are the ones corresponding to special boundary  cycles in $\G$ and these are ordered.

Fix a fat graph $\tilG$ in $\Fat_S$. Lets study the category $\Psi/\tilG$. An object of $\Psi/\tilG$ is a pair 
\[\left(\G^{\oc}, \varphi:\Psi(\G^\oc)\map \tilG \right).\]
As before, we define a functor
\[ J : \Psi/\tilG \map \Psi^{-1}(\tilG)\]
by sending $(\G^{\oc}, \varphi)$ to the open-closed fat graph obtained by collapsing in $\G^{\oc}$ the edges collapsed in $\Psi(\G^{\oc})$ by $\varphi$. It is easy to show that $J$ is adjoint to the inclusion
\[I: \Psi^{-1}(\tilG)\map \Psi/\tilG\]
and in particular, to use Quillen's theorem B, it suffices to show that any morphism
\[\psi : \tilG_1\map \tilG_2\]
of $\Fat_{S}$ induces an homotopy equivalence
\[\psi^* :|\Psi^{-1}(\tilG_1) |\map |\Psi^{-1}(\tilG_2)|.\]
To do this we will show that for any $\tilG$  in $\Fat_S$
\[|\Psi^{-1}(\tilG)| \simeq \left(S^1\right)^{\p_\pecial S}.\]
Since the special boundary cycle of $\tilG$ are ordered, they correspond to special boundary component of $S$. To get an object in $\Psi^{-1}(\tilG)$, we need to attach leaves on each of these boundary cycles one for each incoming or outgoing parts. For example,  if the \ith boundary component of $S$ has four special intervals on it which alternate between incoming and outgoing as someone walks along the boundary. Say that these intervals are number 2nd, 2nd, 3rd, 3rd (recall that incoming and outgoing are ordered separately). Then we need to pick along the \ith boundary of $\tilG$ four attaching points for the leaves 
\[l_2^\win \quad l_2^\out \quad l_3^{\win} \quad l_3^\out.\]
This corresponds to choosing fourf cyclically ordered points on the circle of $\om_i$. 
This choice is therefore, up to homotopy, a circle. 
\end{proof}

\subsection{Admissible fat graphs}

Following the definition of the Sullivan chord diagrams in \cite{CohenGodin},  we now restrict the shape of the graph underlying an open-closed fat graph. These \emph{admissible} fat graphs will be used to define the higher genus string operations in the following sections. Note that this model is quite similar to the model used in \cite{CostelloCalabiYau}. 

\begin{definition}
An open-closed fat graph 
\[\G^{\oc}= (\G, \In, \Out,\Closed) \]
is \textbf{admissible} if its incoming boundary cycles are embedded in $\G$. 
\end{definition}

More precisely each incoming leaf $v$ in $\In\ \cap\ \Closed$, which represent a closed incoming boundary component of $\Sigma_\G$, correspond to some boundary cycle of $\G$ and hence to a map
\[f_v : S^1 \map |\G|.\]
We get a map
\[\xymatrix{
\DU{v\in \In\setminus \Closed} \{v\} \du \DU{v\in \In\cap \Closed} S^1\ar[rrr]^-{\Du Id \du \Du f_v}&&&|\G|.
}\]
The open-closed fat graph $\G^{\oc}$ is admissible if and only if this map is an inclusion.

\begin{figure} \begin{center}
\mbox{\epsfig{file=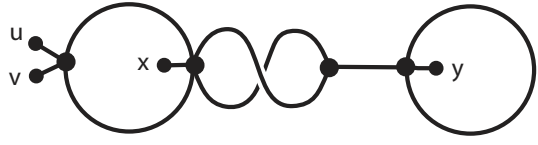, width=200pt}}
\caption{A fat graph}\label{fig:admissible}
\end{center}
\end{figure}

\begin{example}
Let $\G$ be the fat graph of figure \ref{fig:admissible}. An open-closed fat graph $\G^\oc=(\G,\In,\Out,\Closed)$ is admissible if and only if
\[\In\cap \Closed \subset \{x,y\}.\]
\end{example}

Let $\aFat$ be the full subcategory of $\Fatoc$ whose objects are all admissible fat graphs. For any open-closed cobordism $S$, we denote by 
\[\aFat_S = \aFat\cap \Fatoc_S\]
the subcategory of $\aFat$ whose open-closed fat graph $\G^{\oc}$ have a surface $\Sigma_{\G^{\oc}}$ homeomorphic to $S$ as an open-closed cobordism. 

\begin{theorem} 
For any open-closed cobordism $S$ whose boundary is not completely incoming, the inclusion
\[\aFat_S \map \Fatoc_S\]
is a homotopy equivalence.
\end{theorem}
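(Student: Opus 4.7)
The plan is to apply Quillen's Theorem A to the inclusion functor $i : \aFat_S \to \Fatoc_S$. It suffices to show that for every object $\G^{\oc} \in \Fatoc_S$, the comma category $i/\G^{\oc}$ --- whose objects are pairs $(\G^{\oc}_0, \varphi_0)$ with $\G^{\oc}_0 \in \aFat_S$ and $\varphi_0 : \G^{\oc}_0 \to \G^{\oc}$ a morphism of open-closed fat graphs --- has contractible geometric realization. My strategy is to exhibit a terminal object of this comma category; when $\G^{\oc}$ is already admissible, $(\G^{\oc}, \mathrm{id})$ is itself manifestly terminal, so the interesting case is the non-admissible one.

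First I would construct a canonical \emph{minimal admissification} $A(\G^{\oc}) \in \aFat_S$ together with a collapse morphism $\alpha_{\G^{\oc}} : A(\G^{\oc}) \to \G^{\oc}$. Given $\G^{\oc}$, I identify each failure of admissibility of the incoming boundary cycles: a half-edge visited more than once by a single incoming cycle, a half-edge visited by two distinct incoming cycles, and the corresponding coincidences at vertices. Each such coincidence is resolved by a local blow-up --- the insertion of one or more new edges whose free ends are rooted in the outgoing or free portion of $\G^{\oc}$. The positive-boundary hypothesis is used exactly here: it guarantees that each connected component of $\G^{\oc}$ contains some non-incoming leaf or boundary cycle where the new edges can be anchored, and that the resulting surface $\Sigma_{A(\G^{\oc})}$ remains homeomorphic to $S$ as an open-closed cobordism. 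Collapsing the inserted edges recovers $\G^{\oc}$, defining $\alpha_{\G^{\oc}}$.

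Next I would verify that $(A(\G^{\oc}), \alpha_{\G^{\oc}})$ is terminal in $i/\G^{\oc}$. For any $(\G^{\oc}_0, \varphi_0) \in i/\G^{\oc}$, the admissibility of $\G^{\oc}_0$ forces each non-embedded feature of $\G^{\oc}$ to be lifted through $\varphi_0$ into at least as much graph structure as appears in $A(\G^{\oc})$; thus $\varphi_0$ must factor as $\alpha_{\G^{\oc}} \circ \psi$ for a unique $\psi : \G^{\oc}_0 \to A(\G^{\oc})$ that collapses precisely the edges of $\G^{\oc}_0$ which sit above the original edges of $\G^{\oc}$ rather than above the newly-inserted blow-up edges. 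Existence of a terminal object makes $|i/\G^{\oc}|$ contractible, and Quillen's Theorem A then yields that $i$ induces a homotopy equivalence on geometric realizations.

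The main obstacle will be setting up the admissification construction carefully enough for the terminality argument to go through. One has to uniformly handle several combinatorial failure modes --- a single edge traversed twice in the same orientation by one cycle, an edge traversed once in each direction, an edge shared between two distinct incoming cycles, and iterated coincidences at vertices of higher valence --- and verify in each case that the local blow-up can be performed without altering the open-closed cobordism type, that the prescribed attachment to the outgoing or free boundary is available (this is precisely where positive boundary enters), and that the construction is canonical enough to make the factorization $\varphi_0 = \alpha_{\G^{\oc}} \circ \psi$ unique. Once this admissification is in place, the rest of the argument is formal.
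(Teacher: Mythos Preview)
Your strategy has a genuine gap: the terminal object you are hoping for in $i/\G^{\oc}$ does not exist in general, and your description of the admissification is incompatible with how morphisms in $\Fat$ work.

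A morphism $\varphi_0:\G_0^{\oc}\to\G^{\oc}$ collapses a forest; equivalently, $\G_0^{\oc}$ is obtained from $\G^{\oc}$ by blowing up certain vertices into trees. There is no mechanism for ``inserting new edges whose free ends are rooted in the outgoing or free portion'' --- any such edge would change the Euler characteristic and hence the cobordism type. So your admissification must be a purely local blow-up at the offending vertices, and the positive-boundary hypothesis cannot enter in the way you describe. More seriously, these local blow-ups are not unique even at the minimal level. Already for a valence-four vertex with cyclic order $(a,b,c,d)$ there are two fat blow-ups into a single edge (putting $\{a,b\},\{c,d\}$ at the two ends, or $\{b,c\},\{d,a\}$), and neither collapses to the other. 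Thus $i/\G^{\oc}$ typically has several incomparable minimal objects and no terminal one; the dual attempt via an initial object fails for the same reason, since trivalent tree blow-ups of a high-valence vertex come in many incomparable shapes.

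The paper avoids this by changing the functor. Rather than comparing $\aFat_S$ to $\Fatoc_S$ directly, it passes through categories $\cF^p_{q,g}$ of fat graphs with $p$ marked vertices and shows that the functor $\Psi:\cF^a_{p+q,g}\to\cF^p_{q,g}$ which collapses each incoming circle to a marked point has contractible fibres. The fibre over a graph with marked vertex $v$ is the category $\cC(s^{-1}(v))$ of oriented circle graphs with leaves cyclically labelled by the half-edges at $v$; because the underlying shape is now a \emph{circle} rather than an arbitrary tree, this category does have an initial object (every non-leaf vertex trivalent), hence is contractible. The positive-boundary hypothesis is used only to guarantee $q\geq 1$, so that after collapsing the incoming circles one is still in the range where the fat-graph model computes $B\Mod$. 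If you want to rescue a Quillen-A argument, you would need to show $|i/\G^{\oc}|$ is contractible by some means other than exhibiting a terminal object --- for instance by relating it to a product of categories of the $\cC(A)$ type --- but at that point you are essentially reproducing the paper's argument.
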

\begin{proof}
It suffices to show this result for connected surfaces. So let $S$ be a connected open-closed cobordism. 
Say the surfaces $S$ has $p$ closed incoming boundary components, $q$ other boundary components and genus $g$. By the assumption, $q$ is at least one.

We will first prove a similar statement for the mapping class group of the surface $S^\prime$ obtained from $S$ by collapsing each boundary component to a marked point. 

As before let $\Fat^2_{q,g}$ be the category of fat graphs $\G$ with $q$ ordered boundary cycles and whose surface $\Sigma_\G$ has genus $g$. As in the previous section, we allow bivalent and univalent vertices. We already know that
\[|\Fat^2_{q,g}| \simeq  B\Mod(S_{q,g})\]
where $S_{q,g}$ is a connected surface with $q$ marked points and with genus $g$.
Lets also consider the category $\cF^p_{q,g}$ of pairs $(\G,\{v_1,\ldots v_p\})$ where $\G$ is a fat graph in $\Fat_{q,g}$ and where 
\[\{v_1,\ldots, v_p\}\subset V\]
is an ordered set of $p$ distinct vertices in $\G$. The morphisms of $\cF^p_{q,g}$ are morphisms of fat graphs which sends each marked vertex to the corresponding marked vertex.

A diffeomorphisms of $S_{p+q,g}$ which fixes $p+q$ marked points does fix $q$ points. We therefore get a short exact sequence
\[\Diff(S_{p+q,g})\map \Diff(S_{q,g}) \maplu{ev_p} C_p(S_{q,g})\]
where $C_p(S_{q,g})$ is the space of $p$ distinct and ordered points on $S$. Since $q>0$, we have that the connected components of $\Diff(S_{p+q,g})$ are trivial and hence
\[\Diff(S_{p+q,g})\simeq \Mod(S_{p+q,g}).\]
We therefore get a fibration
\[\xymatrix{
\C_p(S_{q,g}) \ar[r]& B\Mod(S_{p+q,g}) \ar[r]& B\Mod(S_{q,g})}\]

Consider the functor
\[\xi : \cF^p_{q,g} \map \Fat_{q,g}\]
which forgets the $p$ marked vertices. It also removes any special vertex that was a leaf and the unique edge attached to it. Fix a fat graph $\G$ in $\Fat_{q,g}$. As before we can argue that the homotopy fibre of the geometric realization of $\xi$ is the geometric realization of the category $\xi^{-1}(\G)$. Now an object in this category corresponds to a choice of $p$ different and ordered marked vertices. However this vertices may be on the graph $\G$ itself or on a leaf attached to it. We are therefore picking $p$ points around the graph. Since a neighborhood of the graph is homeomorphic to $\Sigma_\G$, we get that 
\[|\xi^{-1}(\G)| \cong C_p(S_{q,g}).\]
Since both fibration have the same holonomy, we get that
\[|\cF^p_{q,g}| \simeq B\Mod(S_{p+q,g}).\]

Now consider the category $\cF^a_{p+q,g}$ of fat graph $\G$ in $\Fat_{p+q,g}$ so that the first $p$ boundary components are disjoint circles. There is a functor
\[\Psi: \cF^a_{p+q,g} \map \cF^p_{q,g}\]
which transforms the $p$ circles into $p$ marked vertices. We claim that $\Psi$ induces an homotopy equivalence on the geometric realization. Fix a fat graph $\G$ with $p$ marked points. Again, the homotopy fibre of $\Psi$ above $\G$ is simply the geometric realization of the category $\Psi^{-1}(\G)$.

\begin{figure} \begin{center}
\mbox{\epsfig{file=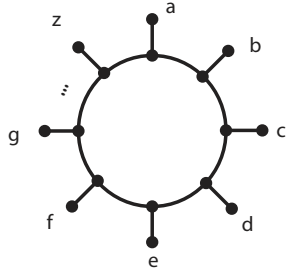, width=105pt}}
\caption{The universal element of $\cC((a,b,c,d,e,f,g \ldots z))$}\label{fig:universal}
\end{center}
\end{figure}
An object in $\Psi^{-1}(\G)$ is an opening of each marked vertex into a circle. Hence for each of these marked vertices $v$, we only need to pick a circle and where each half-edge adjacent to $v$ lands. These choices are independent of each other hence
\[\Psi^{-1}(\G) \cong \prod_{v \in\Marked} \cC(s^{-1}(v)) \]
is a product of $p$ categories. For any cyclically ordered set $A$, we let $\cC(A)$ be the category of graphs $G$ which are oriented circle with leaves cyclically labeled by $A$.
Using the argument of theorem \ref{thm:openclosed}, we can show that the subcategory $\cC(A)_{\geq 3}$ where each non-leaf vertex has at least valence three is a deformation retract. Now this subcategory has an initial object, namely the graph where each vertex has exactly order 3. This graph is shown in figure... In particular 
\[|\cC(A)| \simeq |\cC(A)_{\geq 3}| \simeq *\]
are both contractible which means that so is $\Psi^{-1}(\G)$. Hence $\Psi$ induces a homotopy equivalence and 
\[|\cF^{a}_{p+q,g} | \simeq B\Mod(S_{p+q,g}).\]

Now we can reapply the whole argument of theorem \ref{thm:openclosed} with the category $\cF^a_{p+q,g}$ replacing $\Fat$. This gives the desired result.
\end{proof}

\subsection{Gluing fat graphs}

\label{sub:gluing graphs}
We now will show how to see the gluing of surfaces at the category level. Say we have two open-closed cobordisms $S_1$ and $S_2$ and say we have an identification $\p_\out S_1$ and $\p_\win S_2$. We would like to have a functor
\[ \aFat_{S_1} \times \aFat_{S_2} \map \aFat_{S_1\# S_2}\]
which would model the map
\begin{equation}\label{gluing}
B\Mod^\cB(S_1)\times B\Mod^{\cB}(S_2)\map B\Mod^{\cB}(S_1\# S_2).\end{equation}
However to glue two fat graphs, we would need to subdivide the corresponding boundary cycles to a matching size and then identify them. For general fat graphs, such a subdivision might be infinite. For admissible fat graphs, they are finite and come in a contractible family, but there is no natural one. 

A simple way to remedy this problem is to build a gluing that is only partially defined. More precisely, we will construct a subcategory
\[\cG_{S_1,S_2} \subset \aFat_{S_1} \times \aFat_{S_2}\]
of gluable fat graphs.  
On the geometric realization, we get a homotopy equivalence
\[|\cG_{S_1,S_2}| \subset |\aFat_{S_1}| \times |\aFat_{S_2}|.\]
We also define a gluing functor
\[\cG_{S_1,S_2} \map \aFat_{S_1\# S_2}.\]
which models \eqref{gluing}.

The objects of $\cG$ are pairs $(\G,\tilG)$ of admissible fat graphs in $\aFat_{S_1}\times \aFat_{S_2}$ so that for each closed outgoing boundary of $S_1$, the corresponding boundary cycles of $\G$ and $\tilG$ have  \emph{exactly} the same number of edges. We want to think of these boundary cycles as glued. In particular if
\[\om_i^{\out} = (L\ol{L} A_1\ldots A_k) \qquad \tilom_i^{\in}=(L\ol{L}B_1\ldots B_k)\]
then our morphisms will act as if $B_i$ and $\ol{A}_{k-i}$ are already identified. In particular, the morphisms of $\cG$ are morphisms 
\[(\varphi,\tilvarphi):(\G_1,\tilG_1)\map (\G_2,\tilG_2)\]
which  collapse $B_i$ if only if it also collapse $\ol{A}_{k-i}$

We define a functor
\[\#: \cG_{S_1,S_2} \map \aFat_{S_1\# S_2}\]
by gluing the admissible fat graphs $(\G,\tilG)$ along their identified edge. To make this definition more precise, let $eV_{\tilG}$, $eE_{\tilG}$ and $eH_{\tilG}$ be respectively the set of all vertices, edges and half-edges of $\tilG$ which are not part of an incoming boundary cycles. The admissible fat graph $\G\#\tilG$ will have vertices, edges and half-edges as follows.
\begin{eqnarray*}
V_\# &=& V_\G \du eV_{\tilG}\\
E_\# &=& E_\G \du eE_{\tilG}\\
H_\#&=& H_\G\du eH_{\tilG}.
\end{eqnarray*}
If $V^\win_\tilG$ denotes the set of vertices which are in fact part of an incoming boundary cycles, we have a map
\[V^\win_\tilG \map V\]
which sends an incoming vertex $s(B_i)$ of $\tilG$ to the vertex  $s(\til{A}_{k-i})$.
The admissible fat graph structure on $\G\#\tilG$ is determined by the fact that it's incoming boundary cycles are simply the incoming boundary cycles of $\G$ and it's other boundary cycles are obtained from the other boundary cycles of $\tilG$ by replacing the $\ol{B}_i$ by $A_{k-i}$.

\begin{lemma}
The inclusion 
\[\cG_{S_1,S_2}\map \aFat_{S_1}\times \aFat_{S_2}\]
induces a homotopy equivalence and the functor $\#$ realizes to the gluing of surfaces.
\end{lemma}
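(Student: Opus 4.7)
The strategy splits along the two claims. For the homotopy equivalence of the inclusion I plan to use Quillen's Theorem A; for the realization of gluing I plan to exploit Theorem \ref{thm:openclosed} (adapted to the admissible category via the previous theorem) to reduce to a direct comparison with the surface-gluing homomorphism $\Mod(S_1)\times \Mod(S_2)\to \Mod(S_1\#S_2)$.

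For the inclusion, I would apply Quillen's Theorem A to the inclusion functor $\iota:\cG_{S_1,S_2}\hookrightarrow \aFat_{S_1}\times \aFat_{S_2}$. Fix an object $(\G_0,\tilG_0)$ in the product. The comma category $\iota/(\G_0,\tilG_0)$ has as objects triples $\big((\G,\tilG),\,\varphi,\,\tilde\varphi\big)$ where $(\G,\tilG)\in \cG_{S_1,S_2}$ and $(\varphi,\tilde\varphi)$ is a morphism in $\aFat_{S_1}\times \aFat_{S_2}$. I want to show this category is contractible. The key observation is that, given the $i$\textsuperscript{th} closed outgoing boundary cycle of $\G_0$ with $a_i$ edges and the matching incoming boundary cycle of $\tilG_0$ with $b_i$ edges, an object of $\iota/(\G_0,\tilG_0)$ amounts to the choice of (a) a common refinement size $n_i\geq \max(a_i,b_i)$ and a placement of $n_i-a_i$ new bivalent vertices on the $\G_0$-side and $n_i-b_i$ on the $\tilG_0$-side, together with (b) arbitrary further collapsings in the interior that are compatible with the matching rule. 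The placements of subdivision points on a single boundary cycle form a contractible space (a filtered poset of cyclic configurations), and the remaining interior data factors trivially through the canonical retraction onto the pure-subdivision subcategory. One then defines a deformation retract of $\iota/(\G_0,\tilG_0)$ onto the subcategory of pure matched subdivisions, whose nerve is a product over the matched boundary cycles of contractible subdivision posets; the product is contractible, proving the claim.

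For the gluing functor, I would argue as follows. Both sides have their geometric realization identified with a disjoint union of classifying spaces by the previous theorem (combined with the passage from $\Fatoc$ to $\aFat$ already established). By construction of the surface $\Sigma_{\G\#\tilG}$ from the quadruple $(V_\#,E_\#,H_\#,\omega_\#)$, one verifies directly that $\Sigma_{\G\#\tilG}\cong \Sigma_\G\#\Sigma_{\tilG}$ as open-closed cobordisms. This identification is natural in morphisms of $\cG_{S_1,S_2}$, so on $\pi_1$ of the geometric realization the functor $\#$ induces, up to conjugation, the standard inclusion $\Mod^{\oc}(S_1)\times \Mod^{\oc}(S_2)\to \Mod^{\oc}(S_1\#S_2)$ coming from extending diffeomorphisms by the identity across the glued boundary. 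To promote this to a statement about maps of classifying spaces, I would work with the universal-cover categories $E\aFat$ in the spirit of Igusa's model: a point in the universal cover is parameterized by an isotopy class of marked embeddings $|\G|\hookrightarrow S$ (resp.\ $|\tilG|\hookrightarrow S_2$), and gluing at the level of marked embeddings manifestly realizes the gluing of surfaces.

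The main obstacle will be the contractibility of $\iota/(\G_0,\tilG_0)$. The combinatorial bookkeeping is delicate because the morphisms in $\aFat$ allow collapsing both subdivision edges and other edges, so the comma category is strictly larger than a category of pure subdivisions; one must carefully produce a deformation retraction that respects the matching constraint on morphisms in $\cG_{S_1,S_2}$. A clean route is to filter objects by the total number of subdivision vertices added and show that each filtration layer deformation-retracts onto the next via an explicit natural transformation collapsing a chosen ``extra'' subdivision point, after an arbitrary but compatible ordering of subdivision vertices on each boundary cycle has been fixed—this is the admissible-fat-graph analogue of Igusa's trick used in the proof of Theorem \ref{thm:openclosed}.
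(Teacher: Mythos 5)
Your overall strategy (Quillen's Theorem A applied to the inclusion, and contractibility of a comma category) is in the same spirit as the paper, but you are missing the tool that the paper actually needs to close the argument, and the step you describe as ``a clean route'' is precisely the step the paper flags as intractable at the categorical level.

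The paper does not work with the comma category of the inclusion directly. Instead it introduces the functor $\Psi:\cG_{S_1,S_2}\to(\aFat_{S_1})^{\neq 2}\times(\aFat_{S_2})^{\neq 2}$ that removes bivalent vertices and studies the over-category $\Psi/(\G_0,\tilG_0)$. After a first categorical reduction (two functors $F,G$ with natural transformations, using the orientation of each circle to eliminate the ``superfluous'' bivalent vertices), it is left with the problem of sliding the remaining ``necessary'' bivalent vertices to a fixed position on each matched boundary cycle. The paper's own words here are ``Moving these subdivisions is strangely complicated at the category level. However it becomes child's play when we consider metric fat graphs,'' and the rest of the proof constructs the simplicial space $\cM\aFat$ of metric admissible fat graphs, the ideal subdivision $\widetilde{\cM\aFat}$ (citing [CV86]), and compares $|\cG_{S_1,S_2}|$ and $|\aFat_{S_1}|\times|\aFat_{S_2}|$ through the analogous metric spaces, where moving a subdivision point past a vertex of $\G_0$ is a continuous isotopy of lengths rather than a zig-zag of collapsings.

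This is exactly where your proposal has a gap. Your claimed retraction collapses ``a chosen extra subdivision point, after an arbitrary but compatible ordering of subdivision vertices on each boundary cycle has been fixed.'' On a cyclically ordered set there is no natural ordering, and the morphisms of the comma category are not compatible with a fixed arbitrary choice of which point to collapse: a morphism of $\cG$ can collapse or not collapse matched pairs of edges of $(\G,\tilG)$, and the extra subdivision point you have ``chosen'' can land next to a vertex of $\G_0$ that was previously hidden inside a larger collapsed block, so your natural transformation will not be well-defined after composition. The paper's first reduction avoids this only because there $F$ adds a new edge \emph{right after} each \emph{necessary} vertex using the orientation, so the choice is canonical; once only necessary vertices remain, there is no canonical adjacent location and the metric model is genuinely needed. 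Your ``filtered poset of cyclic configurations is contractible'' assertion is not supported: the relevant category is not a product over boundary cycles of independent posets (the interior collapsings entangle the boundary-cycle data with the rest of the graph), and simply filtering by the number of subdivision vertices does not yield a deformation retraction compatible with all morphisms. The second half of your proposal, identifying $\Sigma_{\G\#\tilG}$ with $\Sigma_\G\#\Sigma_{\tilG}$ and tracking holonomy through the marked-embedding model, is reasonable and matches what the paper would do, but the homotopy-equivalence half needs the metric fat graph detour (or an equivalent device) to be complete.
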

\begin{proof}
We can assume that $S_2$ has at least one incoming boundary circle since otherwise there are no restrictions.

We will consider the functor
\[\Psi : \cG_{S_1,S_2}\map (\aFat_{S_1})^{\neq 2} \times (\aFat_{S_2})^{\neq 2} \]
from the category of gluable admissible fat graphs to the category of pairs of fat graphs with no bivalent vertices. The functor $\Psi$ sends a pair $(\G,\tilG)$ to the pair $(\Psi(\G),\Psi(\tilG))$ where $\Psi(\G)$ is obtained from $\G$ by removing all bivalent vertices and joining edges that are separated by bivalent vertices. A morphism $\Psi(\varphi,\tilvarphi)$ will collapse an edge $A$ of $\Psi(\G)$ only when $\varphi$ collapses every edge of $\G$ that is part of $A$. We will show that the functor $\Psi$ realizes to a homotopy equivalence and hence that $\cG_{S_1,S_2}$ has the right homotopy type. 

Fix a pair of fat graphs $(\G_0,\tilG_0)$ in $(\aFat_{S_1})^{\neq 2} \times (\aFat_{S_2})^{\neq 2}$. We will prove that the category $\cC=\Psi/(\G_0,\tilG_0)$ is contractible. An object in $\cC$ is a pair of fat graphs $(\G,\tilG)$ with morphisms
\[ \varphi:\Psi(\G)\map \G_0 \qquad \tilvarphi:\Psi(\tilG)\map \tilG_0.\]
Such an object is determined by the following information. We have a pair $(\Psi(\G), \Psi(\tilG))$ of fat graphs with no bivalent vertex. We also have a way of adding bivalent vertices to get $\G$ and $\tilG$ so that the outgoing boundary of $\G$ is identified to the incoming boundary of $\tilG$. 

Say $\tilom_i$ is an incoming boundary circle of $\tilG$ and let $\Phi(\tilom_i)$ be its corresponding boundary cycle in $\Phi(\tilG)$. Then $\tilom_i$ is a subdivision of $\Phi(\tilom_i)$ and corresponds to a boundary cycle $\om_j$ of $\G$. The extra bivalent vertices in $\tilom_i$ are of two types. We will call these bivalent vertices \emph{superfluous} if they correspond to bivalent vertices in $\G$ as well. The other bivalent vertices will be called \emph{necessary}.

We will first get rid of all superfluous bivalent vertices one boundary cycle at a time. Note that $\tilom_i$ is an oriented circles. Using this we get two functors
\[F, G : \cC\map \cC.\]
The functor $F$ adds an edge right after each necessary vertex in $\tilom_i$. The functor $G$ replaces the edges between two necessary vertex by a single edge. There are  two natural transformations
\[\xymatrix{
Id_{\cC} &\ar@{=>}[l] F \ar@{=>}[r] &G
}.\]
The first one collapses the newly added edge while the second collapses all other edges.

We will then move all the bivalent vertices to the first edge of each $\tilom_i$ present in $\G$. Moving these subdivision is strangely complicated at the category level. However it becomes child's play when we consider metric fat graphs. With this in mind, we will briefly construct the simplicial space of metric fat graphs and relate it to our model. 

A admissible metric  fat graph is a pair $(\G,\lambda)$ where $\G$ is an adimissble fat graph and
\[\lambda : E_\G\map \bR^{\geq 0}\]
assigns a length to each edge. We ask that any cycle of $\G$ has positive length, that 
\[\sum_{e\in E_\G} \lambda(e) =1\]
and that fat graph
\[\frac{\G}{\{e\quad \lambda(e)=0\}}\]
obtained by collapsing all edges of length zero still be an admissible fat graph. For a fixed $\G$, the allowable $\lambda$'s form a subset $D_\G$ of the simplex $\Delta^{E_\G}$. We build a space of admissible metric fat graphs by taking
\[\cM\aFat = \frac{\Du_{\G} D_\G}{\sim}\]
where we identify two $(\G,\lambda)$ and $(\tilG, \tillambda)$ if and only if there is an isomorphism
\[\varphi : \G/\{e\ \lambda(e)=0\}\map \tilG/\{e\ \tillambda(e)=0\},\]
so that
\[\lambda(e) = \tillambda(\varphi(e)).\]
Note that since we have removed part of $\Delta^{E_\G}$, the space $\cM\aFat$ is not a simplicial space. However, if we first take the simplicial subdivision of $\cM\aFat$ and then keep only the subdivided simplices which are complete, we get a homotopy equivalent subspace
\[\widetilde{\cM\aFat} \subset \cM\aFat.\]
See \cite{CV86} for a proof that for any \emph{ideal} simplicial space $\cS$, the inclusion
\[\widetilde{\cS} \map \cS\]
is a deformation retract.

A simplex of $\widetilde{\cM\aFat}$ corresponds exactly to a simplex
 \[\G_0\smap \cdots \smap \G_{n}\]
 in the nerve of the category $\aFat$. In fact, there is an inclusion
\[\mu : |\Fat| \map \cM\Fat\]
hitting exactly $\widetilde{\cM\Fat}$.
This inclusion sends a point
\[\left(\G_0\maplu{\varphi_{0,1}} \cdots \maplu{\varphi_{n-1,n}} \G_n, 0\leq t_1\leq\cdots\leq t_n\leq 1\right)\in |\aFat| \]
to the metric fat graph $(\G_0,\lambda)$ where we take $\lambda$ to be the normalized version of 
\[\lambda^{\prime} (e) =\begin{cases}
1 & \varphi_{0,n}(e):=\varphi_{0,1}\cdots \varphi_{n-1,n}(e) \in E_{\G_n}\\
t_{k+1}&\varphi_{0,k}(e) \in E_{\G_k}\qquad \varphi_{0,{k+1}}(e)\in V_{\G_{k+1}}\\
t_1 &\varphi_{0,1}(e)\in V_{\G_1}
\end{cases}.\]

Now in the metric fat graphs setting, we take
\[\cM\cG_{S_1,S_2} \subset \cM\aFat_{S_1}\times \cM\aFat_{S_2}\]
to contain the pairs $\left((\G,\lambda),(\tilG,\tillambda)\right)$ where the corresponding boundary cycles of $\G$ and $\tilG$ have the same number of edges and each on of these edge has the same length as its corresponding one. Now, by the preceding argument
\[\xymatrix{
|\cG_{S_1,S_2}|\ar@{<=>}[r]\ar[d]& \widetilde{\cM\cG_{S_1, S_2}} \ar[r]^{\simeq}\ar[d]& \cM\cG_{S_1,S_2}\ar[d]\\
|\left(\aFat_{S_1}\right)^{\neq 2}|\times |\left(\aFat_{S_2}\right)^{\neq 2}|\ar@{<=>}[r]& \widetilde{\cM\left(\aFat_{S_1}\right)^{\neq 2}}\times \widetilde{\cM\left(\aFat_{S_2}\right)^{\neq 2}}\ar[r]^{\sim}& \cM\left(\aFat_{S_1}\right)^{\neq 2}\times \cM\left(\aFat_{S_2}\right)^{\neq 2}
}\]
It therefore suffices to prove that the map
\[\cM\Psi: \cM\cG_{S_1,S_2}\map \cM\left(\aFat_{S_1}\right)^{\neq 2}\times \cM\left(\aFat_{S_2}\right)^{\neq 2}\]
is a homotopy equivalence.

Now in this setup, we consider can move the bivalent vertices continuously to the desired edge.
Finally we can now collapse in the subdivided world what is collapsed by $\varphi$ and $\tilvarphi$.
\end{proof}

\subsection{A combinatorial cobordism category}

We can use this partially defined gluing to get a combinatorial version of a cobordism category. This is the category that Tillmann used in \cite{TillmannInfinite} to show that the stable mapping class group has the homology of an infinite loop space.

Consider the following partial 2-category $\mathit{Cobor}^{\Fat}$. The objects of $\mathit{Cobor}^{\Fat}$ are isomorphism classes of open-closed 1-manifolds. The morphism between any two such $P$ and $Q$ is the category 
\[\aFat_{P,Q} = \du_{[S]} \Fat_{S}\]
 of admissible fat graphs $\G$ which represents a cobordism $S$ between $P$ and $Q$.
 
 The composition in $\mathit{Cobor}^\Fat$ is the gluing of admissible fat graphs described in the previous section. In particular, it only partially defined.

  \section{Construction of the Thom-Pontryagin collapse}
\label{sec:TP}

In this section we start the construction of the string operations using the model $\aFat$ of the previous section. For each open-closed cobordism $S$, we will construct a map of spectra
\begin{equation}\label{eqn:thom col}
\Sigma^{\infty} M^{\p_\win S} \times B\Mod^{\oc}(S) \map \Thom(\kappa_S)
\end{equation}
Here $\kappa_S$ is a virtual bundle over a space homotopy equivalent to
\[E\Diff(S;\p_\win S\du \p_\out S) \underset{\Diff(S)}{\times} \Map(S; M).\]

We will start constructing \eqref{eqn:thom col} over the objects of $\aFat$. In section \ref{sub:TP admissible}, we will build homotopically-trivial spaces $\Tub(\G)$ which parameterize maps of spectra 
\[\Sigma^{\infty} M^{\p_\win\G} \map \Thom(\kappa_\G).\]
Here $\kappa_\G$ is a virtual bundle above the space $\Map(\G,M)$. 
We will then construct the map \eqref{eqn:thom col} over the simplices of $\aFat$. More precisely, for each simplex
\[\sigma = \G_n\smap \cdots \smap \G_0\]
of $\aFat$, we will built in section \ref{sub:second thickening}, 
a space $\Tub(\sigma)$ whose points parameterize compatible maps of spectra for each of the $\G_k$'s.
We will then show how to include these choices in our model $\aFat$ without changing its homotopy type in section \ref{sub:spaces}.
Finally we will construct the map of \eqref{eqn:thom col} in section \ref{sub:construction}.

In section \ref{sec:orientation} and \ref{sec:gluing operations} we will discuss orientability, Thom isomorphism and gluing of the operations.

\begin{remark}
This section is heavy on homotopy limits and colimits. For the uninitiated, we recommend \cite{BousfieldKan} as a reference. Also, the construction of the spaces $\cT_{\sigma}(\sigma_0)$ and $\Tub(\sigma)$ are quite technical and should probably be skipped on a first reading of this section. The results of proposition \ref{prop:tub} are sufficient for the construction of the desired generalized Thom collapse.
\end{remark}

\subsection{Thom-Pontryagin collapse for one admissible fat graph.}
\label{sub:TP admissible} \label{sub:objects}
Let $\G$ be an admissible fat graph. In this section, we will define a contractible family of generalized Thom collapses
\begin{equation}\label{eqn:gen thom G}
 M^{\p_{\win} \G } \times W_\G \map \Thom(\kappa_\G)
\end{equation}
Here $\kappa_\G$ is a virtual bundle above $M^{\G}$, $W_\G$ is a Euclidean space
and $\p_\win\G$ is the subgraph of $\G$ which consists of only the incoming closed circles and the incoming open leaves. Each of the maps of \eqref{eqn:gen thom G} gives a map of spectra
\[\Sigma^{\infty} M^{\p_\win \G} \map \Thom(\left(\kappa_\G,W_\G\times M^\G\right))\]
between the suspension spectrum of $M^{\p_\win \G}$ and the Thom spectra of the virtual bundle
\[\kappa_\G - \left(W_\G\times M^\G\right)\map M^{\G}.\]

We first need to fix some notation. Let $V_\win$, $H_\win$ and $E_\win$ be the vertices, half-edges and edges that are part of the incoming boundary circles or are the incoming  leaves of $\G$. Let $eV$, $eH$ and $eE$ be the remaining (extra) vertices, half-edges and edges. Let $\G^{div}$ be the fat graph obtained from $\G$ by forgetting the attachment of the extra-edges to their endpoints and adding a vertex at the middle of each extra edge. Let
\[\rho_{\infty} : M^\G\map M^{\p_\win \G}\times PM^{eE}\times M^{eV} \]
be the embedding induced by the map of graphs $\G^{div}\map \G$ which reattach the extra-edges. Fix an embedding $f:M\map W$  of $M$ into an Euclidean space $W=\R^k$ and denote its normal bundle by $\nu$. Note that as virtual bundles $\nu\cong (W\times M,TM)$. 

We will denote by $\kappa_\G$ the pull-back bundle of
\[\xymatrix{
\kappa_\G\ar[r]\ar[d]& TM^{eH}\times \nu^{eV\du eE}\ar[d]\\
M^\G\ar[r]&M^{eH}\times M^{eV\du eE}.
}\]
Here the bottom arrow sends a map $\gamma:\G\map M$ to its value at each vertex in $eV$, at the source of every extra half-edge and at the middle point of each extra edge.

We are ready to construct the space parameterizing the maps \eqref{eqn:gen thom G}. Consider the diagram
\begin{equation}\label{dia:dim0}
\xymatrix{
M^\G \ar[r]^-{\rho_{\infty}}& M^{\p_\win \G}\times PM^{eE} \times M^{eV}&\ar[l]^-{cst} M^{\p_\win \G} \times M^{eE\du eV} \ar[rr]^-{Id\times f^{eE\du eV}}&& M^{\p_\win \G} \times W^{eE\du eV}.
}\end{equation}
where the map $cst$ includes $M^{eE}$ into $PM^{eE}$ as the constant paths. We want to take  the composition of a Thom collapse of $Id\times f^{eE\du eV}$ with $cst$ and then with a Thom collapse for the embedding $\rho_\infty$. Of course there are many choices involved in defining the two Thom collapses. In the next proposition we narrow these choices down to a contractible family $\cT(\G)$.
 To construct $\cT(\G)$, we will need the following concept of a propagating flow to lift tubular neighborhoods of a finite dimensional embedding $\rho_{fin}$ to the infinite embedding $\rho_\infty$.

\begin{definition}
A \emph{propagating flow} for a bundle $\lambda$ is a function
\[ \Omega : \lambda \map \cX_c^{v}(\lambda)\]
from $\lambda$ to the space $\cX_c^v$ of compactly supported vertical vector fields in $\lambda$ so that 
\[\Omega_{\textbf{x}}( \textbf{z}) = \begin{cases}
\textbf{x}& \textbf{z}=t\textbf{x} \quad t\in [0,1]\\
0& \textbf{x}=0. 
\end{cases}\]
Denote the space of such propagating flows by $ \cP(\lambda)$. 
\end{definition}

Note that if we start at the zero vector in the same fiber as $\textbf{x}$ and flow according to $\Omega_{\textbf{x}}$ for time 1, we end up at $\textbf{x}$. 

\begin{proposition}
There is a contractible space $\cT(\G)$ and a map
\[\cT(\G)\times M^{\p_{\win}\G} \times W^{eE\du eV} \map Thom(\kappa_\G).\]
\end{proposition}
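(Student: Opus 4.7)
The strategy is to take $\cT(\G)$ to be a product of two contractible parameter spaces, one for each of the Thom--Pontrjagin collapses implicit in diagram~\eqref{dia:dim0}, and to construct the map by composing the two resulting families of collapses with the inclusion $cst$ of constant paths.

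For the finite-dimensional embedding $f^{eE\du eV}\colon M^{eE\du eV}\hookrightarrow W^{eE\du eV}$ I would let $\cT_{\mathit{fin}}$ be the (standard contractible) space of tubular neighborhoods of the base embedding $f\colon M\hookrightarrow W$; each choice induces, by taking products, a continuous family of Thom collapses $M^{\p_\win\G}\times W^{eE\du eV}\to M^{\p_\win\G}\times\Thom(\nu^{eE\du eV})$. For the infinite-dimensional embedding $\rho_\infty$, I would first identify the normal bundle as $TM^{eH}$: each extra half-edge $h$ contributes a $T_{\gamma(s(h))}M$-direction corresponding to perturbing the endpoint at $s(h)$ of the path living on its edge. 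To build a tubular neighborhood of $\rho_\infty$ explicitly — the usual tubular-neighborhood theorem being unavailable in infinite dimensions — I would use a propagating flow $\Omega\in\cP(TM)$. Given a small tangent vector $\mathbf{x}\in T_mM$, the trajectory of $0\in T_mM$ along $\Omega_\mathbf{x}$ is the straight-line path from $0$ to $\mathbf{x}$; composing with $\exp\colon TM\to M$ yields an explicit path in $M$ from $m$ to $\exp_m\mathbf{x}$. Concatenating two such paths at the midpoint for each extra edge of $\G$ converts an $|eH|$-tuple of endpoint-displacement tangent vectors into an honest element of $M^\G$, giving an explicit inverse of $\rho_\infty$ on the tubular neighborhood cut out by small displacement vectors. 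The space $\cP(TM)$ is convex (the defining relations $\Omega_\mathbf{x}(t\mathbf{x})=\mathbf{x}$ and $\Omega_0\equiv 0$ are preserved under convex combinations) and hence contractible.

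With $\cT(\G):=\cT_{\mathit{fin}}\times\cP(TM)$, which is therefore contractible, the desired map is built by first applying the finite-dimensional Thom collapse (producing $(m_e,v_e)\in\nu_{m_e}$ for each $e\in eE\du eV$, or the basepoint), then inserting constant paths via $cst$, then applying the propagating-flow tubular collapse along $\rho_\infty$ (producing an element of $M^\G$ and a normal vector in $TM^{eH}$, or the basepoint), and finally combining the two normal contributions to land in the fiber $(TM^{eH}\oplus\nu^{eE\du eV})\big|_{M^\G}\cong\kappa_\G$.

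The main obstacle will be verifying continuity of the composite on the overlap between the ``tubular neighborhood'' locus and its complement: this is precisely what the axioms for a propagating flow are designed to guarantee, since they force the flow-produced paths to degenerate continuously to the constant path as the displacement vectors tend to zero, while outside the tubular neighborhood the composite is sent to the basepoint by construction. One must also confirm that the whole construction depends continuously on the parameter in $\cT(\G)$, which follows from the smoothness of the assignment $\mathbf{x}\mapsto\Omega_\mathbf{x}$ together with the smooth dependence of tubular-neighborhood collapses on their defining data. Once these continuity checks go through, the identification of the target with $\Thom(\kappa_\G)$ is immediate from the bundle decomposition above.
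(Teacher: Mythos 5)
Your proposal correctly identifies the two Thom collapses and the need for a propagating flow, but the way you propose to use the flow does not produce a tubular neighborhood of $\rho_\infty$, and this is the crux of the whole argument. You take $\Omega\in\cP(TM)$, flow $0$ to $\mathbf{x}$ along $\Omega_{\mathbf{x}}$, push forward under $\exp$, and \emph{concatenate} the resulting geodesic correction onto the end of each extra-edge path. Two problems. First, flowing $0$ to $\mathbf{x}$ along a propagating flow always traces the straight segment $t\mapsto t\mathbf{x}$, so you have just rederived the geodesic $\exp_m(t\mathbf{x})$ --- the flow does no work for you here. Second, and fatally, concatenating a correction path onto $\gamma_e$ does not restrict to the identity on the zero section: when the displacement vector vanishes you obtain $\gamma_e$ followed by a constant, which is a different parameterization and hence a different point of $PM^{eE}$. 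Such a map is not a tubular neighborhood of $\rho_\infty$ and does not yield a Thom collapse. The paper avoids this by using the propagating flow in a different way: $\Omega$ is a propagating flow for $\lambda_{\mathit{fin}}$ (the normal bundle of the \emph{finite-dimensional} embedding $\rho_{\mathit{fin}}$), pushed to a compactly supported vector field $Z_{\mathbf{x}}$ on the ambient manifold; the correction to the path is then obtained by integrating the ODE $\frac{d}{dt}\sigma_t(s,e)=(1-s)Z^{v_0}_{\mathbf{x}}(\sigma_t(v),\sigma_t(s,e))+sZ^{v_1}_{\mathbf{x}}(\sigma_t(v),\sigma_t(s,e))$, which moves every point of the path simultaneously. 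Because the flow vanishes when $\mathbf{x}=0$, this visibly restricts to the identity on the zero section and gives a genuine tubular neighborhood, which concatenation does not.

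There is a second omission. After the collapse along $f^{eE\du eV}$ you are carrying a normal vector $v_e\in\nu_{m_e}$ at each extra edge/vertex, and the subsequent collapse along $\rho_\infty$ moves the basepoints $m_e$. To transport the $v_e$'s along to the new basepoints --- which you need in order to land in the fiber of $\kappa_\G=TM^{eH}\times\nu^{eE\du eV}\vert_{M^\G}$ --- requires a connection on $\nu^{eE\du eV}$. The paper's $\cT(\G)$ accordingly has four factors, $\Tub(f^{eE\du eV})\times\Tub(\rho_{\mathit{fin}})\times\cP(\lambda_{\mathit{fin}})\times\cC(\nu^{eE\du eV})$; your two-factor space $\cT_{\mathit{fin}}\times\cP(TM)$ drops the $\Tub(\rho_{\mathit{fin}})$ and $\cC(\nu^{eE\du eV})$ factors entirely, and replaces an arbitrary $\Tub(f^{eE\du eV})$ by products of tubular neighborhoods of the single embedding $f\colon M\to W$. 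These latter two simplifications would not matter for this proposition alone, but the first omission (the concatenation issue) is a genuine gap that invalidates the construction.
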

\begin{proof}
We define $\cT(\G)$ as the following product of spaces. We first include a choice of a tubular neighborhood 
\[\varphi : \nu^{eE\du eV}\map W^{eE \du eV}\]
for 
\[f^{eE\du eV} : M^{eE\du eV}\map W^{eE\du eV}\]
which gives one for the right-most map of \eqref{dia:dim0}. 
We then pick a tubular neighborhood 
\[\psi_{fin} : \lambda_{fin} \map M^{V_{\win}}\times M^{eH}\times M^{eV}\]
for the restriction $\rho_{fin}$ of $\rho_\infty$ to the vertices of $\G^{div}$.  
We also pick a propagating flow for the normal bundle $\lambda_{fin}$ of $\rho_{fin}$. 
Finally we pick a connection $\nabla$ on the bundle
\[\nu^{eE\du eV}\map  M^{eE\du eV}.\]
Let $\cC(\nu^{eE\du eV})$ be the space of such connections.
Hence we let
\[\cT(\G) = \Tub(f^{eE\du eV})\times \Tub(\rho_{fin}) \times \cP(\lambda_{fin})\times \cC(\nu^{eE\du eV}).\]
Here $\Tub(A\smap B)$ is the space of tubular neighborhood of the embedding $A\smap B$ as defined in the appendix \ref{app:tubular}.  By proposition \ref{prop:tub contractible} of the appendix, the first two spaces involved in the construction of $\cT(\G)$ are contractible. The last two spaces are convex and hence it suffices to show that $\cP(\lambda_{fin})$ is not empty. This will be proved in lemma \ref{lem:propagating}.

Fix an element in $\xi=(\varphi, \psi, \Omega,\nabla)$ in $\cT(\G)$. Lets now construct the Thom collapse associated to $\xi$. The tubular neighborhood $\varphi$ induces a Thom collapse
\[M^{\p_\win \G} \times W^{eE\du eV} \map Thom(M^{\p_\win \G}\times \nu^{eE\du eV}).\]
in a continuous way. 
We then consider the inclusion $cst$ of $M$ into $PM$ and the evaluation $ev_{1/2}$ of a path at its half-way point to get maps of bundles
\[\xymatrix{
\nu \ar[r]^-{cst}\ar[d]& \nu_{PM} =ev_{1/2}^*\nu \ar[r]\ar[d]& \nu\ar[d]\\
M\ar[r]^-{cst}&PM \ar[r]^{ev_{1/2}} &M
}\]
Since $cst$ is a map of bundle between $\nu$ and $\nu_{PM}$, it defines a map of Thom spaces
\[Thom\left(M^{\p_\win \G}\times \nu^{eE\du eV}\right) \map Thom\left(M^{\p_{\win} \G}\times \nu_{PM}^{eE}\times \nu^{eV}\right).\]
We now have 
\[M^{\p_\win \G}\times W^{eE\du eV}\map \Thom\left(M^{\p_\win\G}\times \nu^{eE}_{PM}\times \nu^{eV}\right).\]

We then use $\psi$ and $\Omega$ to construct a diffeomorphism
\[ \psi_{\infty} : \lambda_{\infty} \maplu{\cong} ev_{\G^{\div}}^{-1}(\word{Image}(\psi)) \subset M^{\p_\win \G}\times {PM}^{eE}\times M^{eV}\]
and hence a tubular neighborhood for $\rho_\infty$.
Using $\psi$ we send the vector fields of $\Omega$ to vector fields on $\word{Image}(\psi)\subset M^{V_{\win}}\times M^{eH}\times \nu^{eE}_{PM}\times \nu^{eE}$. Since these vector fields are compactly supported, we can then extend them to 
\[\xymatrix{
Z : \lambda_{fin} \ar[r]&\cX^v_c(\lambda_{fin}) \ar[r]&\cX_c(M^{V_\win}\times M^{eH}\times M^{eV})
}\]
Now for any element $a\in V_{\win}\du eH\du eV$ we have a replacement function
\[\word{rep}_a : \left(M^{V_\win\du eH\du eV}\right) \times M \map M^{V_\win\du eH\du eV}\]
which sends $((m_b),m)$ to the element obtained by replacing $m_a$ by $m$. Mixing this with $Z$, we get
\[ \xymatrix{
\left(M^{V_\win\du eH\du eV}\right)\times M \ar[r]^{\word{rep}_a} \ar@/_20pt/[rrr]_{Z_x^v}& \left(M^{V_\win\du eH\du eV}\right)\ar[r]^{Z_x} 
&TM^{V_\win \du eH\du eV}\ar[r]^-{ev_a}& TM
}\]

Note that $\rho_{fin}$ fits in the following pull-back diagram.
\[\xymatrix{
M^{\G}\ar[r]_-{\rho_{\infty}}\ar[d]^{ev_\G}& M^{\p_\win}\times {PM}^{eE}\times M^{eV}\ar[d]^{ev_{\G^{div}}}\\
M^{V} \ar[r]_-{\rho_{fin}}&M^{V_\win} \times M^{eH}\times M^{eV}
}\]
Hence the normal bundle of $\rho_\infty$ is the pullback
\[\lambda_{\infty} = ev^{-1}_\G (\lambda_{fin}).\]
Lets construct the tubular neighborhood
\[\psi_{\infty}: \lambda_\infty\map M^{\p_\win\G}\times \nu_{PM}^{eE}\times \nu^{eV} = M^{\G^{div}}.\]
Take an element 
\[\left(\gamma :\G\map M,\, \textbf{x}\right) \qquad \textbf{x} \in \lambda_{fin}|_{ev_\G(\gamma)}\]
of $\lambda_\infty$. 
We let the tubular neighborhood $\psi_{fin}$ determine the value of $\psi_{\infty}$ on the vertices of ${\G^{div}}$.
\[\psi_{\infty}(\gamma,x)(v) = \psi_{fin}(ev(\gamma),x)(v)\]

Here is a convoluted way of picturing this. Let $\sigma_t \in M^{V_{\win}}\times M^{eH}\times \nu^{eE}\times \nu^{eV}$ be the flow which starts at $ev(\gamma)$ and follows the $Z_\textbf{x}$. The path $\sigma_t$  is the unique solution to the ODE
\begin{eqnarray*}
\sigma_0 &=& ev (\gamma)\\
\frac{d}{dt} \sigma_t &=& Z_{x} (\sigma_t)
\end{eqnarray*}
Note that by our choice of $\Omega$, this $\sigma_0$ is a path that ends at $\psi_{fin} (ev(\gamma), x)$. We have therefore
\[\psi_\infty(\gamma,x)(v)  =\sigma_1(v)\]
for any vertex $v\in V\du eH$.

We will use a similar ODE to pick the value of $\psi_{\infty}$ on edges. Pick any edge $e$ of $\G^{div}$. Say the vertices of $e$ are $v_0, v_1 \in V\du eH$. We already know that these end points  $v_0$ and $v_1$ will follow the flow $Z_x$. The path between these two values will follow a slight variant of the flow $Z_x$. More precisely, let $\sigma_t(s,e)$ be the unique solution to the following ODE.
\begin{eqnarray*}
\sigma_0(s,e) &=& \gamma(s,e)\\
\frac{d}{dt} \sigma_t(s,e) &=& (1-s) Z_x^{v_0}\left( (\sigma_t(v)), \sigma_t(s,e)\right) + s Z_{x}^{v_1}\left((\sigma_t(v)),\sigma_t(s,e)\right)
\end{eqnarray*}
We now define 
\[\psi_\infty(\gamma,\textbf{x})(e, s)= \sigma_{1}(e,s).\]
Note that when $s=0$, the ODE becomes
\begin{eqnarray*}
\sigma_0(0,e) &=& \gamma(0,e)= \gamma(v_0)\\
\frac{d}{dt} \sigma_t(0,e)&=& Z_x^{v_0}\left((\sigma_t(v)), \sigma_t(0,e)\right)
\end{eqnarray*}
which is satisfied by $\sigma_t(0,e) = \sigma_t(v_0)$. A similar argument shows works for $s=1$ and hence we have indeed defined a map
\[\psi_{\infty} : \lambda_{\infty} \map M^{\G^{div}}.\]

Tthis function gives a homeomorphism
\[\psi_\infty : \lambda_{\infty} \map ev^{-1}(\word{Image}(\psi_{fin})).\]
Fx any $\theta$ in $ev^{-1}(\word{Image}(\psi_{fin}))$. Let $x$ be the unique element of $\lambda_{fin}$ with
\[\psi_{fin}(x) =ev(\theta).\]
Since we have an $x$, we have both the starting value at the vertices and the vector field $Z_x$. We also get the value of $\sigma_t(v)$ by flowing along the $Z_x$. Hence we can set up the ``inverse ODE'' 
\begin{eqnarray*}
\sigma_1(s,e)&=& \theta(s,e)\\
\frac{d}{dt} \sigma_t(s,e) &=& (1-s) Z_x^{v_0}\left( (\sigma_t(v)), \sigma_t(s,e)\right) + s Z_{x}^{v_1}\left((\sigma_t(v)),\sigma_t(s,e)\right)
\end{eqnarray*}
to construct $\sigma$ and $\gamma(s,e)=\sigma_0(s,e)$. By construction, this $(\gamma,x)$ is the unique point in $\lambda_\infty$ which lands on $\theta$.

Finally we use the parallel transport associated to the connection $\nabla$ on $\nu^{eE\du eV}$ to construct
\[\psi : \kappa_\G \map M^{\p_\win \G}\times \nu_{PM}^{eE} \times \nu^{eV}.\]
\end{proof}

\begin{lemma} \label{lem:propagating}
Any bundle has a propagating flow. 
Hence the space $\cP(\nu)$ of propagating flows is contractible.
\end{lemma}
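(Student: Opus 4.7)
The plan is to split the claim into two pieces: convexity of $\cP(\lambda)$, which reduces contractibility to non-emptiness, and an explicit construction of one propagating flow by pasting local models with a partition of unity.

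First, I would observe that both defining conditions on $\Omega$---that $\Omega_{\textbf{x}}(\textbf{z}) = \textbf{x}$ whenever $\textbf{z} = t\textbf{x}$ with $t \in [0,1]$, and that $\Omega_{\mathbf{0}} \equiv 0$---are affine-linear in $\Omega$. Hence $\cP(\lambda)$ is a convex subset of the topological vector space of continuous maps $\lambda \to \cX_c^v(\lambda)$, and it suffices to exhibit a single element.

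For existence, choose a locally finite trivializing cover $\{U_\alpha\}$ of the base $B$ with trivializations $\phi_\alpha : \lambda|_{U_\alpha} \cong U_\alpha \times V$, a partition of unity $\{\chi_\alpha\}$ subordinate to $\{U_\alpha\}$, and bump functions $\eta_\alpha : B \to [0,1]$ with $\eta_\alpha \equiv 1$ on $\operatorname{supp}(\chi_\alpha)$ and $\operatorname{supp}(\eta_\alpha)$ compact in $U_\alpha$. Fix a fiberwise norm on $V$ and a smooth $\psi : [0,\infty) \to [0,1]$ with $\psi|_{[0,1]} \equiv 1$ and $\psi|_{[2,\infty)} \equiv 0$. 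For $\textbf{x} \in \lambda_b$ with $\alpha$-trivialization $(b,v_\alpha)$, define a vertical vector field on $\lambda|_{U_\alpha}$ by
\[
\Omega^\alpha_{\textbf{x}}(b', v') \;=\; \eta_\alpha(b')\,\psi\!\left(\tfrac{\|v'\|}{\|v_\alpha\|+1}\right) v_\alpha
\]
(read in the $\alpha$-trivialization), extend by zero outside $U_\alpha$, and set $\Omega_{\textbf{x}} = \sum_\alpha \chi_\alpha(\pi(\textbf{x}))\,\Omega^\alpha_{\textbf{x}}$.

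To verify this $\Omega$ lies in $\cP(\lambda)$, take $\textbf{z} = t\textbf{x} \in \lambda_b$ with $t \in [0,1]$: then $\|t v_\alpha\| \le \|v_\alpha\| < \|v_\alpha\|+1$, so $\psi = 1$; and $\eta_\alpha(b) = 1$ for every $\alpha$ with $\chi_\alpha(b) > 0$. Since at points of the fiber $\lambda_b$ the vertical tangent space is canonically identified with $\lambda_b$ independently of trivialization, each $\Omega^\alpha_{\textbf{x}}(\textbf{z})$ represents the intrinsic vector $\textbf{x}$, and summing gives $\Omega_{\textbf{x}}(\textbf{z}) = \sum_\alpha \chi_\alpha(b)\,\textbf{x} = \textbf{x}$. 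The case $\textbf{x} = 0$ is immediate since all $v_\alpha$ vanish, and continuity in $\textbf{x}$ together with compact support of each $\Omega^\alpha_{\textbf{x}}$ are transparent. The one bit of care is the choice $\eta_\alpha \equiv 1$ on $\operatorname{supp}(\chi_\alpha)$: this is precisely what prevents the weighted local contributions from being diluted along $[0,\textbf{x}]$. Together with convexity, non-emptiness then yields contractibility.
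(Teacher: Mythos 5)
Your proof is correct and follows essentially the same strategy as the paper: reduce contractibility to non-emptiness via convexity, then build a propagating flow by gluing local trivialized models with a partition of unity. The only cosmetic difference is the normalization device — the paper uses a ``squared'' partition of unity $\sum \zeta_\alpha^2 = 1$, whereas you use $\chi_\alpha$ together with the auxiliary bumps $\eta_\alpha \equiv 1$ on $\operatorname{supp}(\chi_\alpha)$, which accomplishes the same thing.
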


\begin{proof}
These ideas are largely based on \cite{Stacey}. We include them for completeness.
Lets first consider the case of a bundle $\R^n$ over a single point. In this case, we need to construct a smooth map
\[Z: \R^n\map \cX_c(\R^n)\]
which sends $v$ to a compactly supported vector field $Z_v$ so that 
\[Z_v (u) =\begin{cases}
v& u =t v \qquad t\in [0,1]\\
0 & v=0.
\end{cases}\]
Pick a smooth bump function
\[\rho:\R^2\map \R\]
so that
\[\rho(x,y)=\begin{cases}
1&|x|\leq 1 \\
1& |x|\leq |y|\\
0& |x| >>> |y|
\end{cases}\]
and let 
\[Z_v(u) =\rho(||u||^2, ||v||^2)\ v.\]
By construction $Z_v$ is a propagating flow.

Now lets consider a trivial bundle 
\[\R^n\times \R^k\map \R^k\]
over some Euclidean space. Pick an other smooth bump function 
\[ \sigma : \R\map \R \]
with
\[\sigma(x) = \begin{cases}
1 & |x|\leq 1\\
0 &|x|\geq 2
\end{cases}\]
We define a propagating flow
\[Y : \bR^n\times \bR^k \map \cX_c(\bR^n\times \bR^k).\]
by setting
\[Y_{(v_n,v_k)}(u_n,u_k) = \sigma(||u_k-v_k||^2) Z_{v_n}(u_n) \qquad \in T_{u_n}\bR^n\subset  T_{u_n,u_k} \bR^n\times \bR^k\]
where $Z$ was constructed above when we considered the fibre $\R^n\map {u_k}$.

Finally lets attack the general case. Pick a bundle $\nu$ above a smooth manifold $N$. Let $\{U_\alpha\}$ be a covering of $N$ by coordinate patches which trivializes $\nu$. Hence we have
\[\xymatrix{
\nu|_{U_\alpha}\ar@{<->}[r]^{\phi_\alpha}& \R^n \times U_\alpha \ar@{<->}[r]^{\psi_\alpha}& \R^n\times \R^k}\]
Pick a \emph{squared} partition of unity $\{\zeta_\alpha\}$ subordinate to the $U_\alpha$. Define
\[X:\nu \map \cX_c(\nu)\]
by setting
\[X_x(y) = \sum_{\alpha}\zeta_{\alpha}(x)\zeta_{\beta}(y) Y^{\alpha}_x(y)\]
where $Y^\alpha$ is the propagating flow obtained above for
\[\nu|_{U_\alpha} \cong \bR^{n}\times \R^k.\]
\end{proof}

\subsection[Definition of $\aFat_1$]{Thickening of $\aFat$ to $\aFat_1$ which includes information for comparing Euclidean spaces}
\label{sub:first thickening}

We now want to construct these string operations over families of admissible fat graphs. The operations constructed in the previous section use a different Euclidean space $W_\G=W^{eE\du eV}$ for each admissible fat graph. To compare the operations, we need to be able to compare these Euclidean spaces. In this section, we thicken the category $\aFat$ to a homotopy equivalent category $\aFat_1$ which includes comparisons between the Euclidean spaces.

 We let $\aFat_1$ be the category whose objects
 \[\mathit{Object}_{\aFat_1} = \mathit{Object}_{\aFat}\]
 are again admissible open-closed fat graphs. The space of morphisms between two admissible fat graphs $\tilG$ and $\G$ is
 \[\mathit{Morphism}_{\aFat_1}(\tilG, \G) = \Du_{\varphi :\tilG\smap \G} \Split(\varphi)\]
where $\varphi$ is a morphism of $\aFat$ between $\tilG$ and $\G$. The space $\Split(\varphi)$, contains for every extra vertex $v$ of $\G$ a splitting $(\alpha_v,\beta_v)$ of the short exact sequence
\[\xymatrix{
\bR\ar[r]&\ar@/_10pt/[l]_{\alpha_v} \bR^{\til{eE}_v\du \til{eV}_v}\ar[r]&\ar@/_10pt/[l]_{\beta_v} \frac{\bR^{\til{eE}_v\du \til{eV}_v}}{\bR}
}\]
where $\til{eE}_v$ contains the extra edges of $\tilG$ which are collapsed to $v$ by $\varphi$ and $\til{eV}_v$ contains the extra vertices of $\tilG$ which are sent to $v$ by $\varphi$. Note that these splittings give a splitting $(\alpha, \beta)$ of
\[\bR^{eV\du eE} \map \bR^{\til{eV}\du \til{eE}} \map \frac{ \bR^{\til{eV}\du \til{eE}}}{\bR^{eV\du eE}}\]
and an isomorphism
\[W_{\G} \cong W_{\tilG} \oplus \frac{ W_\tilG}{W_\G}.\]

The composition of two composable morphism 
\[(\varphi_1:\G_1\smap \G_0, (\alpha_1,\beta_1)) \cdot (\varphi_2:\G_2\smap \G_1, (\alpha_2,\beta_2))\]
is gotten by composing the morphisms in $\aFat$ and by composing the $\beta_v$'s. Note that  $\alpha_v$ is completely determined by $\beta_v$ and vice versa.

\begin{lemma}\label{lem:fat1 vs fat}
The geometric realization of the forgetful functor
\[\Psi:\aFat_1\map \aFat\]
is a homotopy equivalence.
\end{lemma}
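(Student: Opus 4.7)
The plan is to exploit the fact that $\Psi$ is the identity on objects and, over each morphism $\varphi : \tilG \smap \G$ of $\aFat$, has fiber equal to the space $\Split(\varphi)$, which I will show is contractible. The argument then reduces to a levelwise-contractible-fibers argument on nerves.

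First, I would verify that $\Split(\varphi)$ is contractible for every morphism $\varphi$ of $\aFat$. By definition, $\Split(\varphi)$ is a finite product, indexed by the extra vertices $v$ of $\G$, of spaces of splittings of the short exact sequence
\[
0 \map \bR \map \bR^{\til{eE}_v \du \til{eV}_v} \map \bR^{\til{eE}_v \du \til{eV}_v}/\bR \map 0 .
\]
The space of sections of any surjection of finite-dimensional real vector spaces is an affine space over $\mathrm{Hom}$ of the quotient into the kernel, hence convex and contractible. A finite product of contractible spaces is contractible, so $\Split(\varphi) \simeq *$.

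Next, view $\aFat$ as a discrete category and $\aFat_1$ as a topological category, so $\Psi$ induces a simplicial map of nerves $N_{\bullet}\Psi : N_{\bullet}\aFat_1 \map N_{\bullet}\aFat$. In simplicial degree $n$, the target $N_n\aFat$ is a discrete set, while
\[
N_n\Psi^{-1}(\varphi_1,\ldots,\varphi_n) \;=\; \Split(\varphi_1)\times\cdots\times\Split(\varphi_n),
\]
a finite product of contractible spaces, hence contractible. Thus $N_n\aFat_1$ is a disjoint union of contractible spaces, one per $n$-simplex of $N_n\aFat$, and $N_n\Psi$ is a homotopy equivalence in every simplicial degree.

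Finally, I would invoke the realization lemma for simplicial spaces: a map of simplicial spaces that is a levelwise weak equivalence and whose source and target are both Reedy cofibrant (equivalently, have degeneracy maps that are closed cofibrations) induces a weak equivalence on geometric realizations. Both simplicial spaces here are nerves of topological categories with discrete object sets and with morphism spaces that are disjoint unions of smooth manifolds (empty, points, or products of affine spaces), and the degeneracies are inclusions of identity strata, which are clearly closed cofibrations, so the hypotheses are satisfied. The main subtlety to check is this Reedy/properness condition, but given the very simple structure of the morphism spaces it is immediate. This yields the desired homotopy equivalence $|\aFat_1| \simeq |\aFat|$.
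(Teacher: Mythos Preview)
Your proposal is correct and follows essentially the same route as the paper: both arguments observe that each $\Split(\varphi)$ is a convex nonempty (hence contractible) space, so that the fiber of $N_\bullet\Psi$ over any simplex $\sigma=(\G_n\to\cdots\to\G_0)$ is the contractible product $\Split(\varphi_n)\times\cdots\times\Split(\varphi_1)$. The only difference is in the concluding step: the paper appeals to obstruction theory to build a section $|\aFat|\to|\aFat_1|$ and the required homotopies directly, whereas you invoke the realization lemma for good simplicial spaces; these are interchangeable standard devices and neither buys anything the other does not.
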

\begin{proof}
Each space $\Split(\varphi)$ is a convex non-empty set and hence it is contractible. Above a simplex
\[\sigma=\left(\G_n\maplu{\varphi_n}\cdots \maplu{\varphi_1} \G_0\right)\]
in the nerve of $\aFat$, we have simplices
\[\Psi^{-1}(\sigma) =  \Split(\varphi_n)\times\cdots \times  \Split(\varphi_1)\]
which is again contractible. Using this contractibility and obstruction theory, we can define  a section
\[\Omega: |\aFat|\map |\aFat_1|\]
and homotopies between the composition $\Omega\cdot \Psi$ and the identity.
\end{proof}

\subsection{Choices involved in the construction of the operations over simplices}
\label{sub:second thickening}
\label{sub:higher simplices}
In this section we  thicken our model further to include the spaces of choices $\cT(\G)$ which appear in section \ref{sub:TP admissible}. We will first construct for each simplex $\sigma$ of $\Fat_1$ a contractible space $\Tub(\sigma)$.  Each point in that space will include a compatible choice of operations for each admissible graph $\G_i$ in $\sigma$. To be able to twist these choices in our model, we make sure these spaces are functorial with regards to inclusion of simplices.

Let $\cN\Fat_1$ denote the category of simplices of $\Fat_1$ under coinclusion. More prescisely, for any pair of simplices
\[\sigma = \left(\G_n\smap\cdots\smap \G_0\right) \qquad \til\sigma = \left(\tilG_k\smap \cdots \smap \tilG_0\right)\]
the morphisms
\[\mathit{Mor}_{\cN\Fat_1}(\sigma, \til\sigma) = \left\{ 0\leq i_0< \cdots<i_k \leq n \quad \G_{i_j} = \tilG_j\right\}.\]
Let $\Top^{?}$ be the category of functors to $\Top$. The objects of $\Top^?$ are pairs $(\cC,F)$ where $\cC$ is a small category and $F:\cC\smap \Top$ is a functor. A morphism
\[(J, \theta): (\cC,F)\map (\cD, G)\]
is a functor $J:\cC\smap \cD$ with a natural transformation
\[\theta: G\cdot J \Rightarrow F.\]
Finally for any simplex $\sigma =\G_n\smap \cdots \G_0$ of $\Fat$, let $\cN\sigma$ be the category of subsimplices of $\sigma$ under coinclusion. Note that $\cN\sigma$ is naturally isomorphic to the category $\Delta^n$. 

In this section, we construct a functor
\[\cT : \cN\Fat_1 \map \Top^{?}.\]
For any simplex $\sigma$ of $\Fat_1$, the functor 
\[\cT_\sigma : \cN\sigma\map \Top\]
maps from the category of subsimplices of $\sigma$ to the category of topological spaces. It sends a subsimplex 
\[\sigma_0 = (\G_{i_k}\smap \cdots \smap \G_{i_0})\] 
of $\sigma$ to a space $\cT_\sigma(\sigma_0)$ wich
picks compatible generalized Thom-Pontryagin collapses for the diagram
\begin{equation}\label{dia:sigma0}
\xymatrix@C=10pt{
M^{\G_0} \ar@{=>}[r]\ar[dd]\ar@{=>}[rd]& M^{\p_\win\G_0}\times PM^{eE_0}\times M^{eV_0}\ar@{=>}[d] &\ar[l]  M^{\p_\win\G_0}\times M^{eE_0\du eV_0}\ar@{=>}[r] \ar@{=>}[d]& M^{\p_\win\G_0}\times W_{\G_{i_k}}\ar[dd]\\
& M^{\p_\win \G_0}\times PM^{eE_0}\times M^{eE^{\varphi_{01}} \du eV_1}\ar[d] &\ar[d]\ar[l] M^{\p_\win\G_0}\times M^{eE_1\du eV_1}\ar@{=>}[ru]\\
M^{\G_1} \ar[dd]\ar@{=>}[r]\ar@{=>}[rd]&M^{\p_\win\G_1}\times PM^{eE_1}\times M^{eV_1}\ar@{=>}[d]&\ar[l]  M^{\p_\win\G_1}\times M^{eE_1\du eV_1} \ar@{=>}[r]\ar@{=>}[d]& M^{\p_\win\G_1}\times W_{\G_{i_k}}\ar[dd]\\
&M^{\p_\win\G_1}\times PM^{eE_1}\times M^{eE^{\varphi_{12}}\du eV_2}\ar[d] &\ar[l]\ar[d] M^{\p_\win\G_1}\times M^{eE_2\du eV_2} \ar@{=>}[ru]\\
\vdots\ar[d] &\ar[d] \vdots&\vdots \ar[d]&\vdots\ar[d]\\
M^{\G_{i_0}}\ar@{=>}[r] & M^{\G_{i_0}} \times PM^{eE_{i_0}}\times M^{eV_{i_0}} &\ar[l] M^{\p_\win\G_{i_0}} \times M^{eE_{i_0}\du eV_{i_0}} \ar@{=>}[r] & M^{\p_\win\G_{i_0}}\times W_{\G_{i_k}}
}
\end{equation}
The value of $\cT$ and $\cT_{\sigma}$ on morphisms will ensure compatibility when passing from one simplex to a subsimplex.

\begin{proposition} \label{prop:properties} \label{prop:key}
There is a functor
\[\cT : \cN\Fat_1 \map \Top^?\]
which sends a simplex $\sigma$ to a functor 
\[\cT_\sigma : \cN\sigma \map \Top.\]
This functor $\cT$ has the following properties.
\begin{enumerate}
\item For any subsimplex
$\sigma_0 =\left(\G_{i_k}\smap\cdots\smap \G_{i_0}\right)$
of $\sigma$, a point $x_0$ in $\cT_\sigma(\sigma_0)$ determines compatible operations
\begin{equation}\label{dia:comp op}
\xymatrix{
M^{\p_\win\G_0}\times W_{\G_{i_k}}\ar[d]\ar[rr]^{\mu_{x,0}}&& \Thom(\kappa_{\G_{i_k}}|_{M^{\G_0}})\ar[d]\\
\vdots\ar[d]&&\vdots\ar[d]\\
M^{\p_\win\G_{i_0}}\times W_{\G_{i_k}} \ar[rr]^{\mu_{x,i_k}}&&\Thom(\kappa_{\G_{i_k}}|_{M^{\G_ {i_0}}})
}\end{equation}
for the rows of \eqref{dia:sigma0}.
\item \label{subsubsimplices} For any inclusion of subsimplices of $\sigma$
\[  \sigma_1 = \left(\G_{i_{j_m}} \smap\cdots \smap \G_{i_{j_0}} \right)\subset \sigma_0=\left(\G_{i_k}\smap\cdots\smap \G_{i_0}\right) \]
we have a functor
\[\cT_\sigma(g) : \cT_\sigma(\sigma_1) \map \cT_\sigma(\sigma_0)\]
which is compatible with the operations. 
\item \label{inclusion simplices} For any inclusion 
\[h : \til\sigma=\left(\G_{i_k}\smap\cdots\smap \G_{i_0}\right) \subset \sigma=\left(\G_{n}\smap \cdots \smap \G_0\right)\]
of simplices of $\Fat_1$, the functor $\cT$ sends $h$ to a pair $(I, \theta)$ where 
\[I : \cN\til\sigma\map \cN\sigma\]
 is the natural inclusion of subsimplices of $\til\sigma$ as subsimplices of $\sigma$ and where the natural transformation
 \[\theta : \cT_{\sigma} \cdot I\Rightarrow \cT_{\til\sigma}\]
respects the operations of \eqref{dia:comp op}. 
\item The spaces $\cT_{\sigma}(\sigma_0)$ are contractible.
\end{enumerate}
\end{proposition}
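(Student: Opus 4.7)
The plan is to build $\cT_\sigma(\sigma_0)$ by iterating the single--graph construction of the previous proposition, using the splittings recorded in $\aFat_1$ to enforce compatibility between the data attached to consecutive graphs of the chain $\sigma_0 = (\G_{i_k} \smap \cdots \smap \G_{i_0})$. First I would unpack the input from $\aFat_1$: composing the splittings along $\sigma_0$ yields an explicit decomposition
\[ W_{\G_{i_k}} \;\cong\; W_{\G_{i_0}}\; \oplus\; \bigoplus_{j=0}^{k-1} \frac{W_{\G_{i_{j+1}}}}{W_{\G_{i_j}}}, \]
together with parallel decompositions of $M^{eE\du eV}$ and of the normal bundles $\nu^{eE\du eV}$ and $\lambda_{i_j,fin}$ obtained from the pullback squares of Section 3.1. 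This is precisely the ingredient allowing the Thom--collapse data for $\G_{i_j}$ to be extended to $\G_{i_{j+1}}$ in a strict (not merely up--to--homotopy) sense.

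Next I would define $\cT_\sigma(\sigma_0)$ as the space of tuples $(\varphi_j, \psi_j, \Omega_j, \nabla_j)_{j=0}^k$ where each quadruple lies in $\cT(\G_{i_j}) = \Tub(f^{eE_{i_j}\du eV_{i_j}}) \times \Tub(\rho_{i_j,fin}) \times \cP(\lambda_{i_j,fin}) \times \cC(\nu^{eE_{i_j}\du eV_{i_j}})$, subject to the requirement that, under the decomposition above, the $(j{+}1)$-th datum restricts to the $j$-th datum on the appropriate summand (so $\varphi_{j+1}$ is a product of $\varphi_j$ with a tubular neighborhood on the complementary Euclidean factor, $\psi_{j+1,fin}$ restricts to $\psi_{j,fin}$, and similarly for $\Omega_j$ and $\nabla_j$). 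The compatible Thom--Pontryagin collapses $\mu_{x,j}$ of item (1) are then produced by feeding each individual quadruple into the construction of the previous proposition; the commutativity of \eqref{dia:comp op} is forced by the restriction conditions.

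Functoriality is straightforward: for an inclusion $\sigma_1 \subset \sigma_0$ in $\cN\sigma$ one obtains $\cT_\sigma(\sigma_1) \to \cT_\sigma(\sigma_0)$ by extending/forgetting the relevant quadruples (item (2)); for an inclusion $h : \til\sigma \subset \sigma$ in $\cN\aFat_1$ the splittings of $\aFat_1$ compose correctly, so restriction of the data yields the natural transformation $\theta : \cT_\sigma \cdot I \Rightarrow \cT_{\til\sigma}$, and compatibility of $\theta$ with the operations follows from the fact that the construction of the previous proposition is determined locally by each quadruple (item (3)). Contractibility (item (4)) is proved by induction on $k$: the forgetful map $\cT_\sigma(\sigma_0) \to \cT_\sigma(\sigma_0 \setminus \{\G_{i_k}\})$ has as fiber the space of extension data, each factor of which (extensions of tubular neighborhoods, connections on the complementary summand, extensions of propagating flows) is contractible by Lemma \ref{lem:propagating}, Proposition \ref{prop:tub contractible}, and the convexity arguments used to prove contractibility of $\cT(\G_{i_k})$.

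The main obstacle is item (3): one needs the ODE--based construction of $\psi_\infty$ from $\psi_{fin}$ and $\Omega$ to behave coherently as one passes between the Thom collapses attached to $\G_{i_j}$ and to $\G_{i_{j+1}}$. This forces the compatibility conditions on $(\psi_j, \Omega_j)$ to be genuine restriction conditions, not weaker homotopy--level conditions; it is in order to make such strict restrictions meaningful that the Euclidean splittings were promoted from contractible choices (as in $\aFat$) to actual data in $\aFat_1$. Once the restriction conditions are in place, the functoriality and contractibility claims reduce to the individual contractibility statements already available from the single--graph case.
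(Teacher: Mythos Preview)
Your definition of $\cT_\sigma(\sigma_0)$ misses the essential role of the ambient simplex $\sigma$. Look again at the diagram in property~(1): the operations $\mu_{x,r}$ run over \emph{all} $r$ from $0$ up to $i_0$, that is, over every vertex $\G_0,\G_1,\ldots,\G_{i_0}$ of $\sigma$, not merely over the vertices $\G_{i_0},\ldots,\G_{i_k}$ of the subsimplex $\sigma_0$. Your tuple $(\varphi_j,\psi_j,\Omega_j,\nabla_j)_{j=0}^{k}$ carries one package of data per vertex of $\sigma_0$ and has no mechanism for producing operations at the intermediate $\G_r$ with $r\notin\{i_0,\ldots,i_k\}$. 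This is exactly why the paper's space $\cT_\sigma(\sigma_0)$ contains, in addition to basic Thom--collapse data anchored at $\G_{i_0}$, a separate piece of \emph{lifting data} $\psi_r$ for every $r<i_0$ (a tubular neighborhood for the embedding of $M^{V_r}$ into the normal bundle of the diagonal map of diagram~\eqref{lifting}, together with a propagating flow $\Omega_{\varphi_r}$). These $\psi_r$ propagate the basic choices from $\G_{i_0}$ step by step through \emph{every} vertex of $\sigma$ down to $\G_0$, and the two ``restrictions'' in the paper are genuine constraints on this propagation (via the pullback square~\eqref{pullback r r+1} and the triangle~\eqref{restriction 2}), not compatibility between consecutive quadruples indexed by $\sigma_0$.

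This omission also breaks your account of properties~(2) and~(3). For property~(2), the map $\cT_\sigma(\sigma_1)\to\cT_\sigma(\sigma_0)$ with $\sigma_1\subset\sigma_0$ cannot be ``extending/forgetting'': in your setup the target carries data for \emph{more} vertices than the source, and there is no canonical way to fill them in. In the paper's setup, by contrast, $\cT_\sigma(\sigma_1)$ already contains lifting data $\psi_r$ for every $r<i_{j_0}$; since $i_0\le i_{j_0}$ one uses them to transport the basic choices from $\G_{i_{j_0}}$ down to $\G_{i_0}$, keeps the $\psi_r$ for $r<i_0$, and enlarges the Euclidean space via the $\aFat_1$ splitting. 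For property~(3), passing from $\cT_\sigma$ to $\cT_{\tilsigma}$ when $\tilsigma\subset\sigma$ amounts to \emph{composing} the lifting data $\psi_r$ along the vertices of $\sigma$ that disappear in $\tilsigma$; again this presupposes that the $\psi_r$ are part of the data. Finally, your induction for contractibility is on $k$ (the length of $\sigma_0$), whereas the correct induction is on $i_0$ (the gap between the bottom of $\sigma_0$ and the bottom of $\sigma$), reflecting the same structural point.
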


Lets make the compatibility with the operations more precise. Property  \ref{subsubsimplices} of the last theorem means that for any point $y$ in $\cT_\sigma(\sigma_0)$ and $x=\cT_\sigma(g)(y)$, the following diagram commutes for $r\leq i_{0}$
\[\xymatrix{
M^{\p_\win \G_{r}} \times W_{\G_{i_{j_m}}}
 \times \frac{W_{\G_{i_k}}}{W_{\G_{i_{j_m}}}}
  \ar@{<->}[d]^{\cong} \ar[rr]^{\mu_{y,r}\times Id}&& \Thom\left(\kappa_{\G_{i_{j_m}}}|_{M^{\G_r}}\times  \frac{W_{\G_{i_k}} }{W_{\G_{i_{j_m}}}}
 \right)\ar@{<->}[d]^{\cong}\\
M^{\p_\win \G_{r}} \times W_{\G_{i_k}}
 \ar[rr]^{\mu_{x,r}}&& \Thom(\kappa_{\G_{i_k}}|_{M^{\G_r}})
}\]
Here the vertical maps are obtained by using the splitting of 
 the composition $\varphi_{i_k i_{j_m}} : \G_{i_k} \smap \G_{i_{j_m}}$ in $\aFat_1$.
In property \ref{inclusion simplices}, compatibility with the operations means that for any subsimplex
\[\sigma_1=\left(\G_{i_{j_m}}\smap \cdots \smap \G_{i_{j_0}}\right)\]
of $\til\sigma$ and for any $x\in\cT_\sigma(\sigma_1)$ and $y=\theta(x) \in \cT_{\til\sigma}$, the two operations
\[\xymatrix{
M^{\p_{\win} \G_{i_r}}\times W^{eE_{i_{j_m}}\du eV_{i_{j_m}}} 
\ar@/^20pt/[rr]^{\mu_{x,i_r}} 
\ar@/_20pt/[rr]_{\mu_{y,i_r}}
&&\Thom\left(\left.\kappa_{\G_{i_{j_m}}}\right|_{M^{\G_{\G_{i_r}}}}\right)
}\]
are equal for any $r\leq j_0$.

\begin{proof}[\underline{Choices for $\G_{i_0}$}]
Say we have a simple $\sigma$ and a subsimplex $\sigma_0$
\[\sigma = \left(\G_n\maplu{\varphi_{n-1}} \cdots \maplu{\varphi_0} \G_0\right)\qquad 
\sigma_0 = \left( \G_{i_k}\smap \ldots \smap \G_{i_0}\right)\]
We will construct here the part of $\cT_{\sigma}(\sigma_0)$ that gives the operation for $\G_{i_0}$ associated to the diagram
\begin{equation}\label{eqn:Gi0}
\xymatrix@C=15pt{
M^{\G_{i_0}}\ar@{=>}[r] & M^{\G_{i_0}} \times PM^{eE_{i_0}}\times M^{eV_{i_0}} &\ar[l] M^{\p_\win\G_{i_0}} \times M^{eE_{i_0}\du eV_{i_0}} \ar@{=>}[r] & M^{\p_\win\G_{i_0}}\times W_{\G_{i_k}}.
}\end{equation}
Later on, we will describe the  part of $\cT_{\sigma}(\sigma_0)$ which contains the data necessary to lift these choices to similar choices for $\G_{i_0-1}$, $\ldots$ ,$\G_{0}$.

We include in $\cT_\sigma(\sigma_0)$ a tubular neighborhood 
$\varphi_{V,i_0}$ for the map
\begin{equation}\label{eq:left basic}
M^{V_{i_0}} \map  M^{V_\win^{i_0}}\times W^{\left(eE_{i_k}\du eV_{i_k}\right)\setminus eE_{i_0}}.
\end{equation}
We assume that $\varphi_{V,i_0}$ sits above the identity in $M^{V_\win^{i_0}}$.
We also pick tubular neighborhoods $\varphi_{eE,r}$ and $\varphi_{eE,0}$ for the embeddings
\begin{eqnarray}
M^{eE^{\varphi_r}} &\map& W^{eE_{\varphi_r}}\qquad r\leq i_0\label{eq:left sup r}\\
 M^{eE_0}&\map& W^{eE_0}.\label{eq:left sup 0}
\end{eqnarray}
Recall that $eE_{\varphi_r}$ is the set of extra edges collapsed by $\varphi_r$.
These tubular neighborhoods  determine a tubular neighborhood for the last  embedding of \eqref{eqn:Gi0}.

We also pick a tubular neighborhood $\varphi_{\rho, i_0}$ for the diagonal arrow in  
\begin{equation}\label{eq:right basic}
\xymatrix{
&&\frac{\left.T\left(M^{V_{i_0}^\win}\times W^{\left(eE_{i_k}\du eV_{i_k}\right)\setminus eE_{i_0}}\right)\right|_{M^{V_{i_0}}}}{TM^{V_{i_0}}}\times M^{eH_{i_0}}\ar[d]\\
M^{V_{i_0}}\ar[rru]\ar[rr]&&M^{V_{i_0}}\times M^{eH_0}
}
\end{equation}
which is a twisted finite dimensional version of the first embedding of \eqref{eqn:Gi0}. We again ask that these sit atop the identity ma in $M^{V_{i_0}}$.
We pick a propagating $\Omega_{\rho,i_0}$ flow for the normal bundle of this diagronal arrow. These choices determine a tubular neighborhood for the diagonal arrow in 
\[\xymatrix{
&& \left.\frac{T\left(M^{V_{i_0}}\times W^{\left(eE_{i_k}\du eV_{i_k}\right)\setminus eE_{i_0}}\right)}{TM^{V_{i_0}}}\right|_{M^{\p_\win\G_{i_0}}\times M^{eV_{i_0}}}\times PM^{eE_{i_0}}
\ar[d]\\
M^{\G_{i_0}}\ar[rr]\ar[rru]&& M^{\p_{\win}\G_0}\times M^{eV_{i_0}} \times PM^{eE_{i_0}}. }\] 
We finally pick a connection $\nabla_r$ for $\nu^{eE^{\varphi_r}}$. 

Not all such choices will be permitted as some will not lift to operations for the other admissible fat graphs $\G_0,\ldots \G_{i_0-1}$. We will  add restrictions on these choices at a later point. But we first need to fix the information that we need to set up to potential liftings.

As in section \ref{sub:objects}, these choices define an operation
\[M^{\G_{i_0}}\times W^{eE_k\du eV_k} \map \Thom\left(\left.\kappa_{i_k}\right|_{M^{\G_{i_0}}}\right) \]
where $\kappa_{i_k}$ is the bundle of section \ref{sub:TP admissible} which is the pullback of the following diagram
\[\xymatrix{
\kappa_{i_k}\ar[r]\ar[d]&TM^{eH_{i_k}} \times \nu^{eV_{i_k}\du eE_{i_k}} \ar[d]\\
M^{\G_{i_k}}\ar[r]&M^{eH_{i_k}}\times M^{eV_{i_k}\du eE_{i_k}}
}\]
which we restrict along
\[M^{\G_{i_0}} \map M^{\G_{i_k}}.\]
\end{proof}

\begin{proof}[\underline{Choices to lift}]
For each $r\leq i_0$,  consider the diagram
\begin{equation}\label{lifting}
\xymatrix{
M^{V_{r}}\ar@{=>}[r]\ar@{=>}[d]&M^{V_r^\win}\times W^{\left(eE_{i_k}\du eV_{i_k}\right)\setminus eE_{r}} \\
M^{V_r^\win}\times M^{eV_{r+1}}\times M^{eE^{\varphi_r}}\ar@{=>}[ru].
}\end{equation}
To be able to lift a tubular neighborhood from the diagonal arrow to the top arrow, we pick tubular neighborhoods $\psi_r$ for the embedding 
\begin{equation}\label{eq:goingup}
M^{V_r} \map \left.\left(\frac{T\left(M^{V^\win_r}\times W^{\left(eE_{i_k}\du eV_{i_k}\right)\setminus eE_{r}}\right)}{T\left(M^{V_{r}^{\win}}\times M^{ eV_{r+1}}\times M^{eE^{\varphi_r}}\right)}\right)\right|_{M^{V_r^{\win}}\times  M^{eV_{r+1}}\times M^{eE^{\varphi_r}}}
\end{equation}
of $M^{V_r}$ into the normal bundle of the diagonal map. We again ask that it lies above  the identity on $M^{V^\win_r}$. 
Finally we chose a propagating flow $\Omega_{\varphi_r}$ for each $\nu^{eE^{\varphi_r}}$. Together these will allow us to carry the choices for $\G_{i_0}$ up the simplex under two types of restrictions.
\end{proof}

\begin{proof}[\underline{Lifting of the choices and restrictions.}]
Lets now lift the choices for $\G_{i_0}$ to similar choices for each $\G_r$ for $r\leq i_0$. Pick an $x$ in $\cT_{\sigma}(\sigma_0)$. Lets assume by induction that we have already constructed a tubular neighborhood $\varphi_{V,r+1}$ for 
\[M^{V_{r+1}}\map M^{V_{r+1}^\win} \times W^{(eE_{i_k}\du eV_{i_k})\setminus eE_{r+1}},\]
 which lives above the identity on $M^{V^{\win}_{r+1}}$. Lets built a tubular neighborhood $\varphi_{V,r}$ for 
\[M^{V_r}\map M^{V_r^\win} \times W^{(eE_{i_k}\du eV_{i_k})\setminus eE_{r}}.\]
We  consider the diagram
\begin{equation}\label{dia:bottom left}
\xymatrix{
M^{V_r^\win}\times M^{eV_{r+1}} \ar@{=>}[r]\ar[d]& M^{V_r^\win} \times W^{\left(eE_{i_k}\du eV_{i_k}\right)\setminus eE_{r+1}} \ar[d]\\
M^{V_{r+1}}\ar@{=>}[r]& M^{V_{r+1}^\win}\times W^{\left( eE_{i_k}\du eV_{i_k}\right)\setminus eE_{r+1}}
}\end{equation}
Since the tubular neighborhood $\varphi_{V,r+1}$  lives above the identity on $M^{V_{r+1}^\win}$, it lifts to a tubular neighborhood for the top map. We can then use the tubular neighborhood $\psi_{eE,r}$ for 
\[M^{eE^{\varphi_r}}\map W^{eE^{\varphi_r}}\]
 to get a tubular neighborhood for the diagonal arrow in the diagram of \eqref{lifting}. Since the map of \eqref{eq:goingup} is the inclusion of the top entry into the normal bundle of the bottom embedding in diagram \eqref{lifting}, we can compose the a tubular neighborhood $\psi_r$ for \eqref{eq:goingup} with the one we have just constructed for the bottom map to get a tubular neighborhood for 
\[M^{V_r} \map M^{V_r^\win}\times W^{(eE_{i_k}\du eV_{i_k})\setminus eE_{r}}\]
as desired. So far there are no restrictions.

We will apply a similar reasoning to construct a tubular neighborhood $\varphi_{\rho,r}$ for 
\begin{equation}\label{eq:leftr}
M^{V_r}\map \frac{T\left.\left( M^{V_r^\win}\times  W^{\left(eE_{i_k}\du eV_{i_k}\right)\setminus eE_r}\right)\right|_{M^{V_{r}}}}{ TM^{V_r}}\times M^{eH_{r}}.
\end{equation}
Since the diagram
\[\xymatrix{M^{V_r}\ar@{=>}[r]\ar[d] & M^{V_{r}^\win}\times M^{eH_r}\times M^{eE^{\varphi_r}\du eV_{r+1}}\ar[d]\\
M^{V_{r+1}}\ar@{=>}[r]& M^{V_{r+1}}\times M^{eH_{r+1}}
}\]
is a pull-back then so is
\begin{equation}\label{pullback r r+1}
\xymatrix{
M^{V_r}\ar[d] \ar[r] &
\left.\frac{T\left.\left( M^{V^\win_r}\times  W^{\left(eE_{i_k}\du eV_{i_k}\right)\setminus eE_{r+1}}\right)\right|_{M^{V_{r}}}}{ TM^{V_r}}\right|_{M^{V_r^{\win}\du eV_{r+1}}}\times M^{eH_{r}}\times M^{eE^{\varphi_r}} \ar[d]\\
M^{V_{r+1}}\ar[r]& \frac{T\left.\left( M^{V^\win_{r+1}}\times  W^{\left(eE_{i_k}\du eV_{i_k}\right)\setminus eE_{r+1}}\right)\right|_{M^{V_{r+1}}}}{ TM^{V_{r+1}}}\times M^{eH_{r+1}}.
}\end{equation}
Lets assume that we have, by induction, a tubular neighborhood $\varphi_{\rho,r+1}$ for the bottom map. We need assume that this tubular neighborhood restricts to a tubular neighborhood of the top map. This is the \emph{first restriction} that we put on the points of $\cT_{\sigma}(\sigma_0)$. We extend this neighborhood to one for
\[\xymatrix{
M^{V_r} \ar[r]&\left.\frac{T\left.\left( M^{V_r^\win}\times  W^{\left(eE_{i_k}\du eV_{i_k}\right)\setminus eE_{r+1}}\right)\right|_{M^{V_{r}}}}{ TM^{V_r}}\right|_{M^{V_r^{\win}\du eV_{r+1}}}\times M^{eH_{r}}}\times \nu^{eE^{\varphi_r}}\]
by using the connection on $\nu^{eE^{\varphi_r}}$.
We now consider the diagram
\begin{equation}\label{restriction 2}
\xymatrix{
M^{V_r}\ar[r]\ar[dr]& M^{V_r}\times M^{eH_r}\ar[d]\\
& \left.\frac{T\left.\left( M^{V_r^\win}\times  W^{\left(eE_{i_k}\du eV_{i_k}\right)\setminus eE_{r+1}}\right)\right|_{M^{V_{r}}}}{ TM^{V_r}}\right|_{M^{V_r^{\win}\du eV_{r+1}}} \times M^{eH_{r}} \times \nu^{eE^{\varphi_r}}
}\end{equation}
We have just constructed a tubular neighborhood for the bottom map. By crossing the tubular neighborhood $\psi_r$ of \eqref{eq:goingup} with the identity of $M^{eH}$, we get a tubular neighborhood for the vertical map. We restrict our choices so that these tubular neighborhood induce one for the inclusion of $M^{V_r}$ into the normal bundle of the vertical map. This is a \emph{second and last restriction} on the points of $\cT_\sigma(\sigma_0)$. Note that this restriction only asks that the tubular neighborhood of the diagonal map be small enough to sit inside the tubular neighborhood of the vertical one.

We now construct for each $r$ a propagating flow $\Omega_{\rho,r}$ for the normal bundle of \eqref{eq:leftr}. Lets assume by induction that we have chosen one $\Omega_{\rho,r+1}$ for $r+1$. Then we consider again the pullback square of \eqref{pullback r r+1}.
The propagating flow $\Omega_{\rho,r+1}$ for the normal bundle of its bottom map restricts to one for the normal bundle of its top map by 
\[\xymatrix{
\Omega_{top} : \lambda_{\mathit{top}}\ar[r]&
\lambda_{\mathit{bottom}}\ar[rr]^{\Omega_{\rho,r+1}}&& \chi^{v}_c(\lambda_{\mathit{bottom}})\ar[r]
& \chi^{v}_c(\lambda_{\mathit{top}})
}\]
where the last map restricts the vector fields to the appropriate fibres. Hence we get a propagating flow for the normal bundle of 
\[\xymatrix{
M^{V_r} \ar[r]&\left.\frac{T\left.\left( M^{V^\win_r}\times  W^{\left(eE_{i_k}\du eV_{i_k}\right)\setminus eE_{r+1}}\right)\right|_{M^{V_{r}}}}{ TM^{V_r}}\right|_{M^{V_r^{\win}\du eV_{r+1}}}\times M^{eE^{\varphi_r}}\times M^{eH_{r}}
}\]
We extend this propagating flow to one for  the normal bundle of
\[\xymatrix{
M^{V_r} \ar[r]&\left.\frac{T\left.\left( M^{V^\win_r}\times  W^{\left(eE_{i_k}\du eV_{i_k}\right)\setminus eE_{r+1}}\right)\right|_{M^{V_{r}}}}{ TM^{V_r}}\right|_{M^{V_r^{\win}\du eV_{r+1}}}\times \nu^{eE^{\varphi_r}}\times M^{eH_{r}}
}\]
by adding the propagating flow $\Omega_{\varphi_r}$ for $\nu^{eE^{\varphi_r}}$ that is part of $x$. Finally, we consider the pullback diagram
\[\xymatrix{
M^{V_r}\ar[r]\ar[d]& \left.\frac{T\left.\left( M^{V^\win_r}\times  W^{\left(eE_{i_k}\du eV_{i_k}\right)\setminus eE_{r+1}}\right)\right|_{M^{V_{r}}}}{ TM^{V_r}}\right|_{M^{V_r^{\win}\du eV_{r+1}}}\times \nu^{eE^{\varphi_r}}\times M^{eH_{r}}\ar[d]\\
M^{V_r}\ar[r] & \left.\frac{T\left.\left( M^{V^\win_r}\times  W^{\left(eE_{i_k}\du eV_{i_k}\right)\setminus eE_{r+1}}\right)\right|_{M^{V_{r}}}}{ TM^{V_r}}\right|_{M^{V_r^{\win}\du eV_{r+1}}}\times W^{eE^{\varphi_r}}\times M^{eH_{r}}
}\]
where the vertical map uses the tubular neighborhood for
\[M^{eE^{\varphi_r}}\map W^{eE^{\varphi_r}}.\]
We have constructed a propagating flow for the bottom map
and  we lift this propagating to get a propagating flow $\Omega_{\rho,r}$ for the normal bundle of the top map.
\end{proof}

\begin{proof}[\underline{Proof of the first property.}]
For any point $x$ in $\cT_{\sigma}(\sigma_0)$, we have shown how to lift the choices for $\G_{i_0}$ to choices for any $\G_r$ with $r\leq i_0$. We therefore get operations
\[\xymatrix{
 M^{\p_\win \G_r} \times W^{eE_{i_k}\du eV_{i_k}} \ar[rr]&&\Thom\left(\left.\kappa_{\G_{i_k}}\right|_{M^{\G_r}}\right)
 }\]
for each such $r$. We will prove that the each diagram
\begin{equation}\label{eq:comp r r+1}
\xymatrix{
 M^{\p_\win \G_r} \times W^{eE_{i_k}\du eV_{i_k}} \ar[rr]\ar[d]&&\Thom\left(\left.\kappa_{\G_{i_k}}\right|_{M^{\G_r}}\right)\ar[d]\\
 M^{\p_\win \G_{r+1}} \times W^{eE_{i_k}\du eV_{i_k}} \ar[rr]&&\Thom\left(\left.\kappa_{\G_{i_k}}\right|_{M^{\G_{r+1}}}\right)
}\end{equation}
commutes.

For this purpose, we will consider the diagram
\begin{equation}\label{eq:dia r r+1}
\xymatrix{
M^{\G_r}\ar@{=>}[r]\ar@{=>}[rd]\ar[dd]& M^{\p_\win\G_r}\times PM^{eE_r}\times M^{eV_r}\ar@{=>}[d]&\ar[l] M^{\p_\win\G_r}\times M^{eE_r\du eV_r} \ar@{=>}[r]\ar@{=>}[d]& M^{\p_\win \G_r}\times W_{\G_{i_k}}\ar[dd]\\
& M^{\p_\win\G_r}\times PM^{eE_r}\times M^{eE^{\varphi_r}\du eV_{r+1}}\ar[d]&\ar[l] M^{\p_\win\G_r}\times M^{eE_{r+1}\du eV_{r+1}}\ar@{=>}[ru]\ar[d]\\
M^{\G_{r+1}}\ar@{=>}[r]& M^{\p_\win\G_{r+1}} \times PM^{eE_{r+1}}\times M^{eV_{r+1}}&\ar[l] M^{\p_\win\G_{r+1}}\times M^{eE_{r+1}\du eV_{r+1}} \ar@{=>}[r]& M^{\p_\win \G_{r+1}}\times W_{\G_{i_k}}
}\end{equation}
We already have tubular neighborhoods for the double arrows in the top and the bottom rows. From the choices already made, we will construct tubular neighborhood for all other double arrows in this diagram. These already appeared secretely in our construction of the liftings. 

Lets first consider the right-most part of this diagram and its finite dimensional equivalent
\begin{equation}\label{dia:left two downstairs}
\xymatrix{
M^{V^{\win}_r}\times M^{eE_r\du eV_r} \ar@{=>}[r]^{(C)}\ar@{=>}[d]_{(B)}& 
M^{V^{\win}_r}\times W_{\G_{i_k}}\ar[dd]\\
M^{V^{\win}_r}\times M^{eE_{r+1}\du eV_{r+1}}\ar@{=>}[ru]_{(A)}\ar[d]\\
M^{V^{\win}_{r+1}}\times M^{eE_{r+1}\du eV_{r+1}} \ar@{=>}[r]&
M^{V^{\win}_{r+1}}\times W_{\G_{i_k}}.
}\end{equation}
Lets construct the tubular neighborhood for (A) explicitly. The
diagram
\[\xymatrix{
M^{V_r^\win}\times M^{eV_{r+1}} \ar@{=>}[r]\ar[d]& M^{V_r^\win} \times W^{\left(eE_{i_k}\du eV_{i_k}\right)\setminus eE_{r+1}} \ar[d]\\
M^{V_{r+1}}\ar@{=>}[r]& M^{V_{r+1}^\win}\times W^{\left( eE_{i_k}\du eV_{i_k}\right)\setminus eE_{r+1}}
}\]
already appeared in diagram \eqref{dia:bottom left} of page \pageref{dia:bottom left}. We have already shown that our choice of a tubular neighborhood $\varphi_{V,r+1}$ for the bottom row gives a tubular neighborhood $\tilde{\varphi}_{V,r+1}$ for the top row. We get a tubular neighborhood $\psi_A = \tilde{\varphi}_{V,r+1}\times \psi_{eE,r}$ for arrow (A) by taking the product of this tubular neighborhood and of the chosen tubular neighborhood for the 
\[M^{eE^{\varphi_s}} \map W^{eE^{\varphi_s}}.\]
This choice for $(A)$ gives a commutative diagram
\[\xymatrix{
M^{V^{\win}_{r}}\times W^{eE_{i_k}\du eV_{i_k}}\ar[d]\ar[r]&
\Thom\left(\frac{M^{V^{\win}_r}\times \left.TW^{(eV_{i_k}\du eE_{i_k})\setminus eE_{r+1}}\right|_{M^{eV_{r+1}}}}{M^{V^{\win}_r}\times TM^{eV_{r+1}}} \times \nu^{eE_{r+1}}\right)\ar[d]\\
M^{V^{\win}_{r+1}} \times W^{ eE_{i_k}\du eV_{i_k}}\ar[r]&\Thom\left(\frac{M^{\p_\win\G_{r+1}}\times \left.TW^{(eV_{i_k}\du eE_{i_k})\setminus eE_{r+1}}\right|_{M^{eV_{r+1}}}}{M^{V^{\win}_{r+1}}\times TM^{eV_{r+1}}} \times \nu^{eE_{r+1}}\right).
}\]

We now consider the triangle of diagram \eqref{dia:left two downstairs}. As part of $\cT_{\sigma}(\sigma_0)$, we have already chosen a tubular neighborhood $\psi_r$ for the embedding
\[M^{V_r} \map \left.\left(\frac{T\left(M^{V^\win_r}\times W^{\left(eE_{i_k}\du eV_{i_k}\right)\setminus eE_{r}}\right)}{T\left(M^{V_{r}^{\win}}\times M^{ eV_{r+1}}\times M^{eE^{\varphi_r}}\right)}\right)\right|_{M^{V_r^{\win}}\times  M^{eV_{r+1}}\times M^{eE^{\varphi_r}}}\]
of \eqref{eq:goingup}. We define a tubular neighborhood for the twisted version
\[\tilde{B} : M^{V_r}\times M^{eE_r}  \map \left.\left(\frac{T\left(M^{V^\win_r}\times W^{\left(eE_{i_k}\du eV_{i_k}\right)\setminus eE_{r}}\right)}{T\left(M^{V_{r}^{\win}}\times M^{ eV_{r+1}}\times M^{eE^{\varphi_r}}\right)}\right)\right|_{M^{V_r^{\win}}\times  M^{eV_{r+1}}\times M^{eE^{\varphi_r}}} \times \nu^{eE_r}.
\]
of $(B)$ by $\psi_r\times Id_{\nu^{eE_r}}$. Since we have used $\psi_r$ to construct $\varphi_{V,r}$, we  
get compatible choices for the diagram
\[\xymatrix{
M^{V^{\win}_r}\times M^{eE_r\du eV_r} \ar@{=>}[r]^{(C)}\ar@{=>}[d]_{(B)}& 
M^{V^{\win}_r}\times W^{eE_{i_k}\du eV_{i_k}}\\
M^{V^{\win}_r}\times M^{eE_{r+1}\du eV_{r+1}}\ar@{=>}[ru]_{(A)}
.}\]
We can lift the choices for $(A)$ and $(\tilde{B})$ by using the fact that these tubular neighborhoods live above the identity on $M^{V_r^\win}$. Since this is what we have done to get the original operations, our  tubular neighborhoods are compatible for this part of \eqref{eq:dia r r+1}.

Lets now consider the middle part of the diagram
\[\xymatrix{
 M^{\p_\win\G_r}\times PM^{eE_r}\times M^{eV_r}\ar@{=>}[d]_{(D)}&\ar[l] M^{\p_\win\G_r}\times M^{eE_r\du eV_r}\ar@{=>}[d]^{(B)}\\
 M^{\p_\win\G_r}\times PM^{eE_r}\times M^{eE^{\varphi_r}\du eV_{r+1}}\ar[d]&\ar[l] M^{\p_\win\G_r}\times M^{eE_{r+1}\du eV_{r+1}}\ar[d]\\
 M^{\p_\win\G_{r+1}} \times PM^{eE_{r+1}}\times M^{eV_{r+1}}&\ar[l] M^{\p_\win\G_{r+1}}\times M^{eE_{r+1}\du eV_{r+1}}.
 }\]
 We have already picked a tubular neighborhood for the twisted version $\tilde{B}$ of $B$. We construct a tubular neighborhood 
$\psi_r \times Id_{\nu_{PM}^{eE_{r}}}$
 for the twisted version of $(D)$ similarly.
These choices give a commutative diagram
\[\xymatrix{
\Thom\left(M^{\p_\win\G_r}\times \nu_{PM}^{eE_r} \times \frac{T\left(W^{(eE_{i_k}\du eV_{i_k})\setminus eE_r}\right)}{TM^{eV_r}}\right)&
\ar[l]
\Thom\left(M^{\p_\win\G_r} \times 
\frac{TW_{\G_{i_k}}}{TM^{eV_r}\times TM^{eE_{r}}}\right)\\
\Thom\left(M^{\p_\win\G_r}\times \nu_{PM}^{eE_r}\times \frac{T\left(W^{(eE_{i_k}\du eV_{i_k})\setminus eE_{r}}\right)}{T\left(M^{eV_{r+1}\du eE^{\varphi_r}}\right)}\right)\ar[d]\ar[u]&
\Thom\left(M^{\p_\win\G_r}\times \frac{TW_{\G_{i_k}}}{T\left(M^{eV_{r+1}\du  eE_{r+1}}\right)}\right)
\ar[u]\ar[d]\ar[l]\\
\Thom\left(M^{\p_\win\G_{r+1}}\times \nu_{PM}^{eE_{r+1}} \times \frac{T\left(W^{(eE_{i_k}\du eV_{i_k})\setminus eE_{r+1}}\right)}{TM^{eV_{r+1}}}\right)&
\ar[l]
\Thom\left(M^{V_{r+1}^\win} \times \frac{T\left(W^{(eE_{i_k}\du eV_{i_k})}\right)}{TM^{eV_{r+1}\du eE_{r+1}}}\right).
}\]

We are now left to study the left-most part of the diagram. 
\[\xymatrix{
M^{\G_r}\ar@{=>}[r]\ar@{=>}[rd]_{(E)}\ar[dd]& M^{\p_\win\G_r}\times PM^{eE_r}\times M^{eV_r}\ar@{=>}[d]^{(D)}\\
& M^{\p_\win\G_r}\times PM^{eE_r}\times M^{eE^{\varphi_r}\du eV_{r+1}}\ar[d]\\
M^{\G_{r+1}}\ar@{=>}[r]& M^{\p_\win\G_{r+1}} \times PM^{eE_{r+1}}\times M^{eV_{r+1}}
}\]
We already have chosen a tubular neighborhood for every double arrow except $(E)$.
We construct a tubular neighborhood for $(E)$ by first considering the pull-back diagram \[\xymatrix{
M^{V_r}\ar[d] \ar[r] &
\left.\frac{T\left.\left( M^{V_r^{\win}}\times  W^{\left(eE_{i_k}\du eV_{i_k}\right)\setminus eE_{r+1}}\right)\right|_{M^{V^{\win}_{r}}}}{ TM^{V_r}}\right|_{M^{V_r^{\win}\du eV_{r+1}}}\times M^{eH_{r}}\times M^{eE^{\varphi_r}} \ar[d]\\
M^{V_{r+1}}\ar[r]& \frac{T\left.\left( M^{V^{\win}_{r+1}}\times  W^{\left(eE_{i_k}\du eV_{i_k}\right)\setminus eE_{r+1}}\right)\right|_{M^{V^{\win}_{r+1}}}}{ TM^{V_{r+1}}}\times M^{eH_{r+1}}.
}\]
which has already appeared in \eqref{pullback r r+1}. The first restriction ensured that the choice of a tubular neighborhood for the bottom arrow restricts to one for the top arrow. We have also constructed a propagating flow for the tubular neighborhood of the top map. Just as before, we extend this tubular neighborhood to one for 
\[\xymatrix{
M^{V_r} \ar[r]
&\left.\frac{T\left.\left( M^{V^\win_r}\times  W^{\left(eE_{i_k}\du eV_{i_k}\right)\setminus eE_{r+1}}\right)\right|_{M^{V^\win_{r}}}}{ TM^{V_r}}\right|_{M^{V_r^{\win}\du eV_{r+1}}}\times M^{eH_{r}}\times \nu^{eE^{\varphi_r}}}\]
by using the connection on $\nu^{eE^\varphi}$. We also extend the propagating flow for 
\[M^{V_r}\map
\left.\frac{T\left.\left( M^{V_r^{\win}}\times  W^{\left(eE_{i_k}\du eV_{i_k}\right)\setminus eE_{r+1}}\right)\right|_{M^{V^{\win}_{r}}}}{ TM^{V_r}}\right|_{M^{V_r^{\win}\du eV_{r+1}}}\times M^{eH_{r}}\times M^{eE^{\varphi_r}} 
\]
to one for
\[M^{V_r}\map
\left.\frac{T\left.\left( M^{V_r^{\win}}\times  W^{\left(eE_{i_k}\du eV_{i_k}\right)\setminus eE_{r+1}}\right)\right|_{M^{V^{\win}_{r}}}}{ TM^{V_r}}\right|_{M^{V_r^{\win}\du eV_{r+1}}}\times M^{eH_{r}}\times \nu^{eE^{\varphi_r}} 
\]
by using the propagating flow for $\nu^{eE^\varphi_r}$. Using this tubular neighborhood and this propagating flow, we can define a tubular neighborhood for $(E)$ in the usual way. 

By construction the tubular neighborhood on the finite diagram are compatible. 
Note that the propagating flows move constant paths to constant paths following the vertices. In particular, we get a commutative diagram 
\[\xymatrix{
\Thom\left(\left.\kappa_{\G_{i_k}}\right|_{\G_{r}}\right)\ar[dd]
\\
&\ar[ul]\Thom\left(M^{\p_\win\G_r}\times \nu_{PM}^{eE_r}\times \frac{T\left(W^{(eE_{i_k}\du eV_{i_k})\setminus eE_{r}}\right)}{T\left(M^{eV_{r+1}\du eE^{\varphi_r}}\right)}\right)\ar[d]\\
\Thom\left(\left.\kappa_{\G_{i_k}}\right|_{\G_{r+1}}\right)&\ar[l]\Thom\left(M^{\p_\win\G_{r+1}}\times \nu_{PM}^{eE_{r+1}} \times \frac{T\left(W^{(eE_{i_k}\du eV_{i_k})\setminus eE_{r+1}}\right)}{TM^{eV_{r+1}}}\right)
}\]
Finally, by our choice of tubular neighborhood for the twisted version of
\[M^{\G_r}\map M^{\p_\win\G_r}\times PM^{eE_r}\times M^{eV_r}\]
we also get a commutative diagram
\[\xymatrix{
\Thom\left(\left.\kappa_{\G_{i_k}}\right|_{\G_{r}}\right)&\ar[l]\Thom\left(M^{\p_\win\G_r}\times \nu_{PM}^{eE_r} \times \frac{T\left(W^{(eE_{i_k}\du eV_{i_k})\setminus eE_r}\right)}{TM^{eV_r}}\right)\\
&\ar[ul]\ar[u]\Thom\left(M^{\p_\win\G_r}\times \nu_{PM}^{eE_r}\times \frac{T\left(W^{(eE_{i_k}\du eV_{i_k})\setminus eE_{r}}\right)}{T\left(M^{eV_{r+1}\du eE^{\varphi_r}}\right)}\right)
}\]
This completes the proof of property 1.

\end{proof}

\begin{proof}[\underline{The value of $\cT_\sigma$ on morphisms and property 2}]
Say we have a morphism 
\[\sigma_1=\left(\G_{i_{j_m}}\smap \cdots \smap \G_{i_{j_0}}\right)\map
\sigma_0= \left(\G_{i_k}\smap  \ldots \smap \G_{i_0}\right)\]
of $\cN\sigma$. We will define a continuous map
\[\cT_\sigma(\sigma_1)\map \cT_{\sigma}(\sigma_0)\]
which respects the operations. 
Let $x$ be an element of $\cT_\sigma(\sigma_1)$. This element picks choices for 
\begin{equation}\label{eq:sigma1}
\xymatrix@C=15pt{
M^{\G_{i_{j_0}}}\ar[r]& M^{\p_\win\G_{i_{j_0}}} \times PM^{eE_{i_{j_0}}}\times M^{eV_{i_{j_0}}} &\ar[l] M^{\p_\win\G_{i_{j_0}}} \times M^{eE_{i_{j_0}}\du eV_{i_{j_0}}} \ar[r]&
M^{\p_\win\G_{i_{j_0}}}\times W_{\G_{i_{j_m}}}
}\end{equation}
and the information necessary to lift these choices up to choices for $\G_r$ with $r\leq i_{j_0}$. 

In the construction of $\cT_\sigma(\sigma_1)$ we have shown how to get expand the choices for the bottom row to similar choices for the other rows. Since $i_0\leq i_{j_0}$, $x$ determines the necessary tubular neighborhood and propagating flows for
\[\xymatrix@C=15pt{
M^{\G_{i_0}}\ar[r]& M^{\p_\win\G_{i_0}} \times PM^{eE_{i_0}}\times M^{eV_{i_0}} &\ar[l] M^{\p_\win\G_{i_{0}}} \times M^{eE_{i_{0}}\du eV_{i_{0}}} \ar[r]&
M^{\p_\win\G_{i_{0}}}\times W_{\G_{i_{j_m}}}.}\]
We also have the necessary ``going up" information since any $r$ less than $i_0$ is also less than $i_{j_0}$. In particular, we have a continuous map
\[\cT_{\sigma}(\sigma_1)\map \cT_{\sigma}(\G_{i_0} \smap \cdots \smap \G_{i_{j_m}}).\]

To get a point in $\cT_{\sigma}(\sigma_0)$, we simply have to change our Euclidean space from $W_{\G_{i_{j_m}}}$ to $W_{\G_{i_k}}$. As part of the composition
\[\varphi_{i_k i_{j_m}} : \G_{i_k} \smap \G_{i_k-1}\smap\cdots \smap \G_{i_{j_m}},\]
we get a splitting
\[W_{\G_{i_k}} \cong W_{\G_{i_{j_m}}} \oplus \frac{W^{eE_{i_k}\du eV_{i_k}}}{W^{eE_{i_{j_m}}\du eV_{i_{j_m}}}}.\]
Using this splitting, we can transform 
\[\xymatrix@C=8pt{
M^{\G_{i_0}} \ar[r]&M^{\p_{\win}\G_{i_0}}\times PM^{eE_{i_0}} \times M^{eV_{i_0}} &\ar[l] M^{\p_\win\G_{i_0}}\times M^{eE_{i_0}\du eV_{i_0}} \ar[r]\ar[rd]_{-\times 0}& M^{\p_\win\G_{i_0}} \times W^{eE_{i_{k}}\du eV_{i_{k}}}\ar[d]\\
&&& M^{\p_\win\G_{i_0}}\times W^{eE_{i_{j_m}}\du eV_{i_{j_m}}} \times \frac{W^{eE_{i_{k}}\du eV_{i_{k}}}}{W^{eE_{i_{j_m}}\du eV_{i_{j_m}}}}
}\]
and similarly for all other lines in the diagram \eqref{eq:sigma1}. We can extend all of the choices by using the identity on the extra
\[\frac{W^{eE_{i_{k}}\du eV_{i_{k}}}}{W^{eE_{i_{j_m}}\du eV_{i_{j_m}}}}\]
term. This gives the map
\[\cT_{\sigma}(\sigma_1)\map \cT_{\sigma}(\sigma_0)\]
and  for any $r\leq i_0$ a commutative diagram
\[\xymatrix{
M^{\p_\win \G_{r}} \times W^{eE_{i_{j_m}}\du eV_{i_{j_m}}} \times \frac{W^{eE_{i_k}\du eV_{i_k}} }{W^{eE_{i_{j_m}}\du eV_{i_{j_m}}}} \ar[d] \ar[rr]^{\mu_{x,r}\times Id}&& \Thom\left(\kappa_{\G_{i_{j_m}}}|_{M^{\G_r}}\times  \frac{W^{eE_{i_k}\du eV_{i_k}} }{W^{eE_{i_{j_m}}\du eV_{i_{j_m}}}} \right)\ar[d]\\
M^{\p_\win \G_{r}} \times W^{eE_{i_k}\du eV_{i_k}} \ar[rr]^{\mu_{y,r}}&& \Thom(\kappa_{\G_{i_k}}|_{M^{\G_r}}).
}\]
\end{proof}

\begin{proof}[\underline{Construction of the natural transformations}]
Say we have a morphism $g$ in $\cN\Fat$
\[g: \tilsigma=\left(\G_{i_k}\smap \cdots \G_{i_0}\right) \subset \sigma=\left(\G_n\smap \cdots \smap \G_0\right).\]
The morphism
\[\cT(g)=(\cN g, \theta) : (\cN\tilsigma, \cT_{\tilsigma}) \map (\cN\sigma, \cT_{\sigma})\]
of $\Top^?$ is composed of the natural inclusion
\[\cN g : \cN\tilsigma\map \cN\sigma\] 
and of a natural transformation between the composition 
\[\theta : \cT_\sigma\cdot \cN g\Leftarrow \cT_\tilsigma.\]
In particular, for each subsimplex 
\[\sigma_0 = \left(\G_{i_{j_m}}\smap\cdots \smap \G_{i_{j_0}}\right)\]
of $\tilsigma$, we will construct a map
\[\theta_{\sigma_0}:\cT_{\sigma}(\sigma_0)\map \cT_{\tilsigma}(\sigma_0).\]

A point $x$ in $\cT_\sigma(\sigma_0)$ contains tubular neighborhoods for 
\begin{equation}\label{eqn:basic ij0}
\xymatrix{
M^{\G_{i_{j_0}}}\ar@{=>}[r] & M^{\G_{i_{j_0}}} \times PM^{eE_{i_{j_0}}}\times M^{eV_{i_{j_0}}} &\ar[l] M^{\p_\win\G_{i_{j_0}}} \times M^{eE_{i_{j_0}}\du eV_{i_{j_0}}} \ar@{=>}[r] & M^{\p_\win\G_{i_{j_0}}}\times W_{\G_{i_{j_m}}}.
}\end{equation}
It also contains the information necessary to lift these choices through the simplex
\[\G_0\smap\G_1\smap \cdots \smap \G_{i_{j_0}}.\]
A point $y$ in $\cT_{\tilsigma}(\sigma_0)$ considers the same diagram \eqref{eqn:basic ij0} but contains the information to lift through the simplex
\[\G_{i_0}\smap \G_{i_1}\smap \cdots \smap \G_{i_{j_0}}.\]
Hence to go from $\cT_{\sigma}(\sigma_0)$ to $\cT_{\tilsigma}(\sigma_0)$ we need to compose some of the information used to lift the operations. 

From $x$, we will construct a tubular neighborhood for
\[M^{V_{i_r}} \map \left.\left(\frac{M^{V^\win_{i_r}} \times W^{(eE_{i_k}\du eV_{i_k})\setminus eE_{i_r}}}{M^{V^\win_{i_r}}\times M^{eV_{i_{r+1}}}\times M^{eE^{\varphi_{i_{r+1}i_r}}}}\right)\right|_{M^{V^{\win}_{i_r}}\times M^{eV_{i_{r+1}}} \times M^{eE^{\varphi_{i_{r+1}i_r}}}}
\]
for $r=0,\ldots, k-1$. Here $\varphi_{i_ri_{r-1}}$ denotes the composition
\[\varphi_{i_{r}i_{r-1}} : \G_{i_r}\smap \G_{i_r-1}\smap \G_{i_r-2}  \cdots\smap \G_{i_{r-1}}.\]
The point $x$ gives a tubular neighborhood $\psi_s$ for each
\[
M^{V_s}\map \left.\frac{M^{V_s^\win}\times W^{(eE_{i_k}\du eV_{i_k})\setminus eE_s}}{M^{V_s^\win}\times TM^{eV_{s+1}}\times TM^{eE^\varphi_s}}\right|_{M^{V_s^\win}\times M^{eV_{s+1}}\times M^{eE^{\varphi_s}}}.
\]
By first restricting to $M^{V_{i_r}^\win}$ and then adding the identity on
\[\nu^{eE^{\varphi_r}\du\cdots \du eE^{\varphi_{s-1}}}\]
we get a tubular neighborhood for
\[\xymatrix{
M^{V_r^\win} \times M^{eV_s}\times M^{eE^{\varphi_r}\du \cdots \du eE^{\varphi_{s-1}}}
\ar[d]\\
\frac{M^{V_r^\win}\times TW^{(eE_{i_k}\du eV_{i_k})\setminus eE_s}}{M^{V_r^\win}\times TM^{eV_{s+1}}\times TM^{eE^{\varphi_s}}}\times  \nu^{eE^\varphi_r}\times \cdots \times \nu^{eE^{\varphi_{s-1}}}
\ar@{=}[r]&
\frac{M^{V_r^\win}\times TW^{(eE_{i_k}\du eV_{i_k})\setminus eE_r}}{M^{V_r^\win}\times TM^{eV_{s+1}}\times TM^{eE^{\varphi_r}\du \cdots \du eE^{\varphi_s}}}.
}\]
These gives tubular neighborhood for the vertical inclusion of diagram
\[\xymatrix{
M^{V_{i_r}}\ar[rr]\ar[d]\ar@/_100pt/[ddd]&& M^{V_{i_r}^\win}\times W^{\left(eE_{i_k}\du eV_{i_k}\right)\setminus eE_r} \\
M^{V_{i_r}^\win} \times M^{eV_{(i_r)+1}}\times M^{eE^{\varphi_r}}\ar[d]\ar[rur]\\
\vdots\ar[d]\\
M^{V_{i_r}^\win} \times M^{eV_{i_{(r+1)}}} \times M^{eE^{\varphi_{i_{(r+1)}}}\du \cdots \du eE^{\varphi_{(i_r)+1}}}\ar[rruuu]
}\]
twisted by the normal bundles of the diagonal embeddings. In particular, we can compose these embedding and get the required one. Note that since composition and restriction are associative, this operation is also associative. 

For the rest of the data, we join the connections on each $\nu^{eE^{\varphi_s}}$ into connections for the 
\[\nu^{eE^{\varphi_{i_{r+1}i_r}}} =\nu^{eE^{\varphi_{i_{r+1}}}}\times \cdots \times \nu^{eE^{\varphi_{(i_r)+1}}}.\]
We do the same on the propagating flow using that if propagating flows on two bundle $\lambda_A\smap A$ and $\lambda_B\smap B$, give one on $\lambda_A\times \lambda_B$ by 
\[\xymatrix{
\lambda_A\times \lambda_B \ar[rr]^-{\Omega_A\times \Omega_B}
&& \chi^v_c(\lambda_A)\times \chi^v_c(\lambda_B) \ar[r]& \chi^{v(\lambda_A\times \lambda_B)}.
}\]
\end{proof}

\begin{proof}[\underline{$\cT_\sigma(\sigma_0)$ is contractible}]
Say
\[\sigma_0 =(\G_{i_k}\smap \cdots\smap \G_{i_0}) \qquad \sigma=(\G_n\smap \cdots \smap \G_0).\]
The space $\cT_\sigma(\sigma_0)$ is a subspace of the following product of contractible space.
\begin{equation}\label{eq:big contractible}
\begin{split}
&\hspace{-30pt}\Tub\left[\eqref{eq:left basic}\right]\times \prod_{r\leq i_0} \Tub\left[\eqref{eq:left sup r}_r\right] \times \prod_r \cP\left[\eqref{eq:left sup r}_r\right]
 \times \prod_{r\leq i_0} \cC\left[\eqref{eq:left sup r}_r\right] \times \Tub\left[\eqref{eq:left sup 0}\right]\\
 & \times \Tub\left[\eqref{eq:right basic}\right] \times \prod_{r} \Tub\left[\eqref{eq:goingup}_r\right]\times \cP\left[\eqref{eq:right basic}\right] 
\end{split}\end{equation}
where the equation number represents its normal bundle, $\cC$ is the space of connection and $\cP$ the space of propagating flows. We claim that $\cT_\sigma(\sigma_0)$ is carved into this space as a homotopy pullback over a contractible diagram. 

We will prove this by induction on the difference between $i_0$ and $0$. If $i_0=0$ then we are only considering the diagram
\[\xymatrix{
M^{\G_{i_0}}\ar@{=>}[r]& M^{\p_\win \G_{i_0}}\times PM^{eE_{i_0}}\times M^{eV_{i_0}}&\ar[l] M^{\p_\win \G_{i_0}}\times M^{eE_{i_0}\du eV_{i_0}}\ar@{=>}[r]& M^{\p_\win\G_{i_0}}\times W_{\G_{i_k}}.
}\]
There are no restrictions and 
\[\cT_{\G_n\smap\cdots\smap \G_{i_0}}(\G_{i_k}\smap \cdots\smap \G_{i_0})\]
 is equal to the contractible space of \eqref{eq:big contractible}.

Assume that we have shown that 
\[\cT_{\G_n\smap\cdots\smap \G_1}(\G_{i_k}\smap \cdots \smap\G_{i_0})\]
is contractible. Assume also that we have that the map
\[\cT_{\G_n\smap\cdots\smap \G_1}(\G_{i_k}\smap \cdots \smap\G_{i_0})\map \Tub\left(
M^{V_1}\smap \frac{TM^{V_1}\times TW^{(eE_{i_k}\du eV_{i_k})\setminus eE_1}}{TM^{V_1}}\times M^{eH_1}\right). \]
is a fibration (which is clearly true when $i_0=0$).
When constructing $\cT_\sigma(\sigma_0)$, we are considering one extra line in our diagram. In particular, we need to add the information necessary to lift from the line of $\G_1$ to the line of $\G_0$. This also gives two extra restrictions. 

Let
\[\Tub^{\word{square}}(\varphi_0: \G_{1}\smap \G_0) \subset \Tub\left(M^{V_1}\smap \frac{M^{V^\win_{1}}\times TW^{(eE_{i_k}\du eV_{i_k})\setminus eE_{1}}}{TM^{V_{1}}} \times M^{eH_{1}}\right)
\]
be the subspace which contains all tubular neighborhood which lift up 
\[\xymatrix{
M^{V_0}\ar[d]\ar[r]&\left.\frac{T\left(M^{V_0^\win}\times W^{(eE_{i_k}\du eV_{i_k})\setminus eE_1}\right)}{TM^{V_0}}\right|_{M^{V_0^\win\du eV_1}}\times M^{eH_0}\times M^{eE^{\varphi_0}}\ar[d]\\
M^{V_1}\ar[r]& \frac{M^{V^\win_{1}}\times TW^{(eE_{i_k}\du eV_{i_k})\setminus eE_{1}}}{TM^{V_{1}}} \times M^{eH_{1}}
}\]
as in \eqref{pullback r r+1}. Note that the restriction to the top map is a fibration
\[\Tub^{\word{square}}(\varphi_0)\map \Tub\left(M^{V_0}\smap\left.\frac{T\left(M^{V_0}\times M^{(eE_{i_k}\du eV_{i_k})\setminus eE_1}\right)}{TM^{V_0}}\right|_{M^{V_0^\win\du eV_1}}\times M^{eH_0}\times M^{eE^{\varphi_0}}\right).\]
Similarly let $\Tub^{\word{triangle}}(\varphi_0)$ be the subset of
\[\begin{split}
 \cC(\nu^{eE^{\varphi_0}})&\times \Tub\left(TM^{V_0}\smap \frac{M^{V_0}\times TW^{(eE_{i_k}\du eV_{i_k})\setminus eE_{1}}}{TM^{V_0}}\times M^{eH_0}\times M^{eE^{\varphi_0}}\right)\\
 &\times \Tub\left(M^{V_0}\smap \frac{M^{V^\win_0}\times TW^{(eE_{i_k}\du eV_{i_k})\setminus eE_{1}}}{M^{V^{\win}_0}\times T\left(M^{eV_1}\times M^{eE^{\varphi_0}}\right)}\right)
\end{split}\]
which contains the triples which induce a tubular neighborhood on the top map of \eqref{restriction 2}. By the appendix, we know that both these spaces are contractible.

The first restriction can be expressed by a pullback diagram
\[\xymatrix{
\word{Strict}^1_{\sigma}(\sigma_0)\ar[r]\ar[d]&\Tub^{\word{square}}(\varphi_0)\ar[d]\\
\cT_{\G_n\smap\cdots \smap\G_1}(\sigma_0)\ar[r]&\Tub\left(
M^{V_1}\smap \frac{TM^{V^\win_1}\times TW^{(eE_{i_k}\du eV_{i_k})\setminus eE_1}}{TM^{V_1}}\times M^{eH_1}\right).
}\]
Since the bottom arrow is a fibration, $\word{Strict}^1_\sigma(\sigma_0)$ is also the homotopy pullback and hence is  contractible.
The second restriction can then be expressed as a second pullback diagram.
\[\xymatrix@C=10pt{
\cT_{\sigma}(\sigma_0) \ar[r]\ar[d]&\Tub^{\word{triangle}}(\varphi_0)\ar[d]\\
\word{Strict}^1_{\sigma}(\sigma_0) \ar[r]&\Tub\left(M^{V_0} \smap \frac{TM^{V^\win_0}\times TW^{(eE_{i_k}\du eV_{i_k})\setminus eE_1}}{TM^{V_0}} \times M^{eH_0}\times M^{eE^{\varphi_0}}\right)
}\]
Again the bottom arrow is a fibration since it is a composition of two fibrations. In particular, $\cT_{\sigma}(\sigma_0)$ is contractible.
\end{proof}

\subsection{The definition of the space $\Tub(\sigma)$.}
\label{sub:spaces}\label{sec:thickening}
In section \ref{sub:higher simplices}, we have defined a functor
\[\cT : (\cN\aFat)^{op} \map \Top^?\]
from the category of simplices of $\Fat$ to the category of diagrams in spaces. 
The functor $\cT$ takes a $k$-simplex $\sigma$ to the pairs $(\cN \sigma, \cT_\sigma)$
where $\cN\sigma$ is the category of subsimplices of $\sigma$ and $\cT_\sigma$ includes the necessary choices to determine the operations for all subsimplices of $\sigma$. We will now put all these choices in a single space.

Consider the functor
\[\Tub : \cN\aFat_1 \map \Top\]
which sends $\sigma$ to the homotopy limit
\[\Tub(\sigma) = \holim{\cN\sigma} \cT_c.\]
For a description of homotopy limits and homotopy colimits see \cite{BousfieldKan}.

\begin{lemma}
The space $\Tub(\sigma)$ is contractible.
\end{lemma}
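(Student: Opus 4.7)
The plan is to deduce contractibility directly from the already-established contractibility of each value of the diagram $\cT_\sigma$. Recall from property (4) of Proposition \ref{prop:key} that for every subsimplex $\sigma_0$ of $\sigma$ the space $\cT_\sigma(\sigma_0)$ is contractible. Thus the diagram
\[\cT_\sigma : \cN\sigma \map \Top\]
is objectwise weakly contractible, and in particular every structure map $\cT_\sigma(g):\cT_\sigma(\sigma_1)\map\cT_\sigma(\sigma_0)$ is automatically a weak equivalence since any map between contractible spaces is.

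The first step is to invoke homotopy-invariance of the homotopy limit construction, as proved in \cite{BousfieldKan}: an objectwise weak equivalence of diagrams $F\Rightarrow G$ (of fibrant-enough spaces, e.g.\ of CW-type, which we may freely arrange here) induces a weak equivalence $\holim F \map \holim G$. Applying this to the unique natural transformation $\cT_\sigma \Rightarrow \underline{*}$ from $\cT_\sigma$ to the constant diagram at a point, we obtain a weak equivalence
\[\Tub(\sigma) = \holim_{\cN\sigma} \cT_\sigma \maplu{\simeq} \holim_{\cN\sigma} \underline{*}.\]

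The second step is to identify the right-hand side. By the standard cotensor formula for homotopy limits of constant diagrams,
\[\holim_{\cN\sigma} \underline{*} \ \simeq\ \Map\bigl(|\cN\sigma|, *\bigr) = *,\]
so the homotopy limit of the constant point-diagram is itself a point. Combining the two steps yields $\Tub(\sigma) \simeq *$, as required.

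The argument has no serious obstacle: everything follows formally from homotopy-invariance of $\holim$ together with property (4) of the previous proposition. The only mildly technical point one should make sure of is that $\cT_\sigma$ takes values in spaces for which the Bousfield--Kan homotopy limit is homotopy-invariant; this is harmless here because the spaces $\cT_\sigma(\sigma_0)$ are (finite) products of spaces of smooth sections, tubular neighborhoods, connections and propagating flows, all of which have the homotopy type of CW complexes.
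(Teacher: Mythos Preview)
Your proof is correct and follows essentially the same approach as the paper's own one-line argument, which simply observes that $\Tub(\sigma)$ is a homotopy limit of a diagram of contractible spaces (over a contractible indexing category). You have merely spelled out in more detail the standard homotopy-invariance argument that underlies this observation.
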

\begin{proof}
This space $\Tub(\sigma)$ is a homotopy limit over a contractible diagram of contractible spaces.
\end{proof}

By construction, a point $x$ of $\Tub(\sigma)$ determines for each subsimplex $\sigma_0$ a map
\[ x_{\sigma_0} : |\cN(\sigma_0)| \map \cT_\sigma(\sigma_0)\]
between the geometric realization of $\cN(\sigma_0)$ and the space $\cT_{\sigma}(\sigma_0)$. In particular, it picks a continuous family of operations parameterized by $|\cN(\sigma_0)|\cong \Delta^k$.
In fact, a point in the contractible space $\Tub(\sigma)$ fixes the operations for every fat graph in $\sigma$ as well as the higher homotopies between them. 

\begin{example}
If $\sigma=\G$ then 
\[\Tub(\sigma)= \holim{\cN\sigma} {\cT_\sigma} = \cT_\sigma(\sigma)\]
which determines an operation for $\G$.
\end{example}
\begin{example}
If $\sigma = \G_1\smap \G_0$ then $\cN\sigma$ is a category
with three elements $\G_0, \G_1,\sigma$ and the following morphisms
\[\G_0 \map \sigma \imap \G_1.\]
The space
\[\holim{\cN\sigma}(\cT_\sigma)=\word{homotopy\ pullback}\left(\cT_{\sigma}(\G_0)\smap\cT_{\sigma}(\sigma) \imap \cT_{\sigma}(\G_1)\right)\]
is the space of triples $(a,\gamma, b)$. Here $a \in \cT_{\sigma}(\G_0)$, $b\in \cT_{\sigma}(\G_1)$ and $\gamma$ is a path in $\cT_{\sigma}(\sigma)$ which starts at the image of $a$ and ends at the image of $b$ in $\cT_{\sigma}(\sigma)$.
By our definition of $\cT_{\sigma}$, $a$ fixes an operation 
\[M^{\p_\win \G_0}\times W^{eE_0\du eV_0} \maplu{\mu_a} \Thom(\kappa_{\G_0}).\]
The point $b$ picks a pair of compatible operations
\[\xymatrix{
M^{\p_\win \G_0}\times W^{eE_1\du eV_1} \ar[d]\ar[r]^{\mu_b|_{\G_0}}& \Thom(\kappa_{\G_1}|_{M^{\G_0}})\ar[d]\\
M^{\p_\win \G_1}\times W^{eE_1\du eV_1} \ar[r]^{\mu_b}& \Thom(\kappa_{\G_1}).
}\]
Finally $\gamma$ is a homotopy between
\begin{equation}\label{eq:10}
\xymatrix{
M^{\p_\win \G_0}\times W_{\G_1} \ar[r]^-{\cong}&
M^{\p_\win \G_0}\times W^{\G_0}\times 
\frac{W_{\G_1}}{W_{\G_0}} \ar[r]^-{\mu_a}& \Thom\left(\kappa_{\G_0} \times \frac{W_{\G_1}}{W_{\G_0}}\right)\ar[r]^-{\cong}
& \Thom\left(\kappa_{\G_1}|_{M^{\G_0}}\right).
}\end{equation}
and $\mu_b|_{\G_0}$.
\end{example}

\begin{example}
Say $\sigma=\left(\G_2\smap \G_1\smap \G_0\right)$ is a 2-simplex. We then get the following diagram of spaces
\[\xymatrix@=10pt{
\cT_\sigma(\G_2) \ar[rrr]\ar[rrrrdd]\ar[rrrddd]&&& \cT_\sigma(\G_2\smap \G_1)\ar[rdd] &&&\ar[lll] \cT(\G_1)\ar[lldd]\ar[ddd]\\\\
&&&&\cT_{\sigma}(\sigma)\\
&&&\cT_\sigma(\G_2\smap \G_0)\ar[ur]&&& \ar[ull]\cT_\sigma (\G_1\smap \G_0) \\\\\\
&&&&&&\ar[llluuu]\ar[lluuuu]\cT(\G_0)\ar[uuu].
}\]
A point $x$ of  $\Tub(\sigma)$ picks
\begin{itemize}
\item $(\mu_0,\mu_1,\mu_2)$ where $\mu_i$ is an element of $\cT_\sigma(\G_i)$.
\item $(h_{01},h_{02},h_{12})$ where $h_{ij}$ is a path in $\cT(\G_j\smap \G_i)$.
\item $\kappa_{012}$ a two-cell in $\cT_{\sigma}(\sigma)$.
\end{itemize}
Hence $x$  determines an operation 
\[\mu_0 : M^{\p_\win\G_0}\times W_{\G_0}\map \Thom\left(\kappa_{\G_0}\right).\]
It also fixes an operation $\mu_1$for $\G_1$ which restricts to an operation on $\G_0$.
\[\xymatrix{
M^{\p_\win \G_0}\times W_{\G_1}\ar[d]\ar[r]^{(\mu_1)|_{\G_0}}& \Thom(\kappa_{\G_1}|_{M^{\G_0}})\ar[d]\\
M^{\p_\win \G_1}\times W_{\G_1}\ar[r]^{\mu_1}& \Thom(\kappa_{\G_1}).
}\]
 It also determines an operation $\mu_2$ for $\G_2$ which restricts to one for both $\G_1$ and $\G_0$. 
\[\xymatrix{
M^{\p_\win \G_0}\times W_{\G_2} \ar[d]\ar[rr]^{(\mu_2)|_{\G_0}}&& \Thom(\kappa_{\G_2}|_{M^{\G_0}})\ar[d]\\
M^{\p_\win \G_1}\times W_{\G_2}  \ar[d]\ar[rr]^{(\mu_2)|_{\G_1}}&& \Thom(\kappa_{\G_2}|_{M^{\G_1}})\ar[d]\\
M^{\p_\win \G_2}\times W_{\G_2}  \ar[rr]^{\mu_2}&& \Thom(\kappa_{\G_2})
}\]
It then chooses a homotopy $h_{10}$ between the operation of diagram \eqref{eq:10} and $\mu_{1}$. It also picks a homotopy $h_{20}$ between the two maps of 
\[\xymatrix{
M^{\p_\win \G_0}\times W_{\G_2} \ar[r]\ar@/^20pt/[rrr]^{\mu_2|_{\G_0}}&M^{\p_\win \G_0}\times W_{\G_0} \times \frac{W_{\G_2}}{W_{\G_0}} \ar[r]^{\mu_0}& \Thom\left(\kappa_{\G_0}\times \frac{W_{\G_2}}{W_{\G_0}} \right)\ar[r]& \Thom(\kappa_{\G_2}|_{M^{\G_0}}).
}\]
The point $x$ also gives a homotopy $h_{12}$ between the two operations of
\[\xymatrix{
M^{\p_\win \G_1}\times W_{\G_2} \ar[r]\ar@/^20pt/[rrr]^{\mu_2|_{\G_1}}&M^{\p_\win \G_1}\times W_{\G_1} \times \frac{W_{\G_2}}{W_{\G_1}} \ar[r]^{\mu_1}& \Thom\left(\kappa_{\G_1}\times \frac{W_{\G_2}}{W_{\G_1}} \right)\ar[r]& \Thom(\kappa_{\G_2}|_{M^{\G_1}})
}\]
which restricts to a homotopy $h_{12}|_{\G_0}$ between
\[\xymatrix{
M^{\p_\win \G_0}\times W_{\G_2} \ar[r]\ar@/^20pt/[rrr]^{\mu_2|_{\G_0}}&M^{\p_\win \G_0}\times W_{\G_1} \times \frac{W_{\G_2}}{W_{\G_1}} \ar[r]^{(\mu_1)|_{\G_0}}& \Thom\left(\kappa_{\G_1}|_{M^{\G_0}}\times \frac{W_{\G_2}}{W_{\G_1}} \right)\ar[r]& \Thom(\kappa_{\G_2}|_{M^{\G_0}})
}\]
Finally $x$ contains a triangle
\[|\cN\sigma| \cong \Delta^2 \map \cT_\sigma(\sigma)\]
in $\cT_\sigma(\sigma)$ which gives a 2-homotopy of operations on $\G_0$ between the composition of $h_{01}$ with the restirction $h_{12}|_{\G_0}$ and $h_{02}$.
\end{example}

In general, a point $x$ in $\Tub(\sigma)$ determines for each $\sigma_0\subset \sigma$ a map
\[x_{\sigma_0} : |\cN\sigma_0| \map \cT_{\sigma}(\sigma_0).\]
These maps are compatible which means that for any inclusion $g: \sigma_1\subset \sigma_0$, we have a commutative diagram
\[\xymatrix{
|\cN\sigma_1| \ar[r]\ar[d]_{x_{\sigma_1}} &|\cN\sigma_0| \ar[d]^{x_{\sigma_0}}\\
\cT_{\sigma}(\sigma_1) \ar[r]^{\cT_{\sigma}(g)}& \cT_{\sigma}(\sigma_0)
 }\]

Lets translate proposition \ref{prop:key} to the spaces $\Tub(\sigma)$.

\begin{proposition}\label{prop:tub}
There is a functor
\[\Tub : \cN\Fat_1\map \Top\]
which has the following properties.
\begin{enumerate}
\item For any simplices
\[\sigma_0=\left(\G_{i_k}\smap \cdots \smap \G_{i_0}\right) \subset \sigma=\left(\G_n\smap \cdots \smap\G_0\right)\]
there are maps
\[\xymatrix@R=15pt{
\Tub(\sigma)\times |\cN\sigma_0|\times W_{\G_{i_k}}\times M^{\p_\win \G_{0}} \ar[d]\ar[rr]^-{\mu_{\sigma_0\subset\sigma}|_{\G_0}}&&\Thom\left(\left.\kappa_{\G_{i_k}}\right|_{M^{\G_0}}\right)\ar[d]\\
\vdots\ar[d]&&\vdots \ar[d]\\
\Tub(\sigma)\times |\cN\sigma_0|\times W_{\G_{i_k}}\times M^{\p_\win\G_{i_0}} \ar[rr]^-{\mu_{\sigma_0\subset\sigma}}&&\Thom\left(\left.\kappa_{\G_{i_k}}\right|_{M^{\G_{i_0}}}\right).
}\]
\item For a pair 
\begin{eqnarray*}
\sigma_1 &=& \left(\G_{i_{j_l}}\smap \cdots \smap \G_{i_{j_0}}\right)\\
\sigma_0&=&\left(\G_{i_k}\smap\cdots \smap \G_{i_0}\right)
\end{eqnarray*}
of subsimplices of $\sigma$, we get commutative diagrams
\[\xymatrix{
\Tub(\sigma)\times |\cN\sigma_1|\times W_{\G_{i_{j_1}}} \times\frac{W_{\G_{i_k}}}{W_{\G_{i_{j_1}}}} \times M^{\p_\win\G_{r}} \ar[rr]^-{\mu_{\sigma_1\subset \sigma}|_{\G_{r}}}\ar[d]&&\Thom\left(\left.\kappa_{\G_{i_{j_l}}}\right|_{M^{\G_{r}}} \times \frac{W_{\G_{i_k}}}{W_{\G_{i_{j_1}}}}\right) \ar[d]\\
\Tub(\sigma)\times |\cN\sigma_0|\times W_{\G_{i_k}} \times M^{\p_\win\G_{r}} \ar[rr]^-{\mu_{\sigma_0\subset\sigma}}&& \Thom\left(\left.\kappa_{\G_{i_{k}}}\right|_{M^{\G_r}}\right)
}\]
for $r\leq i_0$. Here the vertical maps are given by the usual splitting of the Euclidean spaces and by the natural inclusion of subsimplices of $\sigma_1$ as simplices of $\sigma_0$.
\item For a subsimplex $\sigma_0$ of both $\tilsigma\subset\sigma$, we get  commutative diagrams
\[\xymatrix{
\Tub(\sigma) \times |\cN\sigma_0|\times M^{\p_\win\G_{r}}\times  W_{\G_{i_k}}  \ar[rr]^{\mu_{\sigma_0\subset\sigma}|_{M^{\G_r}}} \ar[d]&&\Thom\left(\left. \kappa_{\G_{i_k}}\right|_{M^{\G_{r}}}\right)\ar[d]\\
\Tub(\tilsigma)\times |\cN\sigma_0|\times W_{\G_{i_k}} \ar[rr]^{\mu_{\sigma_0\subset\tilsigma}|_{M^{\G_r}}} &&\Thom\left(\left. \kappa_{\G_{i_k}}\right|_{M^{\G_{r}}}\right)
}\]
for $r\leq i_0$.
\item $\Tub(\sigma)$ is contractible.
\end{enumerate}
\end{proposition}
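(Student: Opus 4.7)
The plan is to derive Proposition \ref{prop:tub} directly from Proposition \ref{prop:key} by taking homotopy limits, so that the four properties of $\cT_\sigma$ translate into the four corresponding properties of $\Tub$. The functor is already defined on objects as $\Tub(\sigma) = \holim_{\cN\sigma} \cT_\sigma$. To extend it to morphisms, I would proceed as follows. Given an inclusion $h:\tilsigma \subset \sigma$ in $\cN\Fat_1$, Proposition \ref{prop:key} provides $\cT(h) = (\cN h, \theta)$, consisting of the inclusion $\cN h: \cN\tilsigma \hookrightarrow \cN\sigma$ and a natural transformation $\theta: \cT_\sigma \cdot \cN h \Rightarrow \cT_\tilsigma$. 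The induced map
\[ \Tub(h): \Tub(\sigma) = \holim_{\cN\sigma} \cT_\sigma \;\longrightarrow\; \holim_{\cN\tilsigma}(\cT_\sigma \cdot \cN h) \;\stackrel{\theta_*}{\longrightarrow}\; \holim_{\cN\tilsigma} \cT_\tilsigma = \Tub(\tilsigma) \]
is obtained by first restricting the homotopy limit along $\cN h$, then applying the map induced by $\theta$. Functoriality of $\Tub$ reduces to functoriality of $\cT$.

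For the operations of property 1, I would invoke the standard description of the homotopy limit, recalled just before the proposition: a point $x \in \Tub(\sigma)$ is a coherent family of maps $x_{\sigma_0}: |\cN\sigma_0| \to \cT_\sigma(\sigma_0)$ which commute with the structure maps $\cT_\sigma(g)$ along inclusions $g:\sigma_1 \subset \sigma_0$. By Proposition \ref{prop:key}.1 each point of $\cT_\sigma(\sigma_0)$ parameterizes a whole tower of compatible operations $\mu_{\cdot,r}$ for $r \leq i_0$. Composing $x_{\sigma_0}$ with this parameterized assignment yields the desired
\[ \Tub(\sigma) \times |\cN\sigma_0| \times W_{\G_{i_k}} \times M^{\p_\win \G_r} \;\longrightarrow\; \Thom\!\left(\left.\kappa_{\G_{i_k}}\right|_{M^{\G_r}}\right), \]
which I take to be $\mu_{\sigma_0 \subset \sigma}$ restricted at $\G_r$. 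Property 2 is then immediate from the coherence $\cT_\sigma(g) \circ x_{\sigma_1} = x_{\sigma_0} \circ |\cN g|$ combined with Proposition \ref{prop:key}.2, which asserts precisely that each $\cT_\sigma(g)$ is operation-compatible in the sense needed.

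Property 3 is handled analogously: by construction $\Tub(h)$ is induced by $\theta$, and Proposition \ref{prop:key}.3 records that $\theta$ commutes with the operations of diagram \eqref{dia:comp op}, so the required square in Proposition \ref{prop:tub} commutes on the nose. Property 4 is exactly the lemma immediately preceding the proposition, which observes that $\Tub(\sigma)$ is a homotopy limit of contractible spaces (Proposition \ref{prop:key}.4) over the indexing category $\cN\sigma$, whose nerve is contractible because $\sigma$ is a terminal object. The hard work has therefore already been carried out in constructing $\cT$ and verifying Proposition \ref{prop:key}; what remains here is essentially formal bookkeeping via the universal property of $\holim$. The only mild subtlety is that the compatibility diagrams supplied by Proposition \ref{prop:key} are strictly (not merely homotopy) commutative, which is what allows them to survive the homotopy-limit construction and give the strict diagrams demanded by the statement.
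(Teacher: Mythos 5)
Your proposal is correct and matches the paper's approach: the paper defines $\Tub(\sigma)$ as $\holim_{\cN\sigma}\cT_\sigma$, records the contractibility lemma, notes that a point of $\Tub(\sigma)$ is a coherent family of maps $x_{\sigma_0}\colon|\cN\sigma_0|\to\cT_\sigma(\sigma_0)$, and then simply declares Proposition~\ref{prop:tub} to be the translation of Proposition~\ref{prop:key} to the spaces $\Tub(\sigma)$ without writing out the bookkeeping. Your writeup makes that translation explicit (how $\Tub$ acts on morphisms via restriction along $\cN h$ followed by $\theta_*$, and how properties 1--4 of $\cT$ carry over), which is exactly what the paper takes for granted.
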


\subsection{Second thickening of $\Fat$ to include the choices of $\Tub(\sigma)$.}

We are now ready to twist these choices of tubular neighborhoods into our model for the classifying spaces of the mapping class groups. We will construct a category $\til{\aFat}$ which lives above the category of simplices in $\aFat_1$. The fiber over a simplex $\sigma$ will be exactly $\Tub(\sigma)$. We then use the fact that $\Tub$ is a functor to move from one fiber to the next.

For this, we use Thomason's categorical construction for the homotopy colimits \cite{Thomason}. We let
\[\til{\aFat} = (\cN\aFat_1)^{op} \int  \Tub\]
be a category enriched over topological spaces.   An object of $\til{\aFat}$ is a pair $(\sigma, x)$ where $\sigma$ is a simplex of $\aFat$ and where $x$ is an element of $\Tub(\sigma)$. A morphism 
\[ (\sigma_0, x) \map (\sigma_1, y)\]
is simply an inclusion of morphism 
\[g : \sigma_1 \map \sigma_0\]
so that
\[y= \cT(g)(x).\]
And so $y$ is the restriction of the choices that $x$ contain for $\sigma$ to choices for $\sigma_1$. 

\begin{lemma}
The functor
\[\til{\aFat} \map \aFat\]
which sends a pair $(\sigma, x)$ to the first vertex $\G_k$ of $\sigma=(\G_k\smap\ldots \smap \G_0)$ induces a homotopy equivalence.
\end{lemma}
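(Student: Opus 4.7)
The plan is to identify $\til{\aFat}$ with a Grothendieck construction, collapse the contractible fibres via Thomason, and then recognize the base as a barycentric subdivision of $\aFat_1$.

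First, I would observe that by construction $\til{\aFat} = (\cN\aFat_1)^{op}\int \Tub$ is exactly Thomason's categorical model for the homotopy colimit. Hence
\[
|\til{\aFat}| \;\simeq\; \hocolim_{\sigma\in (\cN\aFat_1)^{op}} \Tub(\sigma).
\]
Property (4) of Proposition \ref{prop:tub} gives that every $\Tub(\sigma)$ is contractible, so the projection functor $\pi:\til{\aFat}\to (\cN\aFat_1)^{op}$ that forgets the point $x\in\Tub(\sigma)$ induces a homotopy equivalence
\[
|\til{\aFat}| \;\xrightarrow{\;\simeq\;}\; |(\cN\aFat_1)^{op}| \;=\; |\cN\aFat_1|.
\]

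Next, I would invoke the standard fact that, for any small category $\cC$, the geometric realization of the category of simplices $\cN\cC$ is homotopy equivalent to $|\cC|$ via the first-vertex (a.k.a.\ last-vertex) map; this is the barycentric subdivision principle (Quillen's Theorem~A applied to the functor $\cN\cC\to\cC$ sending $(\G_k\smap\cdots\smap\G_0)$ to $\G_k$, whose under-categories $\G/(\mathrm{first\ vertex})$ admit an initial object given by the length-zero simplex $(\G)$ with identity). Applied to $\cC=\aFat_1$, this yields a homotopy equivalence $|\cN\aFat_1|\simeq |\aFat_1|$. Combining this with Lemma \ref{lem:fat1 vs fat} gives $|\cN\aFat_1|\simeq |\aFat|$.

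Finally, I would check that the composition of the three equivalences above is precisely the functor of the lemma: the first map forgets $x$, the second takes the first vertex of a simplex, and the third is the forgetful equivalence $\aFat_1\to\aFat$ of Lemma \ref{lem:fat1 vs fat}; together they send $(\sigma,x)$ with $\sigma=(\G_k\smap\cdots\smap\G_0)$ to $\G_k\in\aFat$, which is exactly the functor in the statement. The main technical obstacle is the bookkeeping between $\cN\aFat_1$ under coinclusion and its opposite: one must verify that the variance in Thomason's construction matches so that the projection $\pi$ really is a homotopy equivalence (this is where contractibility of the fibres is used), and that the first-vertex map—rather than the last-vertex map—gives the correct deformation retract. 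Once these orientations are sorted out, the rest is a formal composition of three homotopy equivalences.
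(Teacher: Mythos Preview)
Your overall architecture is correct and matches the paper's: use Thomason to identify $|\til{\aFat}|$ with a homotopy colimit, collapse the contractible fibres $\Tub(\sigma)$ to land in $|\cN\aFat_1|$, and then argue that the first-vertex functor $\psi:(\cN\aFat_1)^{op}\to\aFat_1$ is a homotopy equivalence (together with Lemma~\ref{lem:fat1 vs fat}). The paper does exactly this.

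The gap is in your justification of the last step. You claim that for each $\G$ the comma category $\G\backslash\psi$ has an initial object, namely the $0$-simplex $(\G)$ with the identity. This is false. An object of $\G\backslash\psi$ is a pair $(\sigma,\varphi)$ with $\sigma=(\G_k\to\cdots\to\G_0)$ and $\varphi:\G\to\G_k$; a morphism out of $((\G),id)$ in this comma category requires a morphism $(\G)\to\sigma$ in $(\cN\aFat_1)^{op}$, i.e.\ a coinclusion $\sigma\to(\G)$, which exists only when $\G$ literally occurs as one of the vertices $\G_j$ of $\sigma$. For a general $(\sigma,\varphi)$ with $\varphi$ a non-identity morphism, there is no such map, so $((\G),id)$ is not initial.

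What actually works---and what the paper does---is a zigzag of natural transformations. Define $G:\G\backslash\psi\to\G\backslash\psi$ by prepending $\G\xrightarrow{\varphi}\G_k$ to the simplex, so $G(\sigma,\varphi)=(\G\xrightarrow{\varphi}\G_k\to\cdots\to\G_0,\ id_\G)$. Then there are natural transformations $Id\Leftarrow G\Rightarrow cst_{((\G),id)}$: the first drops the new front vertex, the second drops everything else. This zigzag contracts $\G\backslash\psi$ and Quillen's Theorem~A applies. So your invocation of Theorem~A is the right idea, but the contractibility of the comma categories needs this extra prepending trick rather than an initial object.
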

\begin{proof}
By Thomason \cite{Thomason}, the geometric realization of $\til{\aFat}$ is the homotopy colimit of the functor $\Tub$. Since the $\Tub(\sigma)$ are contractible, we get that the forgetful map
\[\hocolim{(\cN\aFat)^{op}} F \map |\cN\aFat|\]
is a homotopy equivalence.

Consider the functor
\[\psi : (\cN\aFat)^{op} \map \aFat\]
which sends a simplex $\sigma$ to its first vertex $\G_k$ and a morphism
\[\left(\G_n \map \ldots \map \G_0\right) \map \left(\G_{i_k}\map \ldots \map \G_{i_0}\right)\]
to the composition 
\[\G_n\map \G_{n-1}\map\ldots \map \G_{i_k}\]
whenever $i_k\neq n$ and to the identity $\G_n\map \G_n$ whenever $i_k=n$. It suffices to show that the geometric realization of $\psi$ is a homotopy equivalence. We shall show that for any $\G \in \aFat$ the geometric realization of the category $\G\setminus \psi$ is contractible.

We will define three functors with two natural transformations
\[\xymatrix{
\G\setminus \psi\ar@/_40pt/[rrr]^{\mathit{cst}}="cst" \ar@/^40pt/[rrr]_{Id}="Id" \ar[rrr]^G="GId"_G="Gcst"&& & \G\setminus \psi \ar@{=>}"GId";"Id"^{\theta_1} \ar@{=>}"Gcst";"cst"^{\theta_2}
}\]
The functors send an object
\[(\sigma,\varphi) =(\G_k\smap \cdots \smap \G_0, \varphi:\G\smap \G_l)\]
to
\begin{eqnarray*}
Id\left(\sigma,\varphi\right) &=& \left(\sigma,\varphi \right)\\
G\left( \sigma,\varphi \right) &=& \left(\G\maplu{\varphi}\G_k\smap \G_{k-1}\smap\ldots \smap \G_0, \G\maplu{Id_\G} \G \right)\\
\mathit{cst}\left(\sigma,\varphi \right) &=& \left(\G\maplu{Id_\G} \G, \G\right)
\end{eqnarray*}
We extend these in the natural way to functors. The first natural transformation takes 
\[
\theta_1\left(\sigma,\varphi \right)=\left[ d_{k(k-1)\cdots 0} : \left(\G\maplu{\varphi}\G_k\smap \ldots \smap \G_0, \G\maplu{Id}\G \right)\map \left(\sigma,\varphi\right)\right]
\]
where $d_{k(k-1)\cdots 0}$ is the inclusion of $\sigma$ into 
\[\G\maplu{\varphi}\G_k\smap \ldots \smap \G_0.\]
Similarly the second natural transformation takes
\[
\theta_2\left(\sigma,\varphi\right)=\left[d_{k+1}\left( \G\maplu{\varphi}\G_k\smap \ldots \smap \G_0,\G\maplu{Id}\G\right)\map \left( \G, \G\maplu{Id}\G\right)\right]
\]
Hence the identity on $\G\setminus \psi$ and a constant map are linked by natural transformations. This shows that $|\G\setminus \psi|$ is contractible and hence $\psi$ is a homotopy equivalence as claimed.
\end{proof}

\begin{remark}
Similarly the functors
\[\cN\aFat \map \Fat\]
which sends a simplex $(\G_k\smap\ldots \smap \G_0)$ to its last vertex $\G_0$ also induces a homotopy equivalence on the geometric realization.
\end{remark}

We now have a thicker category $\til{\aFat}$ which realizes to a space with the correct homotopy typer. This new model contains all the information we need to construct the Thom collapses and hence the operations in a compatible way.

\subsection{Construction of the Thom collapse.}
\label{sub:construction}

Using homotopy colimits, we will construct categories which twists the spaces $M^{\p_\win\G}$, $M^{\G}$ and $\kappa_\G$ above the all admissible fat graphs. 

Consider the functor
\[M^{-} :\tilfatop \map \Top\]
which sends an object 
\[\sigma=\left(\G_n\smap \cdots \smap \G_0\right)\] 
to the space $M^{\G_0}$ of piecewise smooth maps from the geometric realization of $\G_0$ to $M$. We again use Thomason's construction \cite{Thomason} and consider the category 
\[ \tilfatop\int M^{-}\]
whose objects are pairs $(\sigma,f:\G_0\map M)$ and whose morphisms 
\[ \varphi: (\sigma,f:\G_0 \smap M) \map (\til{\sigma}, \til{f}:\tilG_0\smap M)\]
are the morphisms 
\[\til{\sigma}=\left(\G_{i_k}\subset \cdots \subset \G_{i_0}=\tilG_0\right)\subset \sigma =\left(\G_n\smap\cdots \subset \G_0 \right)\]
of $\tilfat$ with $g=f\cdot \varphi_{i_00}$
where $\varphi_{i_00}$ is the composition
\[\varphi_{i_00}:\G_{i_0}\smap \cdots\smap \G_0\]
The homotopy type of the geometric realization of this category is the homotopy colimit of the functor $M^{-}$ which is homotopy equivalent to the Borel construction
\[\Mtop = \left|\left(\aFat\right)^{op}\int M^{-}\right| \simeq \DU{S} \left(E\Diff(S:\p) \underset{\Diff(S;\p)}\times \Map(S,M)\right)\]
of pairs $(\Sigma, f:\Sigma\map M)$ where $\Sigma$ is a Riemann surfaces. 

The space  
\[\left| \tilfat \int M^-\right|\]
will be the basis for the virtual bundle $\kappa$. The Thom spectrum of $\kappa$ will be the target of the generalized Thom collapse whose construction is the goal of this entire section.

For each simplex 
\[\sigma = \left(\G_n\smap \cdots \smap \G_0\right),\]
we let $\kappa^+_\sigma=\left.\kappa_{\G_n} \right|_{\G_0}$ which is the target bundle of the operation associated to the diagram
\[\xymatrix{
M^{\G_0}\ar[r]& M^{\p_\win\G_0}\times PM^{eE_0}\times M^{eV_0}&\ar[l] M^{\p_\win\G_0}\times M^{eE_0\du eV_0} \ar[r]& M^{\p_\win\G_0}\times W_{\G_n}.
}\]
We will denote by $\kappa_\sigma^-$ the trivial bundle
\[W_{\G_n}\times M^{\G_0}\map  M^{\G_0}.\]

We claim that these bundles glue to form a bundle over the space 
\[\left| \tilfat \int M^-\right|\]
To keep track of how we patch these bundles together, we consider the following category $\VB$ of virtual bundles. The objects of $\VB$ are pairs $(X,\kappa_+,\kappa_-)$ where $X$ is a topological space and the $\kappa_\pm$'s are bundles above $X$. A morphism 
\[ [f,\theta,\psi^+,\psi^-] :(X,\kappa_+,\kappa_-)\map (Y, \lambda_+,\lambda_-)\]
is an equivalence class of quadruples where $f:X\smap Y$ is a map of spaces, $\theta$ is a bundle above $X$ and the $\psi^{\pm}$ are isomorphisms
\[\psi^\pm : f^*\lambda_\pm \oplus \theta \maplu{\cong}\kappa_\pm\]
of bundles. Two such quadruples $(f_i,\theta_i, \psi_i^+,\psi_i^-)$ represent the same morphism exactly when $f_1=f_2$ and when there is an isomorphism
\[\zeta_\theta :\theta_1\map \theta_2\]
of bundles which sends the $\psi_1^\pm$ to the $\psi_2^\pm$. The objects are discreet but the morphisms are topologized as the quotient
\[\left(\DU{(f, \theta)} \mathit{Isom}(\theta\oplus \kappa_+ , \lambda_+)\times \mathit{Isom}(\theta\oplus \kappa_- , \lambda_-)\right)/\sim.\]
Composition sends
\[\xymatrix{
(X,\kappa_+,\kappa_-) \ar[rrr]^{[f_X,\theta_X,\psi_X^+,\psi_X^-]}&&&
(Y,\mu_+,\mu_-)\ar[rrr]^{[f_Y,\theta_Y,\psi_Y^+,\si_Y^-]}&&&
(Z,\lambda_+,\lambda_-)
}\]
to the morphism
\[\xymatrix{(X,\kappa_+,\kappa_-)\ar[rrrrr]^{[f_Y\circ f_X, \left(f_X^*\theta_Y\right)\oplus \theta_X, \psi^+,\psi^-]}&&&&& (Z, \lambda_+,\lambda_-)}\]
where $\psi^\pm$ is the composition
\[\xymatrix{
 (f_Y\circ f_X)^*\lambda_{\pm} \oplus \left(f_X^*\theta_y \oplus \theta_X\right)\ar[r]& f_X^*\left(f_Y^*(\lambda_\pm)\oplus \theta_Y\right)\oplus \theta_X \ar[rr]^{f_X^*(\psi_Y^\pm)\oplus Id} &&f_X^*(\mu_\pm) \oplus \theta_X \maplu{\psi_X^\pm} \kappa_{\pm}
}\]

\begin{lemma}
There is a functor
\[L\ :\ \cN \Fat_1^{op} \map \VB\]
which is defined on objects as
\[L \left(\G_k\smap\ldots \smap \G_0,x\right)= (M^{\G_0}, \kappa_{\sigma}^+,\kappa_{\sigma}^-).\]
\end{lemma}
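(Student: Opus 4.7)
The plan is to specify $L$ on morphisms and verify functoriality. A morphism in $\cN\Fat_1^{op}$ from $\sigma=(\G_n\smap\cdots\smap\G_0)$ to $\tilsigma=(\G_{i_k}\smap\cdots\smap\G_{i_0})$ is an inclusion of subsimplices $\tilsigma\subset\sigma$ given by indices $0\leq i_0<\cdots<i_k\leq n$, and I must produce a quadruple $(f,\theta,\psi^+,\psi^-)$ representing a morphism $L(\sigma)\to L(\tilsigma)$ in $\VB$.

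The map $f:M^{\G_0}\to M^{\G_{i_0}}$ is the embedding induced by the composed morphism $\G_{i_0}\smap\cdots\smap\G_0$ in $\aFat_1$: since each arrow collapses edges and so gives a surjection of geometric realizations, precomposition yields an inclusion of mapping spaces. For the trivial summand, the splitting data in $\aFat_1$ attached to the composition $\G_n\smap\cdots\smap\G_{i_k}$ provides a natural isomorphism $W_{\G_n}\cong W_{\G_{i_k}}\oplus (W_{\G_n}/W_{\G_{i_k}})$. I take $\theta:=(W_{\G_n}/W_{\G_{i_k}})\times M^{\G_0}$; this immediately yields the isomorphism $\psi^-:f^*\kappa_{\tilsigma}^-\oplus\theta\cong\kappa_{\sigma}^-$.

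The heart of the construction is the isomorphism $\psi^+:\kappa_{\G_{i_k}}|_{M^{\G_0}}\oplus\theta\cong\kappa_{\G_n}|_{M^{\G_0}}$. A rank count makes this plausible: for each extra edge collapsed in passing from $\G_n$ down to $\G_{i_k}$, the bundle $\kappa_{\G_n}$ acquires two additional $TM$-factors (for the two collapsed half-edges) and two additional $\nu$-factors (for the collapsed edge and the absorbed extra vertex), contributing $TM^2\oplus\nu^2$ beyond $\kappa_{\G_{i_k}}$, while $W_{\G_n}$ gains exactly $W^2$ over $W_{\G_{i_k}}$. These balance via the canonical isomorphism $TM\oplus\nu\cong W\times M$ supplied by the fixed embedding $f:M\hookrightarrow W$ together with its orthogonal complement, so that $TM^2\oplus\nu^2\cong W^2$ fiberwise. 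The per-vertex decomposition encoded by the $\aFat_1$ splitting organizes these matchings (one summand of $W_{\G_n}/W_{\G_{i_k}}$ per collapsed extra edge or extra vertex, paired with the corresponding $TM$ or $\nu$ factor in $\kappa_{\G_n}$) into a global bundle isomorphism $\psi^+$.

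Functoriality under a composite $\sigma_2\to\sigma_1\to\sigma_0$ in $\cN\Fat_1^{op}$ reduces to two ingredients: the associativity of the $\aFat_1$ splittings, which is immediate from how splittings of the short exact sequences defining $\Split$ compose, and the naturality of the canonical identification $TM\oplus\nu\cong W$, which is built into the linear structure of $W$. The main obstacle in writing out the argument is the bookkeeping needed to track, through a chain of morphisms, which extra vertices, edges, and half-edges of the finest graph correspond to which of the coarser one, and to check that the resulting isomorphisms are consistent with the pullback along $f$; once this correspondence is organized by the $\aFat_1$ data, the identifications are forced fiberwise by the pointwise $TM\oplus\nu\cong W$ isomorphism, and the equivalence class $[f,\theta,\psi^+,\psi^-]$ is well defined since $\theta$ is determined up to canonical isomorphism by the splitting data.
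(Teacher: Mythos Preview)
Your identification of $f$, $\theta$, and $\psi^-$ matches the paper, as does the observation that associativity in $\aFat_1$ governs functoriality on the $\kappa^-$ side. The divergence, and the gap, is in $\psi^+$.

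Your construction of $\psi^+$ is a rank count followed by an appeal to a splitting $TM\oplus\nu\cong W$. This is underspecified. The excess $TM$-factors in $\kappa^+_\sigma$ over $f^*\kappa^+_{\tilsigma}$ are indexed by the collapsed extra half-edges $eH_n\setminus eH_{i_k}$, whereas $\theta$ is $W$ tensored with the quotient $\bR^{eE_n\du eV_n}/\bR^{eE_{i_k}\du eV_{i_k}}$, and the $\aFat_1$ data only organizes the latter. To turn your idea into an actual bundle isomorphism you must supply an explicit identification of $\bR^{eH_n\setminus eH_{i_k}}$ with this quotient, choose a global splitting of $0\to TM\to W\times M\to\nu\to 0$ (the paper never fixes a metric on $W$, so there is no given ``orthogonal complement''), and then verify that these three ingredients---your half-edge identification, the $\aFat_1$ splitting applied to the $\nu$-factors, and the $TM\oplus\nu\cong W$ splitting---assemble associatively under nested subsimplices. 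That verification is the actual content of the lemma, and your sketch waves it away.

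The paper takes a different route that avoids these choices entirely. It regards $\kappa^+_\sigma$ as the target bundle of the zigzag defining the operation for $\sigma$, writes a three-row diagram interpolating between that zigzag and the one for $\tilsigma$ (with an idle factor $\frac{W_{\G_n}}{W_{\G_{i_k}}}$ appended), and reads off the targets: the bottom-to-middle comparison is a pullback, and the middle-to-top comparison is the $\aFat_1$ isomorphism $W_{\G_n}\cong W_{\G_{i_k}}\oplus\frac{W_{\G_n}}{W_{\G_{i_k}}}$ applied at the right end. The composite is $\psi^+$, and associativity follows directly from that of composition in $\Fat_1$. No splitting $TM\oplus\nu\cong W$ enters; the isomorphism is forced by the diagram, which is also what makes it compatible with the Thom collapses constructed afterwards.
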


\begin{proof}
It suffices to define $L$ on morphisms and to prove associativity. Take any morphism
\[g: \sigma=(\G_n\smap \cdots \smap \G_0) \map \sigma_0=(\G_{i_k}\smap \dots \smap \G_{i_0}).\]
of $\cN\Fat_1^{op}$. We will define a continuous map
\[\Split(\varphi_n)\times \cdots  \Split(\varphi_1) \map \mathit{Isom}\left(\kappa_{\sigma_0}^+\oplus \theta, f^*\kappa_{\sigma}^+\right)\times \mathit{Isom}\left(\kappa_{\sigma_0}^-\oplus \theta, f^*\kappa_{\sigma}^-\right)\]
which will define $L$ on the morphisms above $g$.

Let 
\begin{eqnarray*}
{\varphi_{i_00} : \G_{i_0}\maplu{\varphi_{i_0}}\cdots\maplu{\varphi_{1}} \G_{0}}\qquad 
{\varphi_{ni_k} : \G_{n}\maplu{\varphi_{i_0}}\cdots\maplu{\varphi_{1}} \G_{i_k}}
\end{eqnarray*}
be the compositions.  Let 
\[\theta =\left(M^{\G_0}\times \frac{W_{\G_n}}{W_{\G_{i_k}}}
\right)\]
Using the composition of $\Fat_1$, we have a map
\[\Split(\varphi_n)\times \cdots \times \Split(\varphi_{(i_k)+1})\times \cdots\times \Split(\varphi_1)  \map \Split(\varphi_{ni_k}).\]
An element of  $\Split(\varphi_{ni_k})$  gives an isomorphisms
\[M^{\G_0}\times W_{\G_{i_k}} \times\frac{W_{\G_n}}{W_{\G_{i_k}}} \map M^{\G_0}\times W_{\G_n}\]
on the $\kappa^-$'s.

To get the isomorphisms on the positive part of the bundle, we consider  the diagram
\[\xymatrix@C=10pt{
M^{\G_0}\ar[r]\ar[dd]\ar[rd]
&M^{\p_\win\G_0}\times PM^{eE_0}\times M^{eV_0}\ar[d]
&\ar[l] M^{\p_\win\G_0}\times M^{eE_0\du eV_0}\ar[r] \ar[d]\ar[rd]
&M^{\p_\win\G_0}\times W_{\G_n}\ar@{<->}[d]^{\cong}\\
& M^{\p_\win\G_{0}}\times PM^{eE_0}\du M^{eE^{\varphi_{i_00}} \du eV_{i_0}} \ar[d]
&\ar[l] M^{\p_\win\G_0}\times M^{eE_{i_0}\du eV_{i_0}} \ar[r]\ar[d]
&M^{\p_\win\G_0}\times W_{\G_{i_k}} \times \frac{W_{\G_n}}{W_{\G_{i_k}}}\ar[d]\\
M^{\G_{i_0}}\ar[r]
&M^{\p_\win\G_{i_0}}\times PM^{eE_{i_0}}\times M^{eV_{i_0}}
&\ar[l] M^{\p_\win\G_{i_0}}\times M^{eE_{i_0}\du eV_{i_0}}\ar[r]
&M^{\p_\win\G_{i_0}}\times W^{\G_{i_k}} \times \frac{W_{\G_n}}{W_{\G_{i_k}}}.
}\]
The target of the operation associated to the bottom row is 
\[\kappa^+_{\sigma_0}\oplus \left(\frac{W_{\G_n}}{W_{\G_{i_k}}}\times M^{\G_{i_0}} \right)\]
All the squares between the second and third row are pullbacks and hence the target of the middle row is
\[\kappa^+_{\sigma_0}|_{M^{\G_0}} \oplus \left(\frac{W_{\G_n}}{W_{\G_{i_k}}}\times M^{\G_{0}} \right).\]
By definition the target of the first row is $\kappa_\sigma$. However, the first and second rows have isomorphic targets and an isomorphisms is specified by the diagram. This gives the required isomorphism
\[\kappa^+_{\sigma_0}|_{M^{\G_0}} \oplus\theta \maplu{\cong} \kappa^+_\sigma. \]

The associativitiy of the composition of morphisms in $\Fat_1$ makes this assignment associative.
\end{proof}

The functor 
\[\tilfatop\map \cN\Fat_1^{op} \maplu{L} \VB \]
gives us a virtual bundle $\lambda$ above the space
\[\left|\tilfatop\int M^{-}\right|.\]
This bundle is the twisting of the virtual bundles $(\kappa_\G^+,\kappa_\G^-).$
The Thom spectrum of this virtual bundle $(\kappa_\G^+,\kappa_\G^-)$ is
\[\Thom(\kappa_\G^+,\kappa_\G^-) =  \Sigma^{-W_{\G}} \Thom(\kappa_\G^+)\]
the desuspension by the Euclidean space $W_{\G}$ of the Thom space of the bundle $\kappa_\G^+$. The Thom spectrum
\[\Thom(\kappa) = \hocolim{\sigma\in\tilfatop} \Thom(\kappa_\sigma^+,\kappa_\sigma^-)\]
of $\kappa$ is the homotopy colimit of the Thom spectrum of the various pieces.

Consider the functor 
\[M^{\p_\win -} : \tilfatop\map Top\]
which sends an object
\[\left(\G_n\smap\G_{n-1}\smap \cdots \smap \G_0, x\right)\]
to the space $M^{\p_\win\G_0}$ and a morphism
\[\left(\G_n\smap \cdots \smap \G_0, x\right)\map \left(\G_{i_k}\smap \cdots \smap \G_{i_0}, y\right)\]
to the map
\[M^{\p_\win\G_0}\map M^{\p_\win\G_{i_0}}\]
corresponding to the composition $\varphi_{i_00}:\G_{i_0}\smap \G_0$.
Consider the category
\[\tilfatop\int M^{\p_\win-}\]
whose objects are triples $(\sigma, x, f)$ where 
\[\sigma= (\G_n\smap \cdots \smap \G_0)\]
is a simplex of $\aFat$, x is in $\Tub(\sigma)$ and where $f:\p_\win\G_0\smap M$ is a piecewise smooth map. The morphisms are simply ``coinclusion" of simplices with $x$ and $f$ following.

\begin{lemma}\label{lem: pwin}
For  each component $\aFat_{S}$, the geometric realization of 
\[\left|\tilfatop_{S}\int M^{\p\win-}\right|\simeq \left|\aFat\right| \times \Map(\p_\win S,M).\]
\end{lemma}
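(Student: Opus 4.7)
The strategy is to argue that the functor $M^{\p_\win -}$ on $\tilfatop_S$ is essentially a constant functor with value $\Map(\p_\win S, M)$, and then to use Thomason's description of the geometric realization of the Grothendieck construction as a homotopy colimit. For a constant functor, this homotopy colimit is simply the product of the realization of the index category with the value.

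First I would analyze $M^{\p_\win \G}$ for a fixed admissible fat graph $\G$ in $\aFat_S$. The admissibility condition says precisely that $\p_\win \G$ embeds in $|\G|$ as a disjoint union of circles (one for each closed incoming leaf) and intervals (the half-boundaries around the open incoming leaves), and the cobordism structure on $\Sigma_\G$ coming from $S$ gives parameterizations. Hence there is a canonical identification $\p_\win \G \cong \p_\win S$ (up to the subdivision of $\p_\win S$ induced by the edges of $\G$), and under this identification $M^{\p_\win \G} = \Map(\p_\win S, M)$.

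Next I would check that the identifications above are natural with respect to morphisms of $\tilfatop_S$. A morphism in $\aFat$ is required to preserve the incoming, outgoing, and closed leaves with their orderings, and the preservation of boundary cycles forces the induced map $\p_\win \G_0 \to \p_\win \G_{i_0}$ to be the canonical identification with $\p_\win S$ on both sides. In other words the induced map $M^{\p_\win \G_0} \to M^{\p_\win \G_{i_0}}$ agrees with the identity on $\Map(\p_\win S, M)$. Morphisms in $\tilfatop_S$ also carry extra data from $\Tub(\sigma)$ and the splittings in $\aFat_1$, but none of that extra data affects the $\p_\win$ factor. Consequently $M^{\p_\win -}$ is naturally isomorphic to the constant functor with value $\Map(\p_\win S, M)$.

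With this in hand, Thomason's construction identifies $|\tilfatop_S \int M^{\p_\win -}|$ with $\hocolim_{\tilfatop_S} M^{\p_\win -}$, and the homotopy colimit of a constant functor $X$ over a category $\cC$ is canonically equivalent to $|\cC| \times X$. This gives
\[
 \left|\tilfatop_S\int M^{\p_\win -}\right| \simeq |\tilfatop_S| \times \Map(\p_\win S, M).
\]
Finally I would invoke the lemma already proven in the previous subsection that the forgetful functor $\tilfatop_S \to \aFat_S$ (or equivalently $\tilde{\aFat}_S \to \aFat_S$) induces a homotopy equivalence on geometric realizations, yielding the desired equivalence. The only technically delicate step is verifying naturality in the second paragraph: one has to confirm that the piecewise-smooth structure on $\Map(\p_\win S, M)$ coming from different subdivisions of $\p_\win S$ gives the same space up to canonical homeomorphism, so that the restriction maps along collapses of edges inside an incoming boundary cycle really are the identity and not merely weak equivalences. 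This reduces to an elementary check about piecewise-smooth mapping spaces and does not require any deeper input.
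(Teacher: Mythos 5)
The claim in your second paragraph --- that the restriction map $M^{\p_\win \G_0} \to M^{\p_\win \G_{i_0}}$ induced by a collapse $\varphi\colon \G_{i_0}\to \G_0$ is literally the identity on $\Map(\p_\win S, M)$, so that $M^{\p_\win -}$ is naturally \emph{isomorphic} to a constant functor --- is false, and the ``elementary check'' you defer to at the end would in fact reveal the problem rather than dispose of it. Collapsing edges removes vertices from the incoming boundary cycles, so $\p_\win\G_{i_0}$ carries a strictly finer subdivision of the same underlying $1$-manifold than $\p_\win\G_0$. The induced map on piecewise-smooth mapping spaces is therefore the \emph{inclusion} of the subspace of maps that happen to be smooth at the removed breakpoints into the larger space of maps permitted to be singular there; this inclusion is a weak equivalence but not a homeomorphism. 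Consequently $M^{\p_\win-}$ is only naturally \emph{weakly equivalent} to a constant diagram, and the step ``the homotopy colimit of a constant functor over $\cC$ is $|\cC|\times X$'' cannot be invoked as written.

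The gap is repairable, but it needs an actual argument. One option that stays close to your strategy: the space of genuinely smooth maps $\p_\win S\to M$ includes naturally into every $M^{\p_\win\G}$, compatibly with all the restriction maps, giving a natural transformation from an honestly constant functor to $M^{\p_\win-}$ that is an objectwise weak equivalence; since $\hocolim$ carries objectwise weak equivalences to weak equivalences, this gives the claimed identification. The paper takes a different route: it factors $M^{\p_\win -}$ through an auxiliary category $\cG_\win^S$ of graph models of $\p_\win S$ (single vertices for incoming intervals, subdivided circles for incoming circles), shows $\cG_\win^S$ realizes to a contractible space by exhibiting a zigzag of functors and natural transformations linking the identity to a constant functor, and then observes that $M^{-}$ sends this zigzag to objectwise weak equivalences. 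Either route is fine, but some argument bridging ``weakly equivalent to constant'' to ``the hocolim is a product'' is required; asserting the restriction maps are identities skips exactly this.
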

\begin{proof}
By Thomason \cite{Thomason}, the category $\tilfatop\int M^{\p_\win - }$ is a categorical model for homotopy colimit of the functor $M^{\p_\win - }$. We will show that the geometric realization of this functor is homotopy equivalent to the constant functor $M^{\p_{\win S}}$ on the component $|\aFat_S|$.

Fix an open-closed cobordism $S$. Consider the category $\cG_\win^S$ of graph models for $\p_\win S$. More precisely, the objects  of $\cG_\win^S$ are fat graphs $G$ with an ordering of its connected components $\pi_0G$. We ask that the ith component of $G$ be a single vertex whenever the  ith incoming boundary of $S$ is an interval. Similary the ith component of $G$ is a circle with a single leaf as a starting point whenever the ith incoming boundary of $S$ is a circle.

We first see that the functor $M^{\p_\win -}$ factors through $\cG_\win^S$
\[\xymatrix{
\tilfatop\ar[r]^{\p_\win - }& \cG_\win^S \ar[r]^{M^-}& \Top
.}\]
However, we claim that the category $\cG_\win^S$ realizes to a contractible category. Hence since all the $M^{\p_\win\G}$ are homotopy equivalent, we get that
\[\hocolim{\tilfatop_S} M^{\p_\win -} \simeq |\tilfatop_S| \times M^{\p_\win S}.\]

Lets now show that the geometric realization of  $\cG_\win ^S$ is contractible. Notice that since the morphisms preserve the ordering of the connected components of the graphs $G$, it suffices to prove that the category $\cG_\win^S$ is contractible for surfaces $S$ with a single boundary. Because the category depends only on the incoming boundary of $S$, we only need to cconsider two categories $\cG^{S^1}$ which is the category of ``oriented circle graphs" and $\cG^{I}$ which is the category of graphs with a single vertex and no edges. The second one is clearly contractible as all objects are uniquely isomorphic.

It therefore suffices to conisder $\cG^{S^1}$. We have three functors and two natural transformations
\[\xymatrix{
\cG^{S^1}\ar[rrr]^{F}="Fup"_{F}="Fdown"\ar@/^40pt/[rrr]_{Id}="Id" \ar@/_40pt/[rrr]^{cst}="cst" &&&\cG^{S^1} \ar@{=>}"Fup";"Id"^{\theta_1} \ar@{=>}"Fdown";"cst"_{\theta_2}
}\]
The first functor is the identity $Id$. The second one $F$ adds a single edge at the beginning of the circle. The third one, $cst$, sends everything to the circle with a single edge. The natural transformation $\theta_1$ collapse the edge added by $F$ while $\theta_2$ collapses everything but that added edge. We therefore have natural transformations linking the identity and a constant map which proves the claim.
\end{proof}

\begin{theorem}
For each open-closed cobordism $S$, we get a map of spectrum
\[B\Mod^\oc(S)\times M^{\p_\win S} \map \Thom(\kappa_S)\]
where $\kappa_S$ is the restriction of $\kappa$ to the connected component corresponding to $S$.
\end{theorem}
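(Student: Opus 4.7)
The plan is to assemble the map out of the simplex-wise operations produced by Proposition~\ref{prop:tub}, by exploiting the homotopy colimit description of both the source and the target. First, note that by Lemma~\ref{lem: pwin} combined with the forgetful homotopy equivalence $|\tilfatop_S|\simeq|\aFat_S|$ and the identifications $|\aFat_S|\simeq B\Mod^{\oc}(S)$, the source can be rewritten as
\[
B\Mod^{\oc}(S)\times M^{\p_\win S}\;\simeq\;\left|\tilfatop_S\int M^{\p_\win -}\right|.
\]
The target, by its construction at the end of Section~\ref{sub:construction}, is $\Thom(\kappa_S)=\hocolim_{\sigma\in\tilfatop_S}\Thom(\kappa_\sigma^+,\kappa_\sigma^-)$. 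So it suffices to construct a natural transformation of $\tilfatop_S$-indexed spectra from $\Sigma^{\infty}M^{\p_\win-}$ (regarded as acting on the base point of each simplex) to $\Thom(\kappa^+_{-},\kappa^-_{-})$, extended over the simplicial parameter, and then pass to homotopy colimits.

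Second, for a single simplex $\sigma=(\G_n\smap\cdots\smap\G_0)$ and a point $x\in\Tub(\sigma)$, apply part (1) of Proposition~\ref{prop:tub} with $\sigma_0=\sigma$ at the bottom-most level $\G_0$ to obtain a map
\[
\mu_{\sigma\subset\sigma}\colon\Tub(\sigma)\times|\cN\sigma|\times W_{\G_n}\times M^{\p_\win\G_0}\;\longrightarrow\;\Thom(\kappa_\sigma^+|_{M^{\G_0}}).
\]
Desuspending by the Euclidean space $W_{\G_n}$ turns this into a map of spectra
\[
\Tub(\sigma)\times|\cN\sigma|\times \Sigma^{\infty}M^{\p_\win\G_0}\;\longrightarrow\;\Thom(\kappa_\sigma^+,\kappa_\sigma^-),
\]
which is exactly a map from the $\sigma$-slice of the source to the $\sigma$-slice of the target, parameterized continuously over $\Tub(\sigma)\times|\cN\sigma|$.

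Third, verify compatibility under the morphisms of $\tilfatop_S$. A morphism $(\sigma,x)\to(\sigma_0,y)$ corresponds to an inclusion of subsimplex $g\colon\sigma_0\subset\sigma$ together with $y=\cT(g)(x)$. The homotopy colimit of the source glues the spectrum for $\sigma$ to the spectrum for $\sigma_0$ via the restriction $M^{\p_\win\G_0}\smap M^{\p_\win\G_{i_0}}$ along the face inclusion $|\cN\sigma_0|\hookrightarrow|\cN\sigma|$; the homotopy colimit of the target glues via the functor $L$ of the $\VB$-category, i.e.\ via the isomorphisms of virtual bundles induced by the splittings in $\aFat_1$. Property (2) of Proposition~\ref{prop:tub} is exactly the statement that $\mu_{\sigma_0\subset\sigma}$ restricts along these face inclusions to the appropriate $\mu$ for $\G_r$ with $r\le i_0$, and property (3) is exactly the statement that replacing $\sigma$ by a larger simplex $\tilsigma$ through the natural transformation $\theta$ of Section~\ref{sub:higher simplices} does not change the operation. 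Together these encode a genuine natural transformation of functors $\tilfatop_S^{op}\to\Spectra$, so we may take $\hocolim$.

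Finally, taking the homotopy colimit yields the desired map
\[
\Sigma^{\infty}\left|\tilfatop_S\int M^{\p_\win-}\right|\;\longrightarrow\;\Thom(\kappa_S),
\]
and precomposing with the equivalence $B\Mod^{\oc}(S)\times M^{\p_\win S}\simeq|\tilfatop_S\int M^{\p_\win-}|$ gives the theorem. The main obstacle will be the coherence verification of the third paragraph: one has to check that the identifications of $\kappa_{\sigma_0}^\pm|_{M^{\G_0}}\oplus\theta\cong\kappa_\sigma^\pm$ built into the functor $L$ really do match the isomorphisms of target Thom spaces that appear in the compatibility squares of Proposition~\ref{prop:tub}(2,3), on the nose and not merely up to further homotopy. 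This is where the carefully engineered splittings in $\aFat_1$ and the pull-back-compatible choice of tubular neighborhoods (the ``first restriction'' in Section~\ref{sub:second thickening}) pay off, since they ensure that the three sources of coherence data (splittings of Euclidean spaces, compositions of tubular neighborhoods, and propagating flows that transport constant paths to constant paths) all fit together into a single strict natural transformation.
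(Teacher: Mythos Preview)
Your proposal is correct and follows essentially the same route as the paper: identify the source via Lemma~\ref{lem: pwin}, build the simplex-wise operations from Proposition~\ref{prop:tub}(1), desuspend, and use parts (2) and (3) for the coherence required to pass to homotopy colimits. One terminological point: you call the resulting data a ``genuine natural transformation of functors $\tilfatop_S^{op}\to\Spectra$,'' whereas the paper is careful to say it is a natural transformation only ``up to specified higher homotopies''---this is precisely why the $|\cN\sigma|$ factor you included is indispensable, and the paper spells out the resulting cell-by-cell boundary check (distinguishing the trivial faces $i<m$ from the face $i=m$, where properties (2) and (3) are actually invoked) rather than summarizing it as you do.
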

\begin{proof}
We will construct a map of spectra
\[\mu_{TP} :  \hocolim{\tilfatop}\Sigma^{\infty} M^{\p_\win -} \map \Thom(\kappa) =\hocolim{\tilfatop}\Thom(\kappa_\sigma^+,\kappa_\sigma^-)\]
directly. Morally, we are constructing a natural transformation  
\[\theta : \Sigma^{\infty} M^{\p_\win -} \Longrightarrow \Thom(\kappa_{-}^+,\kappa_{-}^-)\]
up to specified higher homotopies. 

Fix a simplex $\sigma=\left(\G_n\smap \cdots \smap \G_0\right)$.
By proposition \ref{prop:tub}, for any subsimplex 
\[\sigma_0=\left(\G_{i_k}\smap \cdots \smap \G_{i_0}\right)\]
 we get a map of spaces
\[\mu_{\sigma_0\subset\sigma} : \Tub(\sigma) \times |\cN\sigma_0| \times W_{\G_{i_k}}\times  M^{\p_\win\G_{i_0}}\map \Thom\left(\kappa_{\sigma_0}^+\right).\]
By desuspending by $W_{\G_{i_k}}$, we get a map of spectra
\begin{equation}\label{eq:map hocolim}
\lambda_{\sigma_0\subset \sigma}:\Tub(\sigma)\times \left|\cN\sigma_0\right| \times \Sigma^{\infty} M^{\p_\win \G_{i_0}} \map \Thom(\kappa_{\sigma_0}).
\end{equation}

Take an $m$-simplex
\[\sigma_0\subset \cdots \subset \sigma_m\]
of $\cN\Fat_1$. Above this $m$-simplex, we have the space
\[\Delta^{\sigma_0\subset\cdots \subset \sigma_m} \times \Tub(\sigma_m)\]
in the geometric realization of $\tilfatop$. The boundary of this space is identified with the spaces above the $\sigma_0\smap\cdots \widetilde{\sigma_i}\cdots \smap \sigma_m$. This $m$-simplex $\sigma_0\cdots \sigma_m$ also corresponds naturally to an $m$-simplex in $\cN\sigma_m$. Using this, we get
\[\theta_{\sigma_0\cdots \sigma_m}: \Delta^{\sigma_0\subset\cdots\subset\sigma_m}\times \Tub(\sigma_m)\times  \Sigma^{\infty} M^{\p_\win \G_{0}} \map \Delta^{\sigma_0\subset\cdots\subset\sigma_m} \times \Tub(\sigma_m)\times  \Thom(\kappa_{\sigma_m})\] 
where $\G_0$ is the first vertex of $\sigma_m$. This defines $\theta$ above $\sigma_0\subset \cdots \subset \sigma_m$.

We claim that all of  these maps together define a map $\mu_{TP}$ of homotopy colimits. For this to be the case, we need  that the operation above an $m$-simplex coincides with the operation for its subsimplices on the boundary of $\Delta^{\sigma_0\subset \cdots \subset \sigma_m}$.

Take any $m-1$ subsimplex
\[\sigma_0\subset \cdots \subset \widehat{\sigma_i}\subset\cdots \subset \sigma_m.\]
Lets first consider the case where $i<m$ since it is mostly trivial. In this case the identification along this boundary is simply the identity
\[
\p_i\left(\Delta^{\sigma_0\subset\cdots \subset \sigma_m}\right) \times  \Tub(\sigma_m) \map
\Delta^{\sigma_0\subset\cdots \widehat{\sigma_i}\cdots \subset \sigma_m}\times  \Tub(\sigma_m).
\]
Since the operations  we are comparing are built as
\[\xymatrix@R=10pt{
\Delta^{\sigma_0\subset\cdots \widehat{\sigma_i}\cdots \subset \sigma_m} \times \Tub(\sigma_m)\times \Sigma^{\infty }M^{\p_\win\G_0}\ar[r]&|\cN\sigma_m| \times \Tub(\sigma_m)\times \Sigma^{\infty }M^{\p_\win\G_0}\ar[r]^-{\lambda_{\sigma_m\subset\sigma_m}}&
\Thom(\kappa_{\sigma_m})\\
\Delta^{\sigma_0\subset\cdots \subset \sigma_m} \times \Tub(\sigma_m)\times \Sigma^{\infty }M^{\p_\win\G_0}\ar[r]&|\cN\sigma_m| \times \Tub(\sigma_m)\times \Sigma^{\infty }M^{\p_\win\G_0}\ar[r]^-{\lambda_{\sigma_m\subset\sigma_m}}&
\Thom(\kappa_{\sigma_m}).
}\]
we have a commutative diagram
\[\xymatrix{
\Delta^{\sigma_0\subset\cdots \widehat{\sigma_i}\cdots \subset \sigma_m} \times \Sigma^{\infty }M^{\p_\win\G_0}\ar[rr]^-{\theta_{\sigma_0\cdots \widehat{\sigma_i}\cdots \sigma_m}}\ar[d]&&
\Thom(\kappa_{\sigma_m})\ar[d]\\
\Delta^{\sigma_0\subset\cdots \subset\sigma_m} \times \Sigma^{\infty} M^{\p_\win\G_0}\ar[rr]^-{\theta_{\sigma_0\cdots\sigma_m}}&& \Thom(\kappa_{\sigma_m}).
}\]

For $i=m$, we need to work a bit harder. Say 
\[\sigma_{m-1}=\G_{i_k}\smap \cdots \smap \G_{i_0}.\]
In $\hocolim{\tilfatop}\Sigma^{\infty} M^{\p_\win -}$, we have
\[ \Delta^{\sigma_0\subset\cdots \subset \sigma_m}\times \Tub(\sigma_m)\times \Sigma^{\infty} M^{\p_\win \G_0}\]
above $\sigma_0\subset\cdots \subset\sigma_m$. While above $\sigma_0\subset\cdots \subset \sigma_{m-1}$, we have
\[ \Delta^{\sigma_0\subset\cdots\subset \sigma_{m-1}}\times \Tub(\sigma_{m-1}) \times \Sigma^{\infty} M^{\p_\win \G_{i_0}}.\]
These are glued using
\[\p_{m-1}\left(\Delta^{\sigma_0\subset\cdots \subset \sigma_m}\right)\times \Tub(\sigma_m)\times \Sigma^{\infty} M^{\p_\win \G_0} \map \Delta^{\sigma_0\subset\cdots\subset \sigma_{m-1}}\times \Tub(\sigma_{m-1}) \times \Sigma^{\infty} M^{\p_\win \G_{i_0}}.\]
We therefore need to show that the two operations agree along this identification. We will use the properties of \eqref{eq:map hocolim} as in proposition \ref{prop:tub} to show that 
\[\xymatrix{
\p_{m-1}\left(\Delta^{\sigma_0\subset\cdots \subset \sigma_m}\right)\times \Tub(\sigma_m)\times \Sigma^{\infty} M^{\p_\win \G_0} \ar[rr]^-{\lambda_{\sigma_0\cdots\sigma_m}|_{\p_{m-1}}}\ar[d]
&&\Thom(\kappa_{\sigma_m})\ar[d]\\
\Delta^{\sigma_0\subset\cdots\subset \sigma_{m-1}}\times \Tub(\sigma_{m-1}) \times \Sigma^{\infty} M^{\p_\win \G_{i_0}}\ar[rr]^-{\lambda_{\sigma_0\cdots\sigma_{m-1}}}&&
\Thom(\kappa_{\sigma_{m-1}})
}\]
commutes.

The map $\lambda_{\sigma_0\cdots\sigma_m}$ is defined after suspension by $W_{\G_{n}}$ as
\begin{equation}\label{eq:G0}
 |\cN\sigma(m)| \times \Tub(\sigma_m) \times M^{\p_\win\G_{0}}\times W_{\G_{n}} \map \Thom(\kappa^+_{\sigma_{m}}).
 \end{equation}
The map $\lambda_{\sigma_0\cdots \sigma_{m-1}}$ is defined after suspension by $W_{\G_{i_k}}$ as 
\begin{equation}\label{eq:Gi0}
|\cN\sigma_{m-1}|\times \Tub(\sigma_{m-1})\times M^{\p_\win\G_{i_0}}\times W_{\G_{i_k}} \map \Thom(\kappa^+_{\sigma_{m-1}}).
 \end{equation}
However we have an identification 
\[W_{\G_{i_k}} \times \frac{W_{\G_{n}}}{W_{\G_{i_k}}} \cong W^{eE_n\du eV_n}\]
as part of $\sigma_m$. In particular by suspending \eqref{eq:Gi0} by $\frac{W_{\G_{n}}}{W_{\G_{i_k}}}$, we get a map of spaces
 \[
 |\cN\sigma_{m-1}|\times \Tub(\sigma_{m-1})\times M^{\p_\win\G_{i_0}}\times W_{\G_{n}} \map \Thom\left(\kappa^+_{\sigma_{m-1}}\times  \frac{W_{\G_{n}}}{W_{\G_{i_k}}}  \right).
  \]
 which represents the same map of spectra as \eqref{eq:Gi0}. 
This new map is at the same suspension level as \eqref{eq:G0}. 

We know by property 3 of proposition \ref{prop:tub} on page \pageref{prop:tub} applied to $\sigma_{m-1}$ as a subsimplex of both $\sigma_{m-1}$ and $\sigma_m$, we get a commutative diagram
\[\xymatrix{
\Tub(\sigma_{m}) \times |\cN\sigma_{m-1}|\times  M^{\p_\win\G_{i_0}}\times W_{\G_{i_k}} \ar[d] \ar[rr]^-{\mu_{\sigma_{m-1}\subset\sigma_{m-1}}}&& \Thom(\kappa_{\G_{i_k}}|_{M^{\G_{i_0}}})\ar[d]\\
\Tub(\sigma_{m-1})\times |\cN\sigma_{m-1}| \times M^{\p_\win\G_{i_0}}\times W_{\G_{i_k}} \ar[rr]^-{\mu_{\sigma_m\subset\sigma_m}}&& \Thom(\kappa_{\G_{i_k}}|_{M^{\G_{i_0}}})
}\] 
which induce a commutative diagram
\begin{equation}\label{eq:prop3 trans}
\xymatrix{
\Tub(\sigma_{m}) \times |\cN\sigma_{m-1}|\times  M^{\p_\win\G_{0}}\times W_{\G_{i_k}} \ar[d] \ar[rrr]^-{\mu_{\sigma_{m-1}\subset\sigma_{m-1}}|_{M^{\G_0}}}&&& \Thom(\kappa^+_{\sigma_{m-1}}|_{M^{\G_{0}}})\ar[d]\\
\Tub(\sigma_{m-1})\times |\cN\sigma_{m-1}| \times M^{\p_\win\G_{0}}\times W_{\G_{i_k}} \ar[rrr]^-{\mu_{\sigma_m\subset\sigma_m}}&&& \Thom(\kappa^+_{\sigma_{m-1}}|_{M^{\G_{0}}}).
}\end{equation}
By property 2 of the same proposition applied to $\sigma_{m-1}\subset \sigma_m$ both as subsimplices of $\sigma_m$, we get that the diagram
\[\xymatrix{
\Tub(\sigma_m)\times |\cN\sigma_{m-1}|\times W_{\G_{i_k}}\times \frac{W_{\G_n}}{W_{\G_{i_k}}}\times M^{\p_{\win}\G_0} \ar[rr]^-{\mu_{\sigma_{m-1}\sigma_m}|_{M^{\G_0}}}\ar[d]&&\Thom(\kappa^+_{\sigma_{m-1}}|_{M^{\G_0}}\times \frac{W_{\G_{n}}}{W_{\G_{i_k}}})\ar[d]\\
\Tub(\sigma_m)\times |\cN\sigma_m|\times W_{\G_{n}} \times M^{\p_\win\G_0} \ar[rr]^-{\mu_{\sigma_{m}\sigma_m}}&&\Thom(\kappa_{\sigma_m})
}\]
commutes. In particular, we get that
\begin{equation}\label{eq:prop2 trans}
\xymatrix{
\Tub(\sigma_m)\times \p_{m}|\cN\sigma_{m}|\times W_{\G_{n}} \times M^{\p_\win\G_0} \ar[rr]^-{\mu_{\sigma_{m}\sigma_m}}\ar[d]^{\cong}&&\Thom(\kappa_{\sigma_m})\ar[d]^{\cong}\\
\Tub(\sigma_m)\times |\cN\sigma_{m-1}|\times 
 W_{\G_{i_k}}\times \frac{W_{\G_n}}{W_{\G_{i_k}}}\times M^{\p_{\win}\G_0} \ar[rr]^-{\mu_{\sigma_{m-1}\sigma_m}|_{M^{\G_0}}}&&\Thom(\kappa^+_{\sigma_{m-1}}|_{M^{\G_0}}\times \frac{W_{\G_{n}}}{W_{\G_{i_k}}})
 }\end{equation}

 Together \eqref{eq:prop3 trans} and \eqref{eq:prop2 trans} give that the diagram of spectra
\[\xymatrix{
\p_m\left(\Delta^{\sigma_0\subset \cdots \subset \sigma_m}\right) \times \Tub(\sigma_m) \times \Sigma^{\infty} M^{\p_\win\G_{0}}\ar[d] \ar[r]& \Thom(\kappa_{\sigma_{m-1}})\ar[d]\\
\Delta^{\sigma_0\subset\cdots\subset\sigma_{m-1}} \times \Tub(\sigma_{m-1})\times \Sigma^{\infty} M^{\p_\win\G_{0}} \ar[r]& \Thom(\kappa_{\sigma_{m-1}})
}\]
commutes as well which completes the proof. 
\end{proof}

\section{Thom isomorphisms and orientability}
\label{sec:orient}\label{sec:orientation}

Say $S$ is an open-closed cobordism.
In the previous section, we have defined a generalized Thom collapse
\[B\Mod(S)\times \Sigma^{\infty}M^{\p_\win S} \map \Thom(\kappa_S).\]
To complete the definition of the string operations, we need to consider Thom isomorphisms for the virtual bundle $\kappa_S$. 
Since the bundle $\kappa_S$ is not orientable, we will need to consider its determinant bundle. 

To get operations parameterized by some twisted homology of moduli space, we will construct a bundle $\chi_S$ over $B\Mod(S)$ and we will relate the determinant bundle of $\kappa_S$ with a tensor of the determinant bundle of $\chi_S$. This will give string operations parameterized by the homology of the moduli space with coefficients in $\det\chi^{\tens d}$ where $d$ is the dimension of the manifold. 

We will also show that the bundle $\chi_S$ is oriented whenever $S$ has at most one boundary component which is completely free. However, there is no way of picking these orientations compatibly with the composition of surfaces. In particular, we will show that our version of the Chas and Sullivan product is not strictly associative.

\subsection{Twisted moduli space and homological quantum field theory of degree $d$.}

For an open-closed cobordism $S$, consider the pair of vector spaces
\begin{equation}\label{eqn:chi vector space}
\left(H_1(S,\p_\win S ;\bR), H_0(S,\p_\win S;\bR)\right).
\end{equation}
We get a virtual bundle $\chi_S$ over $B\Mod(S)$ by setting the holonomy of this virtual bundle to be the usual action of 
\[ \pi_1(B\Mod_S) \cong \pi_0\Diff_{\oc} (S;\p_\win S \cup \p_\out S) \]
on the relative homology groups.

The determinant bundle of the virtual vector space $(V_1,V_2)$ is 
\[\det(V_1,V_2) \cong \left(V_1\right)^{\wedge \dim V_1} \tens \left(V_2^{\word{dual}}\right)^{\wedge \dim V_2}.\]
A morphism of virtual vector spaces
\[ (A,B)\map (C,D)\]
induces an isomorphisms
\[\det(A,B)\map \det(C,D).\]
Consider the bundle $\det(\chi_S)$ on $B\Mod(S)$.

\begin{lemma}
The bundles $\{\det(\chi_S)\}$ gives a bundle over the $B\Mod(S)$-ProP. 
\end{lemma}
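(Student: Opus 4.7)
The plan is to establish this lemma in three steps: first construct the gluing isomorphism of determinant bundles fiberwise and verify equivariance, then check associativity (and symmetry under disjoint union, i.e.\ the PROP coherence), and finally address the subtlety of signs.

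First I would build the fiberwise isomorphism. Fix a decomposition $S_1\# S_2$ along a matching 1-manifold $N = \p_\out S_1 = \p_\win S_2$, and observe that $\p_\win(S_1\# S_2)=\p_\win S_1$. Apply the long exact sequence of the triple $(S_1\#S_2,\,S_1,\,\p_\win S_1)$ and combine it with the excision identification $(S_1\#S_2,S_1)\simeq (S_2,\p_\win S_2)$ to produce an exact sequence
\[
\cdots\longrightarrow H_i(S_1,\p_\win S_1)\longrightarrow H_i(S_1\#S_2,\p_\win(S_1\#S_2))\longrightarrow H_i(S_2,\p_\win S_2)\longrightarrow\cdots
\]
Since $S_j$ is a surface (and no component of $S_j$ is purely incoming by the positive-boundary assumption), only the terms in degrees $0$ and $1$ contribute to $\chi_{S_j}$; the finitely many remaining terms form a bounded exact sequence of finite-dimensional vector spaces. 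Any such bounded exact sequence $A_\bullet$ carries a canonical trivialization $\bigotimes_i \det(A_i)^{(-1)^i}\cong \mathbb{R}$, and unraveling this in the virtual-vector-space language $(H_1,H_0)$ yields the desired natural isomorphism
\[
\Phi_{S_1,S_2}:\det(\chi_{S_1})\otimes\det(\chi_{S_2})\maplu{\cong}\det(\chi_{S_1\#S_2}).
\]
Equivariance under the gluing homomorphism $\Mod(S_1)\times\Mod(S_2)\to\Mod(S_1\#S_2)$ is then automatic: any pair of diffeomorphisms fixing the gluing locus acts on the whole long exact sequence, and the canonical trivialization is functorial for such maps. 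This upgrades $\Phi_{S_1,S_2}$ to an isomorphism of line bundles over $B(\Mod(S_1)\times\Mod(S_2))$ pulled back from $B\Mod(S_1\#S_2)$.

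Next I would verify the PROP coherence. Disjoint union is easy: $\chi_{S\sqcup S'}\cong \chi_S\oplus\chi_{S'}$ as virtual spaces, and the determinant of a direct sum is the tensor product, so the symmetric monoidal part of the PROP structure is immediate. For associativity of gluing, given $S_1,S_2,S_3$ composable, both iterated isomorphisms $(\Phi_{S_1\#S_2,S_3})\circ(\Phi_{S_1,S_2}\otimes\mathrm{id})$ and $(\Phi_{S_1,S_2\#S_3})\circ(\mathrm{id}\otimes\Phi_{S_2,S_3})$ arise from three different long exact sequences of triples that fit into a common double complex obtained from the filtration $\p_\win S_1\subset S_1\subset S_1\#S_2\subset S_1\#S_2\#S_3$. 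The coherence then reduces to the fact that the canonical determinant trivialization of a bounded exact sequence is associative with respect to splicing, a standard fact about determinant lines.

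The main obstacle, as the paper's subsequent discussion foreshadows, will be keeping track of signs: the Koszul rule for determinants of virtual vector spaces, combined with the fact that the connecting homomorphism shifts degree by one, introduces sign factors that depend on the Euler characteristics $\chi(S_i;\p_\win S_i)$. I would therefore be explicit about the grading convention (placing $\det H_1$ in even degree and $\det H_0$ in odd degree, so that $\det\chi_S$ lives in degree $-\chi(S,\p_\win S)$ as indicated in the introduction), and verify that with this convention all the triangles in the coherence check commute on the nose. Once this bookkeeping is done, the collection $\{\det(\chi_S)\}$ assembles into a line bundle on the $B\Mod(S)$-PROP in the required sense, and the same argument applied to the $d$-th tensor power recovers the twisting $\det(\chi_S)^{\otimes d}$ used throughout the paper.
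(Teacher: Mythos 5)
Your proposal takes the same route as the paper: the long exact sequence of the triple $(S_1\#S_2,\,S_1,\,\p_\win S_1)$ together with the excision identification $H_*(S_1\#S_2;S_1)\cong H_*(S_2;\p_\win S_2)$, and then the canonical determinant trivialization of the resulting bounded exact sequence. The paper's own proof stops at producing the gluing isomorphism and does not explicitly verify associativity or discuss the sign conventions; your additional checks via the filtration $\p_\win S_1\subset S_1\subset S_1\#S_2\subset S_1\#S_2\#S_3$ are correct and fill in what the paper leaves implicit.
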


More precisely, for any two two composable open-closed cobordism $S_1$ and $S_2$, we have an isomorphism
\[\det(S_1)\tens \det(S_2)\map \det(S_1\#S_2)\] 
which lifts the map
\[B\Mod(S_1)\times B\Mod(S_2)\map B\Mod(S_1\#S_2).\]

\begin{proof}
For any such $S_1$ and $S_2$, the long exact sequence 
of homology groups associated to the triple $(S_1\#S_2, S_1,\p_{\win }S_1)$ gives
\begin{equation}\label{eqn:LES}
\xymatrix{
0 \ar[r]& H_1(S_1;\p_{\win} S_1)\ar[r]& H_1(S_1\# S_2;\p_{\win} S_1)\ar[r]& H_1(S_1\#S_2; S_1) \ar`r[d]`d[l]`[llld]`d[lld][lld]&\\
& H_0(S_1;\p_{\win}S_1) \ar[r]&H_0(S_1\# S_2;\p_\win S_1) \ar[r]& H_0(S_1\# S_2; S_1)\ar[r]&0.
}\end{equation}
Using excision, we get
\[H_*(S_1\#S_2; S_1) \cong H_*(S_2;\p_\win S_2).\]
In particular, \eqref{eqn:LES} gives the require isomorphism.
\end{proof}

We have a symmetric monoidal category $H\word{Bord}^{d}$ enriched over graded abelian groups. The objects of $H\word{Bord}^{d}$ are isomorphism clasees of 1-manifolds with boundary. The morphisms in $H\word{Bord}^d$ between $[P]$ and $[Q]$ is
\[H\word{Bord}^d([P],[Q]) = \Bigoplus{[S]}\ H_*\left(B\Mod(S);(\det\chi_S)^{\tens d}\right).\]
Composition comes from the gluing of the preceding lemma.

\begin{definition}
A \emph{homological conformal field theory} of degree $d$ is a symmetric monoidal functor
\[F: H\word{Bord}^d\map \cG  \cG roups\]
from $H\word{Bord}^d$ to the category of graded abelian groups.
\end{definition}

\subsection{A combinatorial model for $\det(\chi)$.}
Over the fat graph model $\tilfatop$, we build a combinatorial model $\det\chi^{\aFat}$ for $\det(\chi)$ as follows. To any simplex 
\[\sigma=(\G_n\smap\cdots \smap \G_0),\]
we associate the pair of vector spaces
\[\left(\bR^{eH_n}, \bR^{eE_n\du eV_n}\right).\]
We think of this virtual vector spaces as the cellular chain complex
\[\bR^{eH_n}\map \bR^{eE_n\du eV_n}\]
computing the real homology of the pair $(\G_n,\p_\win\G_n)$ subdivided once.
To any morphism 
\[\sigma_0 =(\G_{i_k}\smap \cdots \smap \G_{i_0}) \subset \sigma,\]
we associate a morphism
\[\det\left(\bR^{eH_{i_k}}, \bR^{eE_{i_k}\du eV_{i_k}}\right) \map \det\left(\bR^{{eH_n}}, \bR^{{eE_n}\du {eV_n}}\right)\]
of virtual bundles constructed as follows.
The morphism $\varphi_{ni_k}$ induces a chain map
\begin{equation} \label{eq:twisting fat}
\xymatrix{
H_1(\G_n;\p_\win\G_n) \ar[r]\ar[d]^{\cong}_{\varphi_{ni_k}}& \bR^{eH_n} \ar[r]\ar[d]_{\varphi_{ni_k}}& \bR^{eE_n\du eV_n}\ar[d]_{\varphi_{ni_k}}\ar[r] & H_0(\G_n)\ar[d]^{\cong}_{\varphi_{ni_k}}\\
H_1(\G_{i_k},\p_\win\G_{i_k})\ar[r] &\bR^{eH_{i_k}} \ar[r] & \bR^{eE_{i_k}\du eV_{i_k}} \ar[r] & H_0(\G_{i_k}).
}\end{equation}
which gives a morphism of virtual bundles
\[ (\bR^{eH_{i_k}}, \bR^{eE_{i_k}\du eV_{i_k}}) \map (\bR^{eH_n},\bR^{eE_n\du eV_n}).\]
We therefore get an isomorphism
\[ \det(\bR^{eH_{i_k}}, \bR^{eE_{i_k}\du eV_{i_k}}) \map \det(\bR^{eH_n},\bR^{eE_n\du eV_n}).\]
We get a virtual bundle $\det\chi^{\aFat}$ over $\tilfatop$.

\begin{lemma}
The virtual bundle $\det\chi^{\aFat}$ forms a bundle over the partial-PROP $\aFat$. It models the virtual bundle $\det\chi$ over moduli space. 
\end{lemma}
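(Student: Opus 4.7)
The plan is to prove the two claims separately: first that the combinatorial data satisfies the PROP gluing axioms, and second that it realizes the geometric determinant bundle over moduli space.

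First I would verify that the assignment is functorial with respect to morphisms in $\tilfatop$, which reduces to checking that the chain maps in diagram \eqref{eq:twisting fat} compose correctly. Since composition of morphisms $\varphi_{ni_k}$ is the underlying collapse map of graphs, and the induced maps on the cellular complexes $\bR^{eH}\to \bR^{eE\du eV}$ compose by functoriality of cellular chains, the induced isomorphisms of determinants compose correctly. This gives a well-defined virtual line bundle $\det\chi^{\aFat}$ over $|\tilfatop|$.

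For the PROP/gluing structure, I would use the gluing functor $\#:\cG_{S_1,S_2}\to \aFat_{S_1\#S_2}$ of section \ref{sub:gluing graphs}. For a gluable pair $(\G,\tilG)$, the cells of $\G\#\tilG$ split as $eV_\#= eV_\G\du eV_{\tilG}$, $eE_\#= eE_\G\du eE_{\tilG}$, and $eH_\#= eH_\G\du eH_{\tilG}$, where the construction identifies the outgoing boundary cells of $\G$ with the incoming boundary cells of $\tilG$. This yields a short exact sequence of cellular chain complexes
\[0\map C_*(\G;\p_\win\G)\map C_*(\G\#\tilG;\p_\win(\G\#\tilG))\map C_*(\tilG;\p_\win\tilG)\map 0,\]
whose associated long exact sequence is precisely the combinatorial version of \eqref{eqn:LES}. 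Taking determinants of a short exact sequence of chain complexes yields the tensor-product isomorphism $\det\chi^{\aFat}_\G\tens \det\chi^{\aFat}_{\tilG}\cong \det\chi^{\aFat}_{\G\#\tilG}$, and this is compatible with the gluing functor by construction.

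For the second claim, observe that by definition $\bR^{eH_n}\to \bR^{eE_n\du eV_n}$ is the subdivided cellular chain complex computing $H_*(|\G_n|,|\p_\win\G_n|;\bR)$. Since the canonical isomorphism between the determinant of a chain complex and the determinant of its homology is natural, we get a canonical isomorphism
\[\det\chi^{\aFat}_\sigma \cong \det\left(H_1(|\G_n|,\p_\win|\G_n|),H_0(|\G_n|,\p_\win|\G_n|)\right).\]
Because $|\G_n|$ is a spine of $\Sigma_{\G_n}$ that contains $\p_\win\Sigma_{\G_n}$ as a deformation retract of its collar, we may identify this with $\det\chi_{S}$ where $S=\Sigma_{\G_n}$. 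Under the homotopy equivalence $|\aFat_S|\simeq B\Mod^\oc(S)$ of the admissibility theorem, the action of morphisms of $\tilfatop$ on the combinatorial bundle matches the natural $\Mod^\oc(S)$-action on $\chi_S$ by naturality of cellular homology under homotopy equivalences preserving the distinguished subspaces.

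The main obstacle is the bookkeeping required to ensure that the identifications in the three paragraphs above are compatible with one another: the isomorphism $\det\chi^{\aFat}\cong \det\chi$ must intertwine the combinatorial gluing constructed via the short exact sequence of cellular complexes with the geometric gluing coming from the triple $(S_1\#S_2,S_1,\p_\win S_1)$. This is essentially a diagram chase comparing two versions of the connecting homomorphism, but requires attention to signs arising from the Koszul sign rule when permuting factors in the determinant line.
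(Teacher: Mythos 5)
Your proposal is correct and follows essentially the same two-step strategy as the paper's proof. For the ``models $\det\chi$'' half, the paper makes the same observation you do — that $\bR^{eH_n}\to\bR^{eE_n\du eV_n}$ is a cellular chain complex for $H_*(\G_n,\p_\win\G_n)$ — and then writes down the contractible family of isomorphisms $H_1(\G,\p_\win\G)\oplus\Image(d)\cong\bR^{eH}$, $H_0(\G,\p_\win\G)\oplus\Image(d)\cong\bR^{eE\du eV}$, noting that they respect the twisting because the twisting of $\chi^{\aFat}$ over morphisms was defined via the chain map of \eqref{eq:twisting fat}; your phrasing in terms of naturality of the determinant-of-homology isomorphism says the same thing more compactly.

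For the gluing half, there is a small stylistic difference worth noting. You assemble the identity $eV_\# = eV_\G\du eV_{\tilG}$ (and likewise for $eE$, $eH$) into a short exact sequence of relative cellular chain complexes
\[
0\to C_*(\G;\p_\win\G)\to C_*(\G\#\tilG;\p_\win(\G\#\tilG))\to C_*(\tilG;\p_\win\tilG)\to 0
\]
and take the multiplicativity of determinant lines over a short exact sequence of complexes. The paper simply records the direct-sum isomorphism of virtual vector spaces
\[
\left(\bR^{eH_n},\bR^{eE_n\du eV_n}\right)\oplus\left(\bR^{\til{eH}_n},\bR^{\til{eE}_n\du\til{eV}_n}\right)\cong\left(\bR^{eH_{\G_n\#\tilG_n}},\bR^{eE_{\G_n\#\tilG_n}\du eV_{\G_n\#\tilG_n}}\right)
\]
and applies $\det$. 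These are the same argument, since the sequence you write is split by that very direct-sum decomposition; your packaging is closer in spirit to \eqref{eqn:LES}, which may make the comparison with the geometric gluing via the triple $(S_1\# S_2,S_1,\p_\win S_1)$ a bit more transparent, but the content is identical. The compatibility issue you flag at the end — that the identification $\det\chi^{\aFat}\cong\det\chi$ intertwines the combinatorial and geometric gluing isomorphisms — is also left implicit in the paper, so you are not missing a step the paper supplies.
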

\begin{proof}
Lets first show that the virtual bundle $\det\chi^{\aFat}$ is a combinatorial version of the virtual bundle $\det\chi$. For any fat graph $\G$, we consider again the cellular chain complex
\[\bR^{eH}\maplu{d} \bR^{eE\du eV}\]
for $H_*(\G,\p_\win \G)$ obtained after a simplicial subdivision of $\G$. This gives a contractible choice of morphisms
\begin{eqnarray*}
H_1(\G,\p_\win\G) \oplus \Image(d) &\maplu{\cong}& \bR^{eH}\\
H_0(\G,\p_\win\G)\oplus \Image(d)&\maplu{\cong}& \bR^{eE\du eV}.
\end{eqnarray*}
These morphisms respect the twisting since we have used
 \eqref{eq:twisting fat} to twist $\chi^{\aFat}$.
In particular these give an isomorphism
\[\det(\chi) \maplu{\cong} \det(\chi^{\aFat}).\]

Recall that $\til{\cG}\subset \tilfatop\times \tilfatop$ is the subcategory of simplices of glue-able fat graphs. We have a diagram
\[\xymatrix{
\chi^{\aFat}\times \chi^{\aFat}\ar[d]&\ar[l] \Psi^{*}(\chi^{\aFat}\times \chi^{\aFat}) \ar[d]\ar@{-->}[r]&\chi^{\aFat}\ar[d]\\
\tilfatop\times \tilfatop&\ar[l]_-{\simeq} \til{\cG} \ar[r]^{\#} &\tilfatop
}\] 
and we want to lift $\#$ to a morphism of vitual bundles
\[\Psi^{*}(\chi^{\aFat}\times \chi^{\aFat}) \map \chi^{\aFat}.\]
For any simplex $\sigma^{\cG}$
\[(\G_n,\tilG_n)\smap \cdots \smap (\G_{0},\tilG_0)\]
of $\til{\cG}$, we have
\[eE_{\G_n\#\tilG_n} \cong eE_{\G_n}\du eE_{\til\G_n}\]
and similar identities for both $eH$ and $eV$. In particular, we get an isomorphism
\[\left(\bR^{eH_n}, \bR^{eE_n\du eV_n}\right)\oplus \left(\bR^{\til{eH_n}}, \bR^{\til{eE_n}\du \til{eV_n}}\right)\map \left(\bR^{eH_{\G_n\#\tilG_n}} , \bR^{eE_{\G_n\#\tilG_n}\du eV_{\G_n\#\tilG_n}}\right)\]
which gives one on the determinant bundle 
\[\det(\chi_{S_1}^{\aFat}) \tens \det(\chi_{S_2}^{\aFat})\cong \det(\chi_{S_1\#S_2}^{\aFat}).\]
\end{proof}

\subsection{Relation between $\chi$ and $\kappa$}

Say we have fixed an orientation on $M$.
The goal of this section is to relate $\det(\kappa)$ and $\det(\chi)^{\tens d}$.
Recall that if
\[\sigma= \G_n\smap \cdots \smap \G_0\]
then
\[\kappa_\sigma^+ = \left.TM^{eH_n} \times \nu^{eE_n\du eV_n}\right|_{M^{\G_0}} \qquad \kappa_\sigma^- = M^{\G_0}\times  W_{\G_n}\]
where $W_{\G_n}\cong W^{eE_n\du eV_n}$. Here $\nu$ is the normal bundle of the fixed embedding $f:M\smap W$. The bundle above a morphism $\tilsigma\subset\sigma$ is determined according to the maps of diagram \ref{dia:sigma0} on page \pageref{dia:sigma0}.

Using the idea that $\nu$ is really $W$ minus $TM$, we can define a second bundle $\det\tilde\kappa$ above 
\[\hocolim{\tilfatop} \Sigma^{\infty} M^{-}\]
which has
\begin{eqnarray*}
\tilde\kappa_{\sigma}^+ &=& \left.TM^{eH_n}\right|_{M^{\G_0}}\\
\tilde\kappa_{\sigma}^- &=& \left.TM^{eE_n\du eV_n}\right|_{M^{\G_0}}.
\end{eqnarray*}
Pick a morphism
\[\tilsigma = (\G_{i_k}\smap\cdots \smap \G_{i_0})\subset \sigma.\]
We have
\[TM^{eH_{i_k}}\map TM^{eH_n}\]
which hits  the half-edges of $eH_n$ that are not collapsed by $\varphi_{ni_k}$. We also have
\[TM^{eV_{i_k}\du eE_{i_k}} \map TM^{eV_n\du eE_{n}}.\]
As we have argued before the cokernel of these maps are identified by the pullback diagram
\[\xymatrix{
M^{V_{i_k}}\ar[r]\ar[d]& M^{V_{i_k}^\win}\times M^{eH_{i_k}}\times M^{eV_{i_k}\du eE_{i_k}}\ar[d]\\
M^{V_{n}}\ar[r]& M^{V_n^\win}\times M^{eH_n}.
}\]
This determines an isomorphism
\[\det\tilde\kappa_{\tilsigma} \cong \det\tilde\kappa_{\sigma}\]
and hence we have a bundle $\det\tilde\kappa$.

\begin{lemma}
There is a natural isomorphism
\[\det(\tilde\kappa)\cong \det(\kappa).\]
\end{lemma}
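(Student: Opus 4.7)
The plan is to construct the isomorphism fiberwise from the standard splitting $TM\oplus \nu \cong W\times M$ and then verify compatibility with the structure maps associated to a morphism $\tilsigma\subset\sigma$. Pick a simplex $\sigma=(\G_n\smap\cdots\smap\G_0)$. Since $\nu$ is the normal bundle of the embedding $f:M\smap W$, we have a canonical isomorphism of bundles $TM\oplus \nu\cong \underline{W}$ over $M$, where $\underline W=W\times M$ is the trivial bundle. Taking determinants and using the canonical orientations of $W$ and (by hypothesis) of $TM$, this gives a canonical isomorphism
\[
\det(\nu)\cong \det(TM)^{-1},
\]
and hence, at each simplex,
\[
\det(\nu^{eE_n\du eV_n})\otimes \det(TM^{eE_n\du eV_n})\cong \det(W_{\G_n}).
\]
Substituting into $\det(\kappa_\sigma)=\det(TM^{eH_n})\otimes\det(\nu^{eE_n\du eV_n})\otimes\det(W_{\G_n})^{-1}$ and cancelling $\det(W_{\G_n})$ yields
\[
\det(\kappa_\sigma)\;\cong\;\det(TM^{eH_n})\otimes\det(TM^{eE_n\du eV_n})^{-1}\;=\;\det(\tilde\kappa_\sigma),
\]
as virtual line bundles over $M^{\G_0}$.

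Next I would check that this fiberwise identification assembles into a morphism of virtual bundles over $|\tilfatop\int M^{-}|$, i.e.\ that it is compatible with the transition isomorphisms attached to a morphism $\tilsigma=(\G_{i_k}\smap\cdots\smap\G_{i_0})\subset\sigma$. Recall that the transition for $\kappa$ is built, via the pull-back square
\[
\xymatrix{
M^{V_{i_k}}\ar[r]\ar[d] & M^{V^{\win}_{i_k}}\times M^{eH_{i_k}}\times M^{eE_{i_k}\du eV_{i_k}}\ar[d]\\
M^{V_n}\ar[r] & M^{V^{\win}_n}\times M^{eH_n},
}
\]
from the inclusion of the non-collapsed half-edges $TM^{eH_{i_k}}\hookrightarrow TM^{eH_n}$, the inclusion $\nu^{eV_{i_k}\du eE_{i_k}}\hookrightarrow \nu^{eV_n\du eE_n}$, and the corresponding splittings of $W_{\G_n}$ coming from $\aFat_1$. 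The transition for $\tilde\kappa$ is defined by the very same inclusions after replacing $\nu$ by $-TM$ everywhere. Since the relation $TM\oplus\nu\cong \underline{W}$ is natural in the base (it is pulled back from a single identity on $M$), the two transition diagrams are identified by the Whitney-sum isomorphism applied factorwise to each copy of $M$ indexed by $eE_\bullet\du eV_\bullet$. Hence the determinant isomorphism assembles as claimed.

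The only subtle point is orientation: the identification $\det(\nu)\cong\det(TM)^{-1}$ depends on a choice of orientation of $W$ and of $TM$, and we must use these same choices uniformly over all simplices so that the two transition maps are identified on the nose rather than only up to sign. This is straightforward since $W$ has its standard orientation and $M$ is globally oriented by hypothesis; the resulting isomorphism $\det(\tilde\kappa)\cong\det(\kappa)$ is then natural with respect to every morphism of $\tilfatop$, proving the lemma. The only potential obstacle is bookkeeping of the splittings $W_{\G_n}\cong W_{\G_{i_k}}\oplus W_{\G_n}/W_{\G_{i_k}}$ from $\aFat_1$, but because these splittings are applied identically to the $\kappa_\sigma^-$ and to the trivialization of $\det(\nu^{eE_n\du eV_n})\otimes\det(TM^{eE_n\du eV_n})$, they cancel out of the final identification.
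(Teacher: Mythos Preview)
Your argument is correct and follows essentially the same route as the paper: both use the relation $TM\oplus\nu\cong \underline W$ to cancel $\nu^{eE_n\du eV_n}$ against the trivial $W_{\G_n}$ in $\kappa_\sigma^-$, reducing $\det(\kappa_\sigma)$ to $\det(\tilde\kappa_\sigma)$, and then observe that the transition maps for $\kappa$ and $\tilde\kappa$ were set up precisely so that this identification is compatible with morphisms. One small remark: you do not actually need the orientation of $M$ here---the short exact sequence $TM\to \underline W\to\nu$ already gives a canonical isomorphism $\det(TM)\otimes\det(\nu)\cong\det(\underline W)$ without choosing a splitting or trivializing either factor, and the paper accordingly defers the orientation hypothesis to the next lemma where it is genuinely used.
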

\begin{proof}
We  have
\[\tilde\kappa_\sigma^+ \oplus \left(\nu^{eE_n\du eV_n}\right)\cong  \kappa_{\sigma}^+.\]
Now by definition of $\nu$, we have a short exact sequence
\[TM\map W\times M \map \nu\]
and in particular, we have a contractible choice of splittings
\[\nu\oplus TM \cong W.\]
This means that we have a contractible choice of identification
\[\tilde\kappa_{\sigma}^+\oplus \left(\nu^{eE_n\du eV_n}\right) \cong \tilde\kappa_{\sigma}^+.\]
We therefore have an isomorphism
\[\det(\tilde\kappa_\sigma)\cong \det(\kappa_\tilsigma).\]
Our choice of gluing in $\tilde\kappa_\sigma$ was made so that the isomorphism respects the twisting above morphisms.
\end{proof}

We can define a similar bundle above $\tilfatop$. 
For any simplex $\sigma$, we let
\[\lambda_{\sigma}=\det\left(\bR^{eH_n},\bR^{eV_n\du eH_n}\right).\]
As in $\tilde\kappa$, for a morphism of simplicies $\tilsigma\subset\sigma$, we consider
\begin{eqnarray*}
\bR^{eH_{i_k}}&\map& \bR^{eH_n}\\
\bR^{eE_{i_k}\du eV_{i_k}}&\map&\bR^{eE_n\du eV_m}
\end{eqnarray*}
induced by the morphism $\varphi_{ni_n}$ of graphs. 

\begin{lemma}
The forgetful functor
\[\Psi: \hocolim{\tilfatop} \Sigma^{\infty} M^{-} \map \tilfatop\]
pulls back 
\[\Psi^*(\lambda^{\tens d}) = \det\tilde\kappa\]
\end{lemma}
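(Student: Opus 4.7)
The plan is to exhibit a canonical pointwise identification
\[\det\tilde\kappa_\sigma \cong \lambda_\sigma^{\tens d}\]
for each simplex $\sigma = (\G_n\smap\cdots\smap\G_0)$, and then to check that these identifications are compatible with the morphisms of $\tilfatop$.

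The key input is the classical basis-independent isomorphism of determinant lines
\[\det(V\tens W)\cong (\det V)^{\tens \dim W}\tens (\det W)^{\tens \dim V},\]
natural in $V$ and $W$ for arbitrary linear maps. Applied fiberwise with $V=T_{m_a}M$ and $W=\bR^A$, using the pointwise identification $TM^A\cong TM\tens_{\bR}\bR^A$ at $(m_a)_{a\in A}\in M^A$ and the fact that the orientation of $M$ canonically trivializes $\det TM$, one obtains a natural isomorphism of line bundles
\[\det(TM^A)\cong (\det\bR^A)^{\tens d}\]
over $M^A$. Restricting to $M^{\G_0}$ and combining the cases $A=eH_n$ and $A=eE_n\du eV_n$ via $\det(E_1,E_2)=\det E_1\tens\det E_2^*$ gives
\[\det\tilde\kappa_\sigma\cong(\det\bR^{eH_n})^{\tens d}\tens (\det\bR^{eE_n\du eV_n})^{-\tens d} = \lambda_\sigma^{\tens d}.\]

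For a morphism $\tilsigma=(\G_{i_k}\smap\cdots\smap\G_{i_0})\subset\sigma$, the transition isomorphism for $\lambda^{\tens d}$ is the $d$-th tensor power of $\det$ of the chain map \eqref{eq:twisting fat}, while the transition isomorphism for $\det\tilde\kappa$ is $\det$ of the same chain map after tensoring each term with $TM$ (and $\id_{TM}$ on the $V$ direction). Because the tensor-determinant formula is natural in $W$ for arbitrary linear maps, and because $\id_{TM}$ acts trivially after the orientation trivializes $(\det TM)^{\tens |A|}$, the pointwise isomorphism above intertwines the two transition maps. This yields the desired isomorphism $\Psi^{*}(\lambda^{\tens d})\cong\det\tilde\kappa$ of virtual bundles.

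The main obstacle is the careful interpretation of $\det$ as a functor on virtual vector spaces: the chain maps between $(\bR^{eH_{i_k}},\bR^{eE_{i_k}\du eV_{i_k}})$ and $(\bR^{eH_n},\bR^{eE_n\du eV_n})$ are not degree-preserving isomorphisms but quasi-isomorphisms of two-term complexes, since both compute $H_*(\G,\p_{\win}\G)$ for graphs in the same diffeomorphism class. Once one views $\det$ as taking such quasi-isomorphisms to canonical isomorphisms of lines (via the canonical trivialization of the determinant of an acyclic complex), the remainder of the argument is formal and reduces to the tensor-determinant formula together with the choice of orientation on $M$.
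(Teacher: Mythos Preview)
Your overall strategy---a pointwise isomorphism plus a compatibility check---is what the paper has in mind, but there are two problems with the execution.

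First, the identification $TM^A \cong TM \tens_{\bR} \bR^A$ at a point $(m_a)_{a\in A}$ of $M^A$ does not make sense when the $m_a$ are distinct: the fiber is $\bigoplus_a T_{m_a}M$, and the tangent spaces at different points are not canonically a single $V$, so you cannot feed this into the $\det(V\tens W)$ formula. What does work is the direct-sum identity $\det\bigl(\bigoplus_a T_{m_a}M\bigr)=\bigotimes_a \det T_{m_a}M$, which the orientation of $M$ trivializes; the canonical isomorphism $\det(TM^A)\cong(\det\bR^A)^{\tens d}$ then follows by observing that a transposition in $A$ acts on both sides by $(-1)^d$.

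Second, and more consequentially, you have misidentified the transition maps. The paper defines the transitions for $\tilde\kappa$ via the inclusions $TM^{eH_{i_k}}\hookrightarrow TM^{eH_n}$ and $TM^{eE_{i_k}\du eV_{i_k}}\hookrightarrow TM^{eE_n\du eV_n}$, with cokernels identified through the pullback square, and then defines $\lambda$ ``as in $\tilde\kappa$'' using the analogous inclusions of $\bR^A$'s---what the proof of Lemma~\ref{lem:orientable} calls the $g$-maps. These are \emph{not} the chain maps of \eqref{eq:twisting fat}; those are the $f$-maps, they define $\chi^{\aFat}$, and showing that the $f$- and $g$-maps induce the same isomorphism on determinant lines is precisely the content of the \emph{next} lemma. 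With the correct transitions, $\lambda$ and $\tilde\kappa$ carry literally the same combinatorial structure---one built from $\bR$, the other from the oriented $d$-dimensional $TM$---so the present lemma is immediate and no quasi-isomorphism argument is needed. Your final paragraph is addressing the obstacle of Lemma~\ref{lem:orientable}, not this one.
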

\begin{proof}
Since $TM$ is oriented of dimension $d$, the statement is clear.
\end{proof}

\begin{lemma}\label{lem:orientable}
There is an isomorphism
\[\det(\lambda) \cong \det(\chi)\]
\end{lemma}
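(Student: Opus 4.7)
The plan is to observe that $\lambda$, as a virtual bundle over $\tilfatop$, is (up to the trivial reordering $eV_n\du eE_n\leftrightarrow eE_n\du eV_n$) literally the same combinatorial virtual bundle $\chi^{\aFat}$ introduced in the previous subsection, and then invoke the already established isomorphism $\det(\chi^{\aFat})\cong\det(\chi)$.

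First I would unpack the definitions on objects. For a simplex $\sigma=(\G_n\smap\cdots\smap\G_0)$ the virtual vector space
\[
\lambda_\sigma = \bigl(\bR^{eH_n},\,\bR^{eE_n\du eV_n}\bigr)
\]
is exactly the two-term cellular chain complex
\[
C_1(\G_n,\p_\win\G_n)\;=\;\bR^{eH_n}\;\maplu{\p}\;\bR^{eE_n\du eV_n}\;=\;C_0(\G_n,\p_\win\G_n)
\]
computing $H_*(\G_n,\p_\win\G_n)$ after the simplicial subdivision used to define $\chi^{\aFat}$. So the objects of $\lambda$ and of $\chi^{\aFat}$ coincide.

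Next I would check that the holonomies agree. For a morphism $\tilsigma=(\G_{i_k}\smap\cdots\smap\G_{i_0})\subset\sigma$ in $\cN\Fat_1^{op}$, the maps $\bR^{eH_{i_k}}\to\bR^{eH_n}$ and $\bR^{eE_{i_k}\du eV_{i_k}}\to\bR^{eE_n\du eV_n}$ defining $\lambda$ are precisely the chain-level maps induced by $\varphi_{ni_k}:\G_n\smap\G_{i_k}$, which is the same prescription used in diagram \eqref{eq:twisting fat} to twist $\chi^{\aFat}$. Hence the assignment $\sigma\mapsto\lambda_\sigma$ gives a virtual bundle naturally isomorphic to $\chi^{\aFat}$, and on determinants we get an equality $\det(\lambda)=\det(\chi^{\aFat})$.

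Finally, combining this with the isomorphism $\det(\chi^{\aFat})\cong\det(\chi)$ proved in the previous lemma---which amounts to the observation that, fat graph by fat graph, the chain complex $\bR^{eH}\to\bR^{eE\du eV}$ is quasi-isomorphic to $(H_1(\G,\p_\win\G),H_0(\G,\p_\win\G))$ through a contractible choice of splittings compatible with the action of morphisms of $\aFat$---yields the desired isomorphism $\det(\lambda)\cong\det(\chi)$. There is no real obstacle: the content is already packaged in the preceding lemma, and the present statement is just the bookkeeping that the bundle $\lambda$ defined in the current subsection is the one tracked there.
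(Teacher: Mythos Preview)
Your argument has a genuine gap at the step where you claim the holonomies of $\lambda$ and $\chi^{\aFat}$ agree. They do \emph{not} use the same prescription. The twisting of $\chi^{\aFat}$ over a coinclusion $\tilsigma\subset\sigma$ is given by the pushforward chain maps $f_{eH}:\bR^{eH_n}\to\bR^{eH_{i_k}}$ and $f_{eE\du eV}:\bR^{eE_n\du eV_n}\to\bR^{eE_{i_k}\du eV_{i_k}}$ induced by $\varphi_{ni_k}$ (send a generator to its image, or to zero if collapsed). By contrast, $\lambda$ inherits its twisting from $\tilde\kappa$, where the relevant maps go the other way: $g_{eH}:\bR^{eH_{i_k}}\to\bR^{eH_n}$ sends $1_h$ to $1_{\varphi^{-1}(h)}$ (unique non-collapsed preimage), and $g_{eE\du eV}$ sends $1_v$ to the sum over the entire preimage $\varphi^{-1}(v)\cap(eE_n\du eV_n)$. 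These are different linear maps, and a chain map $\varphi_{ni_k}$ on cells does not produce maps in the direction $\bR^{eH_{i_k}}\to\bR^{eH_n}$, so your sentence identifying them is simply false.

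What the paper actually does is compare $f$ and $g$ directly: $g_{eH}$ is a section of $f_{eH}$, while $f_{eE\du eV}\circ g_{eE\du eV}$ is diagonal with strictly positive entries (each $1_v$ goes to a positive multiple of itself). Hence on determinant lines the two prescriptions induce the same isomorphism, and only then can one conclude $\det(\lambda)\cong\det(\chi^{\aFat})\cong\det(\chi)$. This computation is the entire content of the lemma; your proposal skips it by asserting an identification that does not hold before passing to determinants.
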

\begin{proof}
It suffices to compare $\lambda$ and $\det(\chi^{\aFat})$. Over each $\sigma$,
\[\lambda_{\sigma}\cong \det(\chi^{\aFat})\]
and hence it suffices to compare what happens over the morphisms. Fix an inclusion 
\[\tilde{\sigma}=\left(\G_{i_k}\smap \cdots\smap \G_{i_0}\right)
\subset 
\sigma=\left(\G_n\smap \cdots \smap \G_0\right). \] 
The map $\varphi_{i_00}$ is a homotopy equivalence relative to the incoming circles and hence we get a chain map
\[\xymatrix{
H_1(\G_n;\p_\win\G_n)\ar[r]\ar[d]^{\cong}&\bR^{eH_n} \ar[d]^{f_{eH}}\ar[r]&\bR^{eE_n\du eV_n}\ar[r]\ar[d]^{f_{eE\du eV}} &H_0(\G_{n};\p_\win\G_n)\ar[d]^{\cong}\\
H_1(\G_{i_k};\p_\win\G_{i_k})\ar[r]&\bR^{eH_{i_k}} \ar[r] & \bR^{eE_{i_k}\du eV_{i_k}} \ar[r]& H_0(\G_{i_k};\p_\win\G_{i_k})
}\]
which we have used in \eqref{eq:twisting fat} to construct $\chi^{\Fat}$.
Here the map
\[\bR^{eH_{n}}\map \bR^{eH_{i_k}}\]
sends an element $1_h$ to either $1_{\varphi_{ni_k}(h)}$ if $h$ is not  collapsed by $\varphi_{ni_k}$ or to zero if it is. The second map
\[\bR^{eE_{n}\du eV_{n}}\map \bR^{eE_{i_k}\du eV_{i_k}}\]
sends any $1_a$ to $1_{\varphi_{ni_k}(a)}$. We get the isomorphism of pairs of vector spaces
\[\det\left(\bR^{eH_n}, \bR^{eE_n\du eV_n}\right) \cong\det \left(\bR^{eH_{i_k}}, \bR^{eE_{i_k}\du eV_{i_k}}\right).
\]

When constructed the bundle $\det\tilde\kappa$, we considered
\begin{eqnarray*}
g_{eH}:\bR^{eH_{i_k}}&\maps &\bR^{eH_n}\\
g_{eE\du eV}:\bR^{eE_{i_k}\du eV_{i_k}}&\maps &\bR^{eE_n\du eV_n}.
\end{eqnarray*}
Here 
\begin{eqnarray*}
g_{eH}(1_h)&=& 1_{\varphi^{-1}(h)}\\
g_{eE\du eV}(1_e)&=& 1_{\varphi^{-1}(e)}\\
g_{eE\du eV}(1_v)&=& \sum_{a \in \varphi^{-1}(v) \cap\left(eE_{i_0}\du eV_{i_0}\right)} 1_a.
\end{eqnarray*}
Note that $g_{eH}$ is a section of $f_{eH}$. We also have
\begin{eqnarray*}
f_{eH} \cdot g_{eH}(1_e)&=& 1_e\\
f_{eH} \cdot g_{eH}(1_v)&=& \frac{1_v}{\#\varphi^{-1}(v) \cap\left(eE_{i_0}\du eV_{i_0}\right)} 
\end{eqnarray*}
which is a section up to multiplication by a \emph{positive} constant. 

In particular the $f$'s and the $g$'s induce the same map 
\[\det(\bR^{eH_{i_k}},\bR^{eE_{i_k}\du eV_{i_k}})\cong \det(\bR^{eH_{n}},\bR^{eE_n\du eV_n}).\]
\end{proof}

\subsection{The string operations}

In section \ref{sec:TP}, we have constructed a generalized Thom collapse
\begin{equation}\label{eq:TP collapse}
B\Mod(S;\p S) \times M^{\p_\win S} \map \Thom(\kappa_S).
\end{equation}
We will use the Thom isomorphism 
\[H_*\left(\Thom(\kappa_S);\det(\kappa_S)\right) \maplu{\cong} H_*\left(\hocolim{\tilfatop} M^{-} \right) \]
for the non-oriented bundle $\kappa_S$ and our knowledge of $\det\kappa$ to define our string operations.

\begin{lemma}
The homotopy colimit of $M^{\p_\out -}$ is homotopy equivalent to the product
\[\hocolim{\tilfatop} M^{\p_\out -} \simeq \DU{S}\ \left(B\Mod(S) \times M^{\p_\out S}\right).\]
\end{lemma}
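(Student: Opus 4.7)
The plan is to mimic the proof of Lemma \ref{lem: pwin} almost verbatim. By Thomason's theorem, $\left|\tilfatop \int M^{\p_\out -}\right|$ is a model for the homotopy colimit of $M^{\p_\out -} : \tilfatop \map \Top$. So it suffices to show that on each connected component $\tilfatop_S$ corresponding to an open-closed cobordism $S$, this homotopy colimit is equivalent to $|\tilfatop_S| \times M^{\p_\out S} \simeq B\Mod^{\oc}(S) \times M^{\p_\out S}$.

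The strategy is to factor $M^{\p_\out -}$ through a contractible category $\cG_\out^S$ of ordered graph models for $\p_\out S$. Objects of $\cG_\out^S$ are fat graphs $G$ equipped with an ordering of $\pi_0 G$, where the $i$th component of $G$ is a single vertex if the $i$th outgoing component of $S$ is an interval and is a circle with a distinguished leaf if the $i$th outgoing component is a circle. The functor $M^{\p_\out -}$ factors as
\[
\xymatrix{\tilfatop_S \ar[r]^-{\p_\out -} & \cG_\out^S \ar[r]^-{M^{-}} & \Top,}
\]
so once we show $|\cG_\out^S|$ is contractible, all values of $M^{\p_\out -}$ are canonically homotopy equivalent to $M^{\p_\out S}$ and the homotopy colimit splits as claimed.

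The contractibility of $\cG_\out^S$ is essentially the same argument as for $\cG_\win^S$. Because morphisms preserve the ordering of components, the question reduces to the two one-boundary cases $\cG^I$ (trivially contractible: a single object up to unique isomorphism) and $\cG^{S^1}$ (the category of ``oriented circle graphs''). For $\cG^{S^1}$, one exhibits three functors $Id$, $F$, and $cst$, where $F$ adds a single edge at the chosen starting leaf and $cst$ sends everything to the circle with one edge, together with natural transformations $F \Rightarrow Id$ (collapsing the added edge) and $F \Rightarrow cst$ (collapsing all other edges); this links $Id$ to a constant functor.

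The only subtlety beyond the incoming case is ensuring that $\p_\out -$ is well-defined on morphisms: in an admissible fat graph the outgoing boundary cycles need not be embedded, so naively the set of edges traversed by an outgoing cycle may change under a morphism. This is the one step that requires care, but it is resolved because a morphism of open-closed fat graphs preserves the ordered bijection of outgoing boundary cycles and the underlying cobordism structure, so one chooses the graph model $\p_\out \G$ purely from the combinatorial data $(\Out, \omega)$ (a fixed circle or point per outgoing component, independent of how the cycle sits in $|\G|$), and uses the boundary cycle to produce the evaluation $M^{\G} \map M^{\p_\out S}$. With this convention the factorization through $\cG_\out^S$ is functorial, and combining with the contractibility above and taking disjoint unions over diffeomorphism classes $[S]$ yields the claim.
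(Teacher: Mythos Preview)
Your proposal is correct and is exactly what the paper does: its entire proof is the sentence ``We have already proven this for the functor $M^{\p_\win -}$ in lemma~\ref{lem: pwin}. The proof is exactly the same here.'' Your elaboration faithfully reproduces that argument. The subtlety you flag about non-embedded outgoing cycles is not really an obstruction: morphisms of fat graphs are defined to preserve boundary cycles combinatorially, so the outgoing boundary-cycle graph (a circle with one leaf per closed outgoing component, a point per open one) and the induced collapse maps between them are already functorial in $\G$, giving the factorization through $\cG_\out^S$ without any extra choice.
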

\begin{proof}
We have already proven this for the functor $M^{\p_\win -}$ in lemma \ref{lem: pwin}. The proof is exactly the same here.
\end{proof}

This means that the restriction maps
\[M^{\G}\map M^{\p_\out \G}\]
give a map
\[H_*\left(\hocolim{\tilfatop} M^{-} \right)\map
H_*\left(\hocolim{\tilfatop} M^{\p_\out -} \right) \map\bigoplus_{S} H_*\left(M^{\p_\out S}\right) \]

We will now translate the result of the previous section to get a twisted Thom collapse.

\begin{lemma}
The generalized Thom collapse of \eqref{eq:TP collapse} gives a map
\[H_*\left(B\Mod(S);\det\chi_S^{\tens d}\right) \tens H_*M^{\p_{\win}S} \map H_*(\Thom(\kappa), \det(\kappa)).\]
\end{lemma}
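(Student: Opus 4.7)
The plan is to apply the Thom--Pontryagin map of spectra
\[
B\Mod^{\oc}(S)_+ \wedge \Sigma^\infty M^{\p_\win S}_+ \;\longrightarrow\; \Thom(\kappa_S)
\]
constructed in Section \ref{sec:TP} and then take homology with local coefficients in $\det(\kappa_S)$ on the right. For any map of spectra $f:X\to Y$ and any local system $\mathcal{L}$ on $Y$, one gets an induced map $H_*(X;f^*\mathcal{L})\to H_*(Y;\mathcal{L})$; so the content of the lemma is to identify $f^*\det(\kappa_S)$ on the source as $\det(\chi_S)^{\otimes d}$ tensored trivially along $M^{\p_\win S}$, and then invoke a Künneth formula.

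The identification of the pullback local system is exactly what the preceding three lemmas of this section assemble: the composition
\[
\det(\kappa)\;\cong\;\det(\tilde\kappa)\;=\;\Psi^*\bigl(\lambda^{\otimes d}\bigr)\;\cong\;\Psi^*\bigl(\det(\chi^{\aFat})^{\otimes d}\bigr)\;\cong\;\Psi^*\bigl(\det(\chi)^{\otimes d}\bigr)
\]
holds over $\bigl|\tilfatop\int M^-\bigr|$, where $\Psi$ is the forgetful functor to $\tilfatop$. Restricted to the connected component of an open-closed cobordism $S$, the twisting $\det(\chi)$ is pulled back from $B\Mod^{\oc}(S)\simeq |\aFat_S|$. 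Precomposing with the Thom collapse and projecting $B\Mod^{\oc}(S)\times M^{\p_\win S}\to B\Mod^{\oc}(S)$ shows that $f^*\det(\kappa_S)$ is the external tensor product of $\det(\chi_S)^{\otimes d}$ with the trivial system on $M^{\p_\win S}$.

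Once this identification is in place, the map promised by the lemma is the composition
\[
H_*\bigl(B\Mod^{\oc}(S);\det(\chi_S)^{\otimes d}\bigr)\otimes H_*M^{\p_\win S}
\;\xrightarrow{\times}\;
H_*\bigl(B\Mod^{\oc}(S)\times M^{\p_\win S};\det(\chi_S)^{\otimes d}\bigr)
\;\xrightarrow{f_*}\;
H_*\bigl(\Thom(\kappa_S);\det(\kappa_S)\bigr),
\]
where the first arrow is the (twisted) homological cross product, which is defined and works as in the untwisted case because the second factor $M^{\p_\win S}$ carries trivial coefficients.

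The only non-routine step is checking naturality of the zig-zag of isomorphisms between $\det(\kappa_S)$ and $\det(\chi_S)^{\otimes d}$, i.e.\ that it really descends to a well-defined isomorphism of local systems on $B\Mod^{\oc}(S)$ (rather than merely on each fibre of $\tilfatop$). This is exactly the compatibility with the morphisms of $\cN\Fat_1^{op}$ built into the construction of $\tilde\kappa$ and $\chi^{\aFat}$ in the previous subsections, combined with the homotopy equivalence $|\tilfatop|\simeq|\aFat|\simeq B\Mod^{\oc}(S)$ from Section~\ref{sec:thickening}; no further work is needed beyond assembling these.
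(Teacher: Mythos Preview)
Your proposal is correct and follows essentially the same approach as the paper: identify the pullback of $\det(\kappa_S)$ along the Thom collapse with $\det(\chi_S)^{\otimes d}$ (trivially extended over $M^{\p_\win S}$) using the chain of isomorphisms from the preceding lemmas, then apply functoriality of twisted homology together with the cross product. The paper's proof is the two-sentence version of exactly this argument; you have simply spelled out the intermediate steps.
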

\begin{proof}
By the preceding section, the bundle $\det(\kappa)$
pulls back to $\det\chi_S^{\tens d}$ on 
\[B\Mod(S)\times \Sigma^{\infty} M^{\p_{\win}} S.\]
This proves the statement.
\end{proof}

\begin{definition}
The string operation associated to $S$ is the composition 
\[\begin{split}
H_*(B\Mod(S);\det\chi_S^{\tens d}) \tens H_*M^{\p_{\win}S}&\cong H_*(\widetilde{B\Mod(S)};\det\chi_S^{\tens d}) \tens H_*M^{\p_{\win}S}\\ 
&\map H_*(\Thom(\kappa_S);\det(\kappa)) \map H_*(M^{\p_\out S}).
\end{split}\]
\end{definition}

Note that if the manifold is even dimensional then the twisting above $B\Mod(S)$ is trivial and we get operations parameterized by the homology $H_*(B\Mod(S))$.

\subsection{Triviality over certain components}

Let $S$ be an open-closed cobordism with at most one boundary component that is completely free. We claim that the bundle $\chi_S$ is orientable .

\begin{lemma}
The bundle $\chi_S$ over $B\Mod(S)$ is oriented.
\end{lemma}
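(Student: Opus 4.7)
The plan is to show that the action of $\Mod^{\oc}(S)$ on the real line $\det(\chi_S)$ is trivial. Since every diffeomorphism in $\Mod^{\oc}(S)$ fixes $\p_\win S \du \p_\out S$ pointwise, it acts trivially on $H_*(\p_\win S;\bR)$, and positive boundary forces every connected component of $S$ to contain a fixed point of $\p_\out S$, so $H_0(S;\bR)$ and $H_0(S,\p_\win S;\bR)$ also carry the trivial action. The hypothesis that at most one boundary component of $S$ is completely free guarantees further that every boundary circle of $S$ is preserved setwise: any circle meeting $\p_\win S \cup \p_\out S$ is pinned by its fixed arcs, while a lone completely free boundary circle has no partner to be swapped with.

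I would then feed the long exact sequence of the pair $(S,\p_\win S)$,
\[0 \to H_1(\p_\win S) \to H_1(S) \to H_1(S,\p_\win S) \to H_0(\p_\win S) \to H_0(S) \to H_0(S,\p_\win S) \to 0,\]
into the determinant functor. The vanishing $H_2(S,\p_\win S)=0$ is immediate from positive boundary: no component of $S$ has its whole boundary contained in $\p_\win S$, so the map $H_1(\p_\win S)\to H_1(S)$ is injective. Since the alternating tensor product along an exact sequence is canonically trivial and three of the six factors are trivial as $\Mod^{\oc}(S)$-representations by the previous paragraph, the determinant identity collapses to an isomorphism of $\Mod^{\oc}(S)$-lines
\[\det(\chi_S)\;\cong\;\det H_1(S;\bR).\]

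It remains to argue that $\Mod^{\oc}(S)$ acts on $H_1(S;\bR)$ with determinant one. The radical $B\subset H_1(S;\bR)$ of the intersection pairing is spanned by the classes of the boundary circles of $S$. Because every such circle is preserved setwise and each mapping class acts by an orientation-preserving diffeomorphism of $S$ (hence on each boundary circle with its induced orientation), the action on $B$ is trivial. On the quotient $H_1(S;\bR)/B$ the intersection pairing descends to a nondegenerate symplectic form preserved by $\Mod^{\oc}(S)$, so the induced representation lands in a symplectic group, every element of which has determinant one. In the resulting block-triangular action on $H_1(S;\bR)$ both blocks have determinant one, yielding $\det H_1(S;\bR)=1$ and hence the triviality of $\det(\chi_S)$.

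The main subtlety lies in the role of the hypothesis: if $S$ had two or more completely free boundary components, mapping classes could permute them, and the ensuing permutation sign on $\det B$ would obstruct the final step. Note also that the argument does not single out a preferred orientation, consistent with the remark in the introduction that even when each $\chi_S$ is orientable there is no way to choose such orientations compatibly under gluing of open-closed cobordisms.
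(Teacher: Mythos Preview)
Your argument is correct and follows essentially the same route as the paper: both reduce orientability of $\det(\chi_S)$ via the long exact sequence of $(S,\p_\win S)$ to the claim that $\Mod^{\oc}(S)$ acts on $H_1(S;\bR)$ with positive determinant, and both verify this by splitting $H_1(S;\bR)$ into the span of the boundary-circle classes (fixed, under the hypothesis, with their orientations) and a symplectic quotient. One small slip: positive boundary alone does \emph{not} force every component to meet $\p_\out S$ --- a component could have only free boundary --- but such a component contributes at least one completely free boundary circle, so the hypothesis (together with the fact that components meeting $\p_\win S$ are already pinned) still yields the triviality on $H_0(S;\bR)$ that you need.
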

\begin{proof}
The action of the diffeomorphism group on $H_0(S;\p_\win S)$ is the identity since each connected component of $S$ must have a boundary boundary component which is not incoming and hence all but one must have a bit of outgoing boundary. Since these are preserved by $\Mod(S)$, all but one connected component must be  preserved and hence they must all be preserved.

Consider the long exact sequence
\[\xymatrix{
H_1(\p_\win S) \ar[r]& H_1(S)\ar[r] & H_1(S;\p_\win S)\ar[r]& H_0(\p_\win S) \ar[r]& H_0(S)\ar[r] &H_0(S;\p_\win S)
}\]
The action of the diffeomorphism group is trivial on all terms but 
\[H_1(S)\qquad H_1(S;\p_\win S).\]
If $S$ is a surface of genus $g$ with $n$ boundary components,
\[H_1(S) \cong \bZ^{\oplus 2g} \oplus \bZ^{\oplus n}.\]
The mapping class group acts as the identity on $\bZ^{\oplus n}$ since our diffeomorphisms must preserve all boundary components but one (and hence preserve all). The action on $\bZ^{\oplus 2g}$ is not trivial, however, since diffeomorphisms preserve the intersection pairing of the surface, the action factors through the symplectic group. It therefore is orientation-preserving.
In particular the action on $H_1(S;\p_{\win} S)$ is also orientation preserving.
\end{proof}

In particular, we get string operations parameterized by $H_*(B\Mod(S))$. More precisely, once we have picked an orientation for $\chi_S$, we get an operation
\[\mu_S : H_*B\Mod(S) \tens H_*(M^{\p_{\win} S}) \map H_*(M^{\p_{\out}S}).\]
which increases degree by the relative Euler characteristic
\[\dim(H_0(S;\p S)) - \dim(H_1(S;\p S)\]
times $d$. This degree shift was hidden in the grading of $\chi_S$.

\begin{remark}
Say we are interested in the mapping class group
\[\Mod^\prime(S) = \pi_0\Diff(S;\p_{\win} S\du \p_\out S;\pi_0 \p_\free S)\]
which preserves $\p_\win$ and $\p_\out$ pointwise and fixes each
connected component of $\p_\free$ as a set. The proof of the previous theorem shows that the bundle $\chi_S$ is orientable over
\[B\Mod^{\prime}(S) \subset B\Mod(S)\]
and hence we get operations parameterized by $H_*(B\Mod^{\prime}(S)).$
\end{remark}

\subsection{Our Chas-Sullivan product is skew associative when $d$ is odd.}
\label{sub:ChasSullivan}

This section shows that although $\kappa$ is oriented over certain connected components, it is not possible to pick  orientations which are compatible with the gluing. 

Lets consider the pair of pants $S$. We have
\[(H_1(S;\p_{\win} S), H_0(S;\p_\win S) \cong \left(\bZ,0\right).\]
Say we pick a generator for $H_1(S;\p_\win S)$ which goes from the first boundary to the second. This gives a trivialization of
$\det\chi^{\tens d}$
and hence an operation
\[H_*(B\Mod(S))\tens H_*LM^{\tens 2}\map H_*LM.\]
We will call 
\[\star_{CS}: H_*LM^{\tens 2}\map H_*LM\]
our Chas and Sullivan product.

\begin{figure} \begin{center}
\mbox{\epsfig{file=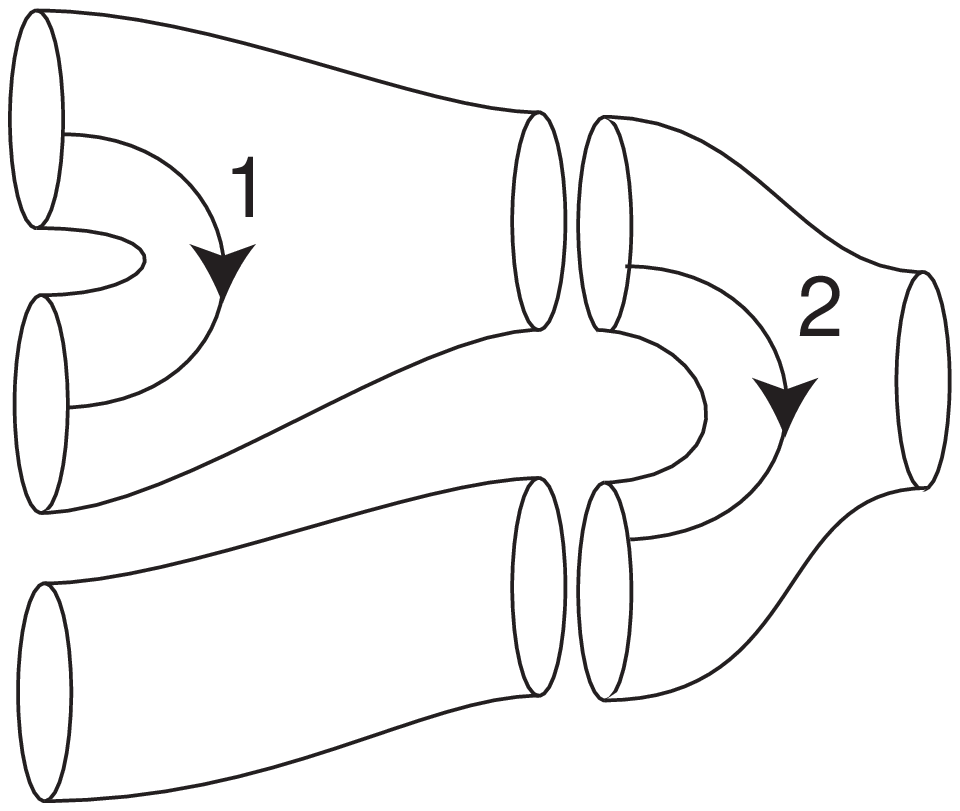, width=150pt}}
\qquad
\mbox{\epsfig{file=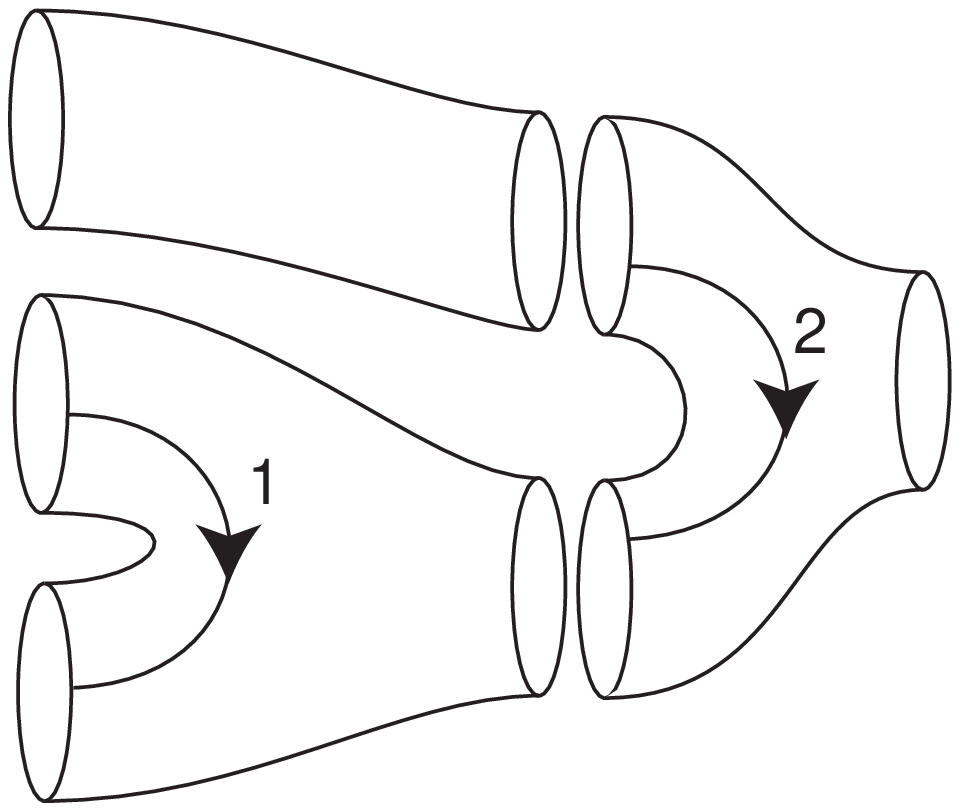, width=150pt}}
\caption{Two compositions}\label{fig:comp}
\end{center}
\end{figure}

\begin{proposition}
The product $\star_{CS}$ is  associative if and only if $M$ is even dimensional. If $M$ is odd-dimensional, $\star_{CS}$ is skew associative.
\end{proposition}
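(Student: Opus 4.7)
The plan is to realize both $(a\star_{CS} b) \star_{CS} c$ and $a \star_{CS} (b \star_{CS} c)$ as instances of the operation associated to a single open-closed cobordism $S$ — the sphere with three incoming and one outgoing boundary circles, as in figure \ref{fig:comp} — but obtained from two different pair-of-pants decompositions. Both compositions are then parameterized by classes in $H_0(B\Mod(S); \det\chi_S^{\tens d})$, and their ratio is controlled entirely by the change of trivialization of $\det\chi_S^{\tens d}$ induced by the two decompositions. The claim will reduce to showing that this ratio is exactly $(-1)^d$.

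First I would identify $\chi_S$. The long exact sequence of the pair $(S,\p_\win S)$, together with the fact that the three incoming circles form a basis of $H_1(S)$, yields $H_0(S;\p_\win S) = 0$ and $H_1(S;\p_\win S) \cong \bZ^2$, generated by classes $e_{xy}$ of curves from an incoming circle $x$ to another incoming circle $y$ (with $\partial e_{xy} = [y] - [x]$ in $H_0(\p_\win S)$), subject to the single relation $e_{bc} = e_{ac} - e_{ab}$. Thus $\det\chi_S$ is trivialized up to sign by $e_{ab} \wedge e_{ac}$.

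Next I would run the long exact sequence of the triple $(S_1\#S_2, S_1, \p_\win S_1)$ for each decomposition to determine the image of the chosen generators under $\det\chi_{S_1} \tens \det\chi_{S_2} \cong \det\chi_S$. For $(a\star b)\star c$, write $S = P_{ab\to d} \# P_{dc\to e}$ with generators $\alpha_1$ (curve from $a$ to $b$) and $\alpha_2$ (curve from $d$ to $c$). A lift of $\alpha_2$ to a relative 1-cycle in $(S,\p_\win S)$ is obtained by prepending any path $\beta$ inside $P_{ab\to d}$ from $a$ to the interior circle $d$, realizing the lift as $e_{ac}$, so the trivialization is $\alpha_1 \wedge \widetilde\alpha_2 = e_{ab} \wedge e_{ac}$. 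A symmetric computation for $a\star(b\star c)$, using $S = P_{bc\to d} \# P_{ad\to e}$ with generators (curve from $b$ to $c$) and (curve from $a$ to $d$) and a connecting path from $d$ to $b$, gives the trivialization $e_{bc} \wedge e_{ab} = (e_{ac} - e_{ab}) \wedge e_{ab} = -e_{ab} \wedge e_{ac}$.

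Hence the two trivializations of $\det\chi_S$ differ by $-1$, so those of $\det\chi_S^{\tens d}$ differ by $(-1)^d$. By the gluing compatibility of the operations $\mu_S$ from Theorem \ref{thm:main}, this yields $(a\star_{CS} b)\star_{CS} c = (-1)^d a\star_{CS}(b\star_{CS} c)$, proving associativity when $d$ is even and skew associativity when $d$ is odd. The main thing to check carefully is that the choice of interior connecting path $\beta$ is immaterial in the wedge: any two such paths differ by an element of $H_1(S_1;\p_\win S_1) = \langle \alpha_1\rangle$, which vanishes after wedging with $\alpha_1$ — so each decomposition produces an intrinsically well-defined trivialization and the $-1$ genuinely reflects the change of decomposition rather than an artifact of lifting.
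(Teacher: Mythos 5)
Your proof is correct and follows essentially the same route as the paper: both arguments identify the two associativity orderings as compositions landing in $H_0(B\Mod(S_3);\det\chi_{S_3}^{\tens d})$ for the four-holed sphere, compute the two induced trivializations of $\det\chi_{S_3}$ (the paper via generators $\gamma_{12}\wedge\gamma_{23}$ versus $\gamma_{23}\wedge\gamma_{12}$, you via $e_{ab}\wedge e_{ac}$ versus $e_{bc}\wedge e_{ab}$ and the relation $e_{bc}=e_{ac}-e_{ab}$), and read off the sign $(-1)^d$. You are somewhat more explicit than the paper about the lifting through the long exact sequence of the triple and about checking that the choice of connecting path is immaterial, but the underlying argument is identical.
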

\begin{proof}
We will assume that our operations glue as does $\chi$ (which will be proven in the next section). We therefore want to compare the two isomorphisms
\[\xymatrix{
\det(\chi_S) \times \det(\chi_S)\ar@/^15pt/[rr]\ar@/_15pt/[rr]&& \det(\chi_{S_3})}\] 
given by the composition illustrated in figure \ref{fig:comp}. Here $S_3$ is the pair of pants with three legs. 

We have that
\[(H_1(S_3;\p_\win S_3), H_0(S_3;\p_\win S_3)) \cong(\bZ\oplus \bZ; 0)\]
where we can choose the generators of $H_1$ to be $\gamma_{12}$ a path from the first to the second boundary and $\gamma_{23}$ a path from the second to the third boundary. 
The first composition will send our chosen generators to
\[\gamma_{12}\tens \gamma_{23}\]
while the second composition will send then to
\[\gamma_{23}\tens \gamma_{12}.\] 
In particular, the two composition of $\star_{CS}$ will agree up to a sign of $(-1)^d$
\end{proof}

The original Chas and Sullivan product as defined in \cite{ChasSullivan} was associative. However, Chas and Sullivan shifted  the homology by $d$. This change in grading hides the signs that we are seeing. Note that this change of grading would make the coproduct skew coassociative from odd-dimensinal manifolds and hence would not  rid us of our bundles.

This sign problem is the same as the sign issue that appears in the definition of the intersection pairing in $H_*M$. If we use the Thom-Pontrjagin construction to define the pairing, it not be associative. However, one could use the cup product directly and get an associative product. But in this last case, we would be thinking of a homology class in its cohomological degree.

\section{Gluing the operations}

\label{sec:gluing operations}
We are now ready to show that the operations that we have constructed glue  to give a degree $d$-TCFT.

\subsection{Thickening the category $\cG$ of glue-able fat graphs.}

In section \ref{sub:gluing graphs},  we have constructed a category $\cG$ of glue-able admissible fat graphs with functors
\[\aFat \times \aFat \imap \cG \maplu{\#} \aFat.\]
The first functor realizes to a homotopy equivalence on the geometric realization. The functore $\#$ glues the glue-able fat graphs.

Since we have thickened the category $\aFat$ to construct the operations, we will now thicken $\cG$ to match. We will construct two categories $\cG_1$ and $\tilde{\cG}$ as well as a diagram
\[\xymatrix{
\tilfatop\times \tilfatop \ar[d] &\ar[l]^-{\sim} \tilde{\cG}\ar[d] \ar[r]^-{\tilde\#}&  \tilfatop\ar[d]\\
\left( \cN\aFat_1\right)^{op} \times\left( \cN\aFat_1\right)^{op}\ar[d]&\ar[l]^-{\sim} \cN\cG_1^{op}\ar[d]\ar[r]^-{\cN\#_1}&\left( \cN\aFat_1\right)^{op}\ar[d]\\
\aFat_1\times \aFat_1 \ar[d]&\ar[l]^-{\sim} \cG_1 \ar[r]^-{\#_1}\ar[d] & \aFat_1\ar[d]\\
\aFat \times \aFat &\ar[l]^-{\sim} \cG \ar[r]^-{\#}& \aFat
}\]
where the $\sim$ means that the functor induces a homotopy equivalence on the geometric realizations.
First, let $\cG_1$ be the pullback of the diagram
\[\xymatrix{
\cG_1\ar[d]\ar[r]&\aFat_1\times \aFat_1\ar[d]\\
\cG\ar[r]&\aFat\times \aFat.
}\]
The objects of $\cG_1$ are pairs $(\G,\tilG)$ of glueable fat graphs and the morphisms of $\cG_1$ are morphisms $(\varphi,\til\varphi)$ of $\cG$ with a choice of splittings for both $\varphi$ and $\til\varphi$.

\begin{lemma}
The functor
\[\cG_1\map \aFat_1\times \aFat_1\]
realizes to a geometric realization.
There is also  a functor
\[\#_1 : \cG_1\map \aFat_1\]
which lifts $\#$.
\end{lemma}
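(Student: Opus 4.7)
The plan is to treat the two claims separately. For the homotopy equivalence, the key observation is that $\cG_1$ is defined as the pullback of $\cG$ and $\aFat_1\times\aFat_1$ over $\aFat\times\aFat$, so it sits in the square
\[\xymatrix{
\cG_1\ar[d]_{\Phi}\ar[r]& \aFat_1\times\aFat_1\ar[d]^{\Psi\times\Psi}\\
\cG\ar[r]& \aFat\times\aFat.
}\]
The plan is to show that $\Phi$ is a homotopy equivalence by the exact same argument used in Lemma \ref{lem:fat1 vs fat}: above a morphism $(\varphi,\til\varphi)$ of $\cG$, the preimage in $\cG_1$ is $\Split(\varphi)\times\Split(\til\varphi)$, a convex non-empty (hence contractible) subspace, and above a simplex of $\cG$ the preimage in $\cG_1$ is a product of such contractible spaces. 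Obstruction theory then yields a section and a homotopy to the identity. Combined with the right-hand vertical map $\Psi\times\Psi$ being a homotopy equivalence (Lemma \ref{lem:fat1 vs fat} applied twice) and the bottom arrow being a homotopy equivalence (by the gluing lemma for $\cG\subset\aFat\times\aFat$), a $2$-out-of-$3$ argument in the commuting square gives that the top arrow is a homotopy equivalence.

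For the functor $\#_1$, define it on objects as $(\G,\tilG)\mapsto\G\#\tilG$, the same as $\#$. On a morphism
\[\bigl((\varphi,\til\varphi),(\alpha_v,\beta_v)_{v\in eV_{\G_2}},(\til\alpha_w,\til\beta_w)_{w\in eV_{\tilG_2}}\bigr):(\G_1,\tilG_1)\smap(\G_2,\tilG_2),\]
one sends it to $\varphi\#\til\varphi:\G_1\#\tilG_1\smap\G_2\#\tilG_2$ equipped with the splittings constructed as follows. The extra vertices of $\G_2\#\tilG_2$ partition as $eV_{\G_2\#\tilG_2}=eV_{\G_2}\sqcup eV_{\tilG_2}$ because the gluing only affects incoming boundary cycles of $\tilG$ and outgoing boundary cycles of $\G$, neither of which are extra. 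Moreover, for $v\in eV_{\G_2}$ the sets $\widetilde{eE}_v$ and $\widetilde{eV}_v$ of extra edges/vertices of $\G_1\#\tilG_1$ collapsed or sent to $v$ by $\varphi\#\til\varphi$ agree with the corresponding sets for $\varphi$ in $\G_1$, and similarly for $w\in eV_{\tilG_2}$. Thus the family $\{(\alpha_v,\beta_v)\}\cup\{(\til\alpha_w,\til\beta_w)\}$ provides the required splittings for $\varphi\#\til\varphi$. Functoriality is immediate from the definition of composition in $\aFat_1$, which composes splittings levelwise, together with functoriality of $\#$ already proved at the level of $\aFat$.

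The only potentially delicate step is verifying that the partition of extra vertices of $\G_2\#\tilG_2$ is indeed as claimed and that no extra vertex on the boundary cycles being glued is an issue; this follows from admissibility (the glued boundary cycles are incoming circles of $\tilG$, which are embedded and whose vertices are never counted among $eV$) together with the precise description of $\G\#\tilG$ in Section \ref{sub:gluing graphs}. Commutativity of
\[\xymatrix{
\cG_1\ar[r]^{\#_1}\ar[d]& \aFat_1\ar[d]\\
\cG\ar[r]^{\#}& \aFat
}\]
is then immediate, so $\#_1$ lifts $\#$ as required.
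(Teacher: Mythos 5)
Your argument matches the paper's essentially line for line: for the first claim you reproduce the same strategy of showing $|\cG_1|\to|\cG|$ is an equivalence via the convex-fiber argument of Lemma~\ref{lem:fat1 vs fat} and then concluding by two-out-of-three in the commuting square; for the second claim you construct $\#_1$ by the same decomposition $eV_{\G\#\tilG}\cong eV_{\G}\sqcup eV_{\tilG}$ (and likewise for $eE$) and combine the splittings componentwise, which is exactly the diagram the paper writes down. The proposal is correct and takes the same route as the paper.
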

\begin{proof}
The argument of proposition \ref{lem:fat1 vs fat} that showed that 
\[|\aFat_1|\map |\aFat|\]
is a homotopy equivalence gives that
\[|\cG_1|\map |\cG|\]
is also one. Since  we have a commutative diagram
\[\xymatrix{
|\cG_1| \ar[r]\ar[d]^{\sim} &| \aFat_1 \times \aFat_1|\ar[d]^{\sim}\\
|\cG|\ar[r]^-{\sim}&|\aFat\times \aFat|
}\]
we get a homotopy equivalence on the top arrow.

Lets now construct the gluing $\#_1$. Fix an object $(\G_1,\G_2)$ in the category $\cG$ of glue-able admissible fat graphs. We let $\G=\G_1\#\G_2$ denote the fat graph obtained by gluing $\G_1$ and $\G_2$. Note that the 
\[eE_\G = eE_{\G_1}\du eE_{\G_2} \qquad eV_\G= eV_{\G_1}\du eV_{\G_2}\qquad eH_\G= eH_{\G_1}\du eH_{\G_2}\]
For any morphism 
\[(\varphi,\til\varphi):(\G_0,\tilG_0)\map (\G_1,\tilG_1)\]
of $\cG$, we construct a continuous map
\[\Split(\varphi)\times \Split(\til\varphi)\map \Split(\varphi\#\til\varphi).\]

Say we have splittings $(\alpha,\beta)$ and $(\til\alpha,\til\beta)$.
We then get a splitting for $\varphi\#\til\varphi$ by considering
\[\xymatrix{
\bR^{eE_{\G_0\#\tilG_0}\du eV_{\G_0\#\tilG_0}} \cong 
\bR^{eE_{\G_0}\du eV_{\G_0}} \times \bR^{eE_{\tilG_0}\du eV_{\tilG_0}} \ar[d]^{\alpha\times\til\alpha}\\
 \bR^{eE_{\G_1}\du eV_{\G_1}}\times \bR^{eE_{\tilG_1}\du eV_{\tilG_1}} \cong
\bR^{eE_{\G_1\#\tilG_1}\du eV_{\G_1\#\tilG_1}} 
}\]
By construction, for an extra vertex $v$ of $\G_1\#\tilG_1$ this splittings gives, as required, a splitting for 
\[
\bR\map \bR^{eE^{\G_0\#\tilG_0}_v\du eV^{\G_0\#\tilG_0}_v}\map
\frac{\bR^{eE^{\G_0\#\tilG_0}_v\du eV^{\G_0\#\tilG_0}_v}}{\bR}
\]
\end{proof}

On the categories of simplices, we get naturally
\begin{equation}\label{eqn:gluing cn}
\cN\left(\aFat_1\times\aFat_1\right) \imap \cN\cG_1\maplu{\cN\#_1} \cN\aFat_1.
\end{equation}
We can compose the first functor with the functor
\[\cN\left(\aFat_1\times\aFat_1\right)  \map \cN\aFat_1\times\cN\aFat_1\]
which sends
\[\left[(\G_n,\tilG_n)\smap \cdots \smap (\G_0,\tilG_0)\right] \mapsto
\left(\left[\G_n\smap \cdots \smap \G_0\right], \left[\G_n\smap \cdots \smap \G_0\right]\right).\]
These also give functors
\[ \left(\cN\aFat_1\right)^{op}\times\left(\cN\aFat_1\right)^{op} \imap \cN\cG_1^{op}\maplu{\cN\#^{op}} \left(\cN\aFat_1\right)^{op}.\]

\begin{lemma}
The functor 
\[\cN\cG_1 \map \cN\left(\aFat_1\times\aFat_1\right)\]
realizes to a homotopy equivalence.
\end{lemma}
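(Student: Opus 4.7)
The plan is to deduce this from the homotopy equivalence $|\cG_1| \to |\aFat_1 \times \aFat_1|$ already established in the previous lemma, via the standard fact that taking the category of simplices of a small category is, after geometric realization, naturally equivalent to the identity.

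More precisely, for any small category $\cC$ there is a natural ``last vertex'' map $\ell_\cC \colon |\cN\cC| \to |\cC|$, which one can describe by sending a simplex
\[
\bigl(\sigma_0 \subset \sigma_1 \subset \cdots \subset \sigma_m\bigr), \quad (t_0,\dots,t_m)\in \Delta^m
\]
in $|\cN\cC|$ to the point of $|\cC|$ obtained by interpreting each $\sigma_i$ through its last vertex in $\cC$. Since $|\cN\cC|$ is the barycentric subdivision of $|\cC|$, the map $\ell_\cC$ is a homotopy equivalence. For any functor $F\colon\cC\to\cD$, the induced functor $\cN F\colon\cN\cC\to\cN\cD$ fits into a commutative square
\[
\xymatrix{
|\cN\cC|\ar[r]^{|\cN F|}\ar[d]_{\ell_\cC} & |\cN\cD|\ar[d]^{\ell_\cD}\\
|\cC|\ar[r]_{|F|} & |\cD|
}
\]
so $|\cN F|$ is a homotopy equivalence whenever $|F|$ is.

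First I would verify that these remarks apply to the forgetful functor $\cG_1 \to \aFat_1\times \aFat_1$. That functor was already shown to realize to a homotopy equivalence (combining the identification of $\cG_1$ as the pullback $\cG \times_{\aFat\times\aFat}(\aFat_1\times\aFat_1)$ with the fact, proven earlier in Section \ref{sub:gluing graphs}, that $|\cG|\to|\aFat\times\aFat|$ is a homotopy equivalence, and with Lemma~\ref{lem:fat1 vs fat}). The commutative square above with $\cC=\cG_1$ and $\cD=\aFat_1\times \aFat_1$ then gives the desired homotopy equivalence
\[
|\cN\cG_1|\map |\cN(\aFat_1\times\aFat_1)|.
\]

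I do not expect any genuine obstacle here: the argument is essentially formal once one accepts the last-vertex equivalence $|\cN\cC|\simeq|\cC|$, which has already been invoked (for $\aFat$) earlier in the paper. The only small point to take care of is that the statement is about $\cN(\aFat_1\times \aFat_1)$ rather than $\cN\aFat_1\times \cN\aFat_1$; these are different categories, but both realize to $|\aFat_1|\times|\aFat_1|$ (again via the last-vertex map, together with the natural comparison between $\cN(\cC\times\cD)$ and $\cN\cC\times\cN\cD$ induced by the two projections), so no additional work is needed to identify the target.
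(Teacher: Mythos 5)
Your proof is correct and uses the same argument as the paper: combine the previous lemma ($|\cG_1|\to|\aFat_1\times\aFat_1|$ is a homotopy equivalence) with the last-vertex equivalence $|\cN\cC|\simeq|\cC|$ and its naturality. The paper's proof is a one-line version of exactly this.
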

\begin{proof}
This is a direct consequence of the fact that the functors
\[\cG_1 \map \aFat_1\times\aFat_1\]
and 
\[\cN\cC\map \cC\qquad \forall\ \cC\]
gives a homotopy equivalences.
\end{proof}

To complete the extensions, we now need to define a category $\til{\cG}$ and functors
\[\tilfatop\times \tilfatop \imap \til{\cG}\map \tilfatop.\]
Recall that we have defined $\tilfatop$ by first defining a functor 
\[\xymatrix{
\Tub: \left(\cN\aFat_1\right)^{op} \maplu{\cT} \Top^? \ar[rr]^-{\holim{}}&& \Top
}\]
and then using Thomason's construction
\[\tilfatop = \left(\cN\aFat_1\right)^{op} \int \Tub.\]
Consider the functor
\[\xymatrix{
(\Tub,\Tub) : \cN\cG_1\ar[r]& \cN\Fat_1\times \cN\Fat_1 \ar[rr]^-{\Tub\times \Tub}&& \Top
}\]
and define
\[\til{\cG} = \cN\cG_1^{op} \int (\Tub,\Tub).\]

\begin{lemma}
There is a functor
\[\Psi : \til{\cG}\map \tilfatop\times \tilfatop\]
which is a homotopy equivalence on the geometric realization.
\end{lemma}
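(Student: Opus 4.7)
The plan is to build $\Psi$ directly from the structural data and then deduce the homotopy equivalence by reducing both sides to the geometric realizations of their indexing categories via Thomason's theorem.

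First I would define $\Psi$ on objects. An object of $\til\cG$ is a pair $(\tau, (x_1, x_2))$ where $\tau = \bigl((\G_n,\tilG_n)\smap\cdots\smap(\G_0,\tilG_0)\bigr)$ is a simplex of $\cG_1$ and $x_i \in \Tub(\sigma_i)$, with $\sigma_1 = (\G_n \smap \cdots \smap \G_0)$ and $\sigma_2 = (\tilG_n \smap \cdots \smap \tilG_0)$ the two simplices of $\aFat_1$ obtained from $\tau$ by projecting onto each factor of $\cG_1 \to \aFat_1 \times \aFat_1$. I set $\Psi(\tau, (x_1, x_2)) = \bigl((\sigma_1, x_1),(\sigma_2, x_2)\bigr)$. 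On morphisms, since a morphism of $\cG_1$ is a pair of morphisms of $\aFat_1$ sharing a common subsimplex structure, and the value of $(\Tub, \Tub)$ on such a morphism is by construction the product of the values of $\Tub$ on each projection, $\Psi$ is manifestly a functor.

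Next I would verify commutativity of the square
\[\xymatrix{
\til\cG \ar[r]^-{\Psi} \ar[d] & \tilfatop \times \tilfatop \ar[d] \\
\cN\cG_1^{op} \ar[r] & (\cN\aFat_1)^{op} \times (\cN\aFat_1)^{op}
}\]
where the vertical arrows are the Thomason projections and the bottom arrow is the pair of projections. Invoking Thomason's theorem \cite{Thomason}, the geometric realizations of $\til\cG$ and $\tilfatop \times \tilfatop$ are, respectively, the homotopy colimits of $(\Tub, \Tub)$ over $\cN\cG_1^{op}$ and of $\Tub \times \Tub$ over $(\cN\aFat_1)^{op} \times (\cN\aFat_1)^{op}$.

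Now each $\Tub(\sigma)$ is contractible (this was established in the lemma following the definition of $\Tub$), so each $(\Tub, \Tub)(\tau) = \Tub(\sigma_1) \times \Tub(\sigma_2)$ is contractible as well. Consequently the forgetful maps
\[|\til\cG| \simeq \hocolim_{\cN\cG_1^{op}}(\Tub,\Tub) \map |\cN\cG_1|, \qquad |\tilfatop \times \tilfatop| \map |\cN\aFat_1| \times |\cN\aFat_1|\]
are both homotopy equivalences, and $|\Psi|$ becomes identified under these equivalences with the induced map $|\cN\cG_1| \to |\cN\aFat_1|\times|\cN\aFat_1|$. By the lemma immediately preceding this statement the functor $\cN\cG_1 \to \cN(\aFat_1 \times \aFat_1)$ realizes to a homotopy equivalence, and the Eilenberg--Zilber type map $|\cN(\aFat_1 \times \aFat_1)| \to |\cN\aFat_1| \times |\cN\aFat_1|$ is always a homotopy equivalence. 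Composing, the bottom map of the square is a homotopy equivalence, hence so is $|\Psi|$.

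The main obstacle to be careful about is not a deep issue but a bookkeeping one: one must check that the value of the functor $(\Tub, \Tub)$ on a morphism of $\cN\cG_1$ really does factor as the product of the $\Tub$-maps on its two projections, so that $\Psi$ is strictly functorial rather than merely functorial up to coherent homotopy. This is immediate from the definition of $(\Tub, \Tub)$ as the composite $\cN\cG_1 \to \cN\Fat_1 \times \cN\Fat_1 \xrightarrow{\Tub \times \Tub} \Top$, so no genuine difficulty arises.
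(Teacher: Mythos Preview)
Your proposal is correct and follows essentially the same approach as the paper: define $\Psi$ componentwise, invoke Thomason to identify both sides as homotopy colimits, and reduce the comparison to the already-established equivalence $|\cN\cG_1|\simeq|\cN\aFat_1|\times|\cN\aFat_1|$. The paper's version is terser---it writes the chain $\hocolim_{\cN\cG_1}(\Tub\times\Tub)\circ F\simeq\hocolim_{\cN\Fat_1\times\cN\Fat_1}\Tub\times\Tub$ directly---whereas you make the intermediate step through contractibility of the fibers explicit, but the logic is the same.
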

\begin{proof}
The functor takes an object 
\[\left((\G,\tilG), (x,\til{x})\right)\]
of $\til{\cG}$ and sends it to the object
\[\left((\G,x),(\tilG,\til{x})\right).\]
The morphisms follow a similar pattern. 

We have a functor
\[F : \cN\cG_1 \map \cN\Fat_1\times \cN\Fat_1\]
which realizes to a homotopy equivalence. Hence
\[\hocolim{\cN\cG_1} (\Tub,\Tub) = \hocolim{\cN\cG_1} \left(\Tub\times \Tub\right)\cdot F \maplu{\simeq} \hocolim{\cN\Fat_1\times \cN\Fat_1} \Tub\times \Tub = \hocolim{\cN\Fat_1} \Tub\times \hocolim{\cN\Fat_1} \Tub\]
\end{proof}

We now need to construct a functor
\[\til{\cG}\maplu{\#} \tilfatop\]
which respects the operations. We will first construct a natural transformation 
\[\zeta:(\Tub,\Tub)\Longrightarrow \Tub \circ\#\]
as in
\[\xymatrix{
\til\cG \ar@/_20pt/[rr]^{\Tub\circ \#}="T" \ar[rd]_{\#}\ar@/^20pt/[rr]_{(\Tub,\Tub)}="TT" &&\Top\\
& \tilfatop \times\tilfatop \ar[ru]_{\Tub}\ar@{=>}"TT";"T"_{\zeta}.
}\]

\subsection{How to glue tubular neighborhoods.}

Before building this $\zeta$, we first start by looking at how one might glue tubular neighborhoods.

\begin{lemma}\label{lem:constructing composition}\label{lem:composition}
Say we have embeddings
\[\xymatrix{
A\times W\ar[r]\ar[d]& A\times X\ar[d] & B\times Y\ar[r]\ar[d]&B\times Z\ar[d]\\
A\ar[r]^{Id}&A& B\ar[r]^{Id}&B
}\]
Say we have tubular neighborhoods
\[\xymatrix{
\left.\frac{A\times TX}{A\times TW}\right|_{A\times W} \ar[r]^{\phi_A}\ar[d]& A\times X\ar[d]& \left.\frac{B\times TZ}{B\times TY}\right|_{B\times Y} \ar[r]^{\phi_B}\ar[d]& B\times Z\ar[d]\\
A\ar[r]^{Id}& A&B\ar[r]^{Id}&B
}\]
which also lie above the identity. Say finally we have a map
\[A\times W\map B.\]
We get a tubular neighborhood for
\[A\times W\times Y\map A\times X\times Z\]
which lie above the identity by taking the composition
\[\xymatrix{
\left.\frac{A\times TX\times TZ}{A\times TW\times TY}\right|_{A\times W\times Y} \ar[r]\ar@/^20pt/[rrrr]^{\psi}&
\left.\frac{A\times TX}{A\times TW}\right|_{A\times W} \times \left.\frac{B\times TZ}{B\times TY}\right|_{B\times Y} \ar[rr]_-{\psi_A\times \psi_B}&& A\times X\times B\times Z \ar[r]& A\times X\times Z
}\]
where the last map is the projection away from $B$.
\end{lemma}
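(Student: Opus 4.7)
The plan is to check that the map $\psi$ written in the statement really is a tubular neighborhood onto an open neighborhood of $A \times W \times Y$ in $A \times X \times Z$, and that it lies above the identity on $A$. The content of the lemma is essentially bookkeeping: the two given tubular neighborhoods are over identity maps, so they can be composed in the product direction without any twisting, and the auxiliary map $A \times W \to B$ lets us pull back $\phi_B$ from $B$ to $A \times W$.

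First I would set up the identification of the normal bundle of the diagonal embedding. Because both $A \times W \hookrightarrow A \times X$ and $B \times Y \hookrightarrow B \times Z$ are product embeddings over the identity, the normal bundle of $A \times W \times Y \hookrightarrow A \times X \times Z$ splits canonically as
\[
\left.\frac{A \times TX \times TZ}{A \times TW \times TY}\right|_{A \times W \times Y} \;\cong\; \left.\frac{A \times TX}{A \times TW}\right|_{A \times W}\!\!\boxplus \left.\frac{B \times TZ}{B \times TY}\right|_{B \times Y},
\]
where the first summand is pulled back along the projection $A \times W \times Y \to A \times W$ and the second along the composition $A \times W \times Y \to A \times W \times Y \to B \times Y$ using the given map $A \times W \to B$. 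This splitting is exactly the first arrow in the displayed definition of $\psi$.

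Next I would carry out the two-step composition. Pulling back $\phi_B$ via $A \times W \to B$ yields
\[
\tilde{\phi}_B \colon \left.\frac{A \times W \times TZ}{A \times W \times TY}\right|_{A \times W \times Y} \longrightarrow A \times W \times Z,
\]
which is a tubular neighborhood of $A \times W \times Y \hookrightarrow A \times W \times Z$ above $\mathrm{Id}_{A \times W}$, precisely because $\phi_B$ was above $\mathrm{Id}_B$ (so the pullback only modifies the $Y$/$Z$-coordinates fibrewise). Then I would cross $\phi_A$ with $\mathrm{Id}_Z$ to obtain a tubular neighborhood of $A \times W \times Z \hookrightarrow A \times X \times Z$ above $\mathrm{Id}_A$. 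Composing these two sends the $B$-coordinate to the image of $A \times W$ in $B$ first, then forgets $B$, which is exactly the projection described in the lemma. The composition lands in $A \times X \times Z$ by construction.

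Finally I would verify the three defining properties of a tubular neighborhood. Openness and injectivity of the image follow since both $\tilde\phi_B$ and $\phi_A \times \mathrm{Id}_Z$ are open embeddings and their images are compatible (the image of $\tilde\phi_B$ lies in $A \times W \times Z$, on which $\phi_A \times \mathrm{Id}_Z$ is defined and is an open embedding into $A \times X \times Z$). Restriction to the zero section reproduces the inclusion $A \times W \times Y \hookrightarrow A \times X \times Z$ because each $\phi_A$ and $\phi_B$ does so on its zero section. That $\psi$ lies above $\mathrm{Id}_A$ is immediate because both building blocks do. The only mildly delicate point, and the single place worth dwelling on, is the compatibility of the two short exact sequences of normal bundles that produces the splitting in the first arrow of $\psi$; once that identification is fixed, the rest is a formal composition of tubular neighborhoods of product form. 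Hence no genuine obstacle arises, and the lemma follows.
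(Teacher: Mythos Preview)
Your construction produces the same map $\psi$ as the paper's, and the overall argument is correct, but the paper takes a more hands-on route. Rather than factoring through a chain of embeddings, the paper verifies directly that the displayed $\psi$ is a tubular neighborhood: injectivity is checked pointwise (project to $A\times X$ and use injectivity of $\phi_A$ to recover $a,w,\theta$; this pins down $b=\pi(a,w)$, and then injectivity of $\phi_B$ recovers $y,\kappa$), and the derivative splitting at the zero section is checked fibre by fibre, reducing immediately to the corresponding property for $\phi_A$ and $\phi_B$.

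Your sequential framing through $A\times W\times Y \hookrightarrow A\times W\times Z \hookrightarrow A\times X\times Z$ is a clean conceptual repackaging, but one sentence needs tightening: you write that $\phi_A\times\mathrm{Id}_Z$ is ``defined on $A\times W\times Z$'' and can be composed with $\tilde\phi_B$. Literally it cannot, since its domain is the normal bundle $\left.\tfrac{A\times TX}{A\times TW}\right|_{A\times W}\times Z$, not $A\times W\times Z$. What actually lets the two steps combine is that this normal direction is independent of the $Z$-coordinate, so the $[\theta]$-component can be carried along unchanged while $\tilde\phi_B$ moves $y$ to $z'$. Once you make that explicit, your composite unwinds to exactly the paper's formula $(a,w,y,[\theta],[\kappa])\mapsto (a,x',z')$ with $(a,x')=\phi_A(a,w,[\theta])$ and $(\pi(a,w),z')=\phi_B(\pi(a,w),y,[\kappa])$, and the injectivity and derivative checks then reduce to those of $\phi_A$ and $\phi_B$ separately, just as in the paper's direct argument.
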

\begin{proof}
The map $\psi$ is automatically smooth. It therefore suffices to prove that it is injective and that the derivative gives a splitting near the zero section.

Lets first prove injectivity. Take two points $[a_i,\theta_i,\kappa_i]$ (i=1,2) in 
\[\frac{A\times TX\times TZ}{A\times TW\times TY}\]
and assume that 
\begin{equation}\label{eq:psi}
\psi([a_1,\theta_1,\kappa_1]) = \psi([a_2,\theta_2,\kappa_2]).
\end{equation}
The map
\[\frac{A\times TX\times TZ}{A\times TW\times TY}\maplu{\psi} A\times X\times Z \map A\times X\]
sends $[a,\theta,\kappa]$ to $\varphi_A([a,\theta])$ and since $\varphi_A$ is injective, we get that
\[a_1=a_2\qquad \theta_1=\theta_2.\]
In particular the $\theta_i's$ lie in the same fiber  $T_wX$ of $TX$.  
Let
\[b= \pi(a_1,  w) = \pi(a_2,w).\]
Because of \eqref{eq:psi} and because the diagram
\[\xymatrix{
\frac{A\times TX\times TZ}{A\times TW\times TY}\ar[r]\ar[rd]_{f}\ar@/^20pt/[rrr]&
\frac{A\times TX}{A\times TW}\times \frac{B\times TZ}{B\times TY}\ar[r]\ar[d]&A\times X\times B\times Z\ar[d]\ar[r]& A\times X\times Z\\
&\frac{B\times TZ}{B\times TY}\ar[r]^{\varphi_B}&B\times Z
}\]
is commutative, we get that
\[\varphi_{B}\circ f([a_1,\theta_1,\kappa_1]) =\varphi_{B}\circ f([a_2,\theta_2,\kappa_2]).\]
Because $\varphi_B$ is injective, we get that
\[(b,\kappa_1) =f([a_1,\theta_1,\kappa_1]) = f([a_2,\theta_2,\kappa_2])=(b,\kappa_2)\]
and hence $\kappa_1=\kappa_2$. Hence $\psi$ is injective.

To show that $\psi$ is a tubular neighborhood, it suffices to show that the following composition is the identity.
\[\xymatrix{
\left.\frac{A\times TX\times TZ}{A\times TW\times TY}\right|_{A\times W\times Y}\ar[r]&
 \left.T\left(\left.\frac{A\times TX\times TZ}{A\times TW\times TY}\right|_{A\times W\times Y}\right)\right|_{0-\word{sect}}\ar[r]^-{d\psi}&\left.T(A\times X\times Z)\right|_{A\times W\times Y}
\ar[r]&
\left.\frac{A\times TX\times TZ}{A\times TW\times TY}\right|_{A\times W\times Y}
}\] 
Here the first map includes the bundle as the vertical tangent space at the zero section.
Pick an element $(a,w,y)$ of $A\times W\times Y$. Let $b=\pi(a,w)$. If we restrict to the fibre above $(a,w,y)$, we have a commutative diagram
\[\xymatrix{
\frac{T_w X}{T_wW}\times \frac{T_y Z}{T_y Y}
\ar[d]\ar[rd]\\
T_{a,x,y}(A\times X\times Z) \ar[r]\ar[d]& T_{a,w}(A\times X)\times T_{b,y}(B\times Z)\ar[dl]\\
T_{a,w}\left(\frac{A\times TZ}{A\times TW}\right) \times T_{b,y}\left(\frac{B\times TZ}{B\times TY}\right).
}\] 
Since the right hand side comes from the product of two tubular neighborhoods it gives the identity. And hence so does the left-hand side.
\end{proof}

\begin{lemma}\label{lem:composition lift}
Assume that we are in the same situation as in the previous lemma. 
Now assume that we also have pullback diagrams
\[\xymatrix{
A_0\times W_0 \ar[r]\ar[d]&A_0\times X_0\ar[d]&&B_0\times Y_0\ar[r]\ar[d]&B_0\times Z_0\ar[d]\\
A\times W\ar[r]& A\times X && B\times Y\ar[r] &B\times Z
}\]
and assume that both $\phi_A$ and $\phi_B$ induce tubular neighborhood for the top maps. Finally assume that the map
\[A\times W \map B\]
restricts to a map $A_0\times W_0\map B_0$. 
Then $\psi(\phi_A,\phi_B)$ restrict to a tubular neighborhood of 
\[A_0\times W_0\times Y_0 \map A_0\times X_0\times Z_0.\]
In fact the new tubular neighborhood is 
\[\psi(\phi_A|_{A_0\times W_0}, \phi_B|_{B_0\times Y_0})\]
\end{lemma}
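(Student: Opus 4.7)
The plan is to verify the statement by direct inspection of the explicit formula for $\psi$ built in Lemma \ref{lem:composition}, using naturality of the construction with respect to restriction along the pullback diagrams. Recall that $\psi(\phi_A,\phi_B)$ is defined as the composition
\[
\xymatrix@C=12pt{
\left.\tfrac{A\times TX\times TZ}{A\times TW\times TY}\right|_{A\times W\times Y}\ar[r]&
\left.\tfrac{A\times TX}{A\times TW}\right|_{A\times W}\times \left.\tfrac{B\times TZ}{B\times TY}\right|_{B\times Y}\ar[rr]^-{\phi_A\times\phi_B}&& A\times X\times B\times Z\ar[r]& A\times X\times Z,
}
\]
where the first map uses the map $A\times W\to B$, and the last projects away from $B$. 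Each of the three arrows admits an evident restriction to the corresponding subspace: the pullback hypotheses identify the normal bundle of $A_0\times W_0\subset A_0\times X_0$ with the restriction to $A_0\times W_0$ of the normal bundle of $A\times W\subset A\times X$ (and similarly for $Y_0,Z_0$), and the hypothesis that $A\times W\to B$ sends $A_0\times W_0$ into $B_0$ guarantees that the first arrow restricts, while the projection clearly restricts.

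First I would verify that $\phi_A$ restricts to $\phi_A|_{A_0\times W_0}$ as a tubular neighborhood of $A_0\times W_0\hookrightarrow A_0\times X_0$; this is exactly the extra hypothesis imposed (and symmetrically for $\phi_B$). Next, I would check by direct substitution that the four arrows used to define $\psi(\phi_A|_{A_0\times W_0},\phi_B|_{B_0\times Y_0})$ via Lemma \ref{lem:composition} are the restrictions to $A_0\times W_0\times Y_0$ of the four arrows used to define $\psi(\phi_A,\phi_B)$. Since composition commutes with restriction, this gives the equality
\[
\psi(\phi_A,\phi_B)\big|_{A_0\times W_0\times Y_0}=\psi(\phi_A|_{A_0\times W_0},\phi_B|_{B_0\times Y_0}).
\]

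Finally, I would invoke Lemma \ref{lem:composition} itself applied to the restricted data $(A_0,B_0,W_0,X_0,Y_0,Z_0,\phi_A|,\phi_B|)$ to conclude that the right-hand side is a tubular neighborhood for $A_0\times W_0\times Y_0\to A_0\times X_0\times Z_0$ lying over the identity on $A_0$. The only mildly non-routine step is the bookkeeping identifying the normal bundle of the restricted map with the restriction of the normal bundle of the larger map; this follows formally from the two pullback squares in the hypothesis, since the quotient $T(A_0\times X_0\times Z_0)/T(A_0\times W_0\times Y_0)|_{A_0\times W_0\times Y_0}$ is canonically the pullback of $T(A\times X\times Z)/T(A\times W\times Y)|_{A\times W\times Y}$. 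I expect no essential obstacle: the lemma is really a naturality statement for the construction of Lemma \ref{lem:composition}.
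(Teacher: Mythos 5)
Your proposal is correct and takes essentially the same route as the paper: both recognize that $\psi$ is defined as a composition of maps each of which restricts over the pullback squares (using the hypotheses on $\phi_A$, $\phi_B$, and $A\times W\to B$), so the composite tubular neighborhood restricts, and the restriction coincides with $\psi$ applied to the restricted data. The paper simply displays the big commutative ladder diagram rather than working through the three arrows in prose, but the content is the same.
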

\begin{proof}
We then have a diagram
\[\xymatrix{
\frac{A_0\times TX_0\times TZ_0 }{A_0\times TW_0\times TZ_0} \ar[d]\ar[r]& 
\frac{A_0\times TX_0}{A_0\times TW_0} \times \frac{B_0\times TZ_0}{B_0\times TY_0} \ar[rr]^{\phi_A|_{A_0}\times \phi_B|_{B_0}}\ar[d]&&
A_0\times X_0\times B_0\times Z_0\ar[r]\ar[d]&
A_0\times X_0\times Z_0\ar[d]\\
\frac{A\times TX\times TZ }{A\times TW\times TZ} \ar[r]& 
\frac{A\times TX}{A\times TW} \times \frac{B\times TZ}{B\times TY} \ar[rr]^{\phi_A\times \phi_B}&&A\times X\times B\times Z\ar[r]&
A\times X\times Z
}\]
and the construction of a tubular neighborhood for 
\[A\times W\times Y\map A\times X\times Z\]
restricts to one on 
\[A_0\times W_0\times Y_0\map A_0\times X_0\times Z_0.\]
Note also that lifting through this type of setting commutes with the construction. (Lifting $\phi_A$ and $\phi_B$ first or lifting the $\psi$ give the sam tubular neighborhood.)

\end{proof}

\begin{lemma}\label{lem:composition twist}
Say we have 
\[\xymatrix{
&\nu_A\ar[d]&&&\nu_B\ar[d]\\
A\times U\ar[r]\ar[ru]& A\times U\times W&& B\times V\ar[r]\ar[ru]&B\times V\times Y 
}\]
where $\nu_A$ and $\nu_B$ are bundles. Say we have picked tubular neighborhoods
\begin{eqnarray*}
\psi_A:\frac{(T\nu_A)|_{A\times U}}{TA\times TU} \cong \nu_A|_{A\times U}\oplus \frac{(TA\times  TU\times TW)|_{A\times U}}{TA\times TU} &\map& \nu_A\\
\psi_B:\frac{(T\nu_B)|_{B\times V}}{TB\times TV} \cong \nu_B|_{B\times V}\oplus \frac{(TB\times  TV\times TY)|_{B\times V}}{TB\times TV} &\map& \nu_B
\end{eqnarray*}
for the diagonal maps that live above the identity on $A\times U$ and $B\times V$. Say we have a map
\[g: A\times U\map B.\]
There gives a tubular neighborhood $\psi_A\star\psi_B$ for the diagonal map of 
\[
\xymatrix{
&\nu_A\oplus g^*\nu_B\ar[d]\\
A\times U\times V\ar[r]\ar[ru]&A\times U\times V\times W\times Y
}\]
\end{lemma}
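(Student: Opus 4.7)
The plan is to reduce to the product construction of Lemma \ref{lem:composition} and its lift Lemma \ref{lem:composition lift}, applied in the twisted setting after first pulling $\psi_B$ back along $g$. The hypothesis that both $\psi_A$ and $\psi_B$ live above the identities on $A\times U$ and $B\times V$ is essential: it is what makes the pullback well-defined and allows us to view both tubular neighborhoods as sitting over a common base $A\times U\times V$.

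First, the map $g:A\times U\map B$ produces, via the composition $A\times U\times V\times Y\map B\times V\times Y$, a pullback bundle $g^*\nu_B$ fitting into
\[\xymatrix{
g^*\nu_B\ar[r]\ar[d] & \nu_B\ar[d]\\
A\times U\times V\times Y\ar[r]^-{g\times Id} & B\times V\times Y.
}\]
Because $\psi_B$ lies above the identity on $B\times V$, it pulls back cleanly to a tubular neighborhood $g^*\psi_B$ for the diagonal inclusion $A\times U\times V\map g^*\nu_B$. Similarly, $\psi_A$ lies above the identity on $A\times U$, so crossing with $Id_V$ produces a tubular neighborhood $\psi_A'$ for $A\times U\times V\map \nu_A$, where $\nu_A$ is now regarded as pulled back to a bundle over $A\times U\times V\times W$.

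Now both $\psi_A'$ and $g^*\psi_B$ sit above the identity on $A\times U\times V$, so we can apply the product construction of Lemma \ref{lem:composition} with $A\times U\times V$ playing the role of the common base, and with the bundles $\nu_A$ and $g^*\nu_B$ playing the roles of $A\times X$ and $B\times Z$. This produces $\psi_A\star\psi_B$ as the unique tubular neighborhood of $A\times U\times V$ in $\nu_A\oplus g^*\nu_B$ whose projection to each factor recovers $\psi_A'$ and $g^*\psi_B$. Injectivity and the derivative condition at the zero section follow by repeating the argument of Lemma \ref{lem:composition}: projecting onto the $\nu_A$-component or onto the $g^*\nu_B$-component returns the corresponding input, both of which are genuine tubular neighborhoods.

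The main point requiring care — though no real difficulty — is verifying that $\psi_A\star\psi_B$ again lives above the identity on $A\times U\times V$, which is what allows the construction to be iterable in the subsequent gluing of string operations. This is immediate, since $\psi_A'$ is defined by crossing $\psi_A$ with $Id_V$ and $g^*\psi_B$ is a pullback, so neither factor moves points of $A\times U\times V$. A secondary bookkeeping point is the compatibility with Lemma \ref{lem:composition lift}: when restricted to subspaces $A_0\times U_0\times V_0\subset A\times U\times V$ via pullback, the construction produces the analogous $\star$-product of the restricted tubular neighborhoods. This follows from the naturality of pullback and of the product construction.
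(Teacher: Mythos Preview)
Your approach is correct and close in spirit to the paper's, but the organization differs in a way worth noting. The paper follows the recipe of Lemma~\ref{lem:composition} verbatim to produce a map landing in the full product $\nu_A\times\nu_B$, and then observes as a separate step that, because $\psi_A$ and $\psi_B$ live above the identities, the image actually lands in the subspace $\nu_A\oplus g^*\nu_B\hookrightarrow\nu_A\times\nu_B$ (via $Id\times g$); only then does one get a map with the correct target. You instead front-load this step by pulling $\psi_B$ back along $g$ before doing anything else, so that both tubular neighborhoods already sit over the common base $A\times U\times V$ and the Whitney-sum construction lands directly where it should. Your route is arguably cleaner and makes the role of the ``above the identity'' hypothesis more transparent.

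One small imprecision: you invoke Lemma~\ref{lem:composition} as if it applies directly, but that lemma is stated for trivial product embeddings $A\times W\hookrightarrow A\times X$, not for general bundles $\nu_A$. What you actually need (and what you implicitly acknowledge when you say the injectivity and derivative checks ``follow by repeating the argument'') is the \emph{method} of that lemma transported to the bundle setting, not the lemma as stated. This is exactly what the paper does too, so it is not a gap, but it would be cleaner to say so explicitly. Your closing remarks about iterability and compatibility with restriction are correct and anticipate Lemma~\ref{lem:composition twist lift}.
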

\begin{proof}
We start by following the idea of lemma \ref{lem:propagating}. We get as far as
\[\begin{split}
&\frac{\left(T\left(\nu_A\oplus g^*\nu_B\right)\right)|_{A\times U\times V }}{TA\times TU\times TV} \cong (\nu_A\oplus g^*\nu_B)|_{A\times U\times V} \oplus \frac{\left(TA\times TU\times TV\times TW\times TY\right)|_{A\times U\times V}}{TA\times TU\times TV}\\
&\map
\left(\nu_A\times \frac{TA\times TW\times TU}{TA\times TU}\right) \times 
\left(\nu_B\times \frac{TB\times TV\times TY}{TB\times TV}\right) \\
&\map
\nu_A\times \nu_B
\end{split}\]
However now we need to be more careful than in the other construciton. Because the tubular neighborhood we are using lie above some identities, we land into the image of the embedding
\[\nu_A\oplus g^*\nu_B \maplu{Id\times g} \nu_A\times \nu_B\]
and using this, we get the desired tubular neighborhoods.
\end{proof}

\begin{lemma}\label{lem:composition twist lift}
Assume that we are in the situation of the previous lemma. Assume that we also have a diagram
\[\xymatrix{
&\nu_A|_{0}\ar[rdd]\ar[d]&&&\nu_B|_0\ar[d]\ar[rdd]\\
A_0\times U_0\ar[r]\ar[ru]\ar[rdd]& A_0\times U_0\times W_0\ar[rdd]&& B_0\times V_0\ar[r]\ar[ru]\ar[rdd]&B_0\times V_0\times Y_0 \ar[rdd]\\
&&\nu_A\ar[d]&&&\nu_B\ar[d]\\
&A\times U\ar[r]\ar[ru]& A\times U\times W&& B\times V\ar[r]\ar[ru]&B\times V\times Y 
}\]
where all squares are pullbacks. Assume that the given tubular neighborhood $\psi_A$ and $\psi_B$ restrict to tubular neighborhoods for the first diagram. The tubular neighborhood $\psi_A\star\psi_B$ then restrict to the tubular neighborhood
\[\psi_A|_0 \star \psi_B|_0\]
for the diagonal map of
\[\xymatrix{
&\nu_A|_0\oplus g_0^*\nu_B|_0\ar[d]\\
A_0\times U_0\times V_0 \ar[r]\ar[ru]&A_0\times U_0\times V_0\times W_0\times Y_0
}\]
\end{lemma}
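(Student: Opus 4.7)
The plan is to unpack the construction of $\psi_A \star \psi_B$ from Lemma \ref{lem:composition twist} step by step and check that each ingredient is compatible with restriction to the ``$0$-indexed'' subspaces, exactly as was done for Lemma \ref{lem:composition lift} relative to Lemma \ref{lem:composition}. The recipe for $\psi_A \star \psi_B$ consisted of three moves: first, the canonical splitting
\[
\frac{T(\nu_A\oplus g^*\nu_B)|_{A\times U\times V}}{TA\times TU\times TV} \;\cong\; (\nu_A\oplus g^*\nu_B)|_{A\times U\times V} \oplus \frac{(TA\times TU\times TV\times TW\times TY)|_{A\times U\times V}}{TA\times TU\times TV};
\]
second, the regrouping into the product of the two normal bundles for $(A,U,W)$ and $(B,V,Y)$ together with $\nu_A\times \nu_B$; and third, the application of $\psi_A\times \psi_B$, whose image lies inside $\nu_A\times_g \nu_B$ since both tubular neighborhoods sit above the identity.

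First, I would observe that the hypothesis that every square in the enlarged diagram is a pullback implies that $g:A\times U\to B$ restricts to $g_0:A_0\times U_0\to B_0$, and hence $g^*\nu_B$ restricts canonically to $g_0^*\nu_B|_0$. Consequently, the source bundle
\[
\frac{T(\nu_A\oplus g^*\nu_B)|_{A\times U\times V}}{TA\times TU\times TV}
\]
restricts to its analogue with subscripts $0$, and the canonical splitting is functorial under pullback of the base.

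Next, I would use the assumed restriction behaviour of $\psi_A$ and $\psi_B$: by hypothesis, $\psi_A$ restricts to a tubular neighborhood of the diagonal map in the $0$-indexed $\nu_A|_0$-diagram, i.e.\ $\psi_A(\lambda_A|_0)\subset \nu_A|_0$ and the induced map is $\psi_A|_0$; likewise for $\psi_B$. Hence $\psi_A\times \psi_B$ restricts to $\psi_A|_0\times \psi_B|_0$ on the corresponding subbundle. Combined with the naturality of the splitting, the composition defining $\psi_A\star\psi_B$ restricts to the composition defining $\psi_A|_0\star \psi_B|_0$.

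The one slightly delicate point — and the main obstacle — is the third step: showing that the image of the restricted composition lands inside $\nu_A|_0\oplus g_0^*\nu_B|_0 \subset \nu_A|_0\times \nu_B|_0$, rather than merely in $\nu_A|_0\times \nu_B|_0$. This follows because the image of $\psi_A\star \psi_B$ already lies in $\nu_A\oplus g^*\nu_B$ (by the ``sits above the identity'' property used in Lemma \ref{lem:composition twist}), and intersecting this with the subbundles $\nu_A|_0$ and $\nu_B|_0$ — which by the pullback hypothesis intersect the graph of $g$ in the graph of $g_0$ — gives $\nu_A|_0\oplus g_0^*\nu_B|_0$. Putting these three pieces together yields the desired identification $(\psi_A\star \psi_B)|_0 = \psi_A|_0\star \psi_B|_0$.
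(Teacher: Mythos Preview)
Your proof is correct and follows exactly the approach of the paper: the paper's own proof consists solely of the displayed composition defining $\psi_A\star\psi_B$, leaving the reader to observe that each step is natural under restriction to the $0$-indexed subspaces. Your version is in fact more detailed than the paper's, since you explicitly articulate the pullback compatibility of $g$, the restriction of $\psi_A\times\psi_B$, and the check that the image lands in the fiber product $\nu_A|_0\oplus g_0^*\nu_B|_0$; none of these points are spelled out in the original.
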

\begin{proof}
\[\begin{split}
&\frac{\left(T\left(\nu_A\oplus g^*\nu_B\right)\right)|_{A\times U\times V }}{TA\times TU\times TV} \cong (\nu_A\oplus g^*\nu_B)|_{A\times U\times V} \oplus \frac{\left(TA\times TU\times TV\times TW\times TY\right)|_{A\times U\times V}}{TA\times TU\times TV}\\
&\map
\left(\nu_A\times \frac{TA\times TW\times TU}{TA\times TU}\right) \times 
\left(\nu_G\times \frac{TB\times TV\times TY}{TB\times TV}\right) \\
&\map
\nu_A\times \nu_B
\end{split}\]
\end{proof}

\subsection{A natural transformation for the $\cT_{\sigma}$}

Using the construction of the previous section, we can now glue the tubular neighborhoods that were used in the definition of $\cT_{\sigma}$.

\begin{lemma}
For any simplex
\[(\sigma,\tilsigma) = \left[(\G_n,\tilG_n)\smap \cdots \smap (\G_0,\tilG_0)\right]\]
of $\cG_1$, there is a natural transformation 
\[\theta : \cT_{\sigma}\times \cT_{\tilsigma} \Longrightarrow \cT_{\sigma\#\tilsigma}\]
as in
\[\xymatrix{
\Delta^n \ar@/^25pt/[rr]_{\cT_{\sigma}\times \cT_{\tilsigma}}="TT" \ar@/_25pt/[rr]^{\cT_{\sigma\#\tilsigma}}="T"&& \Top
\ar@{=>}"TT";"T"_{\theta}.
}\]
\end{lemma}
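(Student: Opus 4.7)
The plan is to build $\theta$ subsimplex-by-subsimplex and then check naturality. For each subsimplex $(\sigma_0,\tilsigma_0)=\left[(\G_{i_k},\tilG_{i_k})\smap\cdots\smap(\G_{i_0},\tilG_{i_0})\right]$ of $(\sigma,\tilsigma)$, I will construct a continuous map
\[
\theta_{(\sigma_0,\tilsigma_0)}:\ \cT_\sigma(\sigma_0)\times \cT_{\tilsigma}(\tilsigma_0)\map \cT_{\sigma\#\tilsigma}(\sigma_0\#\tilsigma_0),
\]
where $\sigma_0\#\tilsigma_0$ is the corresponding subsimplex of $\sigma\#\tilsigma$ under the canonical identification of $\cN\sigma\cong\cN\tilsigma\cong\cN(\sigma\#\tilsigma)\cong\Delta^n$. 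First I would record the combinatorial identifications that make gluing work: because the gluing of admissible fat graphs takes place along boundary cycles and the ``extra'' data lives away from them, one has
\[
eE_{\G_k\#\tilG_k}=eE_{\G_k}\du eE_{\tilG_k},\quad eV_{\G_k\#\tilG_k}=eV_{\G_k}\du eV_{\tilG_k},\quad eH_{\G_k\#\tilG_k}=eH_{\G_k}\du eH_{\tilG_k},
\]
giving product splittings $W_{\G_k\#\tilG_k}\cong W_{\G_k}\oplus W_{\tilG_k}$, $M^{eE\du eV}\cong M^{eE_{\G_k}\du eV_{\G_k}}\times M^{eE_{\tilG_k}\du eV_{\tilG_k}}$, $\nu^{eE\du eV}\cong\nu^{eE_{\G_k}\du eV_{\G_k}}\oplus\nu^{eE_{\tilG_k}\du eV_{\tilG_k}}$, and $M^{\p_\win(\G_k\#\tilG_k)}=M^{\p_\win\G_k}$.

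Next, given $(x,y)\in\cT_\sigma(\sigma_0)\times\cT_{\tilsigma}(\tilsigma_0)$, I produce the constituent pieces of $\theta_{(\sigma_0,\tilsigma_0)}(x,y)$ as follows. The tubular neighborhoods of the ``left-basic'' embeddings \eqref{eq:left basic}, \eqref{eq:left sup r}, \eqref{eq:left sup 0} for $\sigma_0\#\tilsigma_0$ are the direct products of those in $x$ and $y$. The tubular neighborhood of the ``right-basic'' embedding \eqref{eq:right basic} for $\sigma_0\#\tilsigma_0$ is obtained from the right-basic pieces of $x$ and $y$ via lemma \ref{lem:composition twist} applied with $A=M^{V_{i_0}^\win}$, $U=M^{eV_{i_0}^{\G}}$, $B=M^{\tilde V_{i_0}^\win}$, $V=M^{eV_{i_0}^{\tilG}}$, and the map $g:A\times U\to B$ coming from the admissibility gluing, which identifies outgoing boundary vertices of $\G_{i_0}$ with incoming boundary vertices of $\tilG_{i_0}$. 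The tubular neighborhoods $\psi_r$ of \eqref{eq:goingup} for $r\leq i_0$ are combined in the same twisted way. Propagating flows for $\nu^{eE^{\varphi_r}}\oplus\nu^{\tilde{eE}^{\varphi_r}}$ and for the normal bundle of the right-basic embedding are built exactly as at the end of the proof of proposition \ref{prop:key}: a pair of propagating flows on bundles $\lambda_A,\lambda_B$ gives one on $\lambda_A\times\lambda_B$ via
\[
\lambda_A\times\lambda_B\maplu{\Omega_A\times\Omega_B}\cX^v_c(\lambda_A)\times \cX^v_c(\lambda_B)\map \cX^v_c(\lambda_A\times\lambda_B).
\]
Connections on $\nu^{eE^{\varphi_r}}\oplus\nu^{\tilde{eE}^{\varphi_r}}$ are taken to be the direct sum of those supplied by $x$ and $y$.

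I then need to verify that the two restrictions built into the definition of $\cT_{\sigma\#\tilsigma}(\sigma_0\#\tilsigma_0)$ hold. Each is a pullback-square condition saying that a finite-dimensional tubular neighborhood lifts to a larger one; since both restrictions hold separately for $x$ and $y$, lemmas \ref{lem:composition lift} and \ref{lem:composition twist lift} imply that the combined tubular neighborhoods satisfy the corresponding pullback condition for $\sigma\#\tilsigma$.

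Finally, for naturality I would check that for every inclusion $(\sigma_1,\tilsigma_1)\subset(\sigma_0,\tilsigma_0)$ in $\cN\sigma\times\cN\tilsigma$, the square
\[\xymatrix{
\cT_\sigma(\sigma_0)\times \cT_{\tilsigma}(\tilsigma_0)\ar[r]^{\theta}\ar[d]& \cT_{\sigma\#\tilsigma}(\sigma_0\#\tilsigma_0)\ar[d]\\
\cT_\sigma(\sigma_1)\times \cT_{\tilsigma}(\tilsigma_1)\ar[r]^{\theta}& \cT_{\sigma\#\tilsigma}(\sigma_1\#\tilsigma_1)
}\]
commutes. This follows because both vertical maps (as defined in the proof of proposition \ref{prop:key}) act by restriction and reindexing using the splitting data of $\aFat_1$ along $eE,eV,eH$, and the gluing construction distributes over the disjoint union $eE_\G\du eE_{\tilG}$, etc. The main obstacle I expect is the bookkeeping around the right-basic embedding and its lifts up the simplex: one must verify that the ``twisted'' composition of tubular neighborhoods in lemma \ref{lem:composition twist} is correctly compatible both with the pullback squares that express the two restrictions and with the pullback squares of \eqref{pullback r r+1} that drive the lifting up from $\G_r$ to $\G_{r-1}$. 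Admissibility of the $\tilG_k$ (the incoming cycles are embedded) is precisely what makes the map $g:M^{V_r^\win}\times M^{eV_r^\G}\map M^{\tilde V_r^\win}$ needed in lemma \ref{lem:composition twist} well-defined and compatible with these liftings.
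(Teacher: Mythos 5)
Your overall plan matches the paper's: combinatorial identifications of the extra-edge/vertex/half-edge sets under gluing, construction of the glued tubular neighborhood/propagating-flow/connection data piece by piece, verification of the two lifting restrictions via lemmas~\ref{lem:composition lift} and~\ref{lem:composition twist lift}, and naturality. But there is a concrete error in how you treat the embedding~\eqref{eq:left basic}. You claim the tubular neighborhood of
\[
M^{V_{\G_{i_0}\#\tilG_{i_0}}} \map  M^{V_\win^{\G_{i_0}\#\tilG_{i_0}}}\times W^{\left(eE_{\G_{i_k}\#\tilG_{i_k}}\du eV_{\G_{i_k}\#\tilG_{i_k}}\right)\setminus eE_{\G_{i_0}\#\tilG_{i_0}}}
\]
is the \emph{direct product} of the corresponding tubular neighborhoods from $x$ and $y$. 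This cannot be right: the direct product would be a tubular neighborhood of
\[
M^{V_{\G_{i_0}}}\times M^{V_{\tilG_{i_0}}} \;\map\; M^{V_\win^{\G_{i_0}}}\times W^{(\cdots)}\times M^{V_\win^{\tilG_{i_0}}}\times W^{(\cdots)},
\]
but $M^{V_{\G_{i_0}\#\tilG_{i_0}}}\cong M^{V_{\G_{i_0}}}\times M^{eV_{\tilG_{i_0}}}$ (the incoming vertices of $\tilG_{i_0}$ are absorbed into $V_{\G_{i_0}}$), and the target of the glued embedding has no separate $M^{V_\win^{\tilG_{i_0}}}$ factor. One must use lemma~\ref{lem:constructing composition} here, with $A=M^{V_\win^{\G_{i_0}}}$, $W=M^{eV_{\G_{i_0}}}$, $B=M^{V_\win^{\tilG_{i_0}}}$, $Y=M^{eV_{\tilG_{i_0}}}$, and the comparison map $g:A\times W\map B$ coming from the identification $\p_\out\G_{i_0}\cong\p_\win\tilG_{i_0}$. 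The composition lemma is designed exactly so that after taking the product of tubular neighborhoods one can \emph{project away} the $B$ factor while staying a tubular neighborhood; a bare product does not do this.

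You correctly identify that~\eqref{eq:left sup r} and~\eqref{eq:left sup 0} are genuine direct products (since $eE^{(\varphi\#\tilvarphi)_r}=eE^{\varphi_r}\du eE^{\tilvarphi_r}$ and $eE_{\G_0\#\tilG_0}=eE_{\G_0}\du eE_{\tilG_0}$), and you correctly route the right-basic embedding and the $\psi_r$'s through lemma~\ref{lem:composition twist}. So the needed machinery is already in your proposal; the gap is simply that~\eqref{eq:left basic} belongs with the twisted cases, not with the product cases. Once that is corrected, the rest of your argument (propagating flows and connections as products, the two restrictions via the ``lift'' lemmas, naturality from the distributivity of gluing over the splitting data of $\aFat_1$) matches the paper's proof.
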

\begin{proof}
Recall that for any glue-able pair $(\G,\tilG)$, we have
\begin{alignat*}{8}
V_{\G\#\tilG} &\cong V_{\G}\du eV_{\tilG}&\cong \frac{V_{\G} \du V_{\tilG}}{\sim} &\qquad & V^{\win}_{\G\#\tilG} &\cong V^\win_{\G}&\qquad& eV_{\G \#\tilG} &\cong eV_{\G}\du eV_{\tilG}\\
E_{\G\#\tilG} &\cong E_{\G}\du eE_{\tilG} &\cong \frac{E_{\G} \du E_{\tilG}}{\sim}&\qquad & E^{\win}_{\G\#\tilG} &\cong E^\win_{\G}&\qquad& eE_{\G \#\tilG} &\cong eE_{\G}\du eE_{\tilG}.
\end{alignat*}
Fix any object
\[\alpha = (0\leq i_0 < \ldots < i_k \leq n)\]
of $\Delta^n$. To construct $\theta$, we need to define a map
\[\theta(\alpha) : \cT_{\sigma}\left(\G_{i_k}\smap \cdots \smap \G_{i_0}\right)\times \cT_{\tilsigma}\left(\tilG_{i_k}\smap \cdots \smap \tilG_{i_0}\right)
\map \cT_{\sigma\#\tilsigma}\left(\G_{i_k}\#\tilG_{i_k}\smap \cdots \smap \G_{i_0} \#\tilG_{i_0}\right).\]
Let $(x,y)$ be in the domain of this map and lets construct the element
\[z=\theta(\alpha)(x,y) \in \cT_{\sigma\#\tilsigma}\left(\G_{i_k}\#\tilG_{i_k}\smap \cdots \smap \G_{i_0} \#\tilG_{i_0}\right).\]

We first need to pick a tubular neighborhood $\varphi_{V,i_0}^z$ for the embedding
\[\rho_{z} : M^{V_{\G_{i_0}\#\tilG_{i_0}}} \map M^{V^{\win}_{\G_{i_0}\#\tilG_{i_0}}} \times W^{\left(eE_{\G_{i_k}\#\tilG_{i_k}} \du eV_{\G_{i_k}\#\tilG_{i_k}}\right) \setminus eE_{\G_{i_0} \#\tilG_{i_0}}}\]
For this, we take
\begin{eqnarray*}
A \times W = \left(M^{V^{\win}_{\G_{i_0}}}\right) \times \left(M^{eV_{\G_{i_0}}}\right) &\map& \left(M^{V^{\win}_{\G_{i_0}}}\right)\times \left(W^{(eE_{\G_{i_k}}\du eV_{i_k})\setminus eE_{\G_{i_0}}}\right)=A\times X\\
B\times Y = \left(M^{V^{\win}_{\tilG_{i_0}}}\right) \times \left(M^{eV_{\tilG_{i_0}}}\right) &\map& \left(M^{V^{\win}_{\tilG_{i_0}}}\right)\times \left(W^{(eE_{\tilG_{i_k}}\du eV_{i_k})\setminus eE_{\tilG_{i_0}}}\right)=B\times Z\end{eqnarray*}
and the map
\[A\times W =\left(M^{V^{\win}_{\G_{i_0}}}\right) \times \left(M^{eV_{\G_{i_0}}}\right) \map \left(M^{V^{\win}_{\tilG_{i_0}}}\right) = B\]
comes from the identification $\p_{\win}\tilG_{i_0} \cong \p_{\out}\G_{i_0}$. By applying the lemma \ref{lem:constructing composition}, we get a tubular neighborhood for
\[A\times W\times Y\map A\times X\times Z\]
which is exactly $\rho_z$.

Since
\[eE^{(\varphi\#\tilvarphi)_{j(j-1)}} = eE^{\varphi_{j(j-1)}} \du eE^{\tilvarphi_{j(j-1)}}\qquad eE_{\G_0\#\tilG_0} = eE_{\G_0} \du eE_{\tilG_0}\]
we can built a tubular neighborhood for 
\begin{eqnarray*}
M^{eE^{(\varphi\#\tilvarphi)_{j(j-1)}}}&\map& W^{eE^{(\varphi\#\tilvarphi)_{j(j-1)}}}\\
M^{eE_{\G_0\#\tilG_0}}&\map& W^{eE_{\G_0\#\tilG_0}}
\end{eqnarray*}
in $z$ by taking the product of the corresponding ones in $x$ and in $y$.

We now apply lemma \ref{lem:composition twist} to obtain a tubular neighborhood for
\begin{equation}\label{eqn:big rho}
M^{V_{\G_{i_0}\#\tilG_{i_0}}} \map \frac{M^{V_{\G_{i_0}\#\tilG_{i_0}}^\win} \times TW^{(eE_{\G_{i_k}\# \tilG_{i_k}}\du eV_{\G_{i_k}\#\tilG_{i_k}})\setminus eE_{\G_{i_0}\#\tilG_{i_0}}}}{TM^{V_{\G_{i_0}\#\tilG_{i_0}}}} \times M^{eH_{\G_{i_0}}}.
\end{equation}
We take 
\begin{eqnarray*}
A\times U\times W &=& M^{V^{\win}_{\G_{i_0}}} \times M^{eV_{\G_{i_0}}}\times M^{\G_{eH_{i_0}}}\\
\nu_A&=& \frac{M^{V_{\G_{i_0}}} \times TW^{(eE_{\G_{i_k}}\du eV_{\G_{i_k}})\setminus eE_{\G_{i_0}}}}{TM^{V_{\G_{i_0}}}} \times M^{eH_{\G_{i_0}}}\\
B\times V\times Y &=&M^{V^{\win}_{\tilG_{i_0}}} \times M^{eV_{\tilG_{i_0}}}\times M^{eH_{\tilG_{i_0}}}\\
\nu_B&=& \frac{M^{V_{\tilG_{i_0}}} \times TW^{(eE_{\tilG_{i_k}}\du eV_{\tilG_{i_k}})\setminus eE_{\tilG_{i_0}}}}{TM^{V_{\tilG_{i_0}}}} \times M^{eH_{\tilG_{i_0}}}
\end{eqnarray*}

Lets construct the  propagating flow $\Omega_{\rho,i_0}^z$ for the normal bundle $\lambda_{z}$ of \eqref{eqn:big rho} which is part of $z$. Since 
\[\lambda_{z} =\left.\lambda_{x}\times \lambda_y\right|_{M^{V_{i_0}\du \til{eV}_{i_0}}},\]
we first take $\Omega^x\times \Omega^y$ which is a propagating flow 
for the product and then we restrict it to the appropriate fibers.
For $z$, we take the product
\[\nabla^z_r= \nabla^x_r\times \nabla^y_r\]
to be the connection for the bundle
\[
\nu^{eE^{\varphi_r\#\tilvarphi_r}} \cong \nu^{eE^{\varphi_r}}\times \nu^{eE^{\tilvarphi_r}}.
\]

We finally need to pick a tubular neighborhood for 
\[M^{V_{\G_r\#\tilG_r}} \map 
\frac{T\left(M^{V_{\G_r\#\tilG_r}^\win} \times W^{(eE_{\G_{i_k}\#\tilG_{i_k}} \du eV_{\G_{i_k}\#\tilG_{i_k}})\setminus eE_{\G_r\#\tilG_r}}\right)}{T\left(M^{V^\win_{\G_r\#\tilG_r}}\times M^{eE^{\varphi_r\#\tilvarphi_r}\du eV_{\G_{r+1}\#\tilG_{r+1}}}\right)}.
\]
Again, we again apply lemma \eqref{lem:composition twist} with the obvious.

We still need to prove that $z$ has the appropriate lifting properties. We also need to prove that $\theta$ defines a natural transformation. Say $\om$ is a morphism
\[\beta=\{0\leq i_{j_0}<\cdots < i_{j_l}\leq n\} \maplu{\om} \alpha=\{0\leq i_0\cdots<i_{k}\leq n\}\]
of $\Delta^n$. We need to prove that the following diagram commutes.
\[\xymatrix{
\cT_\sigma(\beta) \times \cT_{\tilsigma}(\beta) 
\ar[rr]^{\cT_\sigma(\om)\times \cT_{\tilsigma}(\om)}\ar[d]^{\theta(\beta)}
&&\cT_\sigma(\alpha)\times\cT_{\tilsigma}(\alpha)\ar[d]^{\theta(\alpha)}\\
\cT_{\sigma\#\tilsigma}(\beta)
\ar[rr]^{\cT_{\sigma\#\tilsigma}(\om)}
&&\cT_{\sigma\#\tilsigma}(\alpha)
}\]
These two proofs reduce to knowing that our construction interact well with liftings. This is exactly what the  lemmas \ref{lem:composition lift} and \ref{lem:composition twist lift}.

\end{proof}

\begin{lemma}
The natural transformation 
\[\theta:\cT_{\sigma}\times \cT_{\tilsigma} \map\cT_{\sigma\#\tilsigma}\]
respects the operations. 
\end{lemma}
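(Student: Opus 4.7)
The plan is to show that for any $(x,y)\in \cT_{\sigma}(\sigma_0)\times \cT_{\tilsigma}(\tilsigma_0)$ with $z=\theta(\alpha)(x,y)\in \cT_{\sigma\#\tilsigma}(\sigma_0\#\tilsigma_0)$, the generalized Thom collapse
\[
\mu_{z}\ :\ M^{\p_\win(\G_{i_0}\#\tilG_{i_0})}\times W_{\G_{i_k}\#\tilG_{i_k}}\ \longrightarrow\ \Thom\bigl(\kappa_{\G_{i_k}\#\tilG_{i_k}}\bigr)
\]
factors, under the canonical identifications
\[
\p_\win(\G_{i_0}\#\tilG_{i_0})=\p_\win\G_{i_0},\qquad W_{\G_{i_k}\#\tilG_{i_k}}\cong W_{\G_{i_k}}\oplus W_{\tilG_{i_k}},
\]
as the composition of the collapse $\mu_x$ for $\G_{i_0}$ followed by the collapse $\mu_y$ for $\tilG_{i_0}$ applied on the outgoing boundary of $\G_{i_0}$ (which is identified with $\p_\win\tilG_{i_0}$). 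Here one uses that $eE_{\G\#\tilG}=eE_\G\du eE_{\tilG}$, $eV_{\G\#\tilG}=eV_\G\du eV_{\tilG}$, and $eH_{\G\#\tilG}=eH_\G\du eH_{\tilG}$, so that $\kappa_{\G\#\tilG}\cong \kappa_\G\oplus \kappa_{\tilG}$ over the pullback of $M^{\G\#\tilG}\to M^{\G}\times M^{\tilG}$.

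First, I would treat the underlying finite-dimensional Thom collapse on vertex spaces. The defining construction of $\theta$ feeds the tubular neighborhoods $\varphi_{V,i_0}^x$ and $\varphi_{V,i_0}^y$ (together with the identifications along $\p_\out\G_{i_0}\cong\p_\win\tilG_{i_0}$) into Lemma~\ref{lem:composition}; the resulting $\varphi_{V,i_0}^z$ is precisely the composition of the two vertex-level Thom collapses. The tubular neighborhoods for $f^{eE\du eV}$ are literal products, so the Pontryagin collapse at the level of $W^{eE_{\G_{i_k}}\du eV_{\G_{i_k}}}\times W^{eE_{\tilG_{i_k}}\du eV_{\tilG_{i_k}}}$ factors as the smash of the two corresponding collapses. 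Combining the two yields the equality of finite-dimensional pieces.

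Next I would upgrade to the infinite-dimensional picture. The tubular neighborhood $\psi^{z}_\infty$ on $M^{\G_{i_0}\#\tilG_{i_0}}$ is reconstructed from $\psi^{z}_{fin}$ and the propagating flow $\Omega^{z}$. Because $\Omega^{z}$ was built as the product $\Omega^{x}\times\Omega^{y}$ restricted to the fiber $M^{V_{i_0}\#\tilG_{i_0}}$, the ODE governing $\psi^{z}_\infty$ splits as the two ODEs defining $\psi^{x}_\infty$ and $\psi^{y}_\infty$: points over the $\G$-side follow the $\Omega^{x}$-flow, points over the $\tilG$-side follow $\Omega^{y}$, and the two agree on the glued outgoing circles because the flows act trivially on vertex-tangent data there (this is the standard ``propagating flows preserve constant paths'' observation used in section~\ref{sub:TP admissible}). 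The connections $\nabla^{z}_r=\nabla^{x}_r\times\nabla^{y}_r$ likewise twist the two halves independently. Applying Lemmas~\ref{lem:composition twist} and \ref{lem:composition twist lift}, both at the fiber of each vertex and then at the path level, one checks that the resulting tubular neighborhood $\psi^{z}$ on $\kappa_{\G_{i_k}\#\tilG_{i_k}}$ equals the composite produced by $\psi^x$ and $\psi^y$.

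The compatibility with the lifting data for $r<i_0$ is the same argument repeated level by level up the simplex, using Lemma~\ref{lem:composition lift} and Lemma~\ref{lem:composition twist lift} (which say precisely that product/composition of tubular neighborhoods commutes with restriction to subdiagrams). Finally, naturality of $\theta$ with respect to the functors $\cT_\sigma(\om)$ follows from the fact that changing the splitting $W_{\G_{i_k}}\oplus W/W\cong W_{\G_{i_k}\#\tilG_{i_k}}$ is compatible with the direct-sum decomposition under $(\varphi,\tilvarphi)$. The main delicate point, and the one that will require the most care, is the verification that the two lifting procedures---first glue then lift versus first lift then glue---produce the same tubular neighborhood; this is exactly the content of Lemmas~\ref{lem:composition lift} and \ref{lem:composition twist lift}, and once those are invoked the identification of the operations becomes a diagram chase in the Thom-space category.
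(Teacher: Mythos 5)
Your proposal follows essentially the same route as the paper: express $\mu_z$ as the composite of $\mu_x$ with a lifted and twisted version of $\mu_y$, using the tubular-neighborhood gluing lemmas (Lemma~\ref{lem:composition} together with Lemmas~\ref{lem:composition lift}, \ref{lem:composition twist}, \ref{lem:composition twist lift}) and the product structure of the propagating flows and connections. The main step you compress is the explicit construction of the lifted operation $\tilde\mu_{y,r}$: the paper observes that because the choices in $y$ live over the identity on $M^{\p_\win\tilG_r}$, the collapse $\mu_{y,r}$ can be pulled back along $M^{\G_r}\to M^{\p_\win\tilG_r}$ and further twisted by $\kappa_{\G_{i_k}}|_{M^{\G_r}}$, and it then verifies the factorization $\mu_{z,r}=\tilde\mu_{y,r}\circ(\mu_{x,r}\times Id)$ by chasing a specific $4\times 4$ commutative diagram of intermediate collapses and checking compatibility at the three non-trivial positions (bottom triangle, top-right square, left-most triangle). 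Your ``one checks'' covers precisely this diagram work, and the phrase ``$\mu_y$ applied on the outgoing boundary of $\G_{i_0}$'' elides that $\mu_y$ on the nose targets $\Thom(\kappa_{\tilG_{i_k}}|_{M^{\tilG_r}})$ and not a space receiving $\mu_x$; the lift is exactly what reconciles this. The plan is sound, but the compatibility verification is where the actual labor lies.
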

Lets spell out what we mean by this statement. Pick an object
\[\alpha = (0\leq i_0<\ldots < i_k\leq n)\]
in $\Delta^n$, an element
\[(x,y)\in \cT_\sigma(\alpha)\times \cT_{\tilsigma}(\alpha)\]
and $0 \leq r\leq i_0$. The elements $x$ and $y$ determine operations
\begin{eqnarray*}
\mu_{x,r}: M^{\p_{\win}\G_r}\times W_{\G_{i_k}} &\map& \Thom\left(\kappa_{\G_{i_k}}|_{M^{\G_r}}\right)\\
\mu_{y,r}: M^{\p_{\win}\tilG_r}\times W_{\tilG_{i_k}} &\map& \Thom\left(\kappa_{\tilG_{i_k}}|_{M^{\tilG_r}}\right)\\
\end{eqnarray*}
We first claim that $\mu_{y,r}$ lifts through the diagram
\[\xymatrix{
\kappa_{\G_{i_k}}|_{M^{\G_r}}\times W_{\tilG_{i_k}}\ar@{-->}[rr]^-{\tilde\mu_{y,r}}\ar[d]&&\Thom\left(\left.\kappa_{\G_{i_k}\#\tilG_{i_k}}\right|_{M^{\G_r\#\tilG_r}}\right)\ar[d]\\
M^{\G_r} \times W_{\tilG_{i_k}}\ar@{-->}[rr]^-{\mu^\prime_{y,r}}\ar[d]&&\Thom\left(\left.\kappa_{\tilG_{i_k}}\right|_{ M^{\G_r\#\tilG_r}}\right)\ar[d]\\
M^{\p_\win\tilG_r}\ar[rr]^-{\mu_{y,r}}\times W_{\tilG_{i_k}}&&\Thom\left(\left.\kappa_{\tilG_{i_k}}\right|_{M^{\tilG_r}}\right)
}\]
and that $x$, $y$ and $z=\theta(\alpha)(x,y)$ give  a commutative diagram
\[\xymatrix{
M^{\p_\win\G_r} \times W_{\G_{i_k}} \times W_{\tilG_{i_k}} \ar[rr]^{\mu_{x,r}\times Id} \ar[rrd]_{\mu_{z,r}}&& \Thom(\kappa_{\G_{i_k}}|_{M^{\G_r}})\times W_{\tilG_{i_k}} \ar[d]_{\til\mu_{y,r}}\\
&& \Thom(\kappa_{\G_0\#\tilG_0}).
}\]

\begin{proof}
Lets first show that we get a lift $\tilde\mu_{y,r}$ of $\mu_y$. For the glue-able pair $(\G_r,\tilG_r)$ we have maps of graphs
\[\xymatrix{
\p_\win \tilG_r\ar[r]\ar[d]& \tilG_r\ar[d]\\
\G_r\ar[r]& \G_r\#\tilG_r
}\]
We first consider the diagram
\[\xymatrix{
M^{\G_r\#\tilG_r}\ar[r]\ar[d] 
&M^{\G_r}\times PM^{eE_{\tilG_{r}}}\times M^{eV_{\tilG_r}}\ar[d]
&\ar[l] M^{\G_r} \times M^{eE_{\tilG_{r}}\du eV_{\tilG_{r}}} \ar[r]\ar[d]
&M^{\G_r} \times W^{eE_{\tilG_r}\du eV_{\tilG_r}}\ar[d]\\
M^{\tilG_r}\ar[r]
& M^{\p_\win \tilG_r} \times PM^{eE_{\tilG_r}} \times M^{eV_{\tilG_r}} &\ar[l] M^{\p_\win \tilG_r} \times M^{eE_{\tilG_r}\du eV_{\tilG_r}} \ar[r]
&M^{\p_\win \tilG_r} \times W^{eE_{\tilG_r}\du eV_{\tilG_r}}
}\]
Since the choices contain in $y$ lived above the identity on the incoming boundary of $\tilG_r$, our operation $\mu_{y,r}$ restricts to an operation $\mu^{\prime}_{y,r}$ on the top row. For the same reason, we can then twist this operation by $\kappa_{\G_{i_k}}|_{M^{\G_r}}$.

To prove that these choices give a compatible Thom collapse map, we consider the following diagram. Note that to shorten the notation we have dropped the $i_0$'s and the $i_k$'s as there is no confusion.  We have also used $W(\G)$, $\til{W}$ and $W^\#$ to represent the Euclidean spaces $W^{eE_{\G_{i_k}}\du eV_{\G_{i_k}}}$, $W^{{eE}_{\tilG_{i_k}}\du eV_{\tilG_{i_k}}}$ and $W^{eE_{\G_{i_k}\#\tilG_{i_k}} \du eV_{\G_{i_k}\#\tilG_{i_k}}}$.
\[\xymatrix@C=13pt{
M^{\G^\#}\ar@{=>}[r]\ar@{=>}[dr]
& M^{\G}\times PM^{\til{eE}}\times M^{\til{eV}}\ar@{=>}[d] 
&\ar[l] M^{\G}\times M^{\til{eE}\du \til{eV}}\ar@{=>}[r] \ar@{=>}[d] 
&M^{\G}\times \tilde{W}\ar@{=>}[d]\\
&M^{\p_\win\G}\times PM^{eE^\#}\times M^{ eV^\#} 
&\ar[l] M^{\p_\win\G}\times PM^{eE}\times M^{eV^\#\du \til{eE}} \ar@{=>}[r] 
&M^{\p_\win\G}\times PM^{eE}\times M^{eV}\times \tilde{W}\\
&&\ar[lu] M^{\p_\win\G} \times M^{eE^\# \du eV^\#} \ar@{=>}[rd]\ar@{=>}[r]\ar[u]
& M^{\p_\win\G}\times M^{eE\du eV}\times \tilde{W}\ar@{=>}[d]\ar[u]\\
&&&M^{\p_\win\G}\times W^\#
}\]
We will show how to construct the operation for each double arrow in a compatible way. In the end, the vertical operation will be the one associated $x$ and $\G$, the horizontal one will be  the lifted one associated to $y$ and $\tilG$ and the diagonal one the operation we have just constructed.

Since the extra terms are kept constant, we can use $x$ to pick the choices for the vertical maps. Similarly the choices for $\tilG_0$ determine the tubular neighborhood of the horizontal arrows. However for the horizontal maps, we need to use the map
\[M^{V_{\G_0}} \map M^{V_{\tilG_0}^{\win}}\]
to lift the old tubular neighborhood. To go from $M^{\p_\win\tilG}$ to $M^{\G_0}$,
we use that the tubular neighborhoods keep $M^{\til{V_0^{\win}}}$ constant.
The diagonal maps are all already constructed except for
\[M^{\G}\times M^{\tilde{eE}\du \til{eV}} \map M^{\p_{\win}\G}\times PM^{eE}\times M^{eV}\times \til{W}.\]
This one is easy since the two fat graphs are independent. We use the tubular neighborhood for
\[M^{\p_{\win}\tilG} \times M^{\til{eE}\du\til{eV}} \map M^{\p_{\win}\tilG} \times \tilde{W}.\]
And since it does not move $M^{\p_{\win}\tilG}$, it will not interfere with the choices we have made for 
\[M^{\G}\times  \map M^{\p_{\win}\G}\times PM^{eE}\times M^{eV}.\]

We claim that these choices give compatible operations. Only three positions are not trivial. Lets consider each of these separately.

Consider first the bottom triangle
\[\xymatrix{
 M^{\p_\win\G} \times M^{eE \du eV} \times M^{\til{eE} \du \til{eV}} \ar@{=>}[rd]_{(C)}\ar@{=>}[r]^{(A)}
& M^{\p_\win\G}\times M^{eE\du eV}\times \tilde{W}\ar@{=>}[d]^{(B)}\\
&M^{\p_\win\G}\times W\times \til{W}.
}\]
The point in $M^{\p_\win\G}$ determines which tubular neighborhood for
\[M^{eE\du eV}\map W(\G) \qquad M^{\til{eE}\du \til{eV}}\map \til{W}\]
and these tubular neighborhood are  completely independent. Hence the composition of Thom collapses of $(A)$ and $(B)$ is the operation corresponding to the product of these. This is what we chose for $(C)$.

Lets now consider the top right square
\[\xymatrix{
M^{\G}\times M^{\til{eE}\du \til{eV}}\ar@{=>}[r]^{(A)} \ar@{=>}[d]_{(C)} 
&M^{\G}\times \tilde{W}\ar@{=>}[d]^{(D)}\\
M^{\p_\win\G}\times PM^{eE}\times M^{eV^\#\du \til{eE}} \ar@{=>}[r]_{(B)} 
&M^{\p_\win\G}\times PM^{eE}\times M^{eV}\times \tilde{W}\\
}\]
The tubular neighborhoods for $(A)$ and $(B)$ both use the tubular neighborhood
\[M^{\til{V}^{\win}} \times \nu^{\til{eE}\du \til{eV}} \map M^{\til{V}^{\win}} \times W^{\til{eE}\du \til{eV}}\]
which is part of $x$ and leave the remainder and $M^{\til{V}^{\win}}$ untouched. On the contrary, the tubular neighborhoods for $(C)$ and $(D)$ ignore the terms of $\tilG$. In particular this square give compatible operations.

Finally lets look at the left-most triangle
\[\xymatrix{
M^{\G^\#}\ar@{=>}[r]^-{(A)}\ar@{=>}[dr]_-{(C)}
& M^{\G}\times PM^{\til{eE}}\times M^{\til{eV}}\ar@{=>}[d]^-{(B)} \\
&M^{\p_\win\G}\times PM^{eE^\#}\times M^{ eV^\#} 
 }.\]
This part of the construction is the more subtle. First, we look at the finite dimensional,
\[\xymatrix{
M^{V^\#} \ar[r]^-{(A)_{\word{fin}}}\ar[rd]_-{(C)_{\word{fin}}}&M^{V}\times M^{\til{eH}}\times M^{\til{eV}}\ar[d]^-{(B)_{\word{fin}}}\\
& M^{V^\win}\times M^{eH^\#}\times M^{eV^\#}.
}\]
We twist this with the bundle
\[\begin{split}
\frac{\left.\left(M^{V^{\win}}\times TW^{(eE_{i_k}^\#\du eV_{i_k}^\#)\setminus eE^\#}\right)\right|_{M^{V}}}{M^{V^\win} \times TM^{eV^\#}} \times M^{eH^\#}\\
& \cong \frac{M^{V^{\win}} \times TW^{(eE_{i_k}\du eV_{i_k})\setminus eE}\times TW^{(\til{eE}_{i_k} \du \til{eV}_{i_k})\setminus \til{eE}}}{M^{V^{\win}}\times TM^{eV_{i_k}}\times TM^{\til{eV}_{i_k}}} \times M^{eH}\times M^{\til{eH}}
\end{split}\]
above $M^{V^\#\du eH^\#}$. Since our tubular neighborhoods live above the identity on $M^{V^\win}$, any point $\textbf{m} \in M^{V^\win}$ gives a tubular neighborhoods for the diagram
\[\xymatrix{
M^{eV}\times M^{\til{eV}} \ar[r]\ar[rd]& M^{eV} \times  M^{\til{eV}}\times M^{\til{eH}}\ar[d]\\
&\frac{TW^{(eE_{i_k}\du eV_{i_k})\setminus eE}}{TM^{eV_{i_k}}}
\times M^{eH}\times \frac{TW^{(\til{eE_{i_k}}\du\til{eV_{i_k}})\setminus \til{eE}}}{TM^{\til{eV}_{i_k}}} \times TM^{\til{eH}}.
}\]
Now the part coming from $\G$ and the part coming from $\tilG$ are completely independent. In particular, we get the construction for $\G\#\tilG$.

Now it suffices to see how we use the propagating flows. Recall that we are only moving the extra edges and that
\[eE^\# = eE \du \til{eE}.\]
In particular, since the propagating flow for the diagonal map is the product of the two propagating flow, the movement of one edge of $\G$ only depends on the propagating flow of $\G$. And similarly for $\tilG$. 
\end{proof}

 \appendix
\section{Tubular neighborhoods}

\label{app:tubular}
\subsection{Tubular neighborhoods of finite dimensional embedding}
Say we have an embedding $\rho : M\subset N$. Let
\[\nu = \frac{TN|_M}{TM}\]
be its tubular neighborhood.

\begin{definition}
A \emph{tubular neighborhood} is an embedding
\[f:\nu\map N\]
so that the restriction  
\[\xymatrix{
f|_M : M \ar[rr]^{0-sect} &&\nu \ar[r]^f&N
}\]
to the $0$-section is the inclusion $\rho$
and so that the composition
\[TM \oplus \nu \cong T\nu|_M  \maplu{df} TN|_M \map \nu\]
is the projection onto the second component.
\end{definition}

\begin{figure}
\begin{center}
\mbox{\epsfig{file=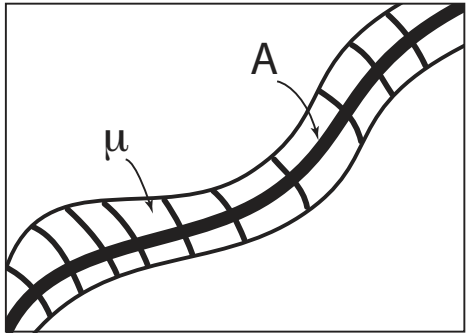, width=200pt}}
\caption{A tubular neighborhood.}\label{fig:tube}
\end{center}
\end{figure}

Assume that both $M$ and $N$ are smooth finite dimensional manifold. Assume that $M$ is compact.
Consider the space $\Tub(\rho)$ of all tubular neighborhoods of $\rho$. We topologize it as a subspace of the space of all embeddings of $\nu$ into $V$ with the $C^{\infty}$ topology.

\begin{proposition}\label{prop:tub contractible}
The space $\Tub(\rho)$ is contractible.
\end{proposition}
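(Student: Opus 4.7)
The plan is to construct a strong deformation retraction of $\Tub(\rho)$ onto a chosen basepoint $f_0$ via a fiberwise Alexander-type scaling in the normal bundle.

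First I will verify that $\Tub(\rho)$ is non-empty. Any Riemannian metric on $N$ produces, via the geodesic exponential map of normal vectors, a tubular neighborhood defined on some disk bundle in $\nu$; precomposing with a fiber-rescaling diffeomorphism of $\nu$ (multiplication in each fiber by a smooth positive function decaying sufficiently fast) extends this to a diffeomorphism $f_0 : \nu \to V_0 \subset N$ satisfying the required tangency condition. Fix such an $f_0$ once and for all.

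Second, for each $f \in \Tub(\rho)$ I will produce a path $\gamma_f : [0,1] \to \Tub(\rho)$ from $f_0$ to $f$, depending continuously on $f$ in the $C^\infty$-topology. On the open neighborhood $U_f := f_0^{-1}(f(\nu) \cap V_0)$ of the zero section, set $\psi_f := f_0^{-1} \circ f$. The conditions $f|_M = f_0|_M = \rho$ and the identification of their derivatives along $M$ with the identity on $\nu$ force
\[
\psi_f(v) = v + O(|v|^2)
\]
in any local trivialization of $\nu$ near $M$. Consequently the Alexander-trick formula
\[
\psi_f^{(t)}(v) := \tfrac{1}{t}\,\psi_f(t v), \quad t \in (0,1], \qquad \psi_f^{(0)}(v) := v,
\]
extends smoothly to $t = 0$, giving a smooth isotopy through embeddings of $\nu$ into $\nu$ that fix the zero section and have derivative equal to the identity there. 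Composing with $f_0$ yields $\gamma_f(t) := f_0 \circ \psi_f^{(t)}$, a continuous path in $\Tub(\rho)$ from $f_0$ at $t = 0$ to $f$ at $t = 1$.

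The hard part will be that $\psi_f$ is defined only on $U_f$ rather than on all of $\nu$, so $\psi_f^{(t)}(v) = \tfrac{1}{t}\psi_f(tv)$ may fail to make sense for $t$ near $1$ and $v$ of large norm. To handle this I will precompose $f$ and $f_0$ with fiber-rescaling diffeomorphisms selected continuously from $f$: explicitly, choose a smooth positive fiber-norm $\| \cdot \|$ on $\nu$ and, for each $f$, pick $\epsilon(f) > 0$ (depending continuously on $f$) such that the $\epsilon(f)$-disk bundle $D_f$ satisfies $f(D_f) \subset V_0$ and $f_0(D_f) \subset f(\nu)$; after rescaling both $f$ and $f_0$ to diffeomorphisms of $D_f$, the composition $\psi_f$ becomes a globally defined fiber-preserving diffeomorphism of the rescaled bundle, and the Alexander trick applies verbatim. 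The continuity of $\epsilon(f)$ follows from openness of the conditions, and the space of admissible rescaling functions is convex hence contractible, so these choices can be made uniformly.

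Finally, I will check that $\gamma_f(t) \in \Tub(\rho)$ for all $t$, the essential point being that $d\psi_f^{(t)}|_M = d\psi_f|_M = \mathrm{id}_\nu$ for every $t$ (an elementary computation from the scaling definition), and that the assignment $(f,t) \mapsto \gamma_f(t)$ is continuous in the $C^\infty$-topology. Together with $\gamma_f(0) = f_0$ and $\gamma_f(1) = f$, this is the desired strong deformation retraction of $\Tub(\rho)$ onto $\{f_0\}$, proving contractibility.
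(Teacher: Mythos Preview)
Your approach is essentially the same as the paper's: a preliminary shrinking step to arrange that the image of $f$ lies inside $V_0 = f_0(\nu)$, followed by the fiberwise Alexander trick $v \mapsto t^{-1}(f_0^{-1}\circ f)(tv)$ to retract onto $f_0$. The paper makes the shrinking step explicit via the homotopy $H(f,t)(y) = f\bigl((1-t)y + t\cdot\tfrac{2\delta}{\pi|y|\,\|f\|_1}\arctan\bigl(\tfrac{\pi|y|\,\|f\|_1}{2\delta}\bigr)\,y\bigr)$, which is manifestly continuous in $f$ through the $C^1$-norm $\|f\|_1$; your version invoking an abstract continuous $\epsilon(f)$ and ``rescaling both $f$ and $f_0$'' is a little imprecise (the basepoint $f_0$ must stay fixed throughout, and $\psi_f = f_0^{-1}\circ f$ is not fiber-preserving), but the underlying two-step strategy is identical and correct.
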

\begin{proof}
This is a classical result. We include a proof because it gives a more general statement which we shall use.
Fix one tubular neighborhoods $f_0: \nu \map N$ and pick a metric on the bundle $\nu$ and an extension of it to $N$. Let $V_0$ denote the image of $f_0$.

The first step is to homotope $\Tub_{f_0}(\rho)$ into the subspace $\Tub_{f_0}(\rho)$ which contains all tubular neighborhoods $f$ so that $f(\nu)\subset f_0(\nu)$. Consider the metric 
\[\|f\|_1 = \sup\{\|df(v)\| \qquad  \|v\| \leq 1\}\]
on $\Tub(\rho)$. Fix a $\delta>0$ so that
\[\|f\|_1 < \delta \implies f(B_1(\nu)) \subset f_0(\nu).\]

We now construct a homotopy of $\Tub(\rho)$ into the subspace $\Tub(\rho)_{f_0}$. Define
\[H:\Tub(\rho) \times I \map \Tub(\rho)\]
by
\begin{eqnarray*}
H(f,t) : \nu &\map& V\\
y&\maps& \begin{cases}
f\left((1-t)y + t \left(\frac{1}{\frac{\pi |y| \|f\|} {2 \delta}} \arctan\left[ \frac{ \pi |y| \|f\|}{2 \delta} \right]  \right) y \right) & |y|\neq 0 \\
f(y)& \word{otherwise.}
\end{cases}
\end{eqnarray*} 
This function is smooth because the function
\[ g(x) = \begin{cases}
\frac{1}{x} \arctan(x)&x\neq 0\\
1& x=0
\end{cases}\]
is both smooth and even. Since the derivative of the function $g$ is 1 at zero, $H(f,t)$ is a tubular neighborhood for each $t$. Because for any positive $c$
\[\lim_{x\map \infty} \frac{2}{\pi c} \arctan[\frac{\pi c}{2} x ] =  c,\]
we have that for any $y\in \nu$
\[\left(\frac{1}{\frac{\pi |y| \|f\|} {2\delta}} \arctan\left[ \frac{ \pi |y| \|f\|}{2 \delta} \right]  \right) y \in D_{\delta/\|f\|}\nu.\]
For any $f=\alpha(z)$ and for $y\in \nu$ with $|y|\neq 0$, we have that
\[H(f,1)(y) \in f(D_{\delta/\|f\|}\nu) \subset V_0\]

We will now retract all of $\Tub(\rho)_{f_0}$ onto $f_0$.  For any $f$ in $\Tub(\rho)_{f_0}$, because
$f(\nu)\subset f_0(\nu)$, we can make sense of
\[g=f_0^{-1}\cdot f : \nu\map \nu.\]
By our definition of tubular neighborhoods, $g$ is the identity on $M\subset \nu$ and whose fiberwise derivative is the identity at $M$.
Consider the map
\begin{eqnarray*}
F_{f}: \nu\times I &\map& \nu\\
(y,t)&\maps& \begin{cases}
t^{-1}g(ty) &t>0\\
Id&t=0
\end{cases}
\end{eqnarray*}
By Hirsch's argument $F_f$ is a smooth homotopy between the identity and $g$. By composing it with $f_0$, we get a isotopy between $f_0$ and $f$. This homotopy depends continuously on $f$.
\end{proof}

\subsection{Compatible Thom collapses}

Lets now consider the pull-back diagram
\[\xymatrix{
A\ar@{^(->}[r]^{\rho_A}\ar[d]&B\ar[d]\\
C\ar@{^(->}[r]^{\rho_C}&D
}\]
We assume that $B$ and $C$ are embedded in D and that they meet transversely at A. Let 
\[\mu = \frac{TB|_A}{TA} \qquad \lambda=\frac{TD|_C}{TC}\]
be the normal bundles. Note that since we have a transverse intersection, 
\[ \mu = \lambda|_A.\]

We want to give conditions on the tubular neighborhoods of $A$ in $B$ and $C$ in $D$ under which the Thom collapses along the horizontal arrows give a commutative diagram
\[\xymatrix{
B\ar[r]\ar[d]& Thom(\mu)\ar[d]\\
D\ar[r]& Thom(\lambda).
}\]
This diagram will commute if and only if the choice of tubular neighborhoods make the following commutes.
\[\xymatrix{
\mu \ar[r]\ar[d]& B\ar[d]\\
\lambda\ar[r]&D
}\]
Hence a choice of tubular neighborhood for $\rho_C$ determines completely the one for $\rho_A$. However, we do need 
\[f:\lambda\map D\]
to map the subbundle $\lambda|_A$ into $B$.
Let $\Tub(\rho_A,\rho_C)\subset  \Tub(\rho_C)$ be the space of tubular neighborhood of $\rho_C$ which induce a tubular neighborhood for $\rho_A$.

\begin{figure}
\begin{center}
\mbox{\epsfig{file=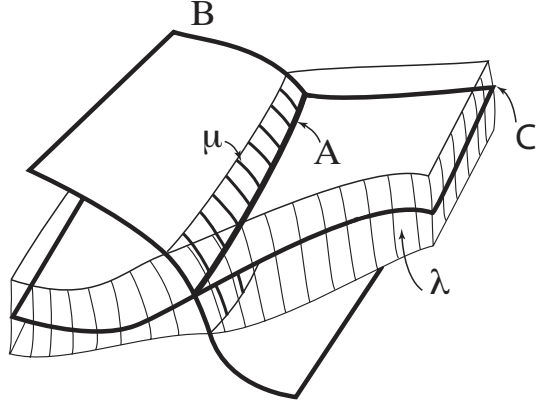, width=200pt}}
\caption{Compatible tubular neighborhoods.}\label{fig:tubeABCD}
\end{center}
\end{figure}

\begin{proposition}
The space $\Tub(\rho_A,\rho_C)$ is contractible.
\end{proposition}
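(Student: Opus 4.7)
The plan is to mimic the two-step deformation retract used in the proof of Proposition \ref{prop:tub contractible}, taking care that each intermediate tubular neighborhood of $\rho_C$ continues to restrict to a tubular neighborhood of $\rho_A$. Throughout, I will use the canonical identification $\mu = \lambda|_A$ coming from the transversality of $B$ and $C$ in $D$.

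First I would check that $\Tub(\rho_A,\rho_C)$ is nonempty. Standard tubular neighborhood theory for pairs produces some $f_0\in\Tub(\rho_C)$ whose restriction to $\mu\subset\lambda|_A$ is a tubular neighborhood of $A$ in $B$; fix such an $f_0$, pick a metric on $\lambda$, extend it to $D$, and set $V_0=f_0(\lambda)$. The radial shrinking $H(f,t)$ from the unconstrained proof acts on each vector $y\in\lambda$ by multiplication by a positive scalar depending only on $|y|$ and $\|f\|$. Because this rescaling is fibrewise-linear, it preserves the subbundle $\mu\subset\lambda|_A$: if $f(\mu)\subset B$ then $H(f,t)(\mu)\subset f(\mu)\subset B$ for every $t$. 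So $H$ restricts to a deformation of $\Tub(\rho_A,\rho_C)$ onto the subspace $\Tub_{f_0}(\rho_A,\rho_C)$ of compatible tubular neighborhoods whose image lies in $V_0$.

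Next I would run Hirsch's trick on $\Tub_{f_0}(\rho_A,\rho_C)$. For $f$ in this subspace, set $g_f = f_0^{-1}\circ f : \lambda\to\lambda$. The key compatibility observation is that $g_f$ carries $\mu$ into $\mu$: since $f(\mu)\subset B\cap f_0(\lambda)$ and $f_0|_\mu$ is a diffeomorphism onto an open neighborhood of $A$ in $B$ of the full dimension $\dim B$, transversality forces $f_0^{-1}(B) = \mu$ in some neighborhood of the zero section, so that $g_f(\mu)\subset\mu$. The Hirsch rescaling
\[
F_f(y,t) \;=\; \begin{cases} t^{-1}\,g_f(ty), & t>0, \\[2pt] \mathrm{id}, & t=0, \end{cases}
\]
is then a smooth fibre-preserving isotopy of $\lambda$ which preserves $\mu$ for every $t$. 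Composing with $f_0$ yields an isotopy $t\mapsto f_0\circ F_f(\cdot,t)$ from $f_0$ to $f$ that remains in $\Tub(\rho_A,\rho_C)$ throughout, and depends continuously on $f$.

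The main obstacle is the compatibility bookkeeping at the transverse intersection: one must verify that the identification $\mu=\lambda|_A$ is preserved by every operation appearing in the homotopy. Radial scaling preserves it trivially because it is fibrewise-linear; the Hirsch step uses only the local identity $f_0^{-1}(B) = \mu$, which in turn relies on the transversality hypothesis together with the matching of dimensions $\dim\mu = \dim B$. Once these two points are established, the argument becomes verbatim the one of Proposition \ref{prop:tub contractible}, now carried out inside the closed subspace $\Tub(\rho_A,\rho_C)\subset\Tub(\rho_C)$, and yields the desired contraction to $f_0$.
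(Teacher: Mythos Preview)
Your proposal is correct and follows essentially the same approach as the paper. The paper's proof is much terser---it simply asserts that one can start with a compatible $f_0$ and that ``both steps involved in contracting the space $\Tub(\rho_C)$ will restrict to homotopies between tubular neighborhoods which have this condition''---whereas you spell out why the radial shrinking and the Hirsch rescaling each preserve the condition $f(\lambda|_A)\subset B$. One small point worth tightening: your claim that $f_0^{-1}(B)=\mu$ near the zero section is what makes the Hirsch step go through, and strictly speaking you want $f_0^{-1}(B)\cap V_0=\mu$ globally on the image, not just locally; this is arranged by choosing $f_0$ so that $f_0(\lambda)\cap B=f_0(\mu)$, which is always possible after a preliminary shrinking. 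With that caveat, your argument and the paper's coincide.
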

\begin{proof}
This proof follows the proof of proposition \ref{prop:tub contractible}. Start with any $f_0:\lambda\map D$ so that 
\[f_0(\lambda|_A)\subset B.\] 
Both steps involved in contracting the space $\Tub(\rho_C)$ will restrict to homotopies between tubular neighborhoods which have this condtition.
\end{proof}

\subsection{Compositions of Thom collapses.}

We will also need some compatibility for compositions of embeddings
\[A \maplu{f} B\maplu{g} C.\]
Denote the normal bundles by
\[\mu = \frac{TB|_A}{TA} \qquad \lambda = \frac{TC|_B}{TB} \qquad \nu = \frac{TC|_A}{TA}\]
We have a short exact sequence
\[\mu\map \nu\map \lambda|_A.\]
This sequence splits but not naturally. Say we have chosen tubular neighborhoods 
\[ f_\mu : \mu \map B\qquad f_\lambda:\lambda\map C\qquad f_\nu:\nu \map C\]
for each of these embeddings. We get three Thom collapses
\[B\map Thom(\mu)\qquad C \map Thom(\lambda)\qquad C\map Thom(\nu).\]
We would like to compare the composition of the first two to the third. However the first two do not compose. To add to this, the target of the unexistent composition would not be the same as the target of the third one. 

We therefore consider tubular neighborhoods for the composition
\[\xymatrix{
A\ar[r]& B \ar[rr]^{0-sect}&& \lambda.
}\]
whose normal bundle is
\[\frac{\left.T\left(\lambda\right)\right|_{A}}{TA} 
\cong \frac{TB|_A\oplus \lambda|_{A}}{TA} 
\cong \mu\oplus \lambda|_A 
\]
A choice of a tubular neighborhood 
\[ \widetilde{f}_{\lambda} : \mu\oplus \lambda|_A\map \lambda \]
gives a Thom collapse
\[\xymatrix{
 \lambda \ar[r]& Thom(\mu\oplus \lambda|_A) 
}\]
which extends to $Thom(\lambda)$.
We can then compose the two Thom collapses to get
\[C\map Thom(\lambda) \map Thom(\mu\oplus \lambda|_A)\]

From the choice  tubular neighborhood $f_\nu$ for $g$, we get identifications
\[ T(\lambda ) |_B \cong TC|_B \qquad \mu\oplus \lambda|_A \cong \nu\]
and hence we get a diagram
\begin{equation}\label{dia:composing compatibility}
\xymatrix{
C\ar[r]\ar[rrd]& Thom(\lambda)\ar[r]& Thom(\mu\oplus \lambda|_A)\ar@{<->}[d]\\
&& Thom(\nu).
}\end{equation}

\begin{definition}
We call the tubular neighborhoods $f_\mu$, $\tilde{f}_\lambda$ and $f_\nu$
compatible if they make diagram \ref{dia:composing compatibility} commutes.
Let $\Tub(\rho_{AB},\rho_{BC},\rho_{AC})$ denote the space of compatible choices of tubular neighborhood for $f$, $g$ and $g\cdot f$.
\end{definition}

\begin{proposition}
The space $Tub(\rho_{AB},\rho_{BC},\rho_{AC})$ is contractible.
\end{proposition}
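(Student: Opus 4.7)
The plan is to adapt the two-stage deformation retraction used in the proof of Proposition \ref{prop:tub contractible} so that it acts simultaneously on all three tubular neighborhoods while preserving the compatibility relation encoded in diagram \eqref{dia:composing compatibility}. First I would verify that $\Tub(\rho_{AB}, \rho_{BC}, \rho_{AC})$ is non-empty: starting from any $f_\lambda\in\Tub(g)$ (non-empty by the already-proved case), pick a splitting of the short exact sequence $\mu\to\nu\to\lambda|_A$, let $f_\mu$ be the restriction of $f_\lambda$ to $\lambda|_A$ followed by projection to $B$, let $\tilde{f}_\lambda$ be determined by $f_\mu$ together with the zero-section-preserving extension into $\lambda$, and finally let $f_\nu$ be the composition $f_\lambda\circ\tilde{f}_\lambda$ transported to $\nu$ via the chosen splitting. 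By construction this triple satisfies \eqref{dia:composing compatibility}.

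Next, fix such a compatible reference triple $(f_\mu^0, \tilde{f}_\lambda^0, f_\nu^0)$ and choose bundle metrics. Stage one applies the radial arctan formula from Proposition \ref{prop:tub contractible} simultaneously to all three tubular neighborhoods, deforming an arbitrary compatible triple $(f_\mu, \tilde{f}_\lambda, f_\nu)$ so that after time $1$ each of the three images lies inside the corresponding reference image. Compatibility is preserved because the arctan rescaling is fiberwise and radial, and the decomposition $\nu\cong\mu\oplus\lambda|_A$ used to formulate compatibility is itself respected by radial scaling in each summand; the commutativity of \eqref{dia:composing compatibility} therefore carries through each time $t$.

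Stage two performs the Hirsch-type rescaling $F(y,t) = t^{-1} g(ty)$ with $g = (f^0)^{-1}\circ f$, applied to each of the three pairs of tubular neighborhoods. The outputs are three smooth families $g_\mu, \tilde{g}_\lambda, g_\nu$ of bundle self-maps isotopic to the identity. Under the identifications furnished by the reference triple, the original compatibility becomes the statement that $\tilde{g}_\lambda$ corresponds to $g_\nu$ via the splitting and that $g_\mu$ is the $\mu$-component of $\tilde{g}_\lambda$; because Hirsch rescaling acts fiberwise and linearly in $y$, these identities persist throughout the isotopy. Composing stage one with stage two yields a continuous deformation retraction of $\Tub(\rho_{AB}, \rho_{BC}, \rho_{AC})$ onto the point $(f_\mu^0, \tilde{f}_\lambda^0, f_\nu^0)$.

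The main obstacle is bookkeeping rather than conceptual: one must check that each step of the deformation preserves the commutativity of diagram \eqref{dia:composing compatibility}, not merely the individual tubular neighborhood conditions. This reduces to the assertion that radial scaling and Hirsch rescaling commute with the decomposition $\nu\cong\mu\oplus\lambda|_A$ supplied by $\tilde{f}_\lambda$, and with the composition $f_\lambda\circ\tilde{f}_\lambda$ defining the Thom collapse through $\lambda$. An alternative, perhaps cleaner route is to exhibit $\Tub(\rho_{AB}, \rho_{BC}, \rho_{AC})$ as the homotopy pullback of a diagram of contractible spaces along fibrations (the forgetful maps to $\Tub(\rho_{AB})$, $\Tub(g)$, and $\Tub(gf)$), whose fibers are each contractible because the compatibility constraint determines the remaining data up to a convex family of choices; this would sidestep the explicit deformation but requires the same verification that compatibility is preserved by the relevant restriction maps.
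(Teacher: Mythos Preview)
Your main approach has a genuine gap, and the paper's route is much simpler than either of your alternatives.

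The paper observes that the compatibility condition \emph{uniquely determines} $f_\nu$ from the pair $(\tilde f_\lambda, f_\lambda)$: the only compatible choice is the composition
\[
\nu \;\cong\; \mu\oplus\lambda|_A \;\xrightarrow{\tilde f_\lambda}\; \lambda \;\xrightarrow{f_\lambda}\; C.
\]
Hence the forgetful map $\Tub(\rho_{AB},\rho_{BC},\rho_{AC}) \to \Tub\bigl((\text{0-sect})\circ\rho_{AB}\bigr)\times\Tub(\rho_{BC})$ is a homeomorphism, and the target is a product of two spaces already known to be contractible. No simultaneous deformation is needed at all.

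The gap in your two-stage deformation is exactly the point you flag as ``bookkeeping'': it is not just bookkeeping. Compatibility says $f_\nu = f_\lambda\circ\tilde f_\lambda$ (after the identification $\nu\cong\mu\oplus\lambda|_A$). If you apply the arctan radial shrinking \emph{independently} to each of $f_\nu$, $f_\lambda$, $\tilde f_\lambda$, the relation breaks: rescaling the composition is not the same as composing the rescaled maps, because the intermediate norm $|\tilde f_\lambda(z)|$ in $\lambda$ is a nonlinear function of $|z|$ in $\nu$. The same problem recurs in Stage~2 with the Hirsch rescaling. So the ``assertion that radial scaling and Hirsch rescaling commute with the decomposition'' is false as stated, and the proposed homotopy leaves the subspace of compatible triples.

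Your alternative homotopy-pullback idea is in the right spirit but still more elaborate than necessary. Once you notice that $f_\nu$ is \emph{forced}, the space is not merely a homotopy pullback but literally a product, and contractibility is immediate.
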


\begin{figure}
\begin{center}
\mbox{\epsfig{file=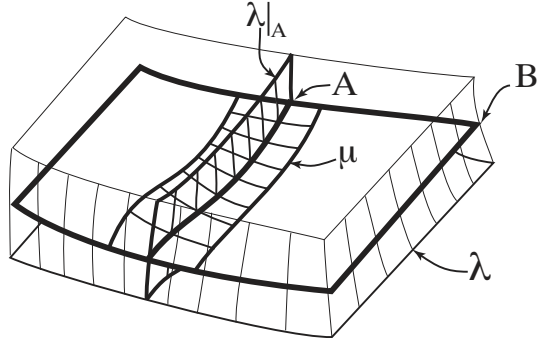, width=200pt}}
\caption{The inclusion of $A$ into $\lambda$ and then into $C$.}
\end{center}
\end{figure}

\begin{proof}
The space $\Tub(\rho_{AB},\rho_{BC},\rho_{AC})$ is homeomorphic to the product
\[\Tub((\word{0-sect}) \cdot \rho_{AB})\times \Tub(\rho_{BC}).\]
For \emph{any} choice of $\widetilde{t}_\mu$ and $t_\lambda$, there is exactly one compatible $t_\nu$, namely the composition
\[\nu \maplu{\cong} \mu\oplus \lambda|_A \maplu{\widetilde{t}_\mu} \lambda \maplu{t_\lambda} C.\]
Note that there is a pullback diagram
\[\xymatrix{
A\ar[r]\ar[d]&\lambda\ar[d]^{t_\lambda}\\
A\ar[r]&C.
}\]
This implies that once $t_\lambda$ is  fixed, the problem simplifies to the pullback case of the previous section. 
\end{proof}

\bibliographystyle{plain}
\bibliography{references}

\end{document}